\newtheorem{Theorem}{Theorem}[section]
\newtheorem{Lemma}[Theorem]{Lemma}
\newtheorem{Remark}[Theorem]{Remark}
\newtheorem{Hypothesis}{Hypothesis}
\def\a{\alpha}
\def\le{\left}
\def\r{\right}
\def\R{\mathbb R}
\def\E{\mathbb E}
\def\P{\mathbb P}
\def\F{\mathcal F}
\def\L{\mathcal L}
\def\eps{\varepsilon}
\def\ds{\displaystyle}
\def\e{\epsilon}
\def\vs{\vspace{.1mm}\\}
 \newcommand{\norm}[1]{\left\| #1\right\|}
 \newcommand{\Inner}[1]{\big\langle #1\big\rangle}
 \newcommand{\abs}[1]{\lvert #1\rvert}
\title{ Parabolic scaling of a stochastic wave map with co-normal noise: limit and fluctuations}\date{}
\author[S. Cerrai]{Sandra Cerrai}
\address{Department of Mathematics\\
University of Maryland\\
}
\email{cerrai@umd.edu}
\thanks{S. Cerrai was partiallly supported by NSF Grant DMS-2348096 (2024-2027), {\em 
 Multiscale Analysis of Infinite-Dimensional Stochastic Systems}}
\author[M. Xie]{Mengzi Xie}
\address{Institut f\"{u}r Mathematik\\
Technische Universit\"{a}t Berlin\\
}
\email{xie@tu-berlin.de}
\subjclass[2010]{}
\keywords{}
\begin{document}

 \begin{abstract}  
This paper investigates the parabolic rescaling limit of a damped stochastic wave map from the real line into the two-dimensional sphere, perturbed by multiplicative Gaussian noise of co-normal type. We prove that  under this rescaling, the solutions converge to those of the deterministic heat flow for harmonic maps, revealing a transition from stochastic hyperbolic to deterministic parabolic dynamics. We further analyze the fluctuations around this limit, proving a  central limit theorem and identifying the limiting process as the solution to a linear stochastic partial differential equation. The study combines tools from geometric analysis, stochastic calculus, and functional analysis, offering insights into the interplay between geometry, noise, and scaling in nonlinear stochastic systems.
 \end{abstract}

 \maketitle
 
 \tableofcontents

\section{Introduction}

Wave maps are critical objects in mathematical physics and geometric analysis, describing energy-minimizing maps from a spacetime manifold into a target Riemannian manifold. In the deterministic setting, they generalize harmonic maps and obey a second-order hyperbolic PDE. Stochastic wave maps introduce randomness into this framework, modeling effects such as thermal noise or turbulent forcing. These equations, who occupy a particularly interesting intersection of deterministic PDE theory, stochastic analysis, and differential geometry, are notoriously difficult due to their combination of nonlinearity, geometry, and stochasticity.

In this context, the present paper investigates a damped stochastic wave map defined on the whole real line with values in the unit sphere $\mathbb{S}^2$, under the influence of spatially homogeneous Gaussian noise that is white in time and regularized in space. We focus on the parabolic rescaling limit, exploring both law of large numbers and central limit behaviors, thereby capturing the deterministic and stochastic asymptotics of the model under small-parameter limits.

We start from the following stochastic damped wave equation
\begin{equation}\label{SPDE-intro}
	\le\{\begin{array}{l}
	\ds{\partial_{t}^{2}u(t,x)+\abs{\partial_{t}u(t,x)}^{2}u(t,x)=\partial_x^2 u(t,x)+\abs{\partial_x u(t,x)}^{2}u(t,x)-\gamma\partial_{t}u(t,x)}\\[10pt]
	\ds{\quad \quad \quad \quad \quad \quad \quad \quad \quad \quad \quad \quad \quad \quad  +\big(u(t,x)\times\partial_{t}u(t,x)\big)\circ\partial_{t}w(t,x), }\\
	[10pt]
	\ds{u(0,x)=u_0(x),\ \ \ \ \partial_{t}u(0,x)=v_0(x) },
	\end{array}\r.
\end{equation}
in  dimension $1+1$, whose solution takes value in the unitary sphere $\mathbb{S}^2$. Here $\gamma$ is a positive constant friction coefficient and  the initial condition $(u_0,v_0)$ is taken in $\mathcal{M}$, where 
\begin{equation}
	\mathcal{M} := \big\{ (u,v): \R\mapsto T\mathbb{S}^{2} \big\},\ \ \ \ \ T\mathbb{S}^{2}:=\left\{(h,k)\in\,\mathbb{S}^2\times \mathbb{R}^3\,:\, h\cdot k=0\right\}.
\end{equation}
The noise $w(t),\ t\geq0,$ is a spatially homogeneous Wiener process defined on a complete filtered probability space $\big(\Omega, \F, (\F_{t})_{t\geq 0}, \P \big)$, with finite spectral measure $\mu$ having finite second moment, and the stochastic differential is understood in Stratonovich sense. The well-posedness of equations of this type have been already studied in the existing literature, and to this purpose, we refer to \cite{brz2007} and \cite{brzgoldys2022}.

The key novelty of this paper lies in the exploration of  equation \eqref{SPDE-intro} under a parabolic scaling, which transforms the system into a family of equations parametrized by a small parameter $\e>0$, in which  time is dilated and space is rescaled. Namely, we define
\[u_\e(t,x):=u(t/\e,x/\sqrt{\e}),\ \ \ \ \ \ \ \ (t,x) \in\,[0,+\infty)\times \R,\]
and investigate the asymptotic behavior of $u_\e$, as $\e\downarrow 0$, 
 with a particular interest in the transition from the stochastic hyperbolic regime to a deterministic parabolic limit. It turns out that, after converting the Stratonovich's differential into the It\^o's one,  $u_\e$ satisfies the equation \begin{equation}\label{SPDE3-intro}
	\le\{\begin{array}{l}
		\ds{\eps\partial_{t}^{2}u_{\eps}(t,x)+\eps\abs{\partial_{t}u_{\eps}(t,x)}^{2}u_{\eps}(t,x)=\partial_x^{\,2} u_{\eps}(t,x)+\abs{\partial_x u_{\eps}(t,x)}^{2}u_{\eps}(t,x)-\gamma_0\,\partial_{t}u_{\eps}(t,x)}\\
		[10pt]
\ds{\quad\quad\quad\quad\quad\quad\quad\quad\quad\quad\quad\quad\quad\quad \quad\quad\quad\quad +\sqrt{\eps}\big(u_{\eps}(t)\times\partial_{t}u_{\eps}(t)\big)\partial_{t}w^{\eps}(t,x), }\\
		[10pt]
		\ds{u_{\eps}(0,x)=u^\e_0(x),\ \ \ \ \partial_{t}u_{\eps}(0,x)=v^\e_0(x) },
	\end{array}\r.
\end{equation}
for some initial conditions $u^\e_0$ and $v^\e_0$ depending on $\e$, where the friction $\gamma$ is now enhanced by the new one
\begin{equation}
\label{gamma0-intro}
\gamma_0:=\gamma+\frac 12\, \mu(\R),	
\end{equation}
and the rescaled noise $w^\epsilon(t,x)$  is white in time and inherits a spatial covariance structure adapted to the parabolic scaling. Concerning our choice to work with Stratonovich noise, we emphasize that, as a general principle,  considering Stratonovich formulation in S(P)DEs is essential for preserving the geometric compatibility of the solution space. In the specific case studied here, where the random perturbation involves the co-normal term  $u\times \partial_t u$, it would be possible to introduce an It\^o type noise instead, and still preserve the geometric constraint of remaining on $\mathbb{S}^2$.  However, this alternative would  require imposing a sufficiently large friction coefficient $\gamma$, in order to control the dynamics and study the limiting behavior of $u_\e$, instead of any arbitrary $\gamma>0$ as in the present paper.

\smallskip

Before presenting the main results of this paper, we would like to highlight that similar asymptotic problems for stochastic damped wave equations with constraints have been previously studied in \cite{brzcer} and \cite{cerxie}.  However, both of these works considered equations defined on a bounded domain  $\mathcal{O}\subset \mathbb{R}^d$, where the solutions were constrained to lie on the Hilbert manifold of functions in $L^2(\mathcal{O})$ with norm equal $1$. The analysis of both papers was motivated by the study of the small-mass limit, also known as Smoluchowski-Kramers approximation (see the original paper \cite{smolu}, and \cite{CF1}, where the infinite dimensional case was studied for the first time), and was only related to the study of the limit of $u_\e$.  
We emphasize that the methods developed in \cite{brzcer} and \cite{cerxie} cannot be reduced to a parabolic rescaling and relied on substantially different techniques, due to the fundamentally distinct nature of the imposed constraints.

\smallskip

The first major contribution of the present paper is a rigorous derivation of the deterministic parabolic limit. We prove that, as $\e\downarrow 0$, the solution $u_\e$
 of the rescaled wave equation \eqref{SPDE3-intro} converges in probability to the unique solution of the heat flow harmonic map equation on the sphere
\begin{equation}\label{limiting_equation-intro}
	\le\{\begin{array}{l}
		\ds{ \gamma_0\,\partial_{t}u(t,x) = \partial_{x}^{2} u(t,x)+\abs{\partial_{x} u(t,x)}^{2}u(t,x)  , \ \ \ \ \  \ (t,x)\in \mathbb{R}^{+}\times \mathbb{R},}\\[10pt]
		\ds{u(0,x)=u_{0}(x),\ \ \ \ \ \ x\in \mathbb{R} },
	\end{array}\r.
\end{equation}
where $u_0\in M:=\{u:\mathbb{R}\to \mathbb{S}^{2}\}$. 
 This limiting equation captures the  geometric flow associated with harmonic maps and is a well-known object of study in geometric analysis. The convergence result is a law of large numbers: for suitable initial conditions and under appropriate uniform estimates, the stochastic system \eqref{SPDE3-intro} loses its randomness in the limit and collapses onto the deterministic trajectory governed by the heat flow  harmonic map equation. It is important to stress that the limiting deterministic dynamics keeps memory of the stochasticity of the wave map equation through the modified friction $\gamma_0$ defined in \eqref{gamma0-intro} which involves the total mass of the spectral measure $\mu$ associated with the noise. 
 
 Our analysis extends also to the limiting behavior of the velocity $\partial_t u_\e$. In this case, we show that while $\partial_t u_\e$ converges to $\partial_t u$ in probability with respect to the weak convergence in $L^2(0,T;L^2(\R))$\footnote{\,Here and in what follows, for the sake of simplicity, we will denote $L^2(\mathbb{R};\mathbb{R}^3)$, $\dot{H}^k(\mathbb{R};\mathbb{R}^3)$ and $H^k(\mathbb{R};\mathbb{R}^3)$, $k\geq 1$, by $L^2(\mathbb{R})$, $\dot{H}^k(\mathbb{R})$ and $H^k(\mathbb{R})$, respectively.
}, because of the presence of the noise in equation \eqref{SPDE3-intro} $\partial_t u_\e$ never converges to $\partial_t u$ in probability with respect to the strong  convergence in $L^2(0,T;L^2(\R))$. As we will discuss later, this fact will have fundamental consequences when it comes to the study of the fluctuations and the proof of the validity of the central limit theorem.

\smallskip

The second major contribution addresses the nature of the  stochastic fluctuations around the deterministic limit, in the case the noise results from  the convolution of a fractional noise 
$w_H$ of Hurst index 
$H \in\,(1/2,1)$  with a smoothing kernel $\eta$, ensuring that the resulting noise satisfies the spectral assumptions needed for the analysis described above. Here, we establish a central limit-type theorem for the rescaled difference 
\[y_\epsilon(t,x):=\e^{H/2-1}(u_\e(t,x)-u(t,x)) ,\]
showing the convergence to the solution $\varrho$ of the following linear stochastic PDE, driven by the spatially  fractional noise 
$w_H(t,x)$ and having  coefficients depending on the deterministic heat flow   harmonic map equation
\begin{equation}\label{clt-intro}
		\le\{\begin{array}{l}
			\ds{\gamma_0\,\partial_{t}\varrho(t) = \partial_x^{\,2} \varrho(t)+\abs{\partial_x u(t)}^{2}\varrho(t)+2(\partial_x u(t)\cdot \partial_x \varrho(t))u(t) +\big(u(t)\times\partial_{t}u(t)\big)\partial_{t}w^{H}(t), }\\
			[10pt]
			\ds{\varrho(0)=0.}
		\end{array}\r.
	\end{equation}

This fluctuation analysis is technically demanding due to the geometry of the target manifold and the non-trivial structure of the noise which involves not only the position $u$, but also the velocity $\partial_tu$. The key fundamental point in our analysis is showing that proving the convergence of $y_\e$ to $y$ in $L^2(0,T;L^2(\R))$ - whether in distribution,  in probability,    in mean-square, or  in the weak topology of $L^2(\Omega;L^2(0,T;L^2(\R)))$ -  can be reduced to proving the  analogous convergence of  $z_\e$ to  $z$, where $z_\e$ solves
\[\gamma_0\,\partial_tz_\e(t)=\partial_x^2 z_\e(t)+(u_{\epsilon}(t)\times \partial_t u_\epsilon(t))Q_\epsilon\partial_t w_H(t),\ \ \ \ \ \ z_\e(0)=0,\]
and $z$ solves
\[\gamma_0\,\partial_tz(t)=\partial_x^2 z(t)+(u(t)\times \partial_t u(t))\partial_t w_H(t),\ \ \ \ \ \ z(0)=0.\]
In particular, since we can show that $z_\e$ converges to $z$ in $L^2(\Omega;L^2(0,T;L^2(\mathbb{R})))$,  we obtain  the validity of a strong  central limit theorem.

\smallskip

In addition to the two main  results we have just described, we would like to mention a few other facts that emerge from our analysis and that are related to the limiting behavior of $u_\e$ and $\partial_t u_\e$ and the properties of the solution $u$ of the heat flow  harmonic map equation \eqref{limiting_equation-intro}.

A first one   concerns the convergence of $u_\e$ to $u$. Actually, as a byproduct of the uniform bounds we prove for   the process $y_\e$, we can show that when dealing with the fractional noise $w_H$, the following estimate holds
\begin{equation}\label{conv-rate-intro}
		\E\sup_{t\in[0,T]}\abs{u_{\epsilon}(t)-u(t)}_{L^2(\mathbb{R})}^{2}+\E\int_{0}^{T}\abs{u_{\epsilon}(t)-u(t)}_{H^{1}}^{2}dt\lesssim_{\,\alpha,T}\epsilon^{3/2-H-\alpha}+ \vert u^\e_0-u_0\vert^2_{L^2(\mathbb{R})},
	\end{equation}
for every $\alpha>0$. In particular the convergence in probability of $u_\e$ to $u$ proved in the first part of the paper is improved to mean-square convergence and a rate of convergence is provided. Notice that in this case, we can take $H \in\,[1/2,1)$, so that the case of space-time white noise can be covered.

A second fact concerns the convergence of $u_\epsilon$ and $\partial_t u_\e$ in the deterministic setting. By assuming that $(u_0,v_0) \in\,\big(\dot{H}^1(\R)\times L^2(\R)\big)\,\cap\, \mathcal{M}$ and $(u^\e_0,v^\e_0) \in\,\big(\dot{H}^3(\R)\times H^2(\R)\big)\,\cap\, \mathcal{M}$, with
\begin{equation}\label{initial_condition_rate_additional-intro}
\abs{u_{0}^{\epsilon}-u_{0}}_{L^2(\R)}=o(\epsilon^{1-H/2}),\ \ \ \ \ \ \ \ \ 		\big\lvert (u_{0}^{\epsilon}-u_0,\sqrt{\epsilon}v_{0}^{\epsilon} ) \big\rvert_{H^{1}(\R)\times L^2(\mathbb{R})} =O(\epsilon^{\beta}), \ \ \ \ 0<\epsilon\ll1,
	\end{equation}
	for some $\beta>0$, we show that 
\begin{equation}
		\E\sup_{t\in[0,T]}\Big(\abs{u_{\epsilon}(t)-u(t)}_{H^{1}}^{2}+\epsilon\abs{\partial_{t}u_{\epsilon}(t)-\partial_{t}u(t)}_{L^2(\mathbb{R})}^{2}\Big) +  \E\int_{0}^{T}\abs{\partial_{t}u_{\epsilon}(t)-\partial_{t}u(t)}_{L^2(\mathbb{R})}^{2}dt \lesssim_{\,T} \mu(\R) +\epsilon^{1\wedge 2\beta}.
	\end{equation}
In particular, in the deterministic case, that is when $\mu(\R)=0$, we get the convergence of $(u_\e-u, \partial_t u_\e-\partial_t u)$ to $0$ in $L^\infty(0,T;H^1(\R))\times L^2(0,T;L^2(\R))$. This problem has been already addressed in  \cite{zarnescu}, in the  more delicate situation of space dimension $d>2$. In that case, in order to get the same convergence, which is clearly only local in time, the authors expanded $u_\e(t,x)$ as
\[u_\e(t,x)=u(t,x)+u_0^I(t/\e,x)+\sqrt{\e}\,u^\e_R(t,x),\] where $u_0^I$ is a suitable boundary layer which is given explicitly and $u_R^\epsilon$ is the solution of the following damped wave equation
\[\epsilon\,\partial^2_t u^\e_R(t,x)=\partial_x^2 u^\e_R(t,x)-\partial_t u^\e_R(t,x)+S(u_R^\e)(t,x)+R(u_R^\e)(t,x),\]
for some singular term $S(u_R^\e)$ and regular term $R(u_R^\e)$. However,  the analysis in \cite{zarnescu} is restricted  to the case $d>2$ and initial conditions $(u_0,v_0) \in\,\dot{H}^7(\R)\times H^6(\R)$.

One last fact that we would like to mention concerns with the heat-flow equation \eqref{limiting_equation-intro}.
As a consequence of our law of large numbers, we obtain that for every $u_0 \in\,\dot{H}^1(\R) \cap M$ there exists a solution for  equation \eqref{limiting_equation-intro}
in $L^{\infty}(0,T; \dot{H}^{1}(\R))\cap L^{2}(0,T; \dot{H}^{2}(\R))$. To the best of our knowledge, this result is new. All existing results we found in the literature concern either well-posedness for equations defined on compact manifolds or equations posed on the entire real line, but none provide square integrability of 
$u$ and its derivatives. Actually, starting from its well-posedness  in $L^{\infty}(0,T; \dot{H}^{1}(\R))\cap L^{2}(0,T; \dot{H}^{2}(\R))$, in Appendix \eqref{AppB}, we will study the well-posedness of equation \eqref{limiting_equation-intro}
in $L^{\infty}(0,T; \dot{H}^{k}(\R))\cap L^{2}(0,T; \dot{H}^{k+1}(\R))$, for every $k\geq 1$.

\section{Notations, assumptions and preliminary results}
\label{notations}

Throughout the present paper, we deal with the equation
\begin{equation}\label{SPDE}
	\le\{\begin{array}{l}
	\ds{\partial_{t}^{2}u(t,x)+\abs{\partial_{t}u(t,x)}^{2}u(t,x)=\partial_x^2 u(t,x)+\abs{\partial_x u(t,x)}^{2}u(t,x)-\gamma\partial_{t}u(t,x)}\\[10pt]
	\ds{\quad \quad \quad \quad \quad \quad \quad \quad \quad \quad \quad \quad \quad \quad  +\big(u(t,x)\times\partial_{t}u(t,x)\big)\circ\partial_{t}w(t,x), }\\
	[10pt]
	\ds{u(0,x)=u_0(x),\ \ \ \ \partial_{t}u(0,x)=v_0(x) },
	\end{array}\r.
\end{equation}
for $(t,x)\in \R^{+}\times \R$, taking values in the . Here $\gamma$ is some positive constant and the initial conditions $(u_0,v_0)$ are taken in $\mathcal{M}$, where 
\begin{equation}
	\mathcal{M} := \big\{ (u,v): \R\mapsto T\mathbb{S}^{2} \big\},\ \ \ \ \ T\mathbb{S}^{2}:=\left\{(h,k)\in\,\mathbb{R}^3\times \mathbb{R}^3\,:\,|h|=1,\ h\cdot k=0\right\}.
\end{equation}
Moreover, $w(t),\ t\geq0,$ is a spatially homogeneous Wiener process defined on a complete filtered probability space $\big(\Omega, \F, (\F_{t})_{t\geq 0}, \P \big)$. This means that if $\mathcal{S}(\mathbb{R})$ denotes the Schwartz space of rapidly decreasing functions on $\mathbb{R}$ and   $\mathcal{S}'(\mathbb{R})$ is the dual space of tempered distributions, with the duality denoted by $(\cdot,\cdot)$, then  $w(t)$, $t\geq 0$,  is an $\mathcal{S}'(\mathbb{R})$-valued Wiener process satisfying the following conditions.
\begin{enumerate}
	\item[i.] For each $\psi\in \mathcal{S}(\mathbb{R})$, $(w(t),\psi)$, $t\geq 0$, is a real-valued $\{\F_{t}\}$-adapted Wiener process.
	
	\item[ii.] There exists  a positive-definite $\Gamma \in \mathcal{S}'(\mathbb{R})$ such that for all $\psi$, $\varphi\in \mathcal{S}(\mathbb{R})$ and $s, t\geq 0$ 
	\begin{equation}
		\E\,(w(s),\varphi)\,(w(t),\psi)=(s\wedge t)\,(\Gamma,\varphi\ast\psi_{(s)} ),
	\end{equation}
	where $\psi_{(s)}(x)=\psi(-x)$.
\end{enumerate} 
Since $\Gamma$ is a positive-define distribution, there exists a positive and symmetric tempered measure $\mu$ on $\mathbb{R}$ such that $\Gamma = \mathcal{F}({\mu})$, where $\F$ denotes the Fourier transform. As  shown e.g. in  \cite{PZ},  the reproducing kernel  of $w(t)$, denoted by $K$, is characterized as 
\begin{equation}
	K = \big\{ \mathcal{F}({\psi \mu}) \in\,\mathcal{S}^\prime(\mathbb{R})\ :\  \psi\in L^{2}_{(s)}(\mathbb{R},\mu)   \big\},\end{equation}
	where 
\begin{equation*}
	L^{2}_{(s)}(\mathbb{R},\mu):=\big\{ \psi\in L^{2}(\mathbb{R},d\mu;\mathbb{C})\ : \ \overline{\psi(x)}=\psi(-x),\ \ x\in\mathbb{R}  \big\}.
\end{equation*}
Note that $K$ is a Hilbert space, endowed with the scalar product 
\begin{equation}
	\Inner{\mathcal{F}({\psi_{1}\mu}),\mathcal{F}({\psi_{2}\mu})  }_{K}=\Inner{\psi_{1},\psi_{2}}_{L^{2}(\mathbb{R},d\mu;\mathbb{C})},
\end{equation}
(for a proof, see e.g. \cite{PZ}). 
In particular, if $\{e_{k}\}_{k\in\mathbb{N}}\subset \mathcal{S}_{(s)}(\mathbb{R})$ is an orthonormal basis of $L^{2}_{(s)}(\mathbb{R},\mu)$, and 
\begin{equation}
\label{wm1}	
\xi_{k}:=\mathcal{F}({e_{k}\mu}),\ \ \ \ \ \ k \in\,\mathbb{N},
\end{equation}
then $\{\xi_k\}_{k \in\,\mathbb{N}}$ is an orthonormal basis of $K$.

 Throughout this paper, we shall assume that $\mu(\mathbb{R})<\infty$, so that $\Gamma$ is a bounded uniformly continuous function, and $K$ is contained in the space of real-valued  bounded and uniformly continuous functions on $\mathbb{R}$. In this case, the process $w(t)$ can be represented as a centered $\{\F_{t}\}_{t\geq0}$-adapted Gaussian random field $\{w(t,x)\ :\ t\geq 0, x\in\mathbb{R}\}$ with\begin{equation*}
	(w(t),\psi)= \int_{\mathbb{R}}w(t,x)\psi(x)dx,\ \ \ \ \psi\in\mathcal{S}(\mathbb{R}),
\end{equation*} 
and 
\begin{equation}
	\E\,(w(t,x)w(s,y))=(t\wedge s)\,\Gamma(x-y),\ \ \ \ t,s\geq0,\ \ \ \ x,y\in\mathbb{R}.
\end{equation}
In what follows, we shall assume that $\mu$ satisfies these further assumptions.
\begin{Hypothesis}
\label{H1}
The measure $\mu$ is absolutely continuous with respect to the Lebesgue measure with  density $m=d\mu/dx$. Moreover
\begin{equation}\label{fourth_moment}
	\int_{\mathbb{R}}(1+\abs{x})^{2}d\mu(x)=\int_{\mathbb{R}}(1+\abs{x})^{2}m(x)dx<\infty.
\end{equation}	
\end{Hypothesis}

\subsection{Parabolic rescaling}

If for every $\e>0$ we define 
\begin{equation*}
	u_{\epsilon}(t,x):=u(t/\epsilon,x/\sqrt{\epsilon}),\ \ \ \ \ \ \ (t,x) \in\,\mathbb{R}^+\times \mathbb{R},
\end{equation*}
then it is immediate to check that $u_{\eps}$ satisfies the following equation
\begin{equation}\label{SPDE1}
	\le\{\begin{array}{l}
		\ds{\epsilon\,\partial_{t}^{2}u_{\epsilon}(t,x)+\epsilon\abs{\partial_{t}u_{\epsilon}(t,x)}^{2}u_{\epsilon}(t,x)=\partial_x^{\,2} u_{\epsilon}(t,x)+\abs{\partial_x u_{\epsilon}(t,x)}^{2}u_{\epsilon}(t,x) -\gamma\,\partial_{t}u_{\eps}(t,x)  }\\[10pt]
		\ds{\quad\quad\quad\quad\quad\quad\quad\quad\quad\quad\quad\quad +\sqrt{\epsilon}\big(u_{\epsilon}(t)\times\partial_{t}u_{\epsilon}(t)\big)\circ\partial_{t}w^{\epsilon}(t,x), }\\
		[10pt]
		\ds{u_{\epsilon}(0,x)=u_{0}^{\epsilon}(x)\in \mathbb{S}^{2},\ \ \ \ \ \ \ \partial_{t}u_{\epsilon}(0,x)=v_{0}^{\epsilon}(x) },
	\end{array}\r.
\end{equation}
where for this rescaled system  the initial conditions $(u^\e_0,v^\e_0)$ depend on $\eps$. 
Throughout the paper, we assume that $(u^\e_0,v^\e_0)$ satisfy the following assumption.
\begin{Hypothesis}
	\label{H1-bis} For every $\e \in\,(0,1)$, we have $(u_0^{\epsilon},v_0^{\epsilon})\in \big(\dot{H}^{2}(\mathbb{R})\times H^{1}(\mathbb{R})   \big)\cap \mathcal{M}$, and the following conditions hold
\begin{equation}
	\label{initial_condition}
	\Lambda_{1}:=\sup_{\epsilon\in(0,1)} \big\lvert (u_{0}^{\epsilon},\sqrt{\epsilon}v_{0}^{\epsilon})\big\rvert_{\dot{H}^{1}(\R)\times L^{2}(\R)}<\infty,\ \ \ \ \Lambda_{2}:=\sup_{\epsilon\in(0,1)}\sqrt{\epsilon}\big\lvert (u_{0}^{\epsilon},\sqrt{\epsilon}v_{0}^{\epsilon})\big\rvert_{\dot{H}^{2}(\R)\times H^{1}(\R)}<\infty.
\end{equation}

\end{Hypothesis}
Note that, as we mentioned already in the Introduction,  throughout the whole paper we will denote $L^2(\mathbb{R};\mathbb{R}^3)$, $\dot{H}^k(\mathbb{R};\mathbb{R}^3)$ and $H^k(\mathbb{R};\mathbb{R}^3)$, $k\geq 1$, by $L^2(\mathbb{R})$, $\dot{H}^k(\mathbb{R})$ and $H^k(\mathbb{R})$, respectively.

As for the noise, for every $\epsilon >0$ we have that $w^{\epsilon}(t)$ is a spatially homogeneous Wiener process with  covariance 
\begin{equation}\label{sm150}
	\E\big[w^{\epsilon}(t,x)w^{\epsilon}(s,y)\big] =  (t\wedge s)\Gamma\big((x-y)/\sqrt{\epsilon}\big)=:(t\wedge s)\Gamma^{\epsilon}(x-y),\ \ \ \ t,s\geq0,\ \ \ \ x,y\in\mathbb{R}.
\end{equation}
If we define 
\[m^{\epsilon}(x):=\sqrt{\epsilon}m(\sqrt{\epsilon}x),\ \ \ \ \ \  d\mu^{\epsilon}(x):=m^{\epsilon}(x)dx,\ \ \ \ x\in\mathbb{R},\]  it can be immediately verified that $\Gamma^{\epsilon}=\mathcal{F}({\mu^{\epsilon}})$, which means that $\mu^{\epsilon}$ is the spectral measure of $w^{\epsilon}(t)$. The reproducing kernel Hilbert space of $w^{\epsilon}(t)$, denoted by $K_{\epsilon}$, can be represented as 
\begin{equation}
	K_{\epsilon} = \big\{ \mathcal{F}({\psi \mu^{\eps}}): \psi\in L^{2}_{(s)}(\mathbb{R},\mu^{\epsilon})   \big\}.
\end{equation}

\begin{Lemma}
	For every $\eps>0$, the family of real-valued $C^{1}$ functions 
	\begin{equation*}
		\xi_{k}^{\eps}(x):=\xi_{k}\big(x/\sqrt{\eps}\big),\ \ \ \ x\in\mathbb{R},\ \ \ \ k\in\mathbb{N},
	\end{equation*}
	forms a complete orthonormal system for $K_{\eps}$. Moreover, for every $\eps>0$ and $x \in\,\mathbb{R}$ we have 
	\begin{equation}
		\sum_{k=1}^{\infty}\abs{\xi_{k}^{\eps}(x) }^{2} = \mu(\mathbb{R})=:c_{0},\ \ \ \ \ \ \ \ \sum_{k=1}^{\infty}\big\lvert (\xi^{\eps}_{k})^\prime(x) \big\rvert^{2} =\frac 1{\epsilon}\,\int_{\mathbb{R}}\abs{z}^{2}d\mu(z)=: \frac{c_{1}}{\eps}.
	\end{equation}
	
\end{Lemma}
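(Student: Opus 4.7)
The plan is to first identify the functions $\xi_k^\eps$ as Fourier transforms against $\mu^\eps$ of a suitable rescaling of $e_k$, and then to deduce the two summation identities from the reproducing-kernel property of $K_\eps$.

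\textbf{Step 1: Identifying $\xi_k^\eps$.}  By definition, $\xi_k(y)=\mathcal F(e_k\mu)(y)=\int_{\mathbb R}e^{-iyz}e_k(z)m(z)\,dz$.  I would substitute $y=x/\sqrt\eps$ and change variables $z=\sqrt\eps\,z'$ in the integral; since $m^\eps(z')=\sqrt\eps\,m(\sqrt\eps z')$, this immediately yields
\begin{equation*}
\xi_k^\eps(x)=\xi_k(x/\sqrt\eps)=\int_{\mathbb R}e^{-ixz'}\,e_k(\sqrt\eps z')\,m^\eps(z')\,dz'=\mathcal F(e_k^\eps\mu^\eps)(x),
\end{equation*}
where $e_k^\eps(z):=e_k(\sqrt\eps z)$.

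\textbf{Step 2: Orthonormal basis.}  The symmetry $\overline{e_k^\eps(z)}=e_k(-\sqrt\eps z)=e_k^\eps(-z)$ shows $e_k^\eps\in L^2_{(s)}(\mathbb R,\mu^\eps)$.  The dilation map $T_\eps\colon L^2_{(s)}(\mathbb R,\mu^\eps)\to L^2_{(s)}(\mathbb R,\mu)$, $(T_\eps f)(y):=f(y/\sqrt\eps)$, is a unitary bijection (direct change of variables), and $T_\eps e_k^\eps=e_k$.  Since $\{e_k\}$ is an ONB of $L^2_{(s)}(\mathbb R,\mu)$, the family $\{e_k^\eps\}$ is therefore an ONB of $L^2_{(s)}(\mathbb R,\mu^\eps)$.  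The definition of $K_\eps$ together with the stated isometry $\mathcal F(\cdot\,\mu^\eps)\colon L^2_{(s)}(\mu^\eps)\to K_\eps$ transfers this into the statement that $\{\xi_k^\eps\}_{k\in\mathbb N}$ is a complete orthonormal system in $K_\eps$.

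\textbf{Step 3: The two summation identities.}  I would use that $\Gamma^\eps$ is the reproducing kernel of $K_\eps$: for every $f=\mathcal F(\psi\mu^\eps)\in K_\eps$ one has $\langle f,\Gamma^\eps(\cdot-x)\rangle_{K_\eps}=\langle\psi,e^{ix\cdot}\rangle_{L^2(\mu^\eps)}=f(x)$, since $e^{ix\cdot}\in L^2_{(s)}(\mu^\eps)$.  Expanding the reproducing kernel in the basis $\{\xi_k^\eps\}$ (the $\xi_k^\eps$ are real-valued because $e_k^\eps\mu^\eps$ is Hermitian symmetric), one obtains the pointwise Parseval identity
\begin{equation*}
\Gamma^\eps(y-x)=\sum_{k=1}^\infty \xi_k^\eps(x)\,\xi_k^\eps(y),\qquad x,y\in\mathbb R.
\end{equation*}
Setting $x=y$ and computing $\Gamma^\eps(0)=\mu^\eps(\mathbb R)=\mu(\mathbb R)=c_0$ gives the first identity.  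For the second, I would differentiate once in $x$ and once in $y$ and then set $x=y$, yielding
\begin{equation*}
\sum_{k=1}^\infty |(\xi_k^\eps)'(x)|^2=-(\Gamma^\eps)''(0)=\int_{\mathbb R}z^2\,d\mu^\eps(z)=\frac1\eps\int_{\mathbb R}|y|^2\,d\mu(y)=\frac{c_1}{\eps},
\end{equation*}
the last equality being a direct change of variables.

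\textbf{Expected main obstacle.}  The only non-formal point is justifying the termwise differentiation of the Parseval identity.  I expect Hypothesis \ref{H1} (finiteness of $\int(1+|x|)^2\,d\mu$) to be exactly what is needed: it makes $\Gamma^\eps\in C^2(\mathbb R)$ and, via the uniform-in-$k$ estimates on $(\xi_k^\eps)'$ obtained from the Fourier representation together with Bessel's inequality applied to the vectors $ze^{-ix\cdot}\in L^2_{(s)}(\mu^\eps)$, permits exchanging sum and derivative (alternatively, one can derive the derivative identity directly from Parseval applied to the function $z\mapsto -iz\,e^{-ixz}$, thereby bypassing differentiation of the series altogether).
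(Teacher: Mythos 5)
Your proposal is correct. Steps 1 and 2 coincide with the paper's argument: the paper likewise verifies that $e_k^\eps(x)=e_k(\sqrt{\eps}x)$ is an orthonormal basis of $L^2_{(s)}(\mathbb{R},\mu^\eps)$ and computes $\mathcal F(e_k^\eps\mu^\eps)(x)=\xi_k(x/\sqrt{\eps})$ by the same change of variables. The only divergence is in Step 3. The paper does not go through the reproducing-kernel expansion of $\Gamma^\eps$ at all: it applies Parseval's identity in $L^2(\mathbb{R},d\mu;\mathbb{C})$ directly to the two functions $z\mapsto e^{izx/\sqrt{\eps}}$ and $z\mapsto ize^{izx/\sqrt{\eps}}$ (the latter being legitimate since $\int|z|^2\,d\mu<\infty$ by Hypothesis \ref{H1}), obtaining $\sum_k|\xi_k^\eps(x)|^2=\mu(\mathbb{R})$ and $\sum_k|(\xi_k^\eps)'(x)|^2=\eps^{-1}\int|z|^2\,d\mu(z)$ in one line each, with no series to differentiate. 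Your primary route via $\Gamma^\eps(y-x)=\sum_k\xi_k^\eps(x)\xi_k^\eps(y)$ followed by $\partial_x\partial_y$ at $x=y$ reaches the same identities but incurs the extra burden of justifying termwise differentiation, which you correctly flag as the delicate point; the parenthetical alternative you mention at the end — Parseval applied to $z\mapsto -iz\,e^{-ixz}$ — is precisely the paper's argument and sidesteps that issue entirely. So both approaches work; the paper's is the more economical of the two, and you have effectively included it as your fallback.
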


\begin{proof}
Since the family  $e_{k}^{\epsilon}(x):=e_{k}(\sqrt{\epsilon}x)$, $x\in\R$, $k\in\mathbb{N}$, forms a complete orthonormal system of $L^{2}_{(s)}(\mathbb{R},\mu^{\epsilon})$, we have $\mathcal{F}({e_{k}^{\epsilon}\mu^{\epsilon} })$, $k \in\,\mathbb{N}$, forms a complete orthonormal basis for $K_{\epsilon}$, and
\begin{equation}
	\mathcal{F}({e_{k}^{\epsilon}\mu^{\epsilon} })(x)=\int_{\mathbb{R}}e^{- i xz}e_{k}(\sqrt{\epsilon}z)\sqrt{\epsilon}m(\sqrt{\epsilon}z)dz = \mathcal{F}({e_{k}\mu})\big(x/\sqrt{\eps}\big)=\xi_{k}\big(x/\sqrt{\eps}\big),\ \ \ \ x\in\mathbb{R},\ \ \ \ k\in\mathbb{N}.
\end{equation}
For any $\eps>0$, we have
\begin{equation}\label{identity1}
	\begin{array}{ll}
		\ds{ \sum_{k=1}^{\infty}\abs{\xi_{k}^{\epsilon}(x)}^{2} =\sum_{k=1}^{\infty}\abs{\xi_{k}(x/\sqrt{\epsilon})}^{2} = \sum_{k=1}^{\infty}\Big\lvert \int_{\mathbb{R}}e^{-i z\frac{x}{\sqrt{\epsilon}}}e_{k}(z)m(z)dz \Big\rvert^{2}  =\int_{\mathbb{R}}m(z)dz=\mu(\mathbb{R})  }.
	\end{array}
\end{equation}
Moreover,  each $\xi_{k}^{\eps}$ is continuously differentiable, and for any $\epsilon>0$
\begin{equation}\label{identity2}
	\begin{array}{ll}
		&\ds{ \sum_{k=1}^{\infty}\big\lvert(\xi_{k}^{\epsilon})'(x)\big\rvert^{2} = \sum_{k=1}^{\infty}\Big\lvert -\frac{i}{\sqrt{\epsilon}} \int_{\mathbb{R}}z e^{ -i z\frac{x}{\sqrt{\epsilon}}}e_{k}(z)m(z)dz \Big\rvert^{2}  }\\
		\vs
		&\ds{\quad\quad\quad\quad\quad=\frac{1}{\epsilon}\sum_{k=1}^{\infty}\Big\lvert\Inner{ i\cdot e^{ i  \frac{x}{\sqrt{\epsilon}}\cdot}, e_{k} }_{L^{2}(\mathbb{R},d\mu;\mathbb{C})} \Big\rvert^{2} =\frac{1}{\epsilon}\int_{\mathbb{R}}\abs{z}^{2}m(z)dz  }.
	\end{array}
\end{equation}

\end{proof}

In what follows, we shall denote by $H^{s}_{\lambda}(\R)$, $s\geq0$,  the completion of $\mathcal{S}(\R)$ with respect to the norm
\begin{equation}
	\vert u\vert_{H^{s}_{\lambda}(\R)}:=\Big(\int_{\R}\big(1+\abs{x}^{2}\big)^{s}\big\lvert\F(\lambda \,u)(x)\big\rvert^{2}dx\Big)^{\frac{1}{2}},
\end{equation}
where  $\lambda(x)=e^{-x^{2}/2}$, $x\in\R$.

\begin{Lemma}
	For every $s\in[0,1]$, there exists some constant $c=c(s)>0$ such that 
	\begin{equation}\label{hilbert_schmidt}
		\sup_{\eps\in (0,1)}\eps^s\,\sum_{k=1}^{\infty}\vert \xi_{k}^{\epsilon}\vert _{H^{s}_{\lambda}(\mathbb{R})}^{2}<\infty.
			\end{equation}
\end{Lemma}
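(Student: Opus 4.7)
The plan is to prove the bound at the endpoints $s=0$ and $s=1$, and then interpolate by Hölder's inequality applied to the weight $(1+|x|^2)^s$.

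First, by Plancherel, $|\xi_{k}^{\epsilon}|^{2}_{H^{0}_{\lambda}(\mathbb{R})}$ is (up to a universal constant from the Fourier convention) equal to $|\lambda\,\xi_{k}^{\epsilon}|^{2}_{L^{2}(\mathbb{R})}$. Summing in $k$ and exchanging the sum with the integral (which is legitimate because the integrand is non-negative), I can apply the pointwise identity $\sum_{k}|\xi_{k}^{\epsilon}(x)|^{2}=c_{0}$ from the previous lemma to obtain
\begin{equation}
\sum_{k=1}^{\infty}|\xi_{k}^{\epsilon}|^{2}_{H^{0}_{\lambda}(\mathbb{R})}\;\lesssim\;c_{0}\int_{\mathbb{R}}\lambda(x)^{2}\,dx,
\end{equation}
which is a finite constant independent of $\epsilon$.

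For the case $s=1$, I use that $(1+|x|^{2})|\mathcal{F}(\lambda\xi_{k}^{\epsilon})(x)|^{2}$ integrated against $dx$ equals, up to constants, $|\lambda\xi_{k}^{\epsilon}|^{2}_{L^{2}}+|(\lambda\xi_{k}^{\epsilon})'|^{2}_{L^{2}}$. Expanding $(\lambda\xi_{k}^{\epsilon})'=\lambda'\xi_{k}^{\epsilon}+\lambda(\xi_{k}^{\epsilon})'$, summing in $k$, and exchanging sum and integral, I get three contributions. The key observation is that $\xi_{k}=\mathcal{F}(e_{k}\mu)$ is real-valued: the assumption $\overline{e_{k}(x)}=e_{k}(-x)$ together with the symmetry of $\mu$ gives $\overline{\xi_{k}}=\xi_{k}$. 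Consequently, the cross term satisfies
\begin{equation}
\sum_{k=1}^{\infty}\xi_{k}^{\epsilon}(x)\,(\xi_{k}^{\epsilon})'(x)=\frac{1}{2}\frac{d}{dx}\sum_{k=1}^{\infty}|\xi_{k}^{\epsilon}(x)|^{2}=\frac{1}{2}\frac{d}{dx}c_{0}=0.
\end{equation}
Using this, together with the two pointwise identities from the previous lemma, I find
\begin{equation}
\sum_{k=1}^{\infty}|\xi_{k}^{\epsilon}|^{2}_{H^{1}_{\lambda}(\mathbb{R})}\;\lesssim\;c_{0}\int_{\mathbb{R}}\bigl(\lambda^{2}+(\lambda')^{2}\bigr)dx+\frac{c_{1}}{\epsilon}\int_{\mathbb{R}}\lambda^{2}\,dx\;\lesssim\;\frac{1}{\epsilon},
\end{equation}
uniformly in $\epsilon\in(0,1)$.

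For the interpolation at a general $s\in[0,1]$, I denote $g^{\epsilon}(x):=\sum_{k}|\mathcal{F}(\lambda\xi_{k}^{\epsilon})(x)|^{2}$ and apply Hölder's inequality with exponents $1/s$ and $1/(1-s)$ to the factorization $(1+|x|^{2})^{s}g^{\epsilon}=\bigl[(1+|x|^{2})g^{\epsilon}\bigr]^{s}\bigl[g^{\epsilon}\bigr]^{1-s}$, obtaining
\begin{equation}
\sum_{k=1}^{\infty}|\xi_{k}^{\epsilon}|^{2}_{H^{s}_{\lambda}(\mathbb{R})}\;\leq\;\Big(\sum_{k=1}^{\infty}|\xi_{k}^{\epsilon}|^{2}_{H^{1}_{\lambda}(\mathbb{R})}\Big)^{s}\Big(\sum_{k=1}^{\infty}|\xi_{k}^{\epsilon}|^{2}_{H^{0}_{\lambda}(\mathbb{R})}\Big)^{1-s}\;\lesssim\;\epsilon^{-s}.
\end{equation}
Multiplying by $\epsilon^{s}$ yields \eqref{hilbert_schmidt}. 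I expect the only non-obvious step to be the vanishing of the cross term, which hinges on the reality of $\xi_{k}$ (itself a consequence of the symmetry conditions on $e_{k}$ and $\mu$); once that is observed, the rest is interpolation and direct application of the pointwise sum identities.
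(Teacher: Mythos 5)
Your proof is correct, but it follows a genuinely different route from the paper's. The paper proves the bound for all $s\in[0,1]$ in one computation: it writes $\mathcal{F}(\lambda\,\xi_k^{\eps})(x)$ as an inner product against $e_k$ in $L^2(\mathbb{R},\mu)$, applies Parseval to collapse the sum over $k$ into $\int_{\mathbb{R}}|\mathcal{F}\lambda(x+y/\sqrt{\eps})|^2\mu(dy)$, translates in $x$, and then uses the Peetre-type inequality $(1+|x-y/\sqrt{\eps}|^2)^s\lesssim \eps^{-s}(1+|x|^2)^s(1+|y|^2)^s$ together with the second-moment assumption on $\mu$. You instead establish the two endpoints $s=0$ and $s=1$ from the pointwise trace identities of the preceding lemma and interpolate by H\"older; both arguments rest on the same hypothesis ($\int(1+|y|^2)^s d\mu<\infty$, entering for you through $c_1$), and your interpolation step is a clean and correct way to handle intermediate $s$. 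The one place where your argument is slightly less self-contained is the vanishing of the cross term $\sum_k\xi_k^{\eps}(\xi_k^{\eps})'$: differentiating the constant series term by term requires justification (e.g.\ local uniform convergence of the differentiated series, or more directly a Parseval computation giving $\sum_k\xi_k^{\eps}(x)(\xi_k^{\eps})'(x)=-\tfrac{i}{\sqrt{\eps}}\int z\,d\mu(z)=0$ by the symmetry of $\mu$); note the paper itself asserts and uses exactly this identity later (in the $\dot H^2\times H^1$ estimates), with the same informal justification, so this is a minor point rather than a gap. A small side benefit of the paper's approach is that it produces the explicit constant $|\lambda|^2_{H^s_1}\int(1+|y|^2)^s d\mu$ for every $s$, whereas yours gives the constant only at the endpoints and by interpolation in between.
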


\begin{proof}
For every $s\in[0,1]$ and $\eps>0$, we have
\begin{equation*}
	\begin{array}{ll}
		&\ds{\sum_{k=1}^{\infty}\vert \xi_{k}^{\epsilon}\vert _{H^{s}_{\lambda}(\mathbb{R})}^{2} = \sum_{k=1}^{\infty}\int_{\mathbb{R}}\big(1+\abs{x}^{2}\big)^{s}\big\lvert \F(\lambda\,\xi_{k}^{\eps})(x)\big\rvert^{2}dx }\\
		\vs
		&\ds{\quad =\sum_{k=1}^{\infty}\int_{\mathbb{R}}\big(1+\abs{x}^{2}\big)^{s}\Big\lvert \int_{\mathbb{R}}e^{- i xz}\Big(\lambda(z)\int_{\mathbb{R}}e^{- iy\frac{z}{\sqrt{\eps}}}e_{k}(y)\mu(dy)\Big) dz \Big\rvert^{2} dx    }\\
		\vs
		&\ds{\quad \quad = \int_{\mathbb{R}}\big(1+\abs{x}^{2}\big)^{s}\sum_{k=1}^{\infty}\Big\lvert \int_{\mathbb{R}}\Big(\int_{\mathbb{R}}e^{- iz\big(x+\frac{y}{\sqrt{\eps}}\big)}\lambda(z)dz\Big)e_{k}(y)\mu(dy)  \Big\rvert^{2}dx  }\\
		\vs
		&\ds{\quad \quad \quad =\int_{\mathbb{R}}\big(1+\abs{x}^{2}\big)^{s}\int_{\mathbb{R}}\Big\lvert\int_{\mathbb{R}}e^{ iz\big(x+\frac{y}{\sqrt{\eps}}\big)}\lambda(z)dz\Big\rvert^{2}\mu(dy)dx }\\
		\vs
		&\ds{= \int_{\mathbb{R}^{2}}\big(1+\abs{x-y/\sqrt{\eps}}^{2}\big)^{s}\big\lvert\F(\lambda)(x)\big\rvert^{2}\mu(dy)dx\lesssim\frac{1}{\eps^{s}}\,\vert \lambda\vert _{H^{s}_{1}(\mathbb{R})}^{2}\int_{\mathbb{R}}\big(1+\abs{y}^{2}\big)^{s}\mu(dy)  }.
	\end{array}
\end{equation*}
\end{proof}

\begin{Remark}
{\em From \eqref{hilbert_schmidt}, we have that for any $\eps>0$, the embedding $K^{\eps}\subset H^{1}_{\lambda}(\mathbb{R})$ is Hilbert-Schmidt, which implies that the paths of $w^{\eps}$ belong to $C^{\delta}([0,T];H^{1}_{\lambda}(\mathbb{R}))$, for any $\delta\in [0,1/2)$. Moreover, for every $s\in [0,1]$ and $\delta\in [0,1/2)$, there exists some constant $c_\delta(s)>0$ such that 
\begin{equation}\label{uniform_est_noise}
\sup_{\eps\in(0,1)} \eps^{s}\,\E\vert w^{\eps}\vert _{C^{\delta}([0,T];H^{s}_{\lambda}(\mathbb{R}))}\leq c_\delta(s).
\end{equation}}
\end{Remark}

As a consequence of \eqref{uniform_est_noise}, we have that for any $s\in [0,1]$, the family  $\{\L(\eps^{s}w^{\eps}) \}_{\eps\in(0,1)}$ is tight in $C([0,T];U)$, whenever the space $H^{s}_{\lambda}(\mathbb{R})$ is compactly embedded in $U$.
		In particular, for any $s\in[0,1]$, $\delta\in[0,1/2)$ and $s'>s$, the family  
		\begin{equation}\label{sm130}\{\L(\eps^{s'}w^{\eps}) \}_{\eps\in(0,1)} \ \ \text{is tight in }\ \ C^{\delta}([0,T];H^{s}_{\lambda}(\mathbb{R})).\end{equation}

\subsection{Well-posedness of the second-order system \eqref{SPDE1}}

For every $\eps>0$ and $(u,v)\in\mathcal{M}$, we have
\begin{equation}
	\text{tr}_{K_{\eps}}\big(u\times (u\times v)\big)= \big(-\abs{u}^{2}v+(u\cdot v)u \big)\sum_{k=1}^{\infty}(\xi_{k}^{\eps})^{2} = -c_0\,v,
\end{equation}
and system \eqref{SPDE1} can be rewritten as 
\begin{equation}\label{SPDE3}
	\le\{\begin{array}{l}
		\ds{\eps\partial_{t}^{2}u_{\eps}(t,x)+\eps\abs{\partial_{t}u_{\eps}(t,x)}^{2}u_{\eps}(t,x)=\partial_x^{\,2} u_{\eps}(t,x)+\abs{\partial_x u_{\eps}(t,x)}^{2}u_{\eps}(t,x)-\gamma_0\,\partial_{t}u_{\eps}(t,x)}\\
		[10pt]
\ds{\quad\quad\quad\quad\quad\quad\quad\quad\quad\quad\quad\quad\quad\quad \quad\quad\quad\quad +\sqrt{\eps}\big(u_{\eps}(t)\times\partial_{t}u_{\eps}(t)\big)\partial_{t}w^{\eps}(t,x), }\\
		[10pt]
		\ds{u_{\eps}(0,x)=u^\e_0(x),\ \ \ \ \partial_{t}u_{\eps}(0,x)=v^\e_0(x) },
	\end{array}\r.
\end{equation}
where we have defined
\begin{equation}
\label{gamma0}
\gamma_0:=\gamma+\frac{c_0}2.	
\end{equation}


According to \cite[Theorem 11.1]{brz2007} (see also \cite[Theorem B.1]{brzgoldys2022}),  the following result holds.
\begin{Theorem}\label{system_wellposedness_loc}
	Let $0<T<R$ be fixed. Then, for every $\e>0$ and every  
	\[(u^\e_0,v^\e_0)\in \left(H^{2}_{\text{loc}}(\mathbb{R})\times  H^{1}_{\text{loc}}(\mathbb{R})\right)\cap \mathcal{M},\] there exists a process $u_\e:[0,T)\times \mathbb{R}\times \Omega\to \mathbb{S}^{2}$ which satisfies the following properties.
	\begin{enumerate}
		\item[i.] The random variable $u_\epsilon(t,x,\cdot)$ is $\F_{t}$-measurable, for every $t<T$ and $x\in \mathbb{R}$.
		
		\item[ii.]  The mapping $[0,T)\ni t\mapsto u_\epsilon(t,\cdot,\omega)\in H^{2}((-R,R);\mathbb{R}^{3})$ is continuous, and the mapping $[0,T)\ni t\mapsto u_\epsilon(t,\cdot,\omega)\in H^{1}((-R,R);\mathbb{R}^{3})$ is continuously differentiable,  $\mathbb{P}$-almost surely.

		\item[iii.] The identities $u_\epsilon(0,x,\omega)=u^\e_0(x)$ and $\partial_{t}u_\epsilon(0,x,\omega)=v^\e_0(x)$ hold, for every $x\in\mathbb{R}$, $\P$-almost surely.
		
		\item[iv.] For every $t\geq 0$ and $R>0$, the process $u_\epsilon$ satisfies the equation 
		
		\begin{equation*}
			\begin{array}{ll}
				&\ds{ \partial_{t}u_\epsilon(t) = v^\e_{0}+\int_{0}^{t}\Big(\partial_x^{\,2} u_\epsilon(s)+\abs{\partial_x u_\epsilon(s)}^{2}u_\epsilon(s)-\abs{\partial_{t}u_\epsilon(s)}^{2}u(s)-\gamma_0\,\partial_{t}u_\epsilon(s) \Big)ds  }\\
				\vs
				&\ds{\quad\quad\quad\quad  \quad\quad\quad\quad \quad\quad+\int_{0}^{t}\big(u_\epsilon(s)\times \partial_{t}u_\epsilon(s)\big)\,dw(s)  },
			\end{array}
		\end{equation*}
		in $L^{2}((-R,R);\mathbb{R}^{3})$, $\P$-almost surely.
	\end{enumerate}
	Moreover, if there exists another process $\tilde{u}_\epsilon:[0,T)\times \mathbb{R}\times \Omega\to \mathbb{S}^{2}$  which satisfies the above properties, then $\tilde{u}_\epsilon(t,x,\omega)=u_\epsilon(t,x,\omega)$, for every $\abs{x}<R-t$ and $t\in[0,T)$, $\P$-almost surely.
\end{Theorem}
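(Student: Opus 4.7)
The plan is to present this theorem as a direct application of the local well-posedness machinery for constrained stochastic wave equations developed by Brzeźniak and collaborators in \cite{brz2007,brzgoldys2022}, after verifying that the specific drift and diffusion of \eqref{SPDE3} fit the abstract framework there. The key structural ingredients making this possible are that the quadratic terms $\eps|\partial_t u_\eps|^2 u_\eps$ and $|\partial_x u_\eps|^2 u_\eps$ are precisely the Lagrange multipliers enforcing the constraint $|u_\eps|=1$, that the cross-product noise $(u_\eps\times\partial_t u_\eps)\partial_t w^\eps$ is tangential along the fibres of $T\mathbb{S}^2$, and that the enhanced friction $\gamma_0=\gamma+c_0/2$ appearing in \eqref{SPDE3} is exactly the drift generated by the It\^o-Stratonovich conversion, thanks to the identity $\tfrac12\sum_k|\xi_k^\eps|^2=c_0/2$ from the preceding lemma.

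First, I would rewrite \eqref{SPDE3} as a first-order system for the pair $(u_\eps,\partial_t u_\eps)$ and truncate the two quadratic nonlinearities by a smooth cutoff at a large level $N$, rendering the coefficients globally Lipschitz. Localizing to a bounded interval $(-R,R)$ and working in $C([0,\tau];H^2((-R,R))\times H^1((-R,R)))$, a Banach contraction argument yields a unique local solution up to a positive random stopping time; the Hilbert-Schmidt character of the diffusion coefficient is guaranteed by the bounds $\sum_k|\xi_k^\eps|^2=c_0$ and $\sum_k|(\xi_k^\eps)^\prime|^2=c_1/\eps$ from the preceding lemma. Applying It\^o's formula to $|u_\eps|^2-1$ and to $u_\eps\cdot\partial_t u_\eps$ shows, via the precise interplay between the penalty nonlinearities, the tangential noise and the correction built into $\gamma_0$, that both quantities satisfy linear homogeneous equations with zero initial data, so the pair $(u_\eps,\partial_t u_\eps)$ stays in $\mathcal{M}$. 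Once the constraint is in force, $|u_\eps|\equiv 1$ deactivates the cutoff as far as $|u_\eps|$ itself is concerned, and the standard hyperbolic energy $\eps|\partial_t u_\eps|_{L^2}^2+|\partial_x u_\eps|_{L^2}^2$ combined with an It\^o energy inequality allows one to extend the local solution to the fixed time $T$ on $(-R,R)$.

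The main obstacle is the passage from bounded-domain solutions to a genuinely global-in-space solution, together with the sharp uniqueness inside the backward cone $\{|x|<R-t\}$ claimed in (iv). This relies on a finite-speed-of-propagation argument for the rescaled wave operator $\eps\partial_t^{\,2}-\partial_x^{\,2}$, whose characteristic speed is $1/\sqrt{\eps}$. For fixed $\eps>0$ the argument is qualitatively identical to the one in \cite{brz2007}, but it must be carried out with care: one tests the difference of two candidate solutions against a weight supported in a shrinking cone and exploits the correct sign of the boundary contributions on the characteristic surface, together with the already established energy estimate, to propagate agreement from the initial slice. Patching the resulting family of solutions indexed by $R\uparrow\infty$ then delivers a process $u_\eps$ satisfying properties (i)-(iv).
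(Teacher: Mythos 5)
The paper does not prove this statement at all: it is imported verbatim from \cite[Theorem 11.1]{brz2007} (see also \cite[Theorem B.1]{brzgoldys2022}), so your proposal is necessarily a different route --- a self-contained reconstruction of the argument rather than a citation. Your structural observations are correct and consistent with what the paper records in Section 2.2: the quadratic terms are the Lagrange multipliers for the constraint, the noise coefficient $u\times\partial_t u$ is tangential, and the It\^o--Stratonovich correction is exactly $-\tfrac{c_0}{2}\partial_t u_\eps$ because $\operatorname{tr}_{K_\eps}\big(u\times(u\times v)\big)=-c_0 v$, which is where $\gamma_0=\gamma+c_0/2$ comes from. The truncation-plus-contraction local theory, the propagation of the constraint via It\^o's formula applied to $\abs{u_\eps}^2-1$ and $u_\eps\cdot\partial_t u_\eps$, and the finite-speed-of-propagation uniqueness in the backward cone are indeed the skeleton of the proof in \cite{brz2007}.

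There is, however, one genuine gap in the continuation step. Your local solution is constructed in $C([0,\tau];H^2\times H^1)$, so to extend it to the deterministic time $T$ you must rule out blow-up of the $H^2\times H^1$ norm, and the first-order energy $\eps\abs{\partial_t u_\eps}_{L^2}^2+\abs{\partial_x u_\eps}_{L^2}^2$ cannot do this: it controls only $\abs{\partial_x u_\eps}_{L^2}$ and $\abs{\partial_t u_\eps}_{L^2}$, while the nonlinearity $\abs{\partial_x u_\eps}^2 u_\eps$ enters the second-order estimate through $\abs{\partial_x u_\eps}_{L^\infty}^2\lesssim \abs{\partial_x u_\eps}_{L^2}\abs{\partial_x^2 u_\eps}_{L^2}+\abs{\partial_x u_\eps}_{L^2}^2$, producing terms superlinear in the quantity being estimated. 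Closing the Gronwall loop requires a genuinely second-order localized energy inequality with an integrating factor of the type $Y_{\eps,\lambda}(t)=\exp\big(-\lambda\int_0^t(1+\abs{v_\eps(s)}_{L^2}^2)ds\big)$, which is precisely what the paper's estimate \eqref{uniform_bound2_loc} (and the corresponding argument in \cite{brz2007}) supplies; as written, your continuation argument does not reach that far. A second, more minor point: once you truncate the nonlinearities, the exact cancellation that makes $h=\abs{u_\eps}^2-1$ satisfy a closed homogeneous equation is perturbed by the cutoff, so you must either check that the cutoff multiplies the two penalty terms consistently (so the cancellation survives) or first establish the constraint on the maximal interval where the cutoff is inactive; this is standard but should be stated rather than assumed.
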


In fact, as we will prove at the end of Section \ref{energy2}, we can extend the result above  as stated below.

\begin{Theorem}\label{system_wellposedness}
	Let $T>0$ and $(u^\e_0,v^\e_0)\in \big(\dot{H}^{2}(\mathbb{R})\times H^{1}(\mathbb{R})   \big)\cap \mathcal{M}$ be fixed. Then, under Hypothesis \ref{H1},  there exists a unique global strong $\{\F_{t}\}$-adapted solution $u_\epsilon$ to \eqref{SPDE3} such that
	\begin{equation*}
		u_\epsilon\in L^2(\Omega;L^{\infty}(0,T;\dot{H}^{2}(\mathbb{R}))),\ \ \ \ \partial_{t}u_\epsilon\in L^2(\Omega;L^{\infty}(0,T;H^{1}(\mathbb{R}))).
	\end{equation*}
\end{Theorem}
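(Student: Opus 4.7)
The plan is to combine the local-in-space existence from Theorem \ref{system_wellposedness_loc} with a priori energy estimates (those that will be derived in Section \ref{energy2}) in order to produce a global solution with the stated regularity. More precisely, for fixed $\eps>0$ and initial data $(u_0^\eps,v_0^\eps)\in(\dot H^2(\R)\times H^1(\R))\cap \mathcal{M}$, and any ball $(-R,R)\subset \R$ with $R>T$, Theorem \ref{system_wellposedness_loc} gives a process $u_\eps^{(R)}$ on $\{|x|<R-t\}$ with paths in $C([0,T];H^2((-R,R)))\cap C^1([0,T];H^1((-R,R)))$ that solves \eqref{SPDE3} in $L^2((-R,R))$. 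By the uniqueness statement in that theorem, the local solutions $u_\eps^{(R)}$ and $u_\eps^{(R')}$ coincide on $\{|x|<(R\wedge R')-t\}$, so letting $R\uparrow\infty$ one obtains a well-defined $\F_t$-adapted process $u_\eps(t,x)$ solving \eqref{SPDE3} on all of $\R$, with $u_\eps\in \mathbb{S}^2$ and paths in $C([0,T];H^2_{\mathrm{loc}}(\R))\cap C^1([0,T];H^1_{\mathrm{loc}}(\R))$ a.s.

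To promote these local regularity properties to the global spaces $\dot H^2(\R)$ and $H^1(\R)$, I would apply It\^o's formula to the first-order energy
\[
E_1(t)=\tfrac{\eps}{2}\,|\partial_t u_\eps(t)|^2_{L^2(\R)}+\tfrac12\,|\partial_x u_\eps(t)|^2_{L^2(\R)},
\]
and then to the second-order energy
\[
E_2(t)=\tfrac{\eps}{2}\,|\partial_x\partial_t u_\eps(t)|^2_{L^2(\R)}+\tfrac12\,|\partial_x^2 u_\eps(t)|^2_{L^2(\R)}.
\]
Using $u_\eps\cdot\partial_t u_\eps=0$ and $|u_\eps|=1$, the nonlinear terms $|\partial_t u_\eps|^2 u_\eps$ and $|\partial_x u_\eps|^2 u_\eps$ in \eqref{SPDE3} are orthogonal to $\partial_t u_\eps$ in $L^2$, so the $E_1$-balance is clean. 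The stochastic correction coming from the Stratonovich-to-It\^o conversion has already been absorbed into $\gamma_0$, so the It\^o stochastic integrals have zero mean after taking expectations. For $E_2$, differentiating \eqref{SPDE3} in $x$ and testing against $\partial_x\partial_t u_\eps$ produces terms that I would control by Sobolev embeddings in dimension one, $|\partial_x u_\eps|_{L^\infty}\lesssim|\partial_x u_\eps|^{1/2}_{L^2}|\partial_x^2 u_\eps|^{1/2}_{L^2}$, interpolation, and the bound on the quadratic variation of the noise provided by the uniform estimate in the last Lemma (since $\eps\sum_k|(\xi_k^\eps)'|^2=c_1$). Combined with Burkholder--Davis--Gundy for the martingale part, this leads to a closed Gronwall-type inequality for $\E\sup_{t\le\tau_n}(E_1(t)+E_2(t))$ along a suitable localizing sequence of stopping times $\tau_n$.

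The closed energy inequality will in particular show that $\sup_n\tau_n=T$ a.s., so no blow-up occurs and the a priori bound
\[
\E\sup_{t\in[0,T]}\bigl(|u_\eps(t)|^2_{\dot H^2(\R)}+|\partial_t u_\eps(t)|^2_{H^1(\R)}\bigr)\le C_\eps\,\bigl(1+|(u_0^\eps,\sqrt\eps v_0^\eps)|^2_{\dot H^2\times H^1}\bigr)
\]
propagates from $t=0$ to the whole interval $[0,T]$. Global uniqueness then follows: any other adapted solution satisfying the stated regularity agrees with $u_\eps^{(R)}$ in the cone $\{|x|<R-t\}$ by the local uniqueness in Theorem \ref{system_wellposedness_loc}, and letting $R\uparrow\infty$ yields uniqueness on $\R$.

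The main obstacle in this programme is the second-order energy estimate for $E_2$: the quasilinear terms $|\partial_x u_\eps|^2 u_\eps$ and $|\partial_t u_\eps|^2 u_\eps$ produce cubic contributions after one spatial differentiation, and the stochastic term $\sqrt\eps\,(u_\eps\times\partial_t u_\eps)\partial_t w^\eps$ contributes, via It\^o's correction and spatial differentiation, a quadratic-variation term of order $\sum_k|(\xi_k^\eps)'|^2=c_1/\eps$ that cancels against the factor $\eps$ in $E_2$ only because of the geometric orthogonality $u_\eps\cdot\partial_t u_\eps=0$. Verifying these cancellations rigorously, together with the sign of the dissipative term $-\gamma_0\partial_t u_\eps$, is precisely what will be carried out in Section \ref{energy2}, and the proof of Theorem \ref{system_wellposedness} reduces to invoking those estimates and assembling them with the local-existence and gluing arguments described above.
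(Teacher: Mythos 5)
Your overall architecture (local existence from Theorem \ref{system_wellposedness_loc}, gluing via local uniqueness, energy estimates at the levels of $\dot H^1\times L^2$ and $\dot H^2\times H^1$, then a Gronwall closure) is the same as the paper's, but the step where you ``promote'' local to global regularity contains a genuine gap. You propose to apply It\^o's formula directly to the global energies $E_1(t)$ and $E_2(t)$ built from $L^2(\R)$ norms. This is circular: for $t>0$ the finiteness of $|\partial_x^2 u_\eps(t)|_{L^2(\R)}$ and $|\partial_x\partial_t u_\eps(t)|_{L^2(\R)}$ is exactly what the theorem asserts, and the gluing argument only delivers $H^2_{\mathrm{loc}}$ regularity, so the quantities $E_1,E_2$ are not known to be finite and the It\^o computation cannot be performed on them. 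If instead you run the computation on a fixed bounded interval $(-R,R)$, the integrations by parts in $\langle \partial_x^k v_\eps,\partial_x^{k+2}u_\eps\rangle$ produce boundary flux terms such as $(\partial_x^{k+1}u_\eps\cdot\partial_x^k v_\eps)\big|_{-R}^{R}$ that have no sign and cannot be absorbed, so the estimate does not close either. Your time-localizing stopping times $\tau_n$ do not help here, because the obstruction is spatial, not a possible blow-up in time of an already finite quantity.

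The paper's resolution, which is the key missing idea, is to apply It\^o's formula to the energies on the \emph{shrinking} intervals $I((R-t)/\sqrt{\eps})$, whose endpoints move inward at speed $1/\sqrt{\eps}$, i.e.\ at the wave speed of the rescaled equation $\eps\,\partial_t^2 u_\eps=\partial_x^2 u_\eps+\dots$. Differentiating the moving domain produces negative boundary contributions $-\tfrac{1}{\sqrt{\eps}}|\cdot|^2_{L^2(\partial I)}$ that exactly absorb the boundary fluxes coming from the integration by parts (see \eqref{ito1_step1}--\eqref{fine40}). This yields the bounds \eqref{uniform_bound1_loc} and \eqref{uniform_bound2_loc_particular} with constants \emph{uniform in $R$}, and the global statement then follows by letting $R\to\infty$ and invoking Fatou's lemma. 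A second, smaller point: the closure of the $\dot H^2\times H^1$ estimate is not a plain Gronwall inequality in expectation, since the Gronwall coefficient is $|v_\eps(s)|^2_{L^2}$; the paper handles this with the exponential weight $Y_{\eps,\lambda}(t)=\exp(-\lambda\int_0^t(1+|v_\eps(s)|^2_{L^2})\,ds)$, which is bounded below by a deterministic constant thanks to the pathwise identity \eqref{uniform_bound1_loc}. Your sketch would need to incorporate both devices to become a proof.
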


\subsection{Spatial fractional noise}
In the second part of this paper, when dealing with the study of the normal fluctuations of $u_\e$ around its deterministic  limit, we will assume a special structure for the noise. Below we give all the  definitions and preliminaries.  

We shall denote by $w^H(t)$   a Gaussian noise, defined on a complete filtered probability space $\big(\Omega,\mathcal{F},(\mathcal{F}_{t})_{t\geq0},\P\big)$, 
which is white in time and has a spatially homogeneous correlation of fractional type, with Hurst index $H\in [1/2,1)$. More precisely, if $C^{\infty}_{0}([0,\infty)\times \R)$ is the space of  infinitely differentiable functions with compact support, then $\{w^{H}(\varphi)\ ;\ \varphi\in\,C^{\infty}_{0}([0,\infty)\times \R)\}$ 
 is a family of centered Gaussian random variables, with covariance 
\begin{equation}
	\E\big(w^{H}(\varphi)w^{H}(\psi)\big) = \int_{0}^{\infty}\int_{\R}\mathcal{F}\varphi(t,\cdot)(y)\overline{\mathcal{F}\psi}(t,\cdot)(y)\mu_{H}(dy)dt,
\end{equation}
where the spectral measure $\mu_{H}$ of $w^{H}(t)$ is given by
\begin{equation}
	\mu_{H}(dx) = a_H\abs{x}^{1-2H}dx,
\end{equation}
for some constant $a_H>0$. In particular, the covariance function of the noise $w^{H}(t)$ can be written as 
\begin{equation}
	\E\big({w}^{H}(t,x)\,w^{H}(s,y)\big) = (t\wedge s)\,\Gamma_{H}(x-y),
\end{equation}
where, as we already mentioned, 
\begin{equation}
	\Gamma_{H}(x)=\int_{\R}e^{-ixy}\mu_{H}(dy) = \abs{x}^{2H-2}.
\end{equation}
Moreover, if $\{e_{k}^{H}\}_{k\in\mathbb{N}}\subset \mathcal{S}_{(s)}(\R)$ is a complete orthonormal basis of $L^{2}_{(s)}(\R,\mu_{H})$, then 
\[\xi_{k}^{H}:=\mathcal{F}(e_{k}^{H}\mu_{H}),\ \ \ \ \ \ k \in\,\mathbb{N},\]  is a complete orthonormal basis for $K^{H}$, the reproducing kernel Hilbert space of $w^{H}(t)$.

Next, we introduce the process 
\begin{equation}\label{sm50}
w(t,x):=(\eta\ast w^{H}(t,\cdot))(x) = \int_{\R}\eta(x-y)w^{H}(t,y)dy,
\end{equation}
and we assume the following conditions. 
\begin{Hypothesis}\label{H2}
The function  $\eta:\R\to \R$ is  smooth, symmetric, non-negative and belongs to  $L^{1}(\R)$, with
\begin{equation}
	\int_{\R}\eta(x)dx=1.
\end{equation}
Moreover,  there exist some positive constants $a\geq H-1/2$ and $b<H-2$ such that
\begin{equation}\label{mollifier1}
	1-\mathcal{F}\eta(x)\lesssim \abs{x}^{a},\ \ \ \ x\in(-1,1),
\end{equation}
and 
\begin{equation}\label{mollifier2}
	\abs{\mathcal{F}\eta(x)}\lesssim \abs{x}^{b},\ \ \ \ x\geq1.
\end{equation}
	\end{Hypothesis}
It is easy to check  that under these conditions $w(t)$ is a spatially homogeneous Wiener process, with  covariance 
\begin{equation}
	\E\big({w}(t,x){w}(s,y)\big) = (t\wedge s)\,\Gamma(x-y),
\end{equation}
where 
\begin{equation}
	\Gamma(x) = \int_{\R}\abs{\mathcal{F}\eta(y)}^{2}e^{-ixy}\mu_H(dy),
\end{equation}
and its spectral measure $\mu$  is
\begin{equation}
	\mu(dx) = \abs{\mathcal{F}\eta(x)}^{2}\,\mu_H(dx),
\end{equation}
(see \cite[Lemma 4.1]{chen2021}). Moreover, since $|\mathcal{F}(\eta)|\leq 1$, due to \eqref{mollifier2}, we have
\begin{equation}
	\begin{array}{ll}
		&\ds{c_0=\mu(\R)= a_H\int_{\R}\abs{\mathcal{F}\eta(y)}^{2}\abs{y}^{1-2H}dy  \lesssim \int_{\abs{y}<1}\abs{y}^{1-2H}dy+\int_{\abs{y}\geq1}\abs{y}^{2b+1-2H}dy <\infty  },
	\end{array}
\end{equation}
and
\begin{equation}
	\begin{array}{ll}
		&\ds{c_1 =\int_{\R}\abs{y}^{2}\mu(dy)= a_H\int_{\R}\abs{\mathcal{F}\eta(y)}^{2}\abs{y}^{3-2H}dy\lesssim \int_{\abs{y}<1}\abs{y}^{3-2H}dy+\int_{\abs{y}\geq1}\abs{y}^{2b+3-2H}dy <\infty  },
	\end{array}
\end{equation}
In particular,  $w(t)$, $t\geq 0$,  satisfies Hypothesis \ref{H1}. 
Furthermore, if $\{e_{k}\}_{k\in\mathbb{N}}\subset \mathcal{S}_{(s)}(\R)$ is a complete orthonormal basis of $L^{2}_{(s)}(\R,\mu)$, then $\xi_{k}:=\mathcal{F}(e_{k}\mu)$, $k \in\,\mathbb{N}$,  is a complete orthonormal basis of $K$, the reproducing kernel Hilbert space of $w(t)$.

\subsection{An alternative formulation}

Now,  we define $\eta_{\epsilon}(x)=\epsilon^{-1/2}\eta(\epsilon^{-1/2}x)$, and 
\begin{equation}
	Q^{\epsilon}h\, (x):= (\eta_{\epsilon}\ast h)(x)=\frac{1}{\sqrt{\epsilon}}\int_{\R}\eta\Big(\frac{x-y}{\sqrt{\epsilon}}\Big)h(y)dy,\ \ \ \ \ \ h \in\,L^2(\R).
\end{equation}
Thus, if we denote \[Q^{\epsilon}w^{H}(t,x):=(\eta_{\epsilon}\ast w^{H}(t,\cdot))(x),\]  we have 
\begin{equation}
	\begin{array}{l}
		\ds{ \E\big(Q^{\epsilon}{w}^{H}(t,x)\,Q^{\epsilon}{w}^{H}(s,y)\big)   = (t\wedge s)\int_{\R}\abs{\mathcal{F}\eta_{\epsilon}(z)}^{2}e^{-iz(x-y)}\mu_H(dz) }\\
		\vs 
		\ds{\quad\quad\quad\quad =(t\wedge s)\,\frac{1}{\epsilon}\int_{\R}\Big\lvert\int_{\R}e^{-izw}\eta\big(\frac{w}{\sqrt{\epsilon}}\big)dw\Big\rvert^{2}e^{-iz(x-y)}\mu_H(dz)   }\\
		\vs 
		\ds{=(t\wedge s)\frac{1}{\epsilon^{1-H}}\int_{\R}\abs{\mathcal{F}\eta(z)}^{2}e^{-iz(x-y)/\sqrt{\epsilon}}\,\mu_H(dz) =\epsilon^{H-1}(t\wedge s)\Gamma((x-y)/\sqrt{\epsilon}).   }
	\end{array}
\end{equation}
Recalling \eqref{sm150}, this implies that
\begin{equation}
\E\big(Q^{\epsilon}{w}^{H}(t,x)\,Q^{\epsilon}{w}^{H}(s,y)\big)=\e^{H-1}\,\E\big({w}^{\epsilon}(t,x)\,{w}^{\epsilon}(s,y)\big),	
\end{equation}
so that
\begin{equation}
	{w}^{\epsilon}(t,x) \sim\epsilon^{(1-H)/2}Q^{\epsilon}{w}^{H}(t,x).
\end{equation}
In particular, we can alternatively write system \eqref{SPDE3} as 
\begin{equation}\label{alternative_form}
	\le\{\begin{array}{l}
		\ds{\epsilon\partial_{t}^{2}u_{\epsilon}(t,x)+\epsilon\abs{\partial_{t}u_{\epsilon}(t,x)}^{2}u_{\epsilon}(t,x)=\partial_x^{\,2} u_{\epsilon}(t,x)+\abs{\partial_x u_{\epsilon}(t,x)}^{2}u_{\epsilon}(t,x) -\,\gamma_0\,\partial_{t}u_{\eps}(t,x)  }\\[10pt]
		\ds{\quad\quad\quad\quad\quad\quad\quad\quad\quad\quad\quad\quad \quad\quad\quad\quad +\epsilon^{1-H/2}\big(u_{\epsilon}(t)\times\partial_{t}u_{\epsilon}(t)\big)Q^{\epsilon}\partial_{t}w^{H}(t,x), }\\
		[10pt]
		\ds{u_{\epsilon}(0,x)=u_{0}^{\epsilon}(x),\ \ \ \ \ \ \partial_{t}u_{\epsilon}(0,x)=v_{0}^{\epsilon}(x) }.
	\end{array}\r.
\end{equation}

\section{Main results}\label{sec-main}

In this section, we will describe the two main results of this paper. 
The first one deals with the limiting behavior of the solution $u_\e$ of equation \eqref{SPDE3}, the second one deals with the analysis of the fluctuations of $u_\e$ around its deterministic limit $u$. We will also some results about $u$

\subsection{Uniqueness for the heat flow harmonic map equation}

We start introducing the deterministic equation
\begin{equation}\label{limiting_equation}
	\le\{\begin{array}{l}
		\ds{ \gamma_0\,\partial_{t}u(t,x) = \partial_{x}^{2} u(t,x)+\abs{\partial_{x} u(t,x)}^{2}u(t,x)  , \ \ \ \ \  \ (t,x)\in \mathbb{R}^{+}\times \mathbb{R},}\\[10pt]
		\ds{u(0,x)=u_{0}(x),\ \ \ \ \ \ x\in \mathbb{R} },
	\end{array}\r.
\end{equation}
where $u_0\in M:=\{u:\mathbb{R}\to \mathbb{S}^{2}\}$. We recall that  $\gamma_0=\gamma+c_0/2$.

If $u\in L^{\infty}(0,T;\dot{H}^{1}(\R))\cap L^{2}(0,T;\dot{H}^{2}(\R))$ is a solution of equation \eqref{limiting_equation} with initial condition $u_0\in M$, then we have $u(t)\in M$, for $t\in [0,T]$. Indeed, if we define 
	\begin{equation*}
		h(t,x):=\abs{u(t,x)}^{2}-1,\ \ \ \ (t,x)\in [0,T]\times \R,
	\end{equation*}
	then we have $\partial_x^{\,2} h(t,x) = 2\partial_x^{\,2} u(t,x) \cdot u(t,x)+2\abs{\partial_x u(t,x)}^{2}$ and $\partial_t, h(t,x)=2\partial_t u(t,x)\cdot u(t,x)$. This implies 
	\begin{equation*}
		\partial_{t}h(t,x) = \partial_x^{\,2} h(t,x) +2 \abs{\partial_x u(t,x)}^{2}h(t,x),\ \ \ \ h(0,x) = 0,
	\end{equation*}
so that $h(t,x)=0$, for $(t,x)\in[0,T]\times \mathbb{R}$.

	In Appendix \ref{AppB}  we will prove the following uniqueness result for   equation \eqref{limiting_equation}.

\begin{Theorem}\label{uniqueness}
	For every $T>0$ and $u_0\in \dot{H}^{1}(\R)\cap M$, problem \eqref{limiting_equation} admits at most one solution $u$ in $L^{\infty}((0,T)\times \R)\cap L^{\infty}(0,T;\dot{H}^{1}(\R))\cap L^{2}(0,T;\dot{H}^{2}(\R))$.
\end{Theorem}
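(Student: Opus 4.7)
The plan is to carry out a direct $L^{2}$-energy estimate on the difference of two candidate solutions. Fix $T>0$ and $u_0\in \dot{H}^{1}(\R)\cap M$, let $u_1,u_2$ be two solutions of \eqref{limiting_equation} in the class $L^{\infty}((0,T)\times\R)\cap L^{\infty}(0,T;\dot{H}^{1}(\R))\cap L^{2}(0,T;\dot{H}^{2}(\R))$, and set $w:=u_1-u_2$. The algebraic identity
\[
\abs{\partial_x u_1}^{2}u_1-\abs{\partial_x u_2}^{2}u_2 = \abs{\partial_x u_1}^{2}\,w + \bigl[(\partial_x u_1+\partial_x u_2)\cdot\partial_x w\bigr]u_2
\]
turns \eqref{limiting_equation} into the quasilinear equation
\[
\gamma_0\,\partial_t w = \partial_x^{\,2}w + \abs{\partial_x u_1}^{2}w + \bigl[(\partial_x u_1+\partial_x u_2)\cdot\partial_x w\bigr]u_2,\qquad w(0)=0.
\]

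Before pairing this with $w$, I need the endpoint regularity $w\in C([0,T];L^{2}(\R))$ so that the energy identity makes sense. This follows from $w(0)=0$ together with $\partial_t u_i\in L^{2}(0,T;L^{2}(\R))$ for $i=1,2$, which comes directly from the equation: $\partial_x^{\,2}u_i\in L^{2}(0,T;L^{2}(\R))$ by hypothesis, while $\abs{\partial_x u_i}^{2}u_i\in L^{2}(0,T;L^{2}(\R))$ via the one-dimensional Gagliardo--Nirenberg bound $\abs{\partial_x u_i}_{L^{4}}^{4}\lesssim \abs{\partial_x u_i}_{L^{2}}^{3}\,\abs{\partial_x^{\,2}u_i}_{L^{2}}$ combined with $\abs{u_i}\in L^{\infty}$.

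Taking the $L^{2}(\R)$ inner product of the equation for $w$ with $w$ and integrating the Laplacian by parts produces
\[
\frac{\gamma_0}{2}\frac{d}{dt}\abs{w}_{L^{2}}^{2} + \abs{\partial_x w}_{L^{2}}^{2} = \int_{\R}\abs{\partial_x u_1}^{2}\abs{w}^{2}\,dx + \int_{\R}\bigl[(\partial_x u_1+\partial_x u_2)\cdot\partial_x w\bigr](u_2\cdot w)\,dx.
\]
Using $\abs{u_2}\in L^{\infty}$, the one-dimensional Agmon inequality $\abs{f}_{L^{\infty}}^{2}\lesssim \abs{f}_{L^{2}}\,\abs{f'}_{L^{2}}$ applied to $f=\partial_x u_i$, together with Cauchy--Schwarz and Young's inequality, the right-hand side is bounded by
\[
\tfrac{1}{2}\abs{\partial_x w}_{L^{2}}^{2}+C\,\Phi(t)\,\abs{w}_{L^{2}}^{2},\qquad \Phi(t):=\abs{\partial_x u_1(t)}_{L^{\infty}}^{2}+\abs{\partial_x u_2(t)}_{L^{\infty}}^{2}.
\]
The decisive observation is $\Phi\in L^{1}(0,T)$: by the Agmon bound, $\abs{\partial_x u_i}_{L^{\infty}}^{2}\in L^{2}(0,T)\subset L^{1}(0,T)$, since $\abs{\partial_x u_i}_{L^{2}}\in L^{\infty}(0,T)$ and $\abs{\partial_x^{\,2}u_i}_{L^{2}}\in L^{2}(0,T)$ by the admissibility class of $u_i$. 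Absorbing $\tfrac{1}{2}\abs{\partial_x w}_{L^{2}}^{2}$ into the left-hand side and invoking Gronwall's inequality with $w(0)=0$ yields $\abs{w(t)}_{L^{2}}^{2}=0$ for every $t\in[0,T]$, so $u_1\equiv u_2$.

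The only real subtlety I anticipate is the rigorous justification of the differential energy identity, which rests on the regularity $w\in H^{1}(0,T;L^{2}(\R))\cap L^{2}(0,T;\dot{H}^{2}(\R))$ just derived; beyond that, the argument is a textbook energy-plus-Gronwall computation whose only nontrivial ingredient is the one-dimensional embedding $H^{1}(\R)\hookrightarrow L^{\infty}(\R)$, used to upgrade the $L^{\infty}_{t}L^{2}_{x}\cap L^{2}_{t}\dot{H}^{1}_{x}$ control of $\partial_x u_i$ to $L^{2}_{t}L^{\infty}_{x}$ control.
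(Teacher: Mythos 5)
Your proof is correct and follows essentially the same route as the paper's: an $L^{2}$-energy estimate on the difference of two solutions, with the nonlinear terms controlled via the one-dimensional embedding $H^{1}(\R)\hookrightarrow L^{\infty}(\R)$ so that the resulting Gronwall coefficient $\abs{\partial_{x}^{2}u_{1}}_{L^{2}}^{2}+\abs{\partial_{x}^{2}u_{2}}_{L^{2}}^{2}$ (equivalently your $\Phi$) lies in $L^{1}(0,T)$ by the admissibility class. The only differences are the cosmetic regrouping of $\abs{\partial_x u_1}^{2}u_1-\abs{\partial_x u_2}^{2}u_2$ and your more explicit justification of the energy identity via $\partial_t u_i\in L^{2}(0,T;L^{2}(\R))$, which the paper only states implicitly through the claim $\rho\in L^{2}(0,T;H^{2}(\R))$.
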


\subsection{A law of large numbers}
The first main result of this paper concerns the convergence of the solution of equation \eqref{SPDE3} to the unique solution of equation \eqref{limiting_equation}. 

\begin{Theorem}\label{small_mass_limit}
	Assume Hypotheses \ref{H1} and \ref{H1-bis} hold, and fix $u_{0}\in \dot{H}^{1}(\R)\cap M$   such that
	\begin{equation}\label{sm200}
		\lim_{\epsilon\to0}\abs{u_{0}^{\epsilon}-u_0}_{L^2_{\text{loc}}(\R)}=0.
	\end{equation} Then,	for every $T>0$, $\delta_{1}<1$ and $\delta_{2}<2$, and  every $\eta>0$, we have
	\begin{equation}
		\label{convergence_pr}
		\lim_{\eps\to0}\P\Big(\abs{u_{\eps}-u}_{C([0,T];H^{\delta_{1}}_{\text{loc}}(\R))}+\abs{u_{\eps}-u}_{L^{2}(0,T;H^{\delta_{2}}_{\text{loc}}(\R))}>\eta \Big) = 0,
	\end{equation}
where $u$ is the unique solution of the  deterministic equation \eqref{limiting_equation}.
	
\end{Theorem}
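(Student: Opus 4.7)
The plan is a compactness-plus-identification argument: use a priori estimates to extract convergent subsequences, identify every accumulation point with the unique solution $u$ of the heat-flow equation \eqref{limiting_equation} via Theorem \ref{uniqueness}, and then upgrade convergence in law to convergence in probability using that the limit is deterministic. The first step is to derive uniform energy bounds. Applying Itô's formula to
\[
E_\eps(t) := \frac{1}{2}\int_\R\big(\eps\abs{\partial_t u_\eps(t)}^2 + \abs{\partial_x u_\eps(t)}^2\big)dx,
\]
and using the orthogonalities $u_\eps\cdot\partial_t u_\eps=0$ and $(u_\eps\times\partial_t u_\eps)\cdot\partial_t u_\eps=0$, which eliminate both the martingale contribution and the constraint term, Hypothesis \ref{H1-bis} yields $\E\sup_{t\leq T}E_\eps(t)+\E\int_0^T\abs{\partial_t u_\eps}_{L^2}^2dt\leq C(T,\Lambda_1)$. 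A parallel second-order estimate, obtained by differentiating the equation in $x$ and again exploiting the geometric orthogonalities, provides a uniform $L^2(0,T;\dot H^2(\R))$ bound for $u_\eps$ depending on $\Lambda_2$.

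Step 1 and the equation \eqref{SPDE3} itself show that $\partial_t u_\eps$ is uniformly bounded in $L^2(0,T;L^2)$ up to an $O(\sqrt{\eps})$ remainder coming from $\eps\partial_t^2 u_\eps$ and the noise; this delivers a uniform Hölder-in-time estimate for $u_\eps$ with values in a sufficiently weak local Sobolev space. The tightness of $\sqrt{\eps}\,w^\eps$ in $C([0,T]; H^s_\lambda(\R))$ follows from \eqref{sm130}. Applying Aubin–Lions–Simon with these bounds, the laws $\{\mathcal{L}(u_\eps)\}$ are tight in
\[
\mathcal{E} := C([0,T]; H^{\delta_1}_{\text{loc}}(\R)) \cap L^2(0,T; H^{\delta_2}_{\text{loc}}(\R)),\qquad \delta_1<1,\ \delta_2<2.
\]

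By Prokhorov's theorem and a Skorokhod representation, pass along a subsequence $\eps_n\downarrow 0$ to a new probability space carrying $(\tilde u_{\eps_n},\sqrt{\eps_n}\tilde w^{\eps_n})\to(\tilde u,0)$ almost surely in $\mathcal{E}\times C([0,T];H^s_\lambda(\R))$. The constraint $\tilde u_{\eps_n}\in\mathbb{S}^2$ is preserved in the limit, and by lower semi-continuity $\tilde u$ lies in $L^\infty(0,T;\dot H^1)\cap L^2(0,T;\dot H^2)$. Testing \eqref{SPDE3} against $\varphi\in C^\infty_0(\R)$, the boundary term $\eps(\partial_t u_\eps(t)-v_0^\eps,\varphi)_{L^2}$ is $O(\sqrt{\eps})$ by Step 1; $\eps\int_0^t(\abs{\partial_t u_\eps}^2 u_\eps,\varphi)_{L^2}ds$ is $O(\eps)$ in expectation; and by Itô's isometry together with the identity $\sum_k\abs{\xi_k^\eps}^2=c_0$, the stochastic integral has $L^2(\Omega)$-norm bounded by $C\sqrt{\eps}$. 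The nonlinear term $\abs{\partial_x u_\eps}^2 u_\eps$ passes to the limit thanks to the strong $L^2_{\text{loc}}$-convergence of $\partial_x\tilde u_{\eps_n}$ ensured by Step 2 and the $L^\infty$-boundedness of $\tilde u_{\eps_n}$. Thus $\tilde u$ is a weak solution of \eqref{limiting_equation} with initial datum $u_0$, and Theorem \ref{uniqueness} gives $\tilde u=u$.

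Because every subsequence of $\{u_\eps\}$ admits a further subsequence converging in law in $\mathcal E$ to the deterministic limit $u$, the whole family converges in law to $u$; convergence in law to a constant is equivalent to convergence in probability, yielding \eqref{convergence_pr}. The main obstacle is the uniform second-order estimate: differentiating the hyperbolic equation \eqref{SPDE3} in $x$ produces a term $\partial_x(\abs{\partial_x u_\eps}^2 u_\eps)$ that is borderline critical at this regularity, together with a noise term $\sqrt{\eps}\,\partial_x[(u_\eps\times\partial_t u_\eps)\partial_t w^\eps]$ whose quadratic covariation must be cancelled carefully. Obtaining a uniform control of $\int_0^T\abs{\partial_x^2 u_\eps}_{L^2}^2dt$ is the technical heart of the argument, and it is precisely there that the constraint $\abs{u_\eps}=1$ and its differential consequences must be leveraged most carefully.
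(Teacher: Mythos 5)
Your overall architecture is the same as the paper's: first- and second-order energy estimates, tightness in $C([0,T];H^{\delta_1}_{\text{loc}})\cap L^2(0,T;H^{\delta_2}_{\text{loc}})$ via Aubin--Lions, Skorokhod representation, identification of every limit point as a weak solution of \eqref{limiting_equation} with datum $u_0$, uniqueness from Theorem \ref{uniqueness}, and the standard upgrade from convergence in law to a deterministic limit to convergence in probability. The first-order identity, the treatment of the tested equation (the $O(\sqrt{\eps})$ and $O(\eps)$ remainders, the vanishing stochastic integral, and the passage to the limit in $\abs{\partial_x u_\eps}^2u_\eps$ using strong $L^2_{\text{loc}}$ convergence of $\partial_x u_{\eps_n}$) all match the paper.

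There is, however, a genuine gap at the step you yourself identify as the technical heart: the uniform bound $\E\int_0^T\abs{\partial_x^2 u_\eps}^2_{L^2}\,dt\le c_T$. The mechanism you propose --- differentiate \eqref{SPDE3} in $x$ and run the second-order energy estimate, ``again exploiting the geometric orthogonalities'' --- does not deliver a uniform bound. When you differentiate the noise coefficient, the It\^o correction contains $\sum_k\abs{(u_\eps\times v_\eps)(\xi_k^\eps)'}^2_{L^2}=\frac{c_1}{\eps}\abs{v_\eps}^2_{L^2}$, since $\sum_k\abs{(\xi_k^\eps)'(x)}^2=c_1/\eps$; after integrating in time and using the first-order bound $\int_0^T\abs{v_\eps}^2_{L^2}\,dt\lesssim 1$, this contributes $O(1/\eps)$, and indeed the paper's second-order estimate \eqref{uniform_bound2} is of size $1/\eps$ (consistent with $\Lambda_2$, which only controls $\sqrt{\eps}\abs{u_0^\eps}_{\dot H^2}$). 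No amount of orthogonality removes this term, because $(\xi_k^\eps)'$ lands on the noise basis, not on $(u_\eps,v_\eps)$. The paper obtains the uniform $L^2_t\dot H^2_x$ bound by a different device: It\^o's formula applied to the cross functional $\eps\Inner{\partial_x u_\eps,\partial_x v_\eps}_{L^2}+\tfrac{\gamma_0}{2}\abs{\partial_x u_\eps}^2_{L^2}$ (Lemma \ref{ito_loc}), whose drift produces $-\abs{\partial_x^2 u_\eps}^2_{L^2}$ directly from pairing $\partial_x^2 u_\eps$ against $\partial_x u_\eps$ rather than against $\partial_x v_\eps$, with remainders ($\eps\int\abs{\partial_x v_\eps}^2$, the cubic terms, and a stochastic integral whose quadratic variation involves only $\abs{\partial_x u_\eps}^2(\eps\abs{\partial_x v_\eps}^2+\abs{v_\eps}^2)$) that are controllable precisely by combining the first-order identity with the $O(1/\eps)$ second-order bound. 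Without this cross-term argument, or an equivalent substitute, your tightness claim in $L^2(0,T;H^{\delta_2}_{\text{loc}})$ for $\delta_2\in(1,2)$ is unsupported, and the identification of the nonlinear term in the limit (which uses strong convergence of $\partial_x u_{\eps_n}$ in $L^2_{t,\text{loc}}$, hence compactness one derivative above $\dot H^1$) collapses with it.
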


\begin{Remark}
{\em \begin{enumerate}
 \item[1.]	Due to Hypothesis \ref{H1-bis}, the sequence $\{u^\e_0-u_0\}_{\e \in\,(0,1)}$ is bounded in $H^1_{\text{loc}}(\R)$. Hence \eqref{sm200} implies that for every $\delta<1$
\[\lim_{\e\to 0}\vert u^\e_0-u_0\vert_{H^\delta_{\text{loc}}(\R)}=0.\]
\item[2.] In Lemma \ref{lemB1} we will prove that any  function in $\dot{H}^1(\R)\cap M$ can be approximated by  smooth functions which still belong to  $M$. In particular, this justifies condition \eqref{sm200}.
 \end{enumerate}

}
	
\end{Remark}

Next result concerns the limiting behavior of $\partial_t u_\epsilon$.
\begin{Theorem}\label{teo3.4}
	Under the same assumptions of Theorem \ref{small_mass_limit}, we have that  $\partial_t u_\epsilon$ converges to $\partial_t u$ in probability, as $\e\downarrow 0$, with respect to the weak convergence in $L^2(0,T;L^2(\mathbb{R}))$. However, if $c_0\neq 0$ and  
	\begin{equation}
		\label{sm105}
	\lim_{\epsilon\to 0}\sqrt{\epsilon}\,\vert v^\epsilon_0\vert_{L^2(\mathbb{R})}=0,	
	\end{equation} we have that  $\partial_t u_\epsilon$ does not converge to $\partial_t u$  in probability, with respect to the strong convergence in $L^2(0,T;L^2(\mathbb{R}))$. 
\end{Theorem}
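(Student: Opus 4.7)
To establish the weak-in-probability convergence of $\partial_t u_\epsilon$ to $\partial_t u$ in $L^2(0,T;L^2(\R))$, my plan is to derive an energy estimate and then identify the limit by duality. Applying It\^o's formula to $\frac{\epsilon}{2}\abs{\partial_t u_\epsilon}_{L^2}^2+\frac{1}{2}\abs{\partial_x u_\epsilon}_{L^2}^2$ in \eqref{SPDE3}, the geometric constraints $u_\epsilon\cdot\partial_t u_\epsilon=0$ and $(u_\epsilon\times\partial_t u_\epsilon)\cdot\partial_t u_\epsilon=0$ make the stochastic integral vanish pointwise and the nonlinear term $\abs{\partial_t u_\epsilon}^2 u_\epsilon$ drop out, while the It\^o correction equals exactly $\frac{c_0}{2}\abs{\partial_t u_\epsilon}_{L^2}^2$ (since $\sum_k\abs{\xi_k^\epsilon}^2=c_0$) and cancels the extra friction $c_0/2$ built into $\gamma_0$. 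This yields the \emph{pathwise} dissipation identity
\begin{equation*}
	\frac{\epsilon}{2}\abs{\partial_t u_\epsilon(t)}_{L^2}^2+\frac{1}{2}\abs{\partial_x u_\epsilon(t)}_{L^2}^2+\gamma\int_0^t\abs{\partial_t u_\epsilon(s)}_{L^2}^2\,ds=\frac{\epsilon}{2}\abs{v_0^\epsilon}_{L^2}^2+\frac{1}{2}\abs{\partial_x u_0^\epsilon}_{L^2}^2,
\end{equation*}
and in particular the \emph{deterministic} uniform bound $\abs{\partial_t u_\epsilon}_{L^2(0,T;L^2(\R))}^2\leq\gamma^{-1}\Lambda_1^2$ via Hypothesis \ref{H1-bis}. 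For any $\phi\in C^\infty_c((0,T)\times\R)$, integration by parts in $t$ gives $\int_0^T\langle\partial_t u_\epsilon,\phi\rangle\,dt=-\int_0^T\langle u_\epsilon,\partial_t\phi\rangle\,dt$, and Theorem \ref{small_mass_limit} together with \eqref{sm200} let me pass to the limit on the right in probability to obtain $\int_0^T\langle\partial_t u,\phi\rangle\,dt$. A density argument, using the uniform $L^2$-bound, then yields the weak convergence in probability on all of $L^2(0,T;L^2(\R))$.

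\textbf{Part 2.} I would argue the failure of strong convergence by contradiction. The pathwise bound above is deterministic, hence the family $\{\abs{\partial_t u_\epsilon-\partial_t u}_{L^2(0,T;L^2)}^2\}_\epsilon$ is uniformly integrable, and strong convergence in probability would force $\E\abs{\partial_t u_\epsilon-\partial_t u}_{L^2(0,T;L^2)}^2\to 0$. On the other hand, Part 1 combined with uniform integrability gives $\E\langle\partial_t u_\epsilon,\partial_t u\rangle_{L^2(0,T;L^2)}\to\abs{\partial_t u}_{L^2(0,T;L^2)}^2$, so everything reduces to computing the quantitative limit of $\E\abs{\partial_t u_\epsilon}_{L^2(0,T;L^2)}^2$. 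Setting $\tilde v_\epsilon:=\partial_t u_\epsilon-\partial_t u$, subtracting \eqref{limiting_equation} from the It\^o form of \eqref{SPDE3} produces the SDE
\begin{equation*}
\epsilon\,d\tilde v_\epsilon+\gamma_0\tilde v_\epsilon\,dt=\bigl[\partial_x^2(u_\epsilon-u)+(\abs{\partial_x u_\epsilon}^2 u_\epsilon-\abs{\partial_x u}^2 u)-\epsilon\abs{\partial_t u_\epsilon}^2 u_\epsilon-\epsilon\partial_t^2 u\bigr]\,dt+\sqrt\epsilon\,(u_\epsilon\times\partial_t u_\epsilon)\,dw^\epsilon.
\end{equation*}
Applying It\^o's formula to $\abs{\tilde v_\epsilon}_{L^2}^2$, taking expectation, integrating by parts in time to rewrite $\E\int\langle\tilde v_\epsilon,\partial_x^2(u_\epsilon-u)\rangle\,dt=-\tfrac12\E\abs{\partial_x(u_\epsilon(T)-u(T))}_{L^2}^2+\tfrac12\abs{\partial_x(u_0^\epsilon-u_0)}_{L^2}^2$, and exploiting the It\^o correction source $\frac{c_0}{2}\E\abs{\partial_t u_\epsilon}_{L^2}^2=\frac{c_0}{2}(\abs{\partial_t u}_{L^2}^2+\E\abs{\tilde v_\epsilon}_{L^2}^2)+o(1)$, I obtain, after letting $\epsilon\to 0$, the self-consistent balance $\gamma\lim_\epsilon\E\abs{\tilde v_\epsilon}_{L^2(0,T;L^2)}^2=\frac{c_0}{2}\abs{\partial_t u}_{L^2(0,T;L^2)}^2$, that is,
\begin{equation*}
	\lim_{\epsilon\to0}\E\abs{\partial_t u_\epsilon-\partial_t u}_{L^2(0,T;L^2(\R))}^2=\frac{c_0}{2\gamma}\abs{\partial_t u}_{L^2(0,T;L^2(\R))}^2>0,
\end{equation*}
whenever $c_0>0$ and $\partial_t u\not\equiv 0$, contradicting the assumed strong convergence.

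\textbf{Main obstacle.} Closing the balance rigorously requires the two endpoint estimates $\lim_\epsilon\epsilon\E\abs{\partial_t u_\epsilon(T)}_{L^2}^2=0$ and $\lim_\epsilon\E\abs{\partial_x(u_\epsilon(T)-u(T))}_{L^2}^2=0$, which together encode a Kramers--Smoluchowski fluctuation--dissipation principle: the fast variable $\partial_t u_\epsilon$ relaxes, on timescale $\epsilon/\gamma_0$, to an Ornstein--Uhlenbeck quasi-stationary distribution with single-time $L^2$-variance of order one and correlation time of order $\epsilon$, so $\epsilon\,\E\abs{\partial_t u_\epsilon(T)}_{L^2}^2=O(\epsilon)$ and $\int_0^T\partial_x(\partial_t u_\epsilon-\partial_t u)\,ds=\partial_x(u_\epsilon(T)-u(T))$ is of order $\sqrt\epsilon$ in $L^2$. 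Both are obtained from the variation-of-constants representation of $\partial_t u_\epsilon$ against the fast semigroup $e^{-\gamma_0 t/\epsilon}$, combined with the a priori regularity of Theorem \ref{system_wellposedness} and the 1D embedding $H^1\hookrightarrow L^\infty$, which is also what renders the nonlinear contribution $\abs{\partial_x u_\epsilon}^2 u_\epsilon-\abs{\partial_x u}^2 u$ in the drift of $\tilde v_\epsilon$ negligible in the limit. This fast-variable OU analysis is the technical heart of the proof; the rest is routine It\^o calculus, integration by parts and uniform integrability.
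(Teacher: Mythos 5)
Your Part 1 is correct and follows essentially the same route as the paper: integrate by parts in time against a dense family of test functions, use the pathwise bound $\int_0^T\abs{\partial_t u_\e(t)}^2_{L^2(\R)}dt\leq \gamma^{-1}\Lambda_1^2$ coming from the energy identity, and invoke Theorem \ref{small_mass_limit} to pass to the limit. Your identification of the mechanism behind Part 2 is also the right one: the It\^o correction cancels exactly half of the enhanced friction, so the dissipation identity for $u_\e$ carries the coefficient $2\gamma$ while the identity for $u$ carries $2\gamma_0=2\gamma+c_0$, and this mismatch forces $\E\abs{\partial_t u_\e}^2_{L^2(0,T;L^2)}\to(1+\tfrac{c_0}{2\gamma})\abs{\partial_t u}^2_{L^2(0,T;L^2)}$, which together with weak convergence rules out strong convergence (provided $\partial_t u\not\equiv 0$, an assumption both you and the paper need implicitly).

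The gap is in how you propose to close the balance. Evaluating the energy identities at the terminal time forces you to control the two endpoint terms $\e\,\E\abs{\partial_t u_\e(T)}^2_{L^2(\R)}$ and $\E\abs{\partial_x(u_\e(T)-u(T))}^2_{L^2(\R)}$, and the second of these is not a routine technicality: it is a \emph{strong, global-in-space, single-time} $\dot H^1$ convergence, which is strictly beyond what Theorem \ref{small_mass_limit} provides ($C([0,T];H^{\delta_1}_{\mathrm{loc}})$ with $\delta_1<1$) and beyond any estimate available under the hypotheses of this theorem (the quantitative bound \eqref{conv-rate} requires the fractional-noise structure and the rate assumption \eqref{initial_condition_rate}, neither of which is assumed here). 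Worse, by the very identities you are using, the convergence of $\abs{\partial_x u_\e(T)}_{L^2(\R)}$ to $\abs{\partial_x u(T)}_{L^2(\R)}$ is equivalent (up to the other vanishing terms) to the statement $\int_0^T\abs{\partial_t u_\e}^2\to(1+\tfrac{c_0}{2\gamma})\int_0^T\abs{\partial_t u}^2$ that you are trying to prove, so deferring it to a ``fast-variable OU analysis'' leaves the essential content of Part 2 unestablished rather than reducing it to something known. Similarly, the claim $\E\abs{\partial_t u_\e(T)}^2_{L^2(\R)}=O(1)$ is not available: the pathwise identity only gives $O(1/\e)$ at a fixed time. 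The paper's fix is simple and worth noting: integrate the energy identity once more in time (equivalently, work with the weighted norm $\int_0^T(T-t)\abs{\partial_t u_\e(t)}^2_{L^2(\R)}dt$). Both problematic endpoint terms then become time-averaged quantities, $\e\int_0^T\abs{\partial_t u_\e}^2dt\to 0$ pathwise and $\int_0^T\abs{\partial_x u_\e}^2dt-\int_0^T\abs{\partial_x u}^2dt\to 0$ in probability, the latter following from the $L^2(0,T;H^{\delta_2}_{\mathrm{loc}})$ convergence already proved; one then shows non-convergence of the weighted norm and transfers it back to $L^2(0,T;L^2(\R))$ using the weak convergence of Part 1. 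With that modification your argument goes through; as written, the key step is missing.
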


\medskip

\subsection{A central limit theorem}\label{sub3.3}
Once proved the convergence of $u_\e$ to its deterministic limit $u$, our next step is studying the fluctuations of $u_\e$ around $u$. 
To this purpose, we assume that the noise has a special structure
\begin{equation}\label{sm50-bis}
	w(t,x):=(\eta\ast w^{H}(t,\cdot))(x) = \int_{\R}\eta(x-y)w^{H}(t,y)dy,
\end{equation}
for some kernel $\eta$ satisfying Hypothesis \ref{H2}. Moreover, for every $\epsilon>0$ we define
\begin{equation}
	y_{\epsilon}(t,x) :=\epsilon^{H/2-1}(u_{\epsilon}(t,x)-u(t,x)),\ \ \ \ (t,x)\in \mathbb{R}^{+}\times \mathbb{R}.
\end{equation}
Once fixed these notations, we can state the second main result of this paper.

\begin{Theorem}\label{CLT}
	Assume $w(t)$ is given by \eqref{sm50-bis}, for some $H\in (1/2,1)$,  and assume Hypotheses \ref{H1}, \ref{H1-bis}, and \ref{H2} hold.  Moreover, in addition to Hypothesis \ref{H1-bis}, assume that $u^\e_0 \in\,\dot{H}^3(\mathbb{R})$ with
	\begin{equation} \label{fine25-bis}\lim_{\e\to 0}\e^{\lambda}\,\vert u_0^\e\vert_{\dot{H}^3(\mathbb{R})}=0,\end{equation}
	for some $\lambda \in\,(0,2(1-H))$. Then, if $u_0\in \dot{H}^{1}(\R)\cap M$  is such that
	\begin{equation}\label{initial_condition_rate}
		 \abs{u_{0}^{\epsilon}-u_{0}}_{L^2(\R)}=o(\epsilon^{1-H/2}), \ \ \ \ 0<\epsilon\ll 1,
	\end{equation}
	and
\begin{equation}\label{initial_condition_rate-bis}	
\lim_{\e\to 0} e^{-\lambda }\,\vert u^\e_0-u_0\vert^2_{H^1(\mathbb{R})}=0,
\end{equation}
for every $T>0$ we have 
	\begin{equation}
\lim_{\epsilon\to 0}		\mathbb{E}\,\vert y_{\epsilon}-\varrho\vert_{L^{2}(0,T;{L^2(\R)})}=0,
	\end{equation}
	where $\varrho\in L^{2}(\Omega;L^{2}(0,T;H))$ is the unique solution of the  equation
	\begin{equation}
		\le\{\begin{array}{l}
			\ds{\gamma_0\,\partial_{t}\varrho(t) = \partial_x^{\,2} \varrho(t)+\abs{\partial_x u(t)}^{2}\varrho(t)+2(\partial_x u(t)\cdot \partial_x \varrho(t))u(t) +\big(u(t)\times\partial_{t}u(t)\big)\partial_{t}w^{H}(t), }\\
			[10pt]
			\ds{\varrho(0)=0 },
		\end{array}\r.
	\end{equation}
	and $u$ is the unique solution of equation \eqref{limiting_equation}, with initial condition $u_0$.

\end{Theorem}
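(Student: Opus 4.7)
The plan is to derive an SPDE for the rescaled fluctuation $y_\epsilon$, compare it with the equation satisfied by $\varrho$, and reduce the convergence to a linear stochastic heat problem. Multiplying \eqref{alternative_form} by $\epsilon^{H/2-1}$, subtracting $\epsilon^{H/2-1}$ times \eqref{limiting_equation}, and expanding the cubic nonlinearity as
\[
\epsilon^{H/2-1}\bigl(\abs{\partial_x u_\epsilon}^2 u_\epsilon - \abs{\partial_x u}^2 u\bigr) = \bigl((\partial_x u_\epsilon + \partial_x u)\cdot \partial_x y_\epsilon\bigr) u_\epsilon + \abs{\partial_x u}^2 y_\epsilon,
\]
one finds that $y_\epsilon$ satisfies
\[
\gamma_0 \partial_t y_\epsilon = \partial_x^{\,2} y_\epsilon + \bigl((\partial_x u_\epsilon + \partial_x u)\cdot \partial_x y_\epsilon\bigr) u_\epsilon + \abs{\partial_x u}^2 y_\epsilon + (u_\epsilon\times \partial_t u_\epsilon)Q^\epsilon \partial_t w^H - \epsilon^{H/2}\bigl(\partial_t^{\,2} u_\epsilon + \abs{\partial_t u_\epsilon}^2 u_\epsilon\bigr).
\]
Subtracting the equation for $\varrho$, the residual $q_\epsilon := y_\epsilon - \varrho$ solves a linear parabolic SPDE whose forcing splits into three contributions: linearization errors in the cubic term (each carrying a factor $\partial_x u_\epsilon - \partial_x u$ or $u_\epsilon - u$), the hyperbolic correction $\epsilon^{H/2}(\partial_t^{\,2} u_\epsilon + \abs{\partial_t u_\epsilon}^2 u_\epsilon)$, and the noise discrepancy $[(u_\epsilon\times \partial_t u_\epsilon)Q^\epsilon - (u\times \partial_t u)]\partial_t w^H$.

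Following the reduction sketched in the introduction, I decompose $q_\epsilon = (z_\epsilon - z) + r_\epsilon$, where $z_\epsilon$ and $z$ are the linear stochastic heat equations with noise coefficients $(u_\epsilon \times \partial_t u_\epsilon)Q^\epsilon$ and $(u\times \partial_t u)$ and zero initial datum, and $r_\epsilon := (y_\epsilon - z_\epsilon) - (\varrho - z)$. The residual $r_\epsilon$ satisfies a parabolic equation whose source terms are pathwise bounded in terms of $y_\epsilon$, $\varrho$, $u_\epsilon$, $u$, their spatial derivatives, and the hyperbolic correction. A Gronwall argument in $L^2(\Omega;L^2(0,T;L^2(\R)))$, combined with the uniform bounds from Hypothesis \ref{H1-bis} and Theorem \ref{system_wellposedness}, the rate estimate \eqref{conv-rate-intro} for $u_\epsilon - u$, and the vanishing of $\epsilon^{H/2}\E\abs{\partial_t^{\,2} u_\epsilon}_{L^2}$ guaranteed by \eqref{fine25-bis}, yields $\E\abs{r_\epsilon}_{L^2(0,T;L^2)} \lesssim \bigl(\E\abs{z_\epsilon - z}_{L^2(0,T;L^2)}^2\bigr)^{1/2} + o(1)$. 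Hence the theorem reduces to proving $z_\epsilon \to z$ in $L^2(\Omega;L^2(0,T;L^2(\R)))$.

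The convergence $z_\epsilon \to z$ is the heart of the argument. Writing both processes in mild form and applying the It\^o isometry, $\E\abs{z_\epsilon(t) - z(t)}_{L^2}^2$ is bounded by the time integral of the Hilbert--Schmidt norm squared of $S(t-s)\bigl[(u_\epsilon\times \partial_t u_\epsilon)Q^\epsilon - (u\times \partial_t u)\bigr]$, where $S(\cdot)$ denotes the heat semigroup generated by $\gamma_0^{-1}\partial_x^{\,2}$. Splitting the bracket as
\[
(u_\epsilon\times \partial_t u_\epsilon)(Q^\epsilon - I) + (u_\epsilon - u)\times \partial_t u_\epsilon + u\times(\partial_t u_\epsilon - \partial_t u),
\]
the first piece is controlled by the approximation $Q^\epsilon \xi^H_k \to \xi^H_k$ guaranteed by the decay estimate \eqref{mollifier1} of Hypothesis \ref{H2}, and the second piece is handled by combining the strong convergence $u_\epsilon\to u$ in $L^2_{\text{loc}}$ with the uniform bound $\epsilon\,\E\abs{\partial_t u_\epsilon}_{L^2}^2\leq C$.

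\textbf{Main obstacle.} The delicate point is the third piece, since by Theorem \ref{teo3.4} the process $\partial_t u_\epsilon$ converges to $\partial_t u$ only weakly, not strongly, in $L^2(0,T;L^2(\R))$, so no direct norm estimate works. The idea to bypass this is to use \eqref{alternative_form} to rewrite $\gamma_0(\partial_t u_\epsilon - \partial_t u)$ as the sum of the spatial quantities $\partial_x^{\,2}(u_\epsilon - u) + (\abs{\partial_x u_\epsilon}^2 u_\epsilon - \abs{\partial_x u}^2 u)$, which converge strongly by Theorem \ref{small_mass_limit} and \eqref{conv-rate-intro}, plus the $\epsilon^{1-H/2}$-weighted stochastic integral and the $\epsilon$-weighted hyperbolic remainder. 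After pairing with the smoothing of $S(t-s)$, the stochastic contribution has $L^2(\Omega)$-size of order $\epsilon^{2-H}$, which is $o(1)$ precisely because $H<1$, while the condition $H>1/2$ ensures that the Hilbert--Schmidt traces involving $(Q^\epsilon - I)\xi_k^H$ against the fractional spectral measure $\mu_H$ remain summable and collapse to zero.
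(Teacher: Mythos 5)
Your top-level architecture matches the paper's: reduce $y_\epsilon-\varrho$ to $z_\epsilon-z$ plus a remainder killed by a Gronwall/fixed-point argument, and identify the lack of strong convergence of $\partial_t u_\epsilon$ as the real obstruction. The gap is in your resolution of that obstruction. You propose to substitute the equation for $\gamma_0(\partial_t u_\epsilon-\partial_t u)$ directly into the coefficient $u\times(\partial_t u_\epsilon-\partial_t u)$ of the stochastic convolution. This fails in both of its natural readings. Read literally, the substitution inserts the term $\epsilon^{1-H/2}(u_\epsilon\times\partial_t u_\epsilon)Q^\epsilon\partial_t w^H(s)$ into the integrand of another integral against $dw^H(s)$, i.e.\ a product of white noises at coincident times; this is not defined and is certainly not estimated by ``pairing with the smoothing of $S(t-s)$'' to get $\epsilon^{2-H}$. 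Read through the correct estimate for the outer stochastic integral (the paper's \eqref{fine8}), what you actually need is convergence of the coefficient in $L^2(0,T;H^{-\sigma}(\R))$ with $\sigma=H-\tfrac12<\tfrac12$. But then the leading spatial term your substitution produces, $u\times\partial_x^2(u_\epsilon-u)=\partial_x(u\times\partial_x(u_\epsilon-u))-\partial_xu\times\partial_x(u_\epsilon-u)$, is only known to vanish in $H^{-1}$ (from the $L^2(0,T;H^1)$ rate \eqref{conv-rate}), and $H^{-1}\not\hookrightarrow H^{-\sigma}$ for $\sigma<\tfrac12$. Likewise your ``$\epsilon$-weighted hyperbolic remainder'' $\epsilon\,\partial_t^2u_\epsilon$ is $O(1)$, not $o(1)$; taming it requires an integration by parts in time inside the coefficient of a stochastic integral, which you do not carry out.

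The paper's way around this is genuinely different and is the heart of its Section on the convergence of $z_\epsilon$: it never substitutes for $\partial_t u_\epsilon$ alone, but exploits that the co-normal quantity $\vartheta_\epsilon=u_\epsilon\times\partial_t u_\epsilon$ satisfies the singularly perturbed relaxation equation $\epsilon\,\partial_t\vartheta_\epsilon+\gamma_0\vartheta_\epsilon=u_\epsilon\times\partial_x^2u_\epsilon-\epsilon^{1-H/2}\partial_tu_\epsilon\,Q_\epsilon\partial_tw^H$ (the cubic terms drop out because $u_\epsilon\times u_\epsilon=0$, and $u_\epsilon\times\partial_x^2u_\epsilon$ is a total spatial derivative of an $L^2$-bounded quantity). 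It then compares $\vartheta_\epsilon$ with an auxiliary process $\eta_\epsilon$ driven by a smooth deterministic heat flow $\hat u_\epsilon$ and regularized by an extra viscosity $\epsilon^\lambda\partial_x^2$, and proves $\vartheta_\epsilon\to u\times\partial_tu$ in $L^2(\Omega;L^2(0,T;H^{-\sigma}))$ before inserting it into the outer stochastic integral. This is exactly where the hypotheses you never use enter: $\lambda\in(0,2(1-H))$ makes the singular term $\epsilon^{-(1-\lambda)}\int e^{B_\epsilon(t-s)}\partial_x^2\vartheta_\epsilon\,ds$ small in $H^{-\sigma}$ (possible only because $\sigma=H-\tfrac12>0$), and \eqref{fine25-bis} on $\vert u_0^\epsilon\vert_{\dot H^3}$ controls the final approximation of $\eta_\epsilon$ by $\hat u_\epsilon\times\partial_t\hat u_\epsilon$. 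Without an argument of this type (or a substitute for it), your step ``the third piece tends to zero'' is unproved, and the proposal does not close.
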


\subsection{Regularity of solutions for the heat flow  harmonic map equation}
As a consequence Theorem \ref{uniqueness} and  Theorem \ref{small_mass_limit}, we obtain that for every $T>0$ and $u_0\in\, \dot{H}^{1}(\R)\cap M$, there exists a unique solution $u\in L^{\infty}(0,T; \dot{H}^{1}(\R))\cap L^{2}(0,T; \dot{H}^{2}(\R))$ for the heat flow harmonic map equation \eqref{limiting_equation}. This result seems to be new in the existing literature. In fact, in Appendix \ref{AppB} we will prove the following further regularity result.

\begin{Theorem}\label{regularity}
	Let $u_0\in \dot{H}^{k}(\R)\cap M$, for some $k\geq 1$. Then there exists a unique solution $u\in L^{\infty}(0,T;\dot{H}^{k}(\R))\cap L^{2}(0,T;\dot{H}^{k+1}(\R))$ of \eqref{heat_flow} for every $T>0$, with 
	\begin{equation}
		\sup_{t\in [0,T]}\abs{u(t)}_{\dot{H}^{k}(\R)}^{2} + \int_{0}^{T}\abs{u(t)}_{\dot{H}^{k+1}(\R)}^{2}dt+\int_{0}^{T}\abs{\partial_{t}u(t)}_{H^{k-1}(\R)}^{2}dt \lesssim_{\,T} c_k(\abs{u_0}_{\dot{H}^{k}(\R)} \big).
	\end{equation}
\end{Theorem}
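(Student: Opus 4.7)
The plan is induction on $k\geq 1$. The base case $k=1$ is essentially provided by combining Theorem \ref{small_mass_limit} with Theorem \ref{uniqueness}: the solution $u$ is obtained as the $\epsilon\downarrow 0$ limit of $u_{\epsilon}$, and the uniform energy estimates already established for $u_\epsilon$ transfer to $u$ by weak/weak-$\ast$ lower semicontinuity, giving $u\in L^{\infty}(0,T;\dot{H}^{1}(\R))\cap L^{2}(0,T;\dot{H}^{2}(\R))$ with a bound depending only on $\abs{u_0}_{\dot{H}^{1}(\R)}$. The corresponding bound on $\partial_{t}u$ in $L^{2}(0,T;L^{2}(\R))$ is read off from the equation.

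For the inductive step from $k-1$ to $k$ with $k\geq 2$, I would first exploit the constraint $\abs{u(t,x)}=1$, which upon differentiation in $x$ gives $u\cdot\partial_{x}u=0$ and then $\abs{\partial_{x}u}^{2}=-u\cdot\partial_{x}^{2}u$, so that equation \eqref{limiting_equation} takes the projected form $\gamma_{0}\,\partial_{t}u=P_{u}\partial_{x}^{2}u$ with $P_{u}:=I-u\otimes u$. I would then apply $\partial_{x}^{k}$ to \eqref{limiting_equation} and pair with $\partial_{x}^{k}u$ in $L^{2}(\R)$ to obtain
\[
\frac{\gamma_{0}}{2}\frac{d}{dt}\abs{\partial_{x}^{k}u}_{L^{2}}^{2}+\abs{\partial_{x}^{k+1}u}_{L^{2}}^{2}=\Inner{\partial_{x}^{k}\bigl(\abs{\partial_{x}u}^{2}u\bigr),\partial_{x}^{k}u}_{L^{2}}.
\]
Expanding the right-hand side by the Leibniz rule yields a sum of trilinear pieces of the form $(\partial_{x}^{a}u\cdot\partial_{x}^{b}u)(\partial_{x}^{c}u\cdot\partial_{x}^{k}u)$ with $a+b+c=k+2$, $a,b\geq 1$. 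Those pieces with all three indices at most $k$ are controlled via Gagliardo--Nirenberg together with the one-dimensional embedding $H^{1}(\R)\hookrightarrow L^{\infty}(\R)$ and the induction hypothesis at level $k-1$; each such term is bounded by $C_{k}(\abs{u}_{\dot{H}^{k-1}})\abs{\partial_{x}^{k}u}_{L^{2}}^{2}+\tfrac{1}{4}\abs{\partial_{x}^{k+1}u}_{L^{2}}^{2}$.

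The genuinely delicate contributions are those containing $\partial_{x}^{k+1}u$, most notably the term proportional to $(\partial_{x}^{k+1}u\cdot\partial_{x}u)(u\cdot\partial_{x}^{k}u)$. For these I would integrate by parts in $x$ and invoke the pointwise orthogonality relations $u\cdot\partial_{x}u=0$ and $\abs{\partial_{x}u}^{2}=-u\cdot\partial_{x}^{2}u$ repeatedly in order to move the critical derivative off $\partial_{x}^{k+1}u$, so that the resulting expressions can be absorbed into $\tfrac{1}{4}\abs{\partial_{x}^{k+1}u}_{L^{2}}^{2}$ plus terms controlled at level $k-1$. Closing via Grönwall yields the stated a priori estimate on $\abs{u}_{\dot{H}^{k}}^{2}+\int_{0}^{T}\abs{u}_{\dot{H}^{k+1}}^{2}dt$, and the bound on $\partial_{t}u$ in $L^{2}(0,T;H^{k-1}(\R))$ follows by reading $\partial_{t}u$ off \eqref{limiting_equation}, using that $H^{k-1}(\R)$ is an algebra for $k\geq 2$ (and Gagliardo--Nirenberg for $k=1$) to handle $\abs{\partial_{x}u}^{2}u$. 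Existence at level $k$ is then obtained by approximating $u_{0}$ in $\dot{H}^{k}(\R)$ by smooth $\mathbb{S}^{2}$-valued initial data in the spirit of Lemma \ref{lemB1}, solving at this higher regularity by standard semilinear parabolic theory, and passing to the limit using the uniform $\dot{H}^{k}$ bound together with the uniqueness furnished by Theorem \ref{uniqueness}.

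The main obstacle is the algebraic bookkeeping of the critical top-order piece in the Leibniz expansion of $\partial_{x}^{k}\bigl(\abs{\partial_{x}u}^{2}u\bigr)$: a brute-force estimate would generate a contribution proportional to $\abs{\partial_{x}^{k+1}u}_{L^{2}}^{2}$ with an unfavorable sign, and it is only through the orthogonality relations forced by $\abs{u}=1$ that this term can be rearranged and absorbed. Once this cancellation is in place, the rest of the argument is a routine Moser-type induction combining Leibniz, Gagliardo--Nirenberg, and Grönwall.
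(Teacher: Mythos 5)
Your overall architecture (induction on $k$, base case from Theorems \ref{small_mass_limit} and \ref{uniqueness}, energy estimates for the inductive step, approximation by smooth sphere-valued data as in Lemma \ref{lemB1}) matches the paper's, but the inductive step itself is carried out by a genuinely different mechanism. The paper first proves, for smooth data, \emph{pointwise} bounds $\sup_{t}\abs{\partial_x^j u(t)}_{L^\infty(\R)}\leq c_{j,T}(\abs{u_0}_{\dot H^j})(1+\abs{D^{j+1}u_0}_{L^2})$ via the parabolic maximum principle applied to the Bochner-type identity for $\tfrac12(\partial_t-\partial_x^2)\abs{\partial_x^j u}^2$ (Lemma \ref{lemmaA2}), and only then runs the $L^2$ energy estimates (Lemma \ref{lemA4}), in which all intermediate trilinear terms are controlled by these sup-norm bounds; finally it proves an $H^k$ continuous-dependence estimate (Lemma \ref{lemB3}) so that the approximating solutions form a Cauchy sequence in $L^\infty(0,T;H^k)\cap L^2(0,T;H^{k+1})$. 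You replace both ingredients by Gagliardo--Nirenberg interpolation against the induction hypothesis and by weak compactness plus uniqueness. This is viable in one space dimension and is arguably more self-contained (no appeal to the maximum principle), at the price of heavier interpolation bookkeeping: terms such as $(\partial_x^j u\cdot\partial_x^{k+2-j}u)(u\cdot\partial_x^k u)$ with $2\leq j\leq k$ force you to interpolate $L^\infty$ norms of derivatives of order up to $k-1$ against $\abs{\partial_x^{k}u}_{L^2}$ and $\abs{\partial_x^{k+1}u}_{L^2}$, whereas the paper either bounds them pointwise or lowers their order using the identity \eqref{A8-bis} for $u\cdot\partial_x^k u$.

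Two soft spots, neither fatal. First, your diagnosis of the ``main obstacle'' is off: the top-order piece $(\partial_x^{k+1}u\cdot\partial_x u)(u\cdot\partial_x^k u)$ does \emph{not} produce a wrong-sign multiple of $\abs{\partial_x^{k+1}u}_{L^2}^2$; since one factor carries only one derivative and $\abs{u}=1$, Cauchy--Schwarz and Young give directly $\delta\abs{\partial_x^{k+1}u}_{L^2}^2+c(\delta)\abs{\partial_x u}_{L^\infty}^2\abs{\partial_x^k u}_{L^2}^2$, which is exactly how the paper absorbs it --- the orthogonality relations \eqref{A8-bis} are needed for the \emph{intermediate} terms, not this one. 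Second, ``passing to the limit using the uniform $\dot H^k$ bound together with uniqueness'' only yields weak-$\ast$ convergence; to identify the limit as a solution you must pass to the limit in $\abs{\partial_x u_n}^2u_n$, which requires strong convergence of $\partial_x u_n$ at some lower level. This is available (e.g.\ from the $k=1$ stability estimate or Aubin--Lions), but should be said; the paper short-circuits the issue by proving the $H^k$ Lipschitz dependence of Lemma \ref{lemB3}, which also gives convergence in the strong topology claimed in the theorem.
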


\section{Uniform estimates}
\label{energy}

If we denote $v_{\eps}=\partial_{t}u_{\eps}$, then problem \eqref{SPDE3} can be rewritten as
\begin{equation}
	\le\{\begin{array}{l}
		\ds{du_{\epsilon}=v_{\epsilon}dt, }\\[8pt]
		\ds{dv_{\epsilon}=\frac{1}{\epsilon}\Big(\partial_x^{\,2} u_{\epsilon}+\abs{\partial_x u_{\epsilon}}^{2}u_{\epsilon}-\epsilon\abs{v_{\epsilon}}^{2}u_{\epsilon}-\,\gamma_0 v_{\epsilon}\Big)dt +\frac{1}{\sqrt{\epsilon}}(u_{\epsilon}\times v_{\epsilon})dw^{\epsilon}(t), }\\[10pt]
		\ds{u_{\epsilon}(0,x)=u_{0}^{\epsilon}(x),\ \ \ v_{\epsilon}(0,x)=v_{0}^{\epsilon}(x) }.
	\end{array}\r.
\end{equation}
For the sake of simplicity of notations, we   denote
\begin{equation*}
	L^{p}(I)=L^{p}(\mathcal{O};\mathbb{R}^{3}),\ \ \ \ W^{r,p}(I)=W^{r,p}(I;\mathbb{R}^{3}),\ \ \ \ \dot{W}^{r,p}(I)=\dot{W}^{r,p}(I;\mathbb{R}^{3}),
\end{equation*}
for any interval $I\subseteq \mathbb{R}$, and any $r\in\R$ and $p\geq 1$. 
Moreover, for any $R>0$, we  denote by $I(R)$ the interval $(-R,R)$, and for any $u:\mathbb{R}\to \mathbb{R}^{3}$, we  define the scalar product
\begin{equation}
	\Inner{u,v}_{L^{2}(\partial I(R))}:=u(R)\cdot v(R)+u(-R)\cdot v(-R),
\end{equation}
and the corresponding norm
\begin{equation}
	\abs{u}_{L^{2}(\partial I(R))}^{2}:= \abs{u(R)}^{2}+\abs{u(-R)}^{2}.
\end{equation}

\subsection{Estimates in $\dot{H}^1(\R)\times L^2(\R)$}
\label{energy1}

The first result is an important consequence of a generalized It\^o's formula.
\begin{Lemma}
If $u_{\epsilon}$ is a strong solution to \eqref{SPDE3}, with initial condition $(u_0,v_0)$ belonging to $\big(H^{2}_{\text{loc}}(\mathbb{R})\times H^{1}_{\text{loc}}(\mathbb{R})   \big)\cap \mathcal{M}$, then
	for every  $0<T<R$ and $k=0,1$	we have
	\begin{equation}\label{ito1}
		\begin{array}{ll}
			&\ds{ d\Big( \abs{\partial_x^{\,k+1}u_{\epsilon}(t)}_{L^{2}(I((R-t)/\sqrt{\epsilon}))}^{2}+\epsilon\abs{\partial_x^{\,k}v_{\epsilon}(t)}_{L^{2}(I((R-t)/\sqrt{\epsilon}))}^{2} \Big) }\\[10pt]
			\vs
			&\ds{ \leq -2\,\gamma_0\abs{\partial_x^{\,k}v_{\epsilon}(t)}_{L^{2}(I((R-t)/\sqrt{\epsilon}))}^{2}dt  + 2\Inner{\partial_x^{\,k}v_{\epsilon}(t),\partial_x^{\,k}\big(\abs{\partial_x u_{\epsilon}(t)}^{2}u_{\epsilon}(t)\big)  }_{L^{2}(I((R-t)/\sqrt{\epsilon}))}dt  }\\
			\vs
			&\ds{  \quad -2\epsilon\Inner{\partial_x^{\,k}v_{\epsilon}(t), \partial_x^{\,k}\big(\abs{v_{\epsilon}(t)}^{2}u_{\epsilon}(t)\big)}_{L^{2}( I((R-t)/\sqrt{\epsilon}) )}dt  +\sum_{i=1}^{\infty}\big\lvert \partial_x^{\,k}\big((u_{\epsilon}(t)\times v_{\epsilon}(t))\xi_{i}^{\epsilon}\big)\big\rvert_{L^{2}(I((R-t)/\sqrt{\epsilon}))}^{2}dt}\\\vs
			&\ds{ \quad \quad \quad \quad \quad \quad+2\sqrt{\epsilon}\,\sigma_{k,\epsilon,I((R-t)/\sqrt{\epsilon})}\big(u_{\epsilon}(t),v_{\epsilon}(t)\big)dw^{\epsilon}(t),\ \ \ \ \ \ \ \ \ t\in[0,T], }
		\end{array}
	\end{equation}
	where,  we denoted 
	\begin{equation*}
		\sigma_{k,\epsilon,I}(u,v)\xi:= \Inner{\partial_x^{\,k}v, \partial_x^{\,k}((u\times v)\xi)}_{L^{2}(I)},\ \ \ \ (u,v)\in H^{k+1}(I)\times H^{k}(I),\ \ \ \xi\in K_{\epsilon}.
	\end{equation*}

\end{Lemma}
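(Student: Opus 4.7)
The plan is to apply It\^o's formula to the two summands $\abs{\partial_x^{k+1}u_\epsilon(t)}^2_{L^2(I(\rho(t)))}$ and $\epsilon\,\abs{\partial_x^{k}v_\epsilon(t)}^2_{L^2(I(\rho(t)))}$ separately, where I abbreviate $\rho(t):=(R-t)/\sqrt{\epsilon}$, and then to sum the two resulting identities. For a functional of the form $t\mapsto\int_{-\rho(t)}^{\rho(t)}g(t,x)\,dx$ with semimartingale integrand, the appropriate Leibniz-type rule supplements the usual It\^o evolution on the frozen interval with a boundary contribution $\rho'(t)\big(g(t,\rho(t))+g(t,-\rho(t))\big)\,dt$ at the two moving endpoints. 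Since $\rho'(t)=-\epsilon^{-1/2}$, this produces a non-positive boundary term $-\epsilon^{-1/2}\abs{\partial_x^{k+1}u_\epsilon(t)}^2_{L^2(\partial I(\rho(t)))}\,dt$ for the first summand and $-\sqrt{\epsilon}\,\abs{\partial_x^{k}v_\epsilon(t)}^2_{L^2(\partial I(\rho(t)))}\,dt$ for the second.

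For the bulk evolution of the first summand I use $du_\epsilon=v_\epsilon\,dt$, which gives $2\Inner{\partial_x^{k+1}u_\epsilon,\partial_x^{k+1}v_\epsilon}_{L^2(I(\rho))}\,dt$, and then integrate by parts on $I(\rho)$ to obtain a boundary cross term $2[\partial_x^{k+1}u_\epsilon\cdot\partial_x^{k}v_\epsilon]_{-\rho}^{\rho}\,dt$ together with an interior contribution $-2\Inner{\partial_x^{k+2}u_\epsilon,\partial_x^{k}v_\epsilon}_{L^2(I(\rho))}\,dt$. For the second summand I apply It\^o to equation \eqref{SPDE3} written in It\^o form, obtaining the drift $2\Inner{\partial_x^{k}v_\epsilon,\partial_x^{k}(\partial_x^{\,2}u_\epsilon+\abs{\partial_x u_\epsilon}^2 u_\epsilon-\epsilon\abs{v_\epsilon}^2 u_\epsilon-\gamma_0 v_\epsilon)}_{L^2(I(\rho))}\,dt$, the Hilbert-Schmidt correction $\sum_{i}\abs{\partial_x^{k}((u_\epsilon\times v_\epsilon)\xi_i^\epsilon)}^2_{L^2(I(\rho))}\,dt$, and the martingale increment $2\sqrt{\epsilon}\,\sigma_{k,\epsilon,I(\rho)}(u_\epsilon,v_\epsilon)\,dw^\epsilon(t)$. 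The crucial algebraic point is that the piece $+2\Inner{\partial_x^{k}v_\epsilon,\partial_x^{k+2}u_\epsilon}$ coming from differentiating the linear principal part $k$ times cancels exactly the interior contribution $-2\Inner{\partial_x^{k+2}u_\epsilon,\partial_x^{k}v_\epsilon}$ produced by the integration by parts above, so that only the boundary cross term survives.

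After this cancellation, the terms located at the endpoint $x=\pm\rho$ take the form $\pm 2\,\partial_x^{k+1}u_\epsilon(\pm\rho)\cdot\partial_x^{k}v_\epsilon(\pm\rho)-\epsilon^{-1/2}\abs{\partial_x^{k+1}u_\epsilon(\pm\rho)}^2-\sqrt{\epsilon}\,\abs{\partial_x^{k}v_\epsilon(\pm\rho)}^2$. The key step is to recognise this as the completed square $-\epsilon^{-1/2}\abs{\partial_x^{k+1}u_\epsilon(\pm\rho)\mp\sqrt{\epsilon}\,\partial_x^{k}v_\epsilon(\pm\rho)}^2\le 0$, a manifestation of the finite propagation speed of the rescaled wave operator; dropping these non-positive quantities yields the asserted inequality \eqref{ito1}. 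The main technical obstacle is justifying rigorously the Leibniz-type It\^o formula on the moving interval $I(\rho(t))$, since the classical functional It\^o formula is stated for fixed domains: I would handle this by regularising the indicator $\chi_{I(\rho(t))}$ by a $C^{1}$-in-time cutoff, applying the standard It\^o formula to the resulting smoothed energies, and passing to the limit. The local regularity $(u_\epsilon,v_\epsilon)\in H^{2}_{\text{loc}}(\mathbb{R})\times H^{1}_{\text{loc}}(\mathbb{R})$ provided by Theorem \ref{system_wellposedness_loc} ensures that the one-dimensional Sobolev traces of $\partial_x^{k+1}u_\epsilon$ and $\partial_x^{k}v_\epsilon$ at the endpoints $\pm\rho(t)$ are well-defined for $k=0,1$, so the boundary algebra above makes sense.
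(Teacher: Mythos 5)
Your proposal is correct and follows essentially the same route as the paper: Itô's formula on the shrinking interval produces the negative boundary terms $-\epsilon^{-1/2}\abs{\partial_x^{k+1}u_\epsilon}^2_{L^2(\partial I)}$ and $-\sqrt{\epsilon}\abs{\partial_x^{k}v_\epsilon}^2_{L^2(\partial I)}$, integration by parts in the cross term leaves only the boundary contribution $2\big[\partial_x^{k+1}u_\epsilon\cdot\partial_x^{k}v_\epsilon\big]_{-\rho}^{\rho}$, and your completion of the square is exactly the weighted Young inequality the paper uses to absorb it. The only cosmetic difference is that you integrate by parts in the $u$-energy term (introducing a transient $\partial_x^{k+2}u_\epsilon$ that cancels against the drift) rather than in $\Inner{\partial_x^{k}v_\epsilon,\partial_x^{k+2}u_\epsilon}$ as the paper does, and both arguments are equally formal at that step, with the rigorous justification deferred to the mollification/approximation scheme of Brze\'zniak--Ondrej\'at.
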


\begin{proof}
	The proof of \eqref{ito1} relies on the It\^{o} formula applied to 
	\begin{equation*}
		(t,v_{\epsilon})\mapsto \abs{\partial_x^{\,k}v_{\epsilon}}_{L^{2}(I((R-t)/\sqrt{\epsilon}))}^{2}.
	\end{equation*}
	For the rigorous proof, we refer to \cite[proof of Lemma 6.1]{brz2007}. Here, we  present some informal arguments.
	
	If $R>T$, then for every $t\in[0,T]$,
	\begin{equation}\label{ito1_step1}
		\begin{array}{ll}
			&\ds{ d\abs{\partial_x^{\,k}v(t)}_{L^{2}(I((R-t)/\sqrt{\epsilon}))}^{2}  =  -\frac{1}{\sqrt{\epsilon}}\abs{\partial_x^{\,k}v_{\epsilon}(t)}_{L^{2}(\partial I((R-t)/\sqrt{\epsilon}) )}^{2}dt  }\\
			\vs
			&\ds{\quad\quad + \frac{2}{\epsilon}\Big( \Inner{\partial_x^{\,k}v_{\epsilon}(t),\partial_x^{\,k}(\partial_x^{\,2} u_{\epsilon}(t)) }_{L^{2}(I((R-t)/\sqrt{\epsilon}) )}  +\Inner{\partial_x^{\,k}v_{\epsilon}(t),\partial_x^{\,k}\big(\abs{\partial_x u_{\epsilon}(t)}^{2}u_{\epsilon}(t)\big) }_{L^{2}(I((R-t)/\sqrt{\epsilon}) )} }\\
			\vs
			&\ds{\quad\quad - \epsilon\Inner{\partial_x^{\,k}v_{\epsilon}(t),\partial_x^{\,k}\big(\abs{v_{\epsilon}(t)}^{2}u_{\epsilon}(t)\big) }_{L^{2}(I((R-t)/\sqrt{\epsilon}) )}  -\gamma_{0}\abs{\partial_x^{\,k}v_{\epsilon}(t)}_{L^{2}(I((R-t)/\sqrt{\epsilon}) )}^{2} \Big)dt }\\
			\vs
			&\ds{\quad \quad  + \frac{1}{\epsilon}\sum_{i=1}^{\infty}\big\lvert \partial_x^{\,k}\big((u_{\epsilon}(t)\times v_{\epsilon}(t))\xi_{i}^{\epsilon}\big)\big\rvert_{L^{2}(I((R-t)/\sqrt{\epsilon}) )}^{2}dt   +\frac{2}{\sqrt{\epsilon}}\,\sigma_{k,\epsilon, I((R-t)/\sqrt{\epsilon})}(u_{\epsilon}(t),v_{\epsilon}(t))dw^{\epsilon}(t) }.
		\end{array}
	\end{equation}
	Note that for any $r>0$, 
	\begin{equation*}
		\begin{array}{ll}
			&\ds{ \Inner{\partial_x^{\,k}v, \partial_x^{\,k}(\partial_x^{\,2} u)}_{L^{2}(I(r))} = -\int_{-r}^{r}\partial_x^{\,k+1}v(x)\cdot \partial_x^{\,k+1}u(x) dx+(\partial_x^{\,k+1}u\cdot \partial_x^{\,k}v)\big|_{-r}^{r}  }\\
			\vs
			&\ds{\quad\quad\quad\quad \quad\quad \leq  -\Inner{\partial_x^{\,k+1}u,\partial_x^{\,k+1}v}_{L^{2}(I(r))}+\frac{1}{2\sqrt{\epsilon}}\abs{\partial_x^{\,k+1}u}_{L^{2}(\partial I(r))}^{2} +\frac{\sqrt{\epsilon}}{2}\abs{\partial_x^{\,k}v}_{L^{2}(\partial I(r))}^{2}  },
		\end{array}
	\end{equation*}
	so that we have 
	\begin{equation}\label{ito1_step2}
		2\Inner{\partial_x^{\,k+1}u,\partial_x^{\,k+1}v}_{L^{2}(I(r))}+2\Inner{\partial_x^{\,k}v,\partial_x^{\,k}(\partial_x^{\,2} u)}_{L^{2}(I(r))}\leq \frac{1}{\sqrt{\epsilon}}\abs{\partial_x^{\,k+1}u}_{L^{2}(\partial I(r))}^{2} + \sqrt{\epsilon}\abs{\partial_x^{\,k}v}_{L^{2}(\partial I(r))}^{2}.
	\end{equation}
Moreover,	
	\begin{equation}\label{ito1_step3}
		\frac{d}{dt}\abs{\partial_x^{\,k+1}u(t)}_{L^{2}(I(R-t) )}^{2}= 2\Inner{\partial_x^{\,k+1}u(t),\partial_x^{\,k+1}v(t)}_{L^{2}(I(R-t) )} - \frac{1}{\sqrt{\epsilon}}\abs{\partial_x^{\,k+1}u(t)}_{ L^{2}(\partial I(R-t) )}^{2},
	\end{equation}
	and combining this with \eqref{ito1_step2} we get
	\begin{equation}\label{fine40}2\Inner{\partial_x^{\,k}v,\partial_x^{\,k}(\partial_x^{\,2} u)}_{L^{2}(I(R-t))}\leq -\frac d{dt}\vert \partial_x^{k+1} u\vert_{L^2(I(R-t))}^2+\sqrt{\e} \,\abs{\partial_x^{\,k}v}_{L^{2}(\partial I(R-t))}^{2}.\end{equation}
Finally, if we plug \eqref{fine40} into \eqref{ito1_step1}	
	we obtain  \eqref{ito1}.

\end{proof}

\begin{Lemma}\label{lemmal1} For every $0<T<R$ and $t \in\,[0,T]$, we have	\begin{equation}\label{uniform_bound1_loc}
		\begin{array}{ll}
			&\ds{\abs{\partial_x u_{\epsilon}(t)}_{L^{2}(I((R-t)/\sqrt{\epsilon}))}^{2}+ \epsilon\abs{\partial_t u(t)}_{L^{2}(I((R-t)/\sqrt{\epsilon}))}^{2}+2\gamma \int_{0}^{t}\abs{\partial_t u(s)}_{L^{2}(I((R-s)/\sqrt{\epsilon}))}^{2}ds }\\
			\vs
			&\ds{\quad\quad \quad\quad \quad\quad \leq \abs{D u_0^{\epsilon}}_{L^{2}(\mathbb{R})}^{2}+\epsilon\,\abs{v_0^{\epsilon}}_{L^{2}(\mathbb{R})}^{2},\ \ \ \  \ \ \mathbb{P}-\text{a.s.}  }
		\end{array}
\end{equation}\end{Lemma}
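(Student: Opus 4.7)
The plan is to apply the Itô-type inequality \eqref{ito1} with $k=0$ and exploit pathwise the sphere constraints $|u_{\epsilon}(t,x)|=1$ and $u_{\epsilon}(t,x)\cdot v_{\epsilon}(t,x)=0$ (the latter obtained by differentiating $|u_{\epsilon}|^{2}\equiv 1$ in $t$) in order to collapse every nonlinear, noise-correction, and stochastic-integral contribution on the right-hand side into one term proportional to $|v_{\epsilon}|^{2}$.

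First, both pointwise products $v_{\epsilon}\cdot\big(|\partial_{x}u_{\epsilon}|^{2}u_{\epsilon}\big)=|\partial_{x}u_{\epsilon}|^{2}\,(u_{\epsilon}\cdot v_{\epsilon})$ and $v_{\epsilon}\cdot\big(|v_{\epsilon}|^{2}u_{\epsilon}\big)=|v_{\epsilon}|^{2}(u_{\epsilon}\cdot v_{\epsilon})$ vanish identically on $\mathcal{M}$, so the two $L^{2}$-inner products on the right of \eqref{ito1} are zero. For the noise-correction term, the elementary identity $|u_{\epsilon}\times v_{\epsilon}|^{2}=|u_{\epsilon}|^{2}|v_{\epsilon}|^{2}-(u_{\epsilon}\cdot v_{\epsilon})^{2}=|v_{\epsilon}|^{2}$, combined with $\sum_{k=1}^{\infty}|\xi_{k}^{\epsilon}(x)|^{2}=c_{0}$ from the previous subsection, gives (writing $I_{t}:=I((R-t)/\sqrt{\epsilon})$ for brevity) the identity $\sum_{i=1}^{\infty}\big|(u_{\epsilon}(t)\times v_{\epsilon}(t))\,\xi_{i}^{\epsilon}\big|_{L^{2}(I_{t})}^{2}=c_{0}\,|v_{\epsilon}(t)|_{L^{2}(I_{t})}^{2}$. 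Finally, the integrand of the stochastic term is $\sigma_{0,\epsilon,I_{t}}(u_{\epsilon},v_{\epsilon})\xi=\int_{I_{t}}v_{\epsilon}\cdot(u_{\epsilon}\times v_{\epsilon})\,\xi\,dx=0$, since $u_{\epsilon}\times v_{\epsilon}\perp v_{\epsilon}$; this is the single point where it is essential to argue pathwise rather than in expectation, and it is the only delicate step of the proof.

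Substituting these four cancellations into \eqref{ito1} yields the differential inequality $d\big(|\partial_{x}u_{\epsilon}(t)|_{L^{2}(I_{t})}^{2}+\epsilon|v_{\epsilon}(t)|_{L^{2}(I_{t})}^{2}\big)\leq (-2\gamma_{0}+c_{0})\,|v_{\epsilon}(t)|_{L^{2}(I_{t})}^{2}\,dt=-2\gamma\,|v_{\epsilon}(t)|_{L^{2}(I_{t})}^{2}\,dt$, by the identity $\gamma_{0}=\gamma+c_{0}/2$ from \eqref{gamma0}. Integrating from $0$ to $t$ and bounding the initial norms on the finite interval $I(R/\sqrt{\epsilon})$ by the corresponding full-line norms produces \eqref{uniform_bound1_loc} $\mathbb{P}$-almost surely. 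No further technical obstacle arises, since the boundary terms induced by the shrinking finite-speed-of-propagation cone are already absorbed into the inequality direction of \eqref{ito1}.
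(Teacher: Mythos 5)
Your proof is correct and follows essentially the same route as the paper: apply \eqref{ito1} with $k=0$, use the constraint $u_{\epsilon}\cdot v_{\epsilon}=0$ to annihilate the two nonlinear inner products, identify the It\^o correction as $c_{0}\,\abs{v_{\epsilon}}_{L^{2}}^{2}$ via \eqref{identity1}, observe that $\sigma_{0,\epsilon,I}(u_{\epsilon},v_{\epsilon})\equiv 0$ so the martingale term vanishes pathwise, and conclude with $\gamma_{0}=\gamma+c_{0}/2$. No gaps.
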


\begin{proof}  
If $(u,v)\in (H^{1}(I)\times L^{2}(I))\cap \mathcal{M}$, then $u\cdot v=0$, and due to \eqref{identity1} this gives, we have
	\begin{equation*}
		\Vert u\times v\Vert _{\L_{2}(K_{\epsilon},L^{2}(I))}^{2}  = \sum_{i=1}^{\infty}\big\lvert (u\times v)\xi_{i}^{\epsilon}\big\rvert_{L^{2}(I)}^{2} = c_{0}\abs{v}_{L^{2}(I)}^{2}.
	\end{equation*}
	Hence, thanks to \eqref{ito1} with $k=0$ 
	\begin{equation*}
		\begin{array}{ll}
			&\ds{ d\Big( \abs{\partial_x u_{\epsilon}(t)}_{L^{2}(I((R-t)/\sqrt{\epsilon}) )}^{2}+\epsilon\abs{v_{\epsilon}(t)}_{L^{2}(I((R-t)/\sqrt{\epsilon}) )}^{2}
				\Big) \leq -2\,\gamma_0\abs{v_{\epsilon}(t)}_{L^{2}(I((R-t)/\sqrt{\epsilon}) )}^{2}dt  }\\
			\vs
			&\ds{\quad \quad \quad  + \sum_{i=1}^{\infty}\big\lvert (u_{\epsilon}(t)\times v_{\epsilon}(t))\xi_{i}^{\epsilon}\big\rvert_{L^{2}(I((R-t)/\sqrt{\epsilon}) )}^{2}dt= -2\gamma\abs{v_{\epsilon}(t)}_{L^{2}(I((R-t)/\sqrt{\epsilon}) )}^{2}dt,}
		\end{array}
	\end{equation*}
	and \eqref{uniform_bound1_loc} follows once we integrate with respect to  time.\end{proof}

\subsection{Estimates in $\dot{H}^2(\R)\times H^1(\R)$ and proof of Theorem \ref{system_wellposedness}
}
\label{energy2}

\begin{Lemma}
For every  $T>0$ and $R>T+1$ and for  every $\epsilon\in (0,1)$ we have
	\begin{equation}\label{uniform_bound2_loc}
		\begin{array}{ll}
			&\ds{ \E\sup_{t\in[0,T]}\Big(\abs{\partial_x ^{2}u_{\epsilon}(t)}_{L^{2}(I((R-t)/\sqrt{\epsilon}))}^{2}+\epsilon\abs{\partial_x v_{\epsilon}(t)}_{L^{2}(I((R-t)/\sqrt{\epsilon}))}^{2}\Big)   }\\
			\vs
			&\ds{\quad\quad +\E\int_{0}^{T}\Big(\abs{\partial_x v_{\epsilon}(s)}_{L^{2}(I((R-s)/\sqrt{\epsilon}))}^{2}+\abs{\partial_x ^{2}u_{\epsilon}(s)}_{L^{2}(I((R-s)/\sqrt{\epsilon}))}^{2}\abs{v_{\epsilon}(s)}_{L^{2}(I((R-s)/\sqrt{\epsilon}))}^{2}\Big) ds }\\
			\vs 
			&\ds{\quad\quad  \quad\quad  \quad\quad  \quad\quad  \lesssim_{\,T,\Lambda_1}\abs{D^{2}u_{0}^{\epsilon}}_{L^{2}(\R)}^{2}+\epsilon\abs{D v_0^{\epsilon}}_{L^{2}(\R)}^{2}+\frac{1}\epsilon}.
		\end{array}
	\end{equation} 
	In particular, 
	\begin{equation}\label{uniform_bound2_loc_particular}
		\begin{array}{ll}
			&\ds{  \sup_{R>1}\ \E\Bigg(\sup_{t\in[0,T]}\Big(\abs{\partial_{x}^{2}u_{\epsilon}(t)}_{L^{2}(-R,R)}^{2}+\epsilon\abs{\partial_{x}v_{\epsilon}(t)}_{L^{2}(-R,R)}^{2}\Big)+\int_{0}^{T}\abs{\partial_{x}v_{\epsilon}(t)}_{L^{2}(-R,R)}^{2}dt\Bigg)   }\\
			\vs 
			&\ds{ \quad\quad\quad\quad  \lesssim_{\,T,\Lambda_1}\abs{D^{2}u_{0}^{\epsilon}}_{L^{2}(\R)}^{2}+\epsilon\abs{D v_0^{\epsilon}}_{L^{2}(\R)}^{2}+\frac{1}\epsilon,\ \ \ \ \epsilon\in(0,1] }.
		\end{array}
	\end{equation}
\end{Lemma}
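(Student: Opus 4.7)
The plan is to apply the It\^o identity \eqref{ito1} with $k=1$ to the energy $Z_\epsilon(t):=\abs{\partial_x^{\,2}u_\epsilon(t)}_{L^2(I((R-t)/\sqrt\epsilon))}^2 + \epsilon\abs{\partial_x v_\epsilon(t)}_{L^2(I((R-t)/\sqrt\epsilon))}^2$, bound each contribution on the right-hand side, obtain a Gronwall-type control of $\E Z_\epsilon(t)$ first, and then recover the supremum by a Burkholder-Davis-Gundy estimate. The whole argument rests on the first-order bound \eqref{uniform_bound1_loc}, which in particular gives $\int_0^T\abs{v_\epsilon(s)}_{L^2}^2\,ds\leq \Lambda_1^2/(2\gamma)$ deterministically.

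The delicate term is the quadratic variation. Expanding $\partial_x((u_\epsilon\times v_\epsilon)\xi_i^\epsilon) = \partial_x(u_\epsilon\times v_\epsilon)\xi_i^\epsilon + (u_\epsilon\times v_\epsilon)(\xi_i^\epsilon)'$ and using $\sum_i\abs{\xi_i^\epsilon}^2 = c_0$, $\sum_i\abs{(\xi_i^\epsilon)'}^2 = c_1/\epsilon$, together with $\sum_i\xi_i^\epsilon(\xi_i^\epsilon)' = \tfrac12\partial_x c_0 = 0$, yields pointwise
\[
\sum_{i=1}^\infty\abs{\partial_x\bigl((u_\epsilon\times v_\epsilon)\xi_i^\epsilon\bigr)}^2 = c_0\,\abs{\partial_x(u_\epsilon\times v_\epsilon)}^2 + \tfrac{c_1}{\epsilon}\,\abs{u_\epsilon\times v_\epsilon}^2.
\]
Using $\abs{u_\epsilon}=1$ and $u_\epsilon\cdot v_\epsilon = 0$, the second piece integrates to at most $(c_1/\epsilon)\abs{v_\epsilon}_{L^2}^2$, which after time integration and \eqref{uniform_bound1_loc} produces the divergent contribution $C/\epsilon$ on the right of \eqref{uniform_bound2_loc}; the first piece is at most $C\abs{\partial_x u_\epsilon}_{L^\infty}^2\abs{v_\epsilon}_{L^2}^2 + C\abs{\partial_x v_\epsilon}_{L^2}^2$, with the last term absorbed by the dissipation $-2\gamma_0\abs{\partial_x v_\epsilon}_{L^2}^2$.

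For the two drift terms $2\Inner{\partial_x v_\epsilon,\partial_x(\abs{\partial_x u_\epsilon}^2 u_\epsilon)}$ and $-2\epsilon\Inner{\partial_x v_\epsilon,\partial_x(\abs{v_\epsilon}^2 u_\epsilon)}$, I would expand the inner derivatives, invoke the differentiated constraint $u_\epsilon\cdot\partial_x v_\epsilon = -v_\epsilon\cdot\partial_x u_\epsilon$, and apply the 1D Gagliardo-Nirenberg bound $\abs{\partial_x u_\epsilon}_{L^\infty}^2\leq 2\abs{\partial_x u_\epsilon}_{L^2}\abs{\partial_x^{\,2} u_\epsilon}_{L^2}\leq 2\Lambda_1\abs{\partial_x^{\,2} u_\epsilon}_{L^2}$. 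The principal outcome is a bound of the form $\abs{\Inner{\partial_x v_\epsilon, \partial_x(\abs{\partial_x u_\epsilon}^2 u_\epsilon)}}\leq C\Lambda_1\abs{\partial_x^{\,2}u_\epsilon}_{L^2}^2\abs{v_\epsilon}_{L^2}$, which by Young splits into $\tfrac12\abs{\partial_x^{\,2}u_\epsilon}_{L^2}^2\abs{v_\epsilon}_{L^2}^2$, absorbed by the $\abs{\partial_x^{\,2}u}_{L^2}^2\abs{v}_{L^2}^2$ integral on the LHS of \eqref{uniform_bound2_loc}, plus a Gronwall-compatible piece dominated by $C\,Z_\epsilon$. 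The $\epsilon$-weighted drift is handled analogously, the $\epsilon$-prefactor combined with $\epsilon\abs{v_\epsilon}_{L^2}^2\leq \Lambda_1^2$ controlling the otherwise-dangerous quartic-in-$v_\epsilon$ contributions.

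Taking expectations, the stochastic integral drops out and Gronwall applied to $\E Z_\epsilon(t) + \E\int_0^t(\abs{\partial_x v_\epsilon}_{L^2}^2 + \abs{\partial_x^{\,2}u_\epsilon}_{L^2}^2\abs{v_\epsilon}_{L^2}^2)\,ds \leq Z_\epsilon(0) + C/\epsilon + C\int_0^t \E Z_\epsilon(s)\,ds$ yields $\E Z_\epsilon(T)\leq C(Z_\epsilon(0) + 1/\epsilon)$. To upgrade to the supremum in the LHS of \eqref{uniform_bound2_loc}, I take $\sup_{t\leq T}$ before expectation and apply Burkholder-Davis-Gundy to the stochastic integral: its quadratic variation is $4\epsilon\sum_i\Inner{\partial_x v_\epsilon,\partial_x((u_\epsilon\times v_\epsilon)\xi_i^\epsilon)}_{L^2}^2$, which by Cauchy-Schwarz is bounded by $4\epsilon\abs{\partial_x v_\epsilon}_{L^2}^2\cdot Q(t)$ where $Q(t)$ is the integrand already controlled above, so a Young split reinvests half of $\E\sup_t\epsilon\abs{\partial_x v_\epsilon}_{L^2}^2$ into the left-hand side. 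The particular case \eqref{uniform_bound2_loc_particular} follows by applying \eqref{uniform_bound2_loc} with $R$ replaced by $R'\sqrt\epsilon + T + 1$ so that $(-R',R')\subset I((R-t)/\sqrt\epsilon)$ for all $t\leq T$, and then taking $\sup$ over $R'>1$. The main obstacle is this interplay: the structural $1/\epsilon$-divergence is unavoidable, and a naive bound on $\partial_x(\abs{\partial_x u}^2 u)$ gives $\abs{\partial_x^{\,2}u}^3$ which cannot be closed --- only the one-dimensional Gagliardo-Nirenberg bound $\abs{\partial_x u}_{L^\infty}^2\lesssim \Lambda_1\abs{\partial_x^{\,2}u}_{L^2}$ reduces this to $\abs{\partial_x^{\,2}u}^2\abs{v}$, which admits a Young split producing precisely the mixed product $\abs{\partial_x^{\,2}u}_{L^2}^2\abs{v}_{L^2}^2$ sitting on the LHS of \eqref{uniform_bound2_loc}.
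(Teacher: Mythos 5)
Your overall architecture (It\^o formula \eqref{ito1} with $k=1$, the one-dimensional Gagliardo--Nirenberg bound to reduce $\abs{\partial_x u_\epsilon}_{L^\infty}^2$ to $\Lambda_1\abs{\partial_x^{2}u_\epsilon}_{L^2}$, the $c_1/\epsilon$ part of the quadratic variation as the source of the $1/\epsilon$, and a Burkholder--Davis--Gundy plus Young split to recover the supremum) is the same as the paper's. The gap is in the Gronwall step. After your estimates the right-hand side of the differential inequality contains terms of the form $C\Lambda_1\abs{\partial_x^{2}u_\epsilon}_{L^2}^2\abs{v_\epsilon}_{L^2}$ (cf.\ the term $4\abs{\partial_x u_\epsilon}_{L^2}\abs{\partial_x^{2}u_\epsilon}_{L^2}^2\abs{v_\epsilon}_{L^2}$ in \eqref{sm2}). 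You propose to Young-split this into $\tfrac12\abs{\partial_x^{2}u_\epsilon}_{L^2}^2\abs{v_\epsilon}_{L^2}^2$, ``absorbed by the mixed product on the LHS of \eqref{uniform_bound2_loc}'', plus a piece $\lesssim Z_\epsilon$. This is circular: the mixed product $\int\abs{\partial_x^{2}u_\epsilon}_{L^2}^2\abs{v_\epsilon}_{L^2}^2\,ds$ appears in the \emph{conclusion} of the lemma, but it does not occur with a favorable sign anywhere in the identity \eqref{ito1} --- the only dissipative term there is $-2\gamma_0\abs{\partial_x v_\epsilon}_{L^2}^2$ --- so there is nothing to absorb it into. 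Likewise, the step ``taking expectations, Gronwall applied to $\E Z_\epsilon(t)\le Z_\epsilon(0)+C/\epsilon+C\int_0^t\E Z_\epsilon(s)\,ds$'' does not follow: the true Gronwall coefficient is the random, time-dependent quantity $\abs{v_\epsilon(s)}_{L^2}$ (or its square), of which \eqref{uniform_bound1_loc} controls only the time integral, not the pointwise value, and $\E\big[\abs{v_\epsilon(s)}_{L^2}Z_\epsilon(s)\big]$ cannot be replaced by $C\,\E Z_\epsilon(s)$.

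The device the paper uses to fix both problems simultaneously is the exponential weight $Y_{\epsilon,\lambda}(t)=\exp\big(-\lambda\int_0^t(1+\abs{v_\epsilon(s)}_{L^2}^2)\,ds\big)$: differentiating $Y_{\epsilon,\lambda}\cdot Z_\epsilon$ produces the extra negative term $-\lambda(1+\abs{v_\epsilon}_{L^2}^2)\big(\abs{\partial_x^{2}u_\epsilon}_{L^2}^2+\epsilon\abs{\partial_x v_\epsilon}_{L^2}^2\big)$, which for $\lambda$ large (depending only on $\Lambda_1$) absorbs the cubic terms above \emph{and} generates the mixed product appearing on the left of \eqref{uniform_bound2_loc}; the weight is then removed at the end using the pathwise bound $\int_0^T(1+\abs{v_\epsilon}_{L^2}^2)\,ds\le T+\Lambda_1^2/(2\gamma)$ from \eqref{uniform_bound1_loc}. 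A pathwise (stochastic) Gronwall lemma with an integrable random coefficient, applied before taking expectations, would serve the same purpose, but some such device is indispensable; as written, your argument does not close.
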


\begin{proof} By applying \eqref{ito1} with $k=1$,  we have  
	\begin{equation}\label{sm6}
		\begin{array}{ll}
			&\ds{ d\Big( \abs{\partial_x^{2}u_{\epsilon}(t)}_{L^{2}(I((R-t))}^{2} +\abs{\partial_xv_{\epsilon}(t)}_{L^{2}(I(((R-t)/\sqrt{\epsilon}))}^{2} \Big)    }\\
			\vs
			&\ds{\leq-2\,\gamma_0\abs{\partial_xv_{\epsilon}(t)}_{L^{2}(I(((R-t)/\sqrt{\epsilon}))}^{2}dt +2\Inner{\partial_xv_{\epsilon}(t),\partial_x\big(\abs{\partial_x u_{\epsilon}(t)}^{2}u_{\epsilon}(t)\big) }_{L^{2}(I(((R-t)/\sqrt{\epsilon}))}dt  }\\
			\vs
			&\ds{\quad - 2\epsilon\Inner{\partial_x v(t),\partial_x \big(\abs{v(t)}^{2}u(t)\big) }_{L^{2}(I((R-t))}dt   + \sum_{i=1}^{\infty}\big\lvert \partial_x \big((u_{\epsilon}(t)\times v_{\epsilon}(t))\xi_{i}^{\epsilon}\big) \big\rvert_{L^{2}(I((R-t)/\sqrt{\epsilon}))}^{2}dt   }\\
			\vs
			&\ds{\quad\quad \quad\quad \quad\quad \quad\quad  +2\sqrt{\epsilon}\,\sigma_{1,\epsilon,I((R-t)/\sqrt{\epsilon})}(u_{\epsilon}(t),v_{\epsilon}(t))dw^{\epsilon}(t)  }.
		\end{array}
	\end{equation}
Thus, if for every $\epsilon\in(0,1)$ and $\lambda>0$ we define 
	\begin{equation}
		Y_{\epsilon,\lambda}(t):= \exp\Bigg(-\lambda\int_{0}^{t}\Big(1+\abs{v_{\epsilon}(s)}_{L^{2}(I((R-s)/\sqrt{\epsilon}))}^{2}\Big)ds\Bigg),\ \ \ \ t\in[0,T],
	\end{equation}
we get
	\begin{equation*}
		\begin{array}{ll}
			&\ds{ \frac{1}{2}d\Big(Y_{\epsilon,\lambda}(t)\Big( \abs{\partial_{x}^{2}u_{\epsilon}(t)}_{L^{2}(I((R-t)/\sqrt{\epsilon}))}^{2}+\epsilon\abs{\partial_{x}v_{\epsilon}(t)}_{L^{2}(I((R-t)/\sqrt{\epsilon}))}^{2}  \Big) \Big)   }\\
			\vs
			&\ds{ \leq Y_{\epsilon,\lambda}(t)\Big(-\frac{\lambda}{2}\Big(1+\abs{v_{\epsilon}(t)}_{L^{2}(I((R-t)/\sqrt{\epsilon}))}^{2}\Big)\Big(\abs{\partial_{x}^{2}u_{\epsilon}(t)}_{L^{2}(I((R-t)/\sqrt{\epsilon}))}^{2}+\epsilon\abs{\partial_{x}v_{\epsilon}(t)}_{L^{2}(I((R-t)/\sqrt{\epsilon}))}^{2}\Big) }\\
			\vs
			&\ds{  \quad\quad -\gamma_{0}\abs{\partial_{x}v_{\epsilon}(t)}_{L^{2}(I((R-t)/\sqrt{\epsilon}))}^{2} +\Inner{\partial_{x}v_{\epsilon}(t),\partial_{x}\big(\abs{\partial_{x} u_{\epsilon}(t)}^{2}u_{\epsilon}(t)\big) }_{L^{2}(I((R-t)/\sqrt{\epsilon}))}  }\\
			\vs
			&\ds{\quad\quad -\epsilon\Inner{\partial_{x}v_{\epsilon}(t),\partial_{x}\big(\abs{v_{\epsilon}(t)}^{2}u_{\epsilon}(t)\big)}_{L^{2}(I((R-t)/\sqrt{\epsilon}))} +\frac 12 \sum_{i=1}^{\infty}\big\lvert \partial_{x}((u_{\epsilon}(t)\times v_{\epsilon}(t))\xi_{i}^{\epsilon})\big\rvert_{L^{2}(I((R-t)/\sqrt{\epsilon}))}^{2}    \Big)dt}\\
			\vs &\ds{\quad\quad\quad\quad\quad\quad\quad +\sqrt{\epsilon}\,Y_{\epsilon,\lambda}(t)\sigma_{1,\epsilon,I((R-t)/\sqrt{\epsilon})}(u_{\epsilon}(t),v_{\epsilon}(t))dw^{\epsilon}(t) }.
		\end{array}
	\end{equation*}
	For any interval $(a,b)\subset \mathbb{R}$, the Gagliardo-Nirenberg inequality gives
	\begin{equation}\label{GN_inequality}
		\abs{u}_{L^{\infty}(a,b)}^{2}\leq 2\abs{u}_{L^{2}(a,b)}\abs{\partial_x u}_{L^{2}(a,b)}+(b-a)^{-1}\abs{u}_{L^{2}(a,b)}^{2},\ \ \ \ u\in H^{1}(a,b).
	\end{equation}
	Thus, if we take $R>T+1$ and define $I:=I((R-t)/\sqrt{\epsilon})$, for  and $t\in [0,T]$ and $\e \in\,(0,1)$,  we get
	\begin{equation}
		\abs{u}_{L^{\infty}(I)}^{2}\leq 2\abs{u}_{L^{2}(I)}\abs{\partial_x u}_{L^{2}(I)}+\abs{u}_{L^{2}(I)}^{2},\ \ \ \ u\in H^{1}(I).
	\end{equation}
	Moreover, for every $(u,v)\in (H^{2}(I)\times H^{1}(I))\cap \mathcal{M}$, we have 
	\begin{equation*}
		\partial_x u\cdot v+u\cdot \partial_x v=\partial_x (u\cdot v) =0.
	\end{equation*}
		Hence,
	\begin{equation}
		\begin{array}{ll}
			&\ds{ \big\lvert\Inner{\partial_x v, \partial_x \big( \abs{\partial_x u}^{2}u\big) }_{L^{2}(I)} \big\rvert  \leq \big\lvert \Inner{\partial_x v,\abs{\partial_x u}^{2}\partial_x u}_{L^{2}(I)}\big\rvert +2\big\lvert \Inner{\partial_x v,(\partial_x u\cdot \partial_x^{2}u)u}_{L^{2}(I)} \big\rvert   }\\
			\vs
			&\ds{\quad \quad =\big\lvert \Inner{\partial_x v,\abs{\partial_x u}^{2}\partial_x u}_{L^{2}(I)}\big\rvert +2\big\lvert \Inner{v,(\partial_x u\cdot \partial_x^{2}u)\partial_x u}_{L^{2}(I)} \big\rvert    }\\
			\vs
			&\ds{\quad \quad \quad \quad \leq \abs{\partial_x u}_{L^{\infty}(I)}^{2}\abs{\partial_x u}_{L^{2}(I)}\abs{\partial_x v}_{L^{2}(I)} + 2\abs{\partial_x u}_{L^{\infty}(I)}^{2}\abs{\partial_x ^{2}u}_{L^{2}(I)}\abs{v}_{L^{2}(I)}   }\\
			\vs
			&\ds{\quad \quad \leq \Big(\abs{\partial_x u}_{L^{2}(I)}^{2}+2\abs{\partial_x u}_{L^{2}(I)}\abs{\partial_x ^{2}u}_{L^{2}(I)}\Big)\Big(\abs{\partial_x u}_{L^{2}(I)}\abs{\partial_x v}_{L^{2}(I)} + 2\,\abs{\partial_x ^{2}u}_{L^{2}(I)}\abs{v}_{L^{2}(I)}\Big)   }\\
			\vs
			&\ds{\quad \quad  = \abs{\partial_{x}u}_{L^{2}(I)}^{3}\abs{\partial_{x}v}_{L^{2}(I)}+2\,\abs{\partial_{x}u}_{L^{2}(I)}^{2}\abs{\partial_{x}^{2}u}_{L^{2}(I)}\big(\abs{\partial_{x}v}_{L^{2}(I)}+\abs{v}_{L^{2}(I)}\big)}\\
			\vs
			&\ds{\quad \quad \quad \quad \quad \quad \quad \quad \quad \quad +4\,\abs{\partial_{x}u}_{L^{2}(I)}\abs{\partial_{x}^{2}u}_{L^{2}(I)}^{2}\abs{v}_{L^{2}(I)}.   }
		\end{array}
	\end{equation}
	This implies that for any $\delta>0$ 	\begin{equation}\label{sm2}
		\begin{array}{ll}
			&\ds{\big\lvert\Inner{\partial_{x}v, \partial_{x}\big( \abs{\partial_{x} u}^{2}u\big) }_{L^{2}(I)} \big\rvert \leq \delta\abs{\partial_{x}v}_{L^{2}(I)}^{2}+c(\delta)\Big(\abs{\partial_{x}u}_{L^{2}(I)}^{6}+\abs{\partial_{x}u}_{L^{2}(I)}^{4}\abs{\partial_{x}^{2}u}_{L^{2}(I)}^{2} \Big) }\\
			\vs
			&\ds{\quad\quad\quad\quad +2\,\abs{\partial_{x}u}_{L^{2}(I)}^{2}\abs{\partial_{x}^{2}u}_{L^{2}(I)}\abs{v}_{L^{2}(I)}+ 4\,\abs{\partial_{x}u}_{L^{2}(I)}\abs{\partial_{x}^{2}u}_{L^{2}(I)}^{2}\abs{v}_{L^{2}(I)} }
		\end{array}
	\end{equation}
	Similarly, we have
	\begin{equation}\label{sm1}
		\begin{array}{ll}
			&\ds{ \big\lvert\Inner{\partial_x v, \partial_x (\abs{v}^{2}u) }_{L^{2}(I)} \big\rvert \leq \big\lvert\Inner{\partial_x v,\abs{v}^{2}\partial_x u}_{L^{2}(I)} \big\rvert +2\big\lvert \Inner{\partial_x v, (v\cdot \partial_x v)u}_{L^{2}(I)}   \big\rvert    }\\
			\vs
			&\ds{\quad =\big\lvert\Inner{\partial_x v,\abs{v}^{2}\partial_x u}_{L^{2}(I)} \big\rvert +2\big\lvert \Inner{v, (v\cdot \partial_x v)\partial_x u}_{L^{2}(I)}\big\rvert  }\\
			\vs
			&\ds{\quad\quad \leq 3\abs{v}_{L^{\infty}(I)}^{2}\abs{\partial_x u}_{L^{2}(I)}\abs{\partial_x v}_{L^{2}(I)} \leq 3\Big(\abs{v}_{L^{2}(I)}^{2}+2\abs{v}_{L^{2}(I)}\abs{\partial_x v}_{L^{2}(I)}\Big)\abs{\partial_x u}_{L^{2}(I)}\abs{\partial_x v}_{L^{2}(I)}.}
				\end{array}
	\end{equation}
	Moreover, as a consequence of \eqref{identity1} and \eqref{identity2}, we have 
	\[\sum_{i=1}^{\infty}\xi_{i}^{\epsilon}(x)(\xi_{i}^{\epsilon})'(x)=0,\ \ \ \ \  \ x\in\R,\ \ \ \epsilon>0,\] and this implies that 
	\begin{equation*}
		\begin{array}{ll}
			&\ds{ \sum_{i=1}^{\infty}\big\lvert \partial_x ((u\times v)\xi_{i}^{\epsilon})\big\lvert_{L^{2}(I)}^{2} = \sum_{i=1}^{\infty}\big\lvert \partial_x (u\times v)\xi_{i}^{\epsilon}+(u\times v)(\xi_{i}^{\epsilon})' \big\rvert_{L^{2}(I)}^{2} = \sum_{i=1}^{\infty}\big\lvert \partial_x (u\times v)\xi_{i}^{\epsilon}\big\rvert_{L^{2}(I)}^{2}  }\\
			\vs
			&\ds{\quad \quad \quad +\sum_{i=1}^{\infty}\big\lvert (u\times v)(\xi_{i}^{\epsilon})'\big\rvert_{L^{2}(I)}^{2} = \sum_{i=1}^{\infty}\Big(\big\lvert (\partial_x u\times v)\xi_{i}^{\epsilon}\big\rvert_{L^{2}(I)}^{2}+\big\lvert (u\times \partial_x v)\xi_{i}^{\epsilon}\big\rvert_{L^{2}(I)}^{2} \Big)}\\
			\vs
			&\ds{+ \sum_{i=1}^{\infty}\Big(  2\Inner{(\partial_x u\times v)\xi_{i}, (u\times \partial_x v)\xi_{i}^{\epsilon} }_{L^{2}(I)}+ \big\lvert (u\times v)(\xi_{i}^{\epsilon})'\big\rvert_{L^{2}(I)}^{2}\Big)   }\\
			\vs
			&\ds{\leq c_0 \abs{\partial_x u}_{L^{\infty}(I)}^{2}\abs{v}_{L^{2}(I)}^{2} +c_0\abs{\partial_x v}_{L^{2}(I)}^{2} +\frac{c_{1}}{\epsilon}\abs{v}_{L^{2}(I)}^{2}+2c_{0}\abs{\partial_x u}_{L^{\infty}(I)}\abs{\partial_x v}_{L^{2}(I)}\abs{v}_{L^{2}(I)}. }
		\end{array}
	\end{equation*}
	Then, thanks again to \eqref{GN_inequality}, for any $\delta>0$  we obtain
	\begin{equation}\label{sm3}
	\begin{array}{l}\ds{	\sum_{i=1}^{\infty}\big\lvert \partial_x ((u\times v)\xi_{i})\big\lvert_{L^{2}(I)}^{2}\leq \frac{c_{1}}{\epsilon}\abs{v}_{L^{2}(I)}^{2}+(c_0+\delta)\abs{\partial_{x}v}_{L^{2}(I)}^{2}}\\
	\vs	
	\ds{\quad \quad \quad \quad \quad +c(\delta)\abs{v}_{L^{2}(I)}^{2}\Big(\abs{\partial_{x}u}_{L^{2}(I)}^{2}+2\abs{\partial_{x}u}_{L^{2}(I)}\abs{\partial_{x}^{2}u}_{L^{2}(I)}  \Big).}
	\end{array}
	\end{equation}
	Furthermore, \eqref{GN_inequality} gives 	
	\begin{equation*}
		\begin{array}{ll}
			&\ds{\Vert \,\sigma_{1,\epsilon,I}(u,v)\Vert _{\L_{2}(K_{\epsilon},\R)}^{2} = \sum_{i=1}^{\infty}\Big\lvert \Inner{\partial_x v,\partial_x ((u\times v)\xi_{i}^{\epsilon})}_{L^{2}(I)} \Big\rvert^{2}\leq 3\sum_{i=1}\Big\lvert\Inner{\partial_x v,(\partial_x u\times v)\xi_{i}^{\epsilon}}_{L^{2}(I)}\Big\rvert^{2}}\\
			\vs
			&\ds{\quad \quad +3\sum_{i=1}\Big\lvert\Inner{\partial_x v,(u\times \partial_x v)\xi_{i}^{\epsilon}}_{L^{2}(I)}\Big\rvert^{2}+3\sum_{i=1}\Big\lvert\Inner{\partial_x v,(u\times v)(\xi_{i}^{\epsilon})'}_{L^{2}(I)}\Big\rvert^{2} }\\
			\vs
			&\ds{\quad \quad \quad \quad \quad \quad \leq 3\abs{\partial_x v}_{L^{2}(I)}^{2}\Big(c_0\abs{\partial_x u}_{L^{\infty}(I)}^{2}\abs{v}_{L^{2}(I)}^{2}+\frac{c_{1}}{\epsilon}\abs{v}_{L^{2}(I)}^{2} \Big) }\\
			\vs
			&\ds{\leq 3\abs{\partial_x v}_{L^{2}(I)}^{2}\Big(c_0\abs{\partial_x u}_{L^{2}(I)}^{2}\abs{v}_{L^{2}(I)}^{2}+2c_0\abs{\partial_x u}_{L^{2}(I)}\abs{\partial_x ^{2}u}_{L^{2}(I)}\abs{v}_{L^{2}(I)}^{2}+\frac{c_{1}}{\epsilon}\abs{v}_{L^{2}(I)}^{2}\Big),  }
		\end{array}
	\end{equation*}
	and, in view of \eqref{uniform_bound1_loc}, for any $\delta>0$ the Burkholder-Davis-Gundy inequality gives 
	\begin{equation}\label{sm4}
		\begin{array}{ll}
			&\ds{\sqrt{\epsilon}\ \E\sup_{r\in[0,t]}\Big\lvert\int_{0}^{r}\, Y_{\epsilon,\lambda}(s)\sigma_{1,\e, I(R-s)}(u(s),v(s))dw(s) \Big\rvert }\\
			\vs
			&\ds{\quad \quad \lesssim \sqrt{\epsilon}\ \E\Big(\int_{0}^{t}Y_{\epsilon,\lambda}^{2}(s) \Vert \,\sigma_{1, \epsilon, I((R-s)/\sqrt{\epsilon}))}(u(s),v(s))\Vert _{\L_{2}(K_{\epsilon},\mathbb{R})}^{2}ds\Big)^{\frac{1}{2}}}\\
			\vs
			&\ds{ \leq \epsilon\delta \ \E\sup_{r\in[0,t]}\Big(Y_{\epsilon,\lambda}(r)\abs{\partial_x v(r)}_{L^{2}(I((R-s)/\sqrt{\epsilon}))}^{2}\Big) + c(\delta)\E\int_{0}^{t}Y_{\epsilon,\lambda}(s)\Big(\frac{1}{\epsilon}+\abs{\partial_{x}u_{\epsilon}(s)}_{L^{2}(I((R-s)/\sqrt{\epsilon}))}^{2} }\\
			\vs 
			&\ds{ \quad \quad \quad \quad \quad \quad +\abs{\partial_{x}u_{\epsilon}(s)}_{L^{2}(I((R-s)/\sqrt{\epsilon}))}^{2}\abs{\partial_{x}^{2}u_{\epsilon}(s)}_{L^{2}(I((R-s)/\sqrt{\epsilon}))}^{2}\Big)\abs{v_{\epsilon}(s)}_{L^{2}(I((R-s)/\sqrt{\epsilon})}^{2}ds.  }
		\end{array}
	\end{equation}
	Hence, if we collect all these estimates, thanks to \eqref{uniform_bound1_loc}, we can find some large enough $\bar{\lambda}>0$, some small enough $\delta>0$  and $c>0$, all depending only on $\Lambda_{1}$, such that for every $\epsilon\in (0,1)$
		\begin{equation*}
		\begin{array}{ll}
			&\ds{ \E\sup_{t\in[0,T]}Y_{\epsilon,\bar{\lambda}}(t)\Big(\abs{\partial_{x}^{2}u_{\epsilon}(t)}_{L^{2}(I((R-t)/\sqrt{\epsilon}))}^{2}+\epsilon\abs{\partial_{x}v_{\epsilon}(t)}_{L^{2}(I((R-t)/\sqrt{\epsilon}))}^{2}\Big)   }\\
			\vs
			&\ds{\quad\quad +\E\int_{0}^{T}Y_{\epsilon,\bar{\lambda}}(s)\Big(\abs{\partial_{x}v_{\epsilon}(s)}_{L^{2}(I((R-s)/\sqrt{\epsilon}))}^{2}+\abs{\partial_{x}^{2}u_{\epsilon}(s)}_{L^{2}(I((R-s)/\sqrt{\epsilon}))}^{2}\abs{v_{\epsilon}(s)}_{L^{2}(I((R-s)/\sqrt{\epsilon}))}^{2}\Big) ds    }\\
			\vs 
			&\ds{\quad\quad\quad\quad \quad\quad \quad\quad \quad\quad \quad\quad \lesssim \Big(\abs{D^{2}u_{0}^{\epsilon}}_{L^{2}(\R)}^{2}+\epsilon\abs{D v_0^{\epsilon}}_{L^{2}(\R)}^{2} + \frac{1}{\epsilon}\Big) }
		\end{array}
	\end{equation*}
		Finally, since  for every $t\in [0,T]$ and $\eps\in(0,1)$
	\begin{equation*}
		Y_{\epsilon,\bar{\lambda}}(t)\geq \exp\Big(-\bar{\lambda}\big(t+\abs{D u_0^{\epsilon}}_{L^{2}(\mathbb{R})}^{2}+\epsilon\abs{v_0^{\epsilon}}_{L^{2}(\mathbb{R})}^{2}\big)\Big)\geq \exp\big(-\bar{\lambda}(\Lambda_{1}^{2}+T)\big)  ,
	\end{equation*}
	we complete the proof.
	
\end{proof}

As a consequence of \eqref{uniform_bound1_loc} and \eqref{uniform_bound2_loc_particular}, by taking the limit as $R\to+\infty$, the Fatou Lemma allows us to conclude that, for any fixed $\epsilon>0$, 
\begin{equation*}
	u_{\epsilon}\in L^{\infty}(0,T;\dot{H}^{2}(\R;\R^{3})),\ \ \ \ \partial_{t}u_{\epsilon}\in L^{\infty}(0,T;H^{1}(\R;\R^{3})).
\end{equation*}
This completes the proof of Theorem \ref{system_wellposedness}.

\begin{Lemma}
	For every $0\leq t\leq T$ and $\epsilon\in(0,1)$ we have
	\begin{equation}\label{uniform_bound1}
		\abs{\partial_x u_{\eps}(t)}_{L^{2}(\R)}^{2}+\eps\abs{v_{\eps}(t)}_{L^{2}(\R)}^{2} +2\gamma\int_{0}^{t}\abs{v_{\eps}(s)}_{L^{2}(\R)}^{2}ds = \abs{D u_0^{\epsilon}}_{L^{2}(\R)}^{2}+\eps\abs{v_0^{\epsilon}}_{L^{2}(\R)}^{2},\ \ \ \  \ \ \P\text{-a.s.}
	\end{equation}
	and 
		\begin{equation}\label{uniform_bound2}
		\begin{array}{ll}
			&\ds{\quad \quad \E\sup_{t\in[0,T]}\Big(\abs{\partial_x ^{2}u_{\epsilon}(t)}_{L^{2}(\R)}^{2}+\epsilon\abs{\partial_x v_{\epsilon}(t)}_{L^{2}(\R)}^{2}\Big)   }\\
			\vs
			&\ds{ +\E\int_{0}^{T}\Big(\abs{\partial_x v_{\epsilon}(s)}_{L^{2}(\R)}^{2}+\abs{\partial_x ^{2}u_{\epsilon}(s)}_{L^{2}(\R)}^{2}\abs{v_{\epsilon}(s)}_{L^{2}(\R)}^{2}\Big) ds \lesssim_{\,T,\Lambda_1}\abs{D^{2}u_{0}^{\epsilon}}_{L^{2}(\R)}^{2}+\epsilon\abs{D v_0^{\epsilon}}_{L^{2}(\R)}^{2}+\frac{1}{\epsilon}}.
		\end{array}
	\end{equation} 
\end{Lemma}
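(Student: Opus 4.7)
The plan is to derive the global estimate \eqref{uniform_bound2} from its localized counterpart \eqref{uniform_bound2_loc_particular} by passing to the limit $R\to\infty$, and then to obtain the pathwise identity \eqref{uniform_bound1} by a direct application of It\^o's formula on the whole real line, exploiting the regularity established in Theorem \ref{system_wellposedness}.

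For \eqref{uniform_bound2}, note that the right-hand side of \eqref{uniform_bound2_loc_particular} is independent of $R$, while each of the quantities
\[\sup_{t\in[0,T]}\abs{\partial_x^2 u_\epsilon(t)}_{L^2(-R,R)}^2,\ \ \sup_{t\in[0,T]}\abs{\partial_x v_\epsilon(t)}_{L^2(-R,R)}^2,\ \ \int_0^T\abs{\partial_x v_\epsilon(s)}_{L^2(-R,R)}^2\,ds,\]
together with $\int_0^T\abs{\partial_x^2 u_\epsilon(s)}_{L^2(-R,R)}^2\abs{v_\epsilon(s)}_{L^2(-R,R)}^2\,ds$ is pathwise non-decreasing in $R$ and converges pointwise to the corresponding $L^2(\R)$-quantity as $R\uparrow\infty$. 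Monotone convergence inside the expectation then yields \eqref{uniform_bound2} with the same implicit constant.

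For the pathwise identity \eqref{uniform_bound1}, I revisit the It\^o calculation underlying \eqref{ito1} with $k=0$, but posed on the fixed domain $\R$. The regularity $u_\epsilon\in L^\infty(0,T;\dot{H}^2(\R))$ and $v_\epsilon\in L^\infty(0,T;H^1(\R))$ obtained above ensures that the integration by parts $\langle v_\epsilon,\partial_x^2 u_\epsilon\rangle_{L^2(\R)}=-\langle \partial_x v_\epsilon,\partial_x u_\epsilon\rangle_{L^2(\R)}$ holds without boundary contributions, so the Young-type estimate in \eqref{ito1_step2} becomes an equality with zero right-hand side. Moreover, the constraints $\abs{u_\epsilon}=1$ and $u_\epsilon\cdot v_\epsilon=0$ inherited from $\mathcal{M}$ immediately give
\[\langle v_\epsilon,\abs{\partial_x u_\epsilon}^2 u_\epsilon\rangle_{L^2(\R)}=\langle v_\epsilon,\abs{v_\epsilon}^2 u_\epsilon\rangle_{L^2(\R)}=0,\]
and, most crucially, $v_\epsilon\cdot(u_\epsilon\times v_\epsilon)\equiv 0$ pointwise, so the stochastic integral term $\sqrt{\epsilon}\,\sigma_{0,\epsilon,\R}(u_\epsilon,v_\epsilon)\,dw^\epsilon$ vanishes identically (not merely in expectation). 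Every mode of the martingale part is zero, which is why the resulting statement is an almost sure equality rather than an equality of expectations.

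It remains to handle the It\^o correction. Using \eqref{identity1} together with the identity $\abs{u_\epsilon\times v_\epsilon}^2=\abs{u_\epsilon}^2\abs{v_\epsilon}^2-(u_\epsilon\cdot v_\epsilon)^2=\abs{v_\epsilon}^2$ yields
\[\sum_{i=1}^\infty\abs{(u_\epsilon\times v_\epsilon)\xi_i^\epsilon}_{L^2(\R)}^2 = c_0\,\abs{v_\epsilon}_{L^2(\R)}^2,\]
and combining this with the dissipative coefficient $-2\gamma_0\abs{v_\epsilon}_{L^2(\R)}^2$, using $\gamma_0=\gamma+c_0/2$ from \eqref{gamma0}, produces exactly $-2\gamma\abs{v_\epsilon}_{L^2(\R)}^2\,dt$. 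Integration in time then delivers \eqref{uniform_bound1}. The only genuine subtlety is the rigorous justification of It\^o's formula on $\R$ for the unbounded functional $\abs{\partial_x\cdot}_{L^2(\R)}^2+\epsilon\abs{\cdot}_{L^2(\R)}^2$; I would handle this by a standard spatial cut-off and smoothing argument, passing to the limit with the help of the $R$-uniform bounds already in hand. Since the martingale contribution vanishes pointwise, no delicate limiting argument on the stochastic integral is required.
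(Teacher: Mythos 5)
Your proposal is correct and follows essentially the same route as the paper: \eqref{uniform_bound2} is deduced from the $R$-uniform localized bound \eqref{uniform_bound2_loc_particular} by letting $R\to\infty$, and \eqref{uniform_bound1} comes from It\^o's formula on all of $\R$, where the integration by parts carries no boundary terms, the geometric constraints kill the nonlinear and martingale contributions pointwise, and the It\^o correction $c_0\abs{v_\epsilon}^2_{L^2(\R)}$ combines with $-2\gamma_0\abs{v_\epsilon}^2_{L^2(\R)}$ to leave $-2\gamma\abs{v_\epsilon}^2_{L^2(\R)}$. Your added remarks on monotone convergence in $R$ and on why the stochastic integral vanishes almost surely only make explicit what the paper leaves implicit.
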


\begin{proof}
	Inequality \eqref{uniform_bound2} is a direct consequence of \eqref{uniform_bound2_loc_particular}. In order to prove  identity \eqref{uniform_bound1}, recalling that $(u_{\eps}(t),v_{\eps}(t))\in \mathcal{M}$, from the It\^{o} formula we get
	\begin{equation}\label{sm153}
		\begin{array}{ll}
			&\ds{  d\Big( \abs{\partial_x u_{\eps}(t)}_{L^{2}(\R)}^{2} +\eps\abs{v_{\eps}(t)}_{L^{2}(\R)}^{2} \Big) =2\Inner{\partial_x u_{\eps}(t),\partial_x v_{\eps}(t)}_{L^{2}(\R)}dt       }\\
			\vs
			&\ds{\quad\quad + 2\Inner{v_{\eps}(t), \partial_x^{\,2} u_{\eps}(t)}_{L^{2}(\R)}dt -2\,\gamma_0\abs{v_\eps(t)}_{L^2(\R)}^{2}dt +\sum_{i=1}^{\infty}\abs{ (u_{\eps}(t)\times v_{\eps}(t))\xi_{i}^{\eps} }_{L^{2}(\R)}^{2}dt. }
		\end{array}
	\end{equation}
	Since $\partial_x u_{\eps}(t)\cdot v_{\eps}(t)\in L^{1}(\R)$ and $\partial_x (\partial_x u_{\eps}(t)\cdot v_{\eps}(t) )\in L^{1}(\R)$, integrating by parts we get	
	\begin{equation*}
		\begin{array}{l}
			\ds{	\int_{\R}\partial_x^{\,2} u_{\eps}(t)\cdot v_{\eps}(t)\ dx = \int_{\R}\partial_x \big(\partial_x u_{\eps}(t)\cdot v_{\eps}(t)\big)dx - \int_{\R}\partial_x u_{\eps}(t)\cdot \partial_x v_{\eps}(t)\ dx}\\
			\vs
			\ds{\quad \quad \quad \quad \quad \quad \quad \quad \quad \quad \quad \quad = - \int_{\R}\partial_x u_{\eps}(t)\cdot \partial_x v_{\eps}(t)\ dx.}
		\end{array}
	\end{equation*}
	Hence, recalling that 
	\begin{equation*}
		\sum_{i=1}^{\infty}\abs{ (u_{\eps}(t)\times v_{\eps}(t))\xi_{i}^{\eps} }_{L^{2}(\R)}^{2} = c_0\abs{v_{\eps}(t)}_{L^{2}(\R)}^{2},
	\end{equation*}
from \eqref{sm153} we get
	\begin{equation*}
		d\Big( \abs{\partial_x u_{\eps}(t)}_{L^{2}(\R)}^{2} +\eps\abs{v_{\eps}(t)}_{L^{2}(\R)}^{2} \Big)   = -2\gamma\abs{v_\eps(t)}_{L^{2}(\R)}^{2}dt,
	\end{equation*}
	and this gives \eqref{uniform_bound1}.
\end{proof}

\subsection{Uniform estimate for $u_\e$ in $L^2(0,T;\dot{H}^2(\R))$}
\label{energy3}

	\begin{Lemma}\label{ito_loc}
		We fix $T>0$ and $R>T+1$. Then, for every $\eps\in(0,1)$, the following estimate  holds $\P$-almost surely 
		\begin{equation}\label{ito2}
			\begin{array}{ll}
				&\ds{ \eps\Inner{\partial_x u_{\eps}(t),\partial_x v_{\eps}(t)}_{L^{2}(I((R-t)/\sqrt{\eps}))}+\frac{\gamma_0}2\,\abs{\partial_x u_{\eps}(t)}_{L^{2}(I((R-t)/\sqrt{\eps}))}^{2} +\int_{0}^{t}\abs{\partial_x ^{2}u_{\eps}(s)}_{L^{2}(I((R-s)/\sqrt{\eps}))}^{2}ds  }\\
				\vs
				&\ds{\quad \quad \quad \leq  c\Big(\abs{D u_0^{\epsilon}}_{L^{2}(\mathbb{R})}^{2}+\epsilon\abs{D^{2}u_0^{\epsilon}}_{L^{2}(\R)}^{2}+\eps^{2}\abs{D v_0^{\epsilon}}_{L^{2}(\mathbb{R})}^{2}\Big)}\\
				\vs 
				&\ds{\quad \quad \quad \quad \quad \quad \quad \quad + \int_{0}^{t}\Inner{\partial_x u_{\eps}(s), \partial_x \big(\abs{\partial_x u_{\eps}(s)}^{2}u_{\eps}(s)\big)  }_{L^{2}(I((R-s)/\sqrt{\eps}))}ds }\\
				\vs
				&\ds{\quad\quad  -\eps\int_{0}^{t}\Inner{\partial_x u_{\eps}(s), \partial_x \big(\abs{v_{\eps}(s)}^{2}u_{\eps}(s)\big) }_{L^{2}(I((R-s)/\sqrt{\eps}))}ds + \eps\int_{0}^{t}\abs{\partial_x v_{\eps}(s)}_{L^{2}(I(R-s)/\sqrt{\eps}))}^{2}ds  }\\
				\vs
				&\ds{\quad\quad \quad\quad \quad\quad \quad\quad \quad\quad \quad\quad  +\sqrt{\eps}\int_{0}^{t}\,\vartheta_{1, I((R-s)/\sqrt{\eps})}\big(u_{\eps}(s),v_{\eps}(s)\big)dw^{\eps}(s),\ \ \ \ t\in[0,T] },
			\end{array}
		\end{equation}
		where, for any bounded interval $I\subset \mathbb{R}$ and $k \in\,\mathbb{N}$, we denote 
		\begin{equation}\label{sm175}
			\vartheta_{k,I}(u,v)\xi:= \Inner{\partial_x^{\,k}u, \partial_x^{\,k}((u\times v)\xi)}_{L^{2}(I)},\ \ \ \ (u,v)\in H^{k+1}(I)\times H^{k}(I),\ \ \ \ \xi\in K_{\epsilon}.
		\end{equation}
		
	\end{Lemma}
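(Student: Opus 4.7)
The plan is to apply Itô's formula to the cross-quantity $\eps\langle\partial_x u_\eps(t),\partial_x v_\eps(t)\rangle_{L^2(I(r(t)))}$ with $r(t)=(R-t)/\sqrt{\eps}$, then to add $\tfrac{\gamma_0}{2}\,d|\partial_x u_\eps(t)|^2_{L^2(I(r(t)))}$ in order to cancel the linear damping contribution. Since $d\partial_x u_\eps=\partial_x v_\eps\,dt$ has no martingale part, the Itô product rule produces no quadratic correction in the cross-term; the only extra drift comes from the Reynolds transport formula on the shrinking interval, which contributes $-\sqrt{\eps}\,[\partial_x u_\eps\cdot\partial_x v_\eps]_{\partial I(r)}\,dt$.

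Substituting $\eps\,dv_\eps$ from \eqref{SPDE3} gives four interior drift pieces, namely $\langle\partial_x u_\eps,\partial_x^3 u_\eps\rangle$, $\langle\partial_x u_\eps,\partial_x(|\partial_x u_\eps|^2 u_\eps)\rangle$, $-\eps\langle\partial_x u_\eps,\partial_x(|v_\eps|^2 u_\eps)\rangle$, and $-\gamma_0\langle\partial_x u_\eps,\partial_x v_\eps\rangle$, together with the martingale $\sqrt{\eps}\,\vartheta_{1,I(r)}(u_\eps,v_\eps)\,dw^\eps$ from the co-normal noise, with $\vartheta_{1,I}$ as in \eqref{sm175}. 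Integrating by parts in the first, $\langle\partial_x u_\eps,\partial_x^3 u_\eps\rangle_{L^2(I(r))}=-|\partial_x^2 u_\eps|^2_{L^2(I(r))}+[\partial_x u_\eps\cdot\partial_x^2 u_\eps]_{\partial I(r)}$, produces the dissipation on the left of \eqref{ito2}. Adding $\tfrac{\gamma_0}{2}d|\partial_x u_\eps|^2_{L^2(I(r))}$, which by the Reynolds identity underlying \eqref{ito1_step3} equals $\gamma_0\langle\partial_x u_\eps,\partial_x v_\eps\rangle\,dt-\tfrac{\gamma_0}{2\sqrt{\eps}}|\partial_x u_\eps|^2_{\partial I(r)}\,dt$, then exactly cancels the damping cross-term, so that the surviving interior drifts are precisely the right-hand integrand of \eqref{ito2}.

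The residual boundary collection $[\partial_x u_\eps\cdot\partial_x^2 u_\eps]_{\partial I(r)}-\sqrt{\eps}\,[\partial_x u_\eps\cdot\partial_x v_\eps]_{\partial I(r)}-\tfrac{\gamma_0}{2\sqrt{\eps}}|\partial_x u_\eps|^2_{\partial I(r)}$ must then be shown to contribute non-positively in time. Applying Young's inequality $|ab|\le\tfrac{1}{2\alpha}|a|^2+\tfrac{\alpha}{2}|b|^2$ to the two signed boundary terms, with weights proportional respectively to $\sqrt{\eps}$ and $\eps$, the $|\partial_x u_\eps|^2_{\partial I(r)}$-coefficients are absorbed into the favourable reserve $-\tfrac{\gamma_0}{2\sqrt{\eps}}|\partial_x u_\eps|^2_{\partial I(r)}$; the leftover positive multiples of $\sqrt{\eps}\,|\partial_x^2 u_\eps|^2_{\partial I(r)}$ and $\eps^{3/2}|\partial_x v_\eps|^2_{\partial I(r)}$ are in turn dominated by the negative Reynolds-type boundary contributions $-\tfrac{1}{\sqrt{\eps}}|\partial_x^2 u_\eps|^2_{\partial I(r)}$ and $-\tfrac{1}{\sqrt{\eps}}|\partial_x v_\eps|^2_{\partial I(r)}$ arising in the pre-Young derivation of \eqref{ito1} at $k=1$, obtained by adding a sufficiently small multiple of that identity before a final application of the scheme \eqref{ito1_step2}. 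This finite-propagation-speed boundary bookkeeping is the main technical obstacle.

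Finally, the initial-time value $\eps\langle\partial_x u_0^\eps,\partial_x v_0^\eps\rangle_{L^2(\R)}$ is bounded by Young's inequality, either directly as $\tfrac{1}{2}|Du_0^\eps|_{L^2(\R)}^2+\tfrac{\eps^2}{2}|Dv_0^\eps|_{L^2(\R)}^2$ or, after one integration by parts on $\R$, as $\tfrac{\eps}{2}|D^2 u_0^\eps|_{L^2(\R)}^2+\tfrac{\eps}{2}|v_0^\eps|_{L^2(\R)}^2$; combining either with the initial contribution $\tfrac{\gamma_0}{2}|Du_0^\eps|_{L^2(\R)}^2$ furnished by the added term and using \eqref{uniform_bound1_loc} to control $\eps|v_0^\eps|^2$ yields the constant $c\bigl(|Du_0^\eps|_{L^2(\R)}^2+\eps|D^2 u_0^\eps|_{L^2(\R)}^2+\eps^2|Dv_0^\eps|_{L^2(\R)}^2\bigr)$ appearing on the right of \eqref{ito2}, which completes the argument.
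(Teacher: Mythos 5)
Your architecture matches the paper's up to one step: It\^o's formula for $\eps\Inner{\partial_x u_\eps,\partial_x v_\eps}_{L^2(I((R-t)/\sqrt{\eps}))}$, addition of $\tfrac{\gamma_0}{2}\abs{\partial_x u_\eps}^2_{L^2(I(\cdot))}$ to cancel the damping cross-term, integration by parts to produce the dissipation $-\abs{\partial_x^2u_\eps}^2_{L^2(I(\cdot))}$, and Young plus Gagliardo--Nirenberg for the initial data. Where you depart from the paper is the boundary bookkeeping, and that is precisely where your argument does not close. The paper never Young-splits the boundary residue: it observes that the two signed traces combine into an exact time derivative,
\begin{equation}
-\sqrt{\eps}\,\Inner{\partial_xu_\eps,\partial_xv_\eps}_{L^{2}(\partial I(r(t)))}+\big(\partial_xu_\eps\cdot\partial_x^{2}u_\eps\big)\big|_{-r(t)}^{r(t)}=-\frac{\sqrt{\eps}}{2}\,\frac{d}{dt}\abs{\partial_xu_\eps(t)}^{2}_{L^{2}(\partial I(r(t)))},
\end{equation}
with $r(t)=(R-t)/\sqrt{\eps}$ (this is \eqref{ito2_step3}). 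Adding the nonnegative functional $\tfrac{\sqrt{\eps}}{2}\abs{\partial_xu_\eps(t)}^2_{L^2(\partial I(r(t)))}$ to the left-hand side, one drops it at time $t$ and is left only with its value at time $0$, namely $\sqrt{\eps}\abs{Du_0^\eps}^2_{L^2(\partial I(R/\sqrt{\eps}))}$, which \eqref{GN_inequality} converts into $\abs{Du_0^\eps}^2_{L^2(\R)}+\eps\abs{D^2u_0^\eps}^2_{L^2(\R)}$ --- that is where that piece of the constant actually comes from. No trace of $\partial_x^2u_\eps$ or $\partial_xv_\eps$ on $\partial I$ ever needs to be controlled.

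Your alternative --- absorbing the leftover traces $\sqrt{\eps}\abs{\partial_x^2u_\eps}^2_{\partial I}$ and $\eps^{3/2}\abs{\partial_xv_\eps}^2_{\partial I}$ by adding ``a sufficiently small multiple'' of the $k=1$ identity --- has a genuine gap. The negative trace $-\tfrac{1}{\sqrt{\eps}}\abs{\partial_x^2u_\eps}^2_{\partial I}$ in \eqref{ito1_step3} cannot be extracted alone: it comes packaged with $2\Inner{\partial_x^2u_\eps,\partial_x^2v_\eps}_{L^2(I)}$, and $\partial_x^2v_\eps$ is not controlled (the solution only has $v_\eps\in H^1$), so that term is only meaningful when recombined with the $v$-equation, i.e.\ you must add a multiple $\delta$ of the full identity \eqref{ito1} at $k=1$. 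This imports every other term of \eqref{ito1} into your estimate: $\delta$ times the Hilbert--Schmidt term $\sum_i\abs{\partial_x((u_\eps\times v_\eps)\xi_i^\eps)}^2_{L^2(I)}$, which contains $\tfrac{c_1}{\eps}\abs{v_\eps}^2_{L^2(I)}$, the nonlinear couplings $\Inner{\partial_xv_\eps,\partial_x(\abs{\partial_xu_\eps}^2u_\eps)}$, and a second stochastic integral $2\sqrt{\eps}\,\sigma_{1,\eps,I}\,dw^\eps$ --- none of which appear on the right-hand side of \eqref{ito2}. The absorption forces $\delta\gtrsim\eps$; with $\delta\sim\eps$ the trace-class term alone contributes $c_1\int_0^t\abs{v_\eps}^2\,ds$ (an extra, unlisted term), and anything larger contributes $O(\delta/\eps)$, which destroys the $\eps$-uniformity that \eqref{ito2} exists to deliver via \eqref{uniform_bound3_loc}. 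So your scheme proves a different, strictly messier inequality than the one stated; to prove \eqref{ito2} as written you need the exact-derivative observation above rather than Young absorption on the boundary.
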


	\begin{proof}
		As a consequence of the It\^{o} formula applied to  
		\begin{equation*}
			\eps\Inner{\partial_x u_{\eps}(t), \partial_x v_{\eps}(t)}_{L^{2}(I((R-t)/\sqrt{\epsilon}))},\ \ \ \ t\in[0,T],
		\end{equation*}
		we have 
		\begin{equation}\label{ito2_step1}
			\begin{array}{ll}
				&\ds{ \epsilon d\Inner{\partial_x u_{\epsilon}(t),\partial_x v_{\epsilon}(t)}_{L^{2}(I((R-t)/\sqrt{\epsilon}))} = -\sqrt{\epsilon}\Inner{\partial_x u_{\epsilon}(t),\partial_x v_{\epsilon}(t)}_{L^{2}(\partial I((R-t)/\sqrt{\epsilon}))}dt }\\
				\vs
				&\ds{+\epsilon \abs{\partial_x v_{\epsilon}(t)}_{L^{2}(I((R-t)/\sqrt{\epsilon}))}^{2}dt  +\Inner{\partial_x u_{\epsilon}(t), \partial_x (\partial_x^{\,2} u_{\epsilon}(t)) }_{L^{2}(I((R-t)/\sqrt{\epsilon}))}dt  }\\
				\vs
				&\ds{+\Inner{\partial_x u_{\epsilon}(t),\partial_x \big(\abs{\partial_x u_{\epsilon}(t)}^{2}u_{\epsilon}(t)\big) }_{L^{2}(I((R-t)/\sqrt{\epsilon}))}dt-\epsilon\Inner{\partial_x u_{\epsilon}(t), \partial_x \big(\abs{v_{\epsilon}(t)}^{2}u_{\epsilon}(t)\big) }_{L^{2}(I((R-t)/\sqrt{\epsilon}))}dt}\\
				\vs
				&\ds{\quad\quad-\,\gamma_0\Inner{\partial_x u_{\epsilon}(t),\partial_x v_{\epsilon}(t) }_{L^{2}(I((R-t)/\sqrt{\epsilon}))}dt    +\sqrt{\epsilon}\,\vartheta_{1, I((R-t)/\sqrt{\epsilon})}(u_{\epsilon}(t),v_{\epsilon}(t))dw^{\epsilon}(t)}.
			\end{array}
		\end{equation}
		First of all, we notice that
		\begin{equation}\label{ito2_step2}
			\frac{d}{dt}\abs{\partial_x u_{\epsilon}(t)}_{L^{2}(I((R-t)/\sqrt{\epsilon}))}^{2}=-\frac{1}{\sqrt{\epsilon}}\abs{\partial_x u_{\epsilon}(t)}_{L^{2}(\partial I((R-t)/\sqrt{\epsilon}))}^{2}+2\Inner{\partial_x u_{\epsilon}(t),\partial_x v_{\epsilon}(t)}_{L^{2}(I((R-t)/\sqrt{\epsilon}))}.
		\end{equation}
		Next, since
		\begin{equation*}
			\begin{array}{ll}
				&\ds{ \frac{\sqrt{\epsilon}}{2}\frac{d}{dt}\abs{\partial_x u_{\epsilon}(t)}_{L^{2}(\partial I((R-t)/\sqrt{\epsilon}))}^{2}}\\
				\vs 
				&\ds{\quad \quad \quad =\sqrt{\epsilon}\,\Inner{\partial_x u_{\epsilon}(t),\partial_x v_{\epsilon}(t)}_{L^{2}(\partial I((R-t)/\sqrt{\epsilon}))}-\big(\partial_x u_{\epsilon}(t)\cdot \partial_x ^{2}u_{\epsilon}(t)\big)\big|_{-(R-t)/\sqrt{\eps}}^{(R-t)/\sqrt{\eps}},   }
			\end{array}
		\end{equation*}
		and 
		\begin{equation*}
			\begin{array}{ll}
				&\ds{\Inner{\partial_x u_{\epsilon}(t),\partial_x (\partial_x^{\,2} u_{\epsilon}(t)) }_{L^{2}(I((R-t)/\sqrt{\epsilon})}}\\
				\vs\
				&\ds{\quad \quad \quad\leq -\abs{\partial_x ^{2}u_{\epsilon}(t)}_{L^{2}(I((R-t)/\sqrt{\epsilon}))}^{2}+\big(\partial_x u_{\epsilon}(t)\cdot \partial_x ^{2}u_{\epsilon}(t)\big)\big|_{-(R-t)/\sqrt{\eps}}^{(R-t)/\sqrt{\eps}}  },
			\end{array}
		\end{equation*}
		we obtain 
		\begin{equation}\label{ito2_step3}
			\begin{array}{ll}
				&\ds{-\sqrt{\epsilon}\Inner{\partial_x u_{\epsilon}(t),\partial_x v_{\epsilon}(t)}_{L^{2}(\partial I((R-t)/\sqrt{\epsilon}))}+\Inner{\partial_x u_{\epsilon}(t),\partial_x (\partial_x^{\,2} u_{\epsilon}(t)) }_{L^{2}(I((R-t)/\sqrt{\epsilon})} }\\
				\vs
				&\ds{\quad\quad\quad\quad \leq -\frac{\sqrt{\epsilon}}{2}\frac{d}{dt}\abs{\partial_x u_{\epsilon}(t)}_{L^{2}(\partial I((R-t)/\sqrt{\epsilon}))}^{2}-\abs{\partial_x ^{2}u_{\epsilon}(t)}_{L^{2}(I((R-t)/\sqrt{\epsilon}))}^{2}   }.
			\end{array}
		\end{equation}
		Thus, by combining \eqref{ito2_step2} and \eqref{ito2_step3} with \eqref{ito2_step1}, we get 
		\begin{equation*}
			\begin{array}{ll}
				&\ds{d \Bigg( \epsilon\Inner{\partial_x u_{\epsilon}(t),\partial_x v_{\epsilon}(t)}_{L^{2}(I((R-t)/\sqrt{\epsilon}))} +\frac{\sqrt{\epsilon}}{2}\abs{\partial_x u_{\epsilon}(t)}_{L^{2}(\partial I((R-t)/\sqrt{\epsilon}))}^{2}+\frac{\,\gamma_0}2\abs{\partial_x u_{\epsilon}(t)}_{L^{2}(I((R-t)/\sqrt{\epsilon}))}^{2}  \Bigg)   }\\
				\vs
				&\ds{ \leq-\frac{\gamma_0}{2\sqrt{\epsilon}}\,\abs{\partial_x u_{\epsilon}(t)}_{L^{2}(\partial I((R-t)/\sqrt{\epsilon}))}^{2}dt-\abs{\partial_x ^{2}u_{\epsilon}(t)}_{L^{2}(I((R-t)/\sqrt{\epsilon}))}^{2}dt +\epsilon \abs{\partial_x v_{\epsilon}(t)}_{L^{2}(I((R-t)/\sqrt{\epsilon}))}^{2}dt   }\\
				\vs
				&\ds{  +\Inner{\partial_x u_{\epsilon}(t),\partial_x \big(\abs{\partial_x u_{\epsilon}(t)}^{2}u_{\epsilon}(t)\big) }_{L^{2}(I((R-t)/\sqrt{\epsilon}))}dt -\epsilon\Inner{\partial_x u_{\epsilon}(t), \partial_x \big(\abs{v_{\epsilon}(t)}^{2}u_{\epsilon}(t)\big) }_{L^{2}(I((R-t)/\sqrt{\epsilon}))}dt }\\
				\vs
				&\ds{\quad\quad \quad\quad \quad\quad + \sqrt{\epsilon}\,\vartheta_{1, I((R-t)/\sqrt{\epsilon})}(u_{\epsilon}(t),v_{\epsilon}(t))dw^{\epsilon}(t) }.
			\end{array}
		\end{equation*}
		In particular, this implies that 
		\begin{equation*}
			\begin{array}{ll}
				&\ds{ \eps\Inner{\partial_x u_{\eps}(t),\partial_x v_{\eps}(t)}_{L^{2}(I((R-t)/\sqrt{\eps}))}+\frac{\gamma_0}2\abs{\partial_x u_{\eps}(t)}_{L^{2}(I((R-t)/\sqrt{\eps}))}^{2} +\int_{0}^{t}\abs{\partial_x ^{2}u_{\eps}(s)}_{L^{2}(I((R-s)/\sqrt{\eps}))}^{2}ds  }\\
				\vs
				&\ds{\leq  c\Big(\eps\Inner{Du_0^{\epsilon},D v_0^{\epsilon}}_{L^{2}(I(R/\sqrt{\eps}))}+\abs{Du_0^{\epsilon}}_{L^{2}(I(R/\sqrt{\eps}))}^{2}+\sqrt{\eps}\abs{D u_0^{\epsilon}}_{L^{2}(\partial I(R/\sqrt{\eps}))}^{2}\Big)}\\
				\vs 
				&\ds{+ \eps\int_{0}^{t}\abs{\partial_x v_{\eps}(s)}_{L^{2}(I((R-s)/\sqrt{\eps}))}^{2}ds +\int_{0}^{t}\Inner{\partial_x u_{\eps}(s), \partial_x \big(\abs{\partial_x u_{\eps}(s)}^{2}u_{\eps}(s)\big)  }_{L^{2}(I((R-s)/\sqrt{\eps}))}ds}\\
				\vs
				&\ds{-\eps\!\!\int_{0}^{t}\Inner{\partial_x u_{\eps}(s), \partial_x \big(\abs{v_{\eps}(s)}^{2}u_{\eps}(s)\big) }_{L^{2}(I((R-s)/\sqrt{\eps}))}ds    +\sqrt{\eps}\!\!\int_{0}^{t}\,\vartheta_{1, I((R-s)/\sqrt{\eps})}\big(u_{\eps}(s),v_{\eps}(s)\big)dw^{\eps}(s) }.
			\end{array}
		\end{equation*}
       Finally, thanks to \eqref{GN_inequality}, and due to the fact that $R>T+1$, we obtain
		\begin{equation*}
			\begin{array}{l}
				\ds{\abs{D u_0^{\epsilon}}_{L^{2}(\partial I(R/\sqrt{\eps}))}^{2}\leq 2\abs{D u_0^{\epsilon}}_{L^{\infty}(I(R/\sqrt{\epsilon}))}^{2}\lesssim \sqrt{\e}\,\abs{D u_0^{\epsilon}}_{L^{2}(I(R/\sqrt{\epsilon}))}^{2}}\\
				\vs
				\ds{\quad \quad +\abs{D u_0^{\epsilon}}_{L^{2}(I(R/\sqrt{\epsilon}))}\abs{D^{2}u_0^{\epsilon}}_{L^{2}(I(R/\sqrt{\epsilon}))}\lesssim \sqrt{\epsilon}\,\abs{D^2 u_0^{\epsilon}}_{L^{2}(I(R/\sqrt{\epsilon}))}^{2}+\frac 1{\sqrt{\e}}\,\abs{D u_0^{\epsilon}}_{L^{2}(I(R/\sqrt{\epsilon}))}^{2}.}
			\end{array}
		\end{equation*}
		Therefore, recalling Hypothesis \ref{H1-bis}, we conclude that \eqref{ito2} holds.
		
	\end{proof}

	\begin{Lemma}
		For every $T>0$, there exists some constant $c_{T}>0$, depending only on $\Lambda_{1},\Lambda_{2}$ and $T>0$, such that for every $\epsilon\in (0,1)$ and $R>T+1$ 
		\begin{equation}\label{uniform_bound3_loc}
			\E\int_{0}^{T}\abs{\partial_x ^{2}u_{\epsilon}(s)}_{L^{2}(I((R-s)/\sqrt{\epsilon}))}^{2}ds \leq c_{T}.
		\end{equation}
		In particular, 
			\begin{equation}\label{uniform_bound4_loc}
			\sup_{R>1}\,	\E\int_{0}^{T}\abs{\partial_x ^{2}u_{\epsilon}(s)}_{L^{2}(-R,R)}^{2}ds\leq c_{T},\ \ \ \ \epsilon\in(0,1).
		\end{equation}
	\end{Lemma}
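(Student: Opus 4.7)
The plan is to take expectations in the pathwise estimate \eqref{ito2} evaluated at $t=T$, exploit the geometric identity $u_{\epsilon}\cdot\partial_x u_{\epsilon}=0$ (which follows from $|u_{\epsilon}|\equiv 1$) to control the two nonlinear terms on the right, and combine this with the energy bound \eqref{uniform_bound1_loc} and the $\epsilon$-weighted bound \eqref{uniform_bound2_loc_particular}. After a standard localization with stopping times defined through $|\partial_{x}v_{\epsilon}|_{L^{2}}$, the stochastic integral is a true martingale and disappears under expectation; since $\tfrac{\gamma_{0}}{2}|\partial_{x}u_{\epsilon}(T)|_{L^{2}}^{2}\geq 0$, the left-hand side of \eqref{ito2} reduces, after moving the sign-indefinite cross term $\epsilon\langle\partial_x u_\epsilon(T),\partial_x v_\epsilon(T)\rangle$ to the right, to $\int_{0}^{T}|\partial_x^{\,2} u_{\epsilon}|_{L^{2}(I((R-s)/\sqrt{\eps}))}^{2}ds$.

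The initial-data contribution on the right of \eqref{ito2} is uniformly bounded by $\Lambda_{1}^{2}+2\Lambda_{2}^{2}$ via Hypothesis \ref{H1-bis}, since $\epsilon|D^{2}u_{0}^{\epsilon}|_{L^{2}}^{2}=(\sqrt{\epsilon}|D^{2}u_{0}^{\epsilon}|_{L^{2}})^{2}\leq\Lambda_{2}^{2}$ and $\epsilon^{2}|Dv_{0}^{\epsilon}|_{L^{2}}^{2}\leq \Lambda_{2}^{2}$. The displaced cross term is controlled by Cauchy--Schwarz, $|\epsilon\langle\partial_x u_{\epsilon},\partial_x v_{\epsilon}\rangle|\leq \tfrac{1}{2}|\partial_x u_{\epsilon}|_{L^{2}}^{2}+\tfrac{\epsilon^{2}}{2}|\partial_x v_{\epsilon}|_{L^{2}}^{2}$, where the first summand is at most $\Lambda_{1}^{2}/2$ by \eqref{uniform_bound1_loc}, and the expectation of the second is $\tfrac{\epsilon}{2}\,\E(\epsilon|\partial_x v_{\epsilon}(T)|_{L^{2}}^{2})=O(1)$ because \eqref{uniform_bound2_loc_particular} provides $\E\sup_{t}\epsilon|\partial_x v_{\epsilon}|_{L^{2}}^{2}\lesssim 1/\epsilon$. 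Exactly the same reasoning controls the term $\epsilon\int_{0}^{T}|\partial_x v_{\epsilon}|_{L^{2}}^{2}\,ds$ on the right, whose expectation is $\epsilon\cdot O(1/\epsilon)=O(1)$.

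The critical nonlinear term $\int_{0}^{T}\langle \partial_x u_{\epsilon},\partial_x(|\partial_x u_{\epsilon}|^{2}u_{\epsilon})\rangle\,ds$ is simplified by expanding the derivative as $2(\partial_x u_{\epsilon}\cdot\partial_x^{\,2} u_{\epsilon})u_{\epsilon}+|\partial_x u_{\epsilon}|^{2}\partial_x u_{\epsilon}$ and using $u_{\epsilon}\cdot\partial_x u_{\epsilon}=0$, so that only $\int_{0}^{T}|\partial_x u_{\epsilon}|_{L^{4}(I_{s})}^{4}ds$ survives, where $I_{s}:=I((R-s)/\sqrt{\epsilon})$. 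The Gagliardo--Nirenberg inequality \eqref{GN_inequality} gives $|\partial_x u_{\epsilon}|_{L^{4}(I_{s})}^{4}\lesssim \Lambda_{1}^{3}|\partial_x^{\,2} u_{\epsilon}|_{L^{2}(I_{s})}+\Lambda_{1}^{4}$, and Young's inequality followed by absorption of a small multiple of $|\partial_x^{\,2} u_{\epsilon}|_{L^{2}(I_{s})}^{2}$ into the left-hand side produces a $c_{T}$ contribution. The remaining cubic term $-\epsilon\int_{0}^{T}\langle \partial_x u_{\epsilon},\partial_x(|v_{\epsilon}|^{2}u_{\epsilon})\rangle\,ds$ is treated identically: the orthogonality $u_{\epsilon}\cdot\partial_x u_{\epsilon}=0$ reduces it to $-\epsilon\int_{0}^{T}\!\int|v_{\epsilon}|^{2}|\partial_x u_{\epsilon}|^{2}dx\,ds$, which has the favourable sign and may simply be discarded.

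The main subtle point, and the reason the estimate is delicate, is that \eqref{uniform_bound2_loc_particular} carries a non-uniform $\epsilon^{-1}$ on its right-hand side, so any bound on $\partial_{x}v_{\epsilon}$ in isolation is useless; what rescues the argument is that every instance of $\partial_{x}v_{\epsilon}$ or $|\partial_{x}^{\,2}u_\epsilon|^2|v_\epsilon|^2$ appearing in \eqref{ito2} is multiplied by a compensating power of $\epsilon$ inherited from the wave-type rescaling, so that expectations end up being $O(1)$. Once \eqref{uniform_bound3_loc} is established with a constant $c_{T}$ independent of $R>T+1$, sending $R\to\infty$ via Fatou's lemma (or choosing $R$ large enough that $I((R-s)/\sqrt{\epsilon})\supset (-R',R')$ for all $s\in[0,T]$) immediately yields \eqref{uniform_bound4_loc}.
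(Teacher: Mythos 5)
Your proposal is correct and follows essentially the same route as the paper: start from \eqref{ito2}, use the orthogonality $u_{\epsilon}\cdot\partial_x u_{\epsilon}=0$ together with the Gagliardo--Nirenberg inequality \eqref{GN_inequality} to reduce and then absorb the nonlinear terms, and control the $\epsilon$-weighted velocity contributions via \eqref{uniform_bound1_loc} and \eqref{uniform_bound2_loc}. Your two small deviations --- discarding the cubic term $-\epsilon\int\Inner{\partial_x u_{\epsilon},\partial_x(\abs{v_{\epsilon}}^{2}u_{\epsilon})}$ by its sign instead of bounding it, and killing the stochastic integral by taking expectations of the stopped martingale rather than applying Burkholder--Davis--Gundy --- are both legitimate and, if anything, slightly simpler, since the statement only requires the time-integrated bound and not a supremum in time.
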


	\begin{proof}
	By Lemma \ref{ito_loc}, for every $\eps\in(0,1)$ and $t\in[0,T]$
 we have 	\begin{equation}
		\begin{array}{ll}
			&\ds{ \eps\Inner{\partial_{x}u_{\eps}(t),\partial_x v_{\eps}(t)}_{L^{2}(I((R-t)/\sqrt{\eps}))}+\frac{\gamma_0}{2}\abs{\partial_x u_{\eps}(t)}_{L^{2}(I((T-t)/\sqrt{\eps}))}^{2} +\int_{0}^{t}\abs{\partial_{x}^{2}u_{\eps}(s)}_{L^{2}(I((T-s)/\sqrt{\eps}))}^{2}ds  }\\
			\vs
			&\ds{\quad \quad \leq  c\Big(\abs{Du_0^{\epsilon}}_{L^{2}(\mathbb{R})}^{2}+\epsilon\abs{D^{2}u_0^{\epsilon}}_{L^{2}(\R)}^{2}+\eps^{2}\abs{Dv_0^{\epsilon}}_{L^{2}(\mathbb{R})}^{2}\Big)}\\
			\vs  &\ds{\quad\quad  \quad\quad  \quad\quad  \quad\quad  + \int_{0}^{t}\Inner{\partial_{x}u_{\eps}(s), \partial_{x}\big(\abs{\partial_x  u_{\eps}(s)}^{2}u_{\eps}(s)\big)  }_{L^{2}(I((R-s)/\sqrt{\eps}))}ds }\\
			\vs
			&\ds{\quad\quad \quad  -\eps\int_{0}^{t}\Inner{\partial_{x}u_{\eps}(s), \partial_{x}\big(\abs{v_{\eps}(s)}^{2}u_{\eps}(s)\big) }_{L^{2}(I((R-s)/\sqrt{\eps}))}ds + \eps\int_{0}^{t}\abs{\partial_{x}v_{\eps}(s)}_{L^{2}(I((R-s)/\sqrt{\eps}))}^{2}ds  }\\
			\vs
			&\ds{\quad\quad  \quad\quad  \quad\quad  \quad\quad  \quad\quad +\sqrt{\eps}\int_{0}^{t}\vartheta_{1,I((R-s)/\sqrt{\eps})}\big(u_{\eps}(s),v_{\eps}(s)\big)dw^{\eps}(s),\ \ \ \ \P\text{-a.s.} }.
		\end{array}
	\end{equation}
	As we did in the proof for \eqref{uniform_bound2_loc}, we take $R>T+1$ and $t\in [0,T]$, and define $I:=I((R-t)/\sqrt{\epsilon})$. Note that, since $u\cdot \partial_x u=0$, we have
	\begin{equation*}
		\Inner{\partial_x u,\partial_x \big(\abs{\partial_x u}^{2}u\big)}_{L^{2}(I)} = \Inner{\partial_x u,\abs{\partial_x u}^{2}\partial_x u}_{L^{2}(I)}.
	\end{equation*}
	Hence, thanks again to \eqref{GN_inequality}, for any $\delta>0$ 
	\begin{equation*}
		\begin{array}{ll}
			&\ds{\Inner{\partial_{x}u,\partial_x \big(\abs{\partial_{x} u}^{2}u\big)}_{L^{2}(I)}  \leq \abs{\partial_{x}u}_{L^{\infty}(I)}^{2}\abs{\partial_{x}u}_{L^{2}(I)}^{2}   \leq \Big( \abs{\partial_{x}u}_{L^{2}(I)}^{2}+2\abs{\partial_{x}^{2}u}_{L^{2}(I)}\abs{\partial_{x}u}_{L^{2}(I)}  \Big)\abs{\partial_{x}u}_{L^{2}(I)}^{2}  }\\
			\vs
			&\ds{\quad\quad\quad\quad\quad\quad  \leq \delta\abs{\partial_{x}^{2}u}_{L^{2}(I)}^{2}+c(\delta)\Big(\abs{\partial_{x}u}_{L^{2}(I)}^{4}+\abs{\partial_{x}u}_{L^{2}(I)}^{6}\Big)  }.
		\end{array}
	\end{equation*}
	In the same way, 
	\begin{equation*}
		\begin{array}{ll}
			&\ds{ \epsilon\big\lvert \Inner{\partial_{x}u, \partial_{x}\big(\abs{v}^{2}u\big) }_{L^{2}(I)}  \big\rvert = \epsilon\big\lvert \Inner{\partial_{x}u, \abs{v}^{2}\partial_{x}u }_{L^{2}(I)} \big\rvert   \leq \epsilon\abs{\partial_{x}u}_{L^{\infty}(I)}^{2}\abs{v}_{L^{2}(I)}^{2}    }\\
			\vs
			&\ds{\quad\quad\quad\quad \leq \epsilon\Big( \abs{\partial_{x}u}_{L^{2}(I)}^{2}+2\abs{\partial_{x}^{2}u}_{L^{2}(I)}\abs{\partial_{x}u}_{L^{2}(I)}  \Big)\abs{v}_{L^{2}(I)}^{2} }.
		\end{array}
	\end{equation*}
	Moreover, since 
	\begin{equation*}
		\begin{array}{ll}
			&\ds{ \norm{\vartheta_{1,I}(u,v)}_{\L_{2}(K_{\epsilon},\mathbb{R})}^{2} = \sum_{i=1}^{\infty}\Big\lvert \Inner{ \partial_{x}u, \partial_{x}\big((u\times v)\xi_{i}^{\epsilon}\big)}_{L^{2}(I)}\Big\rvert^{2}   }\\
			\vs
			&\ds{\quad \quad \quad \quad  \leq 3\sum_{i=1}^{\infty}\Big\lvert\Inner{\partial_{x}u,(u\times \partial_{x}v)\xi_{i}^{\epsilon} }_{L^{2}(I)} \Big\rvert^{2} + 3\sum_{i=1}^{\infty}\Big\lvert\Inner{\partial_{x}u,(u\times v)(\xi_{i}^{\epsilon})' }_{L^{2}(I)} \Big\rvert^{2}  }\\
			\vs
			&\ds{\quad \quad \quad \quad \quad \quad \quad \quad \leq 3c_{0}\abs{\partial_{x}u}_{L^{2}(I)}^{2}\abs{\partial_{x}v}_{L^{2}(I)}^{2}+\frac{3c_{1}}{\epsilon}\abs{\partial_{x}u}_{L^{2}(I)}^{2}\abs{v}_{L^{2}(I)}^{2} },
		\end{array}
	\end{equation*}
	 from \eqref{uniform_bound1_loc} and \eqref{uniform_bound2_loc} we have
	\begin{equation*}
		\begin{array}{ll}
			&\ds{ \sqrt{\epsilon}\ \E\sup_{t\in[0,T]}\Big\lvert \int_{0}^{t}\vartheta_{1,I((R-s)/\sqrt{\epsilon})}(u_{\epsilon}(s),v_{\epsilon}(s))dw^{\epsilon}(s)\Big\rvert  }\\
			\vs 
			&\ds{\quad \quad \quad \quad \quad  \lesssim \sqrt{\epsilon} \ \E\Big(\int_{0}^{T}\norm{\vartheta_{1,I((R-s)/\sqrt{\epsilon})}(u_{\epsilon}(s),v_{\epsilon}(s))}_{\L_{2}(K_{\epsilon},\mathbb{R})}^{2}ds\Big)^{\frac{1}{2}}  }\\
			\vs
			&\ds{\lesssim \Bigg( \E\int_{0}^{T}\abs{\partial_{x}u_{\epsilon}(s)}_{L^{2}(I((R-s)/\sqrt{\epsilon}))}^{2}\Big(\epsilon \abs{\partial_{x}v_{\epsilon}(s)}_{L^{2}(I((R-s)/\sqrt{\epsilon}))}^{2}+\abs{v_{\epsilon}(s)}_{L^{2}(I((R-s)/\sqrt{\epsilon}))}^{2}\Big)ds  \Bigg)^{\frac{1}{2}}\leq c_{T} }.
		\end{array}
	\end{equation*}
	Therefore, in view \eqref{uniform_bound1_loc} and \eqref{uniform_bound2_loc}, if we pick $\delta>0$ small enough we have
	\begin{equation*}
		\begin{array}{ll}
			&\ds{ \E\sup_{t\in [0,T]}\abs{\partial_{x}u_{\epsilon}(t)}_{L^{2}(I((R-t)/\sqrt{\epsilon}))}^{2}    +\E\int_{0}^{T}\abs{\partial_{x}^{2}u_{\epsilon}(s)}_{L^{2}(I((R-s)/\sqrt{\epsilon}))}^{2}ds }\\
			\vs
			&\ds{\quad \quad \quad   \lesssim \Big(\abs{Du_0^{\epsilon}}_{L^{2}(\mathbb{R})}^{2}+\epsilon\abs{D^{2}u_0^{\epsilon}}_{L^{2}(\R)}^{2}+\eps^{2}\abs{Dv_0^{\epsilon}}_{L^{2}(\mathbb{R})}^{2}\Big) +\E\sup_{t\in[0,T]}\abs{\partial_{x}u_{\epsilon}(t)}_{L^{2}(I((R-t)/\sqrt{\epsilon}))}^{2} }\\
			\vs
			&\ds{\quad +\epsilon^{2}\,\E\sup_{t\in[0,T]}\abs{\partial_{x}v_{\epsilon}(t)}_{L^{2}(I((R-t)/\sqrt{\epsilon}))}^{2} + \epsilon\, \E\int_{0}^{T}\abs{\partial_{x}v_{\epsilon}(s)}_{L^{2}(I((R-s)/\sqrt{\epsilon}))}^{2}ds  }\\
			\vs
			&\ds{\quad +\E\int_{0}^{T}\Big(\abs{\partial_{x}u_{\epsilon}(s)}_{L^{2}(I((R-s)/\sqrt{\epsilon}))}^{4}+\abs{\partial_{x}u_{\epsilon}(s)}_{L^{2}(I((R-s)/\sqrt{\epsilon}))}^{6}\Big)\,ds}\\
			\vs 
			&\ds{\quad\quad\quad \quad\quad\quad \quad\quad\quad +\E\int_{0}^{T}\Big(\abs{\partial_{x}^{2}u_{\epsilon}(s)}_{L^{2}(I((R-s)/\sqrt{\epsilon}))}^{2}\abs{v_{\epsilon}(s)}_{L^{2}(I((R-s)/\sqrt{\epsilon}))}^{2}\Big)ds  }\\
			\vs
			&\ds{ \quad\quad\quad + \sqrt{\epsilon}\ \E\sup_{t\in[0,T]}\Big\lvert \int_{0}^{t}\vartheta_{1,I((R-s)/\sqrt{\epsilon})}(u_{\epsilon}(s),v_{\epsilon}(s))dw^{\epsilon}(s)\Big\rvert}\\
			\vs
			&\ds{\quad \quad\quad\quad \quad\quad\quad \quad\quad  \lesssim_{T} 1+\epsilon\abs{D^{2}u_{0}^{\epsilon}}_{L^{2}(\mathbb{R})}^{2}+\abs{Du_0^{\epsilon}}_{L^{2}(\mathbb{R})}^{2}+\epsilon^{2}\abs{Dv_0^{\epsilon}}_{L^{2}(\mathbb{R})}^{2}  },
		\end{array}
	\end{equation*}
	and \eqref{uniform_bound3_loc} follows.
	
	\end{proof}
	
	As a consequence of the previous Lemma, we obtain the following fundamental result.

\begin{Lemma}
Under Hypotheses \ref{H1} and  \ref{H1-bis}, for every $T>0$, there exists some constant $c_{T}>0$, depending on $\Lambda_{1},\Lambda_{2}$ and $T>0$ only, such that 
	\begin{equation}\label{uniform_bound3}
	\sup_{\e \in\,(0,1)}	\E\int_{0}^{T}\abs{\partial_x ^{2}u_{\epsilon}(s)}_{L^{2}(\R)}^{2}ds \leq c_{T}.
	\end{equation}
\end{Lemma}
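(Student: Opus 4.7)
The plan is to deduce the global estimate \eqref{uniform_bound3} from the localized estimate \eqref{uniform_bound3_loc} by sending $R \to \infty$ and using a monotone convergence / Fatou-type argument. The crucial point already embedded in the previous lemma is that the constant $c_T$ on the right-hand side of \eqref{uniform_bound3_loc} depends only on $\Lambda_1$, $\Lambda_2$ and $T$, and \emph{not} on the truncation parameter $R$, so the bound is genuinely uniform in $R$.

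More precisely, fix $\e \in (0,1)$ and $T>0$. For every $R>T+1$, the interval $I((R-s)/\sqrt{\e}) = (-(R-s)/\sqrt{\e},(R-s)/\sqrt{\e})$ contains $(-(R-T)/\sqrt{\e},(R-T)/\sqrt{\e})$, uniformly in $s\in[0,T]$, and the latter exhausts $\mathbb{R}$ as $R\uparrow \infty$. Thus the family of non-negative integrands
\begin{equation}
f_R(s,\omega):=\abs{\partial_x^{2}u_{\epsilon}(s,\omega)}_{L^{2}(I((R-s)/\sqrt{\epsilon}))}^{2}\,\mathbf{1}_{[0,T]}(s)
\end{equation}
is monotone non-decreasing in $R$ and converges pointwise, as $R\to\infty$, to $\abs{\partial_x^{2}u_{\epsilon}(s,\omega)}_{L^{2}(\mathbb{R})}^{2}\,\mathbf{1}_{[0,T]}(s)$. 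By the monotone convergence theorem, applied on the product space $[0,T]\times \Omega$ equipped with $ds\otimes d\P$, we obtain
\begin{equation}
\mathbb{E}\int_{0}^{T}\abs{\partial_x^{2}u_{\epsilon}(s)}_{L^{2}(\mathbb{R})}^{2}ds = \lim_{R\to\infty}\mathbb{E}\int_{0}^{T}\abs{\partial_x^{2}u_{\epsilon}(s)}_{L^{2}(I((R-s)/\sqrt{\epsilon}))}^{2}ds \leq c_T,
\end{equation}
where the last inequality is exactly \eqref{uniform_bound3_loc}. Since $c_T$ is independent of both $R$ and $\e$, taking the supremum over $\e \in (0,1)$ yields \eqref{uniform_bound3}.

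No real obstacle is expected: the uniformity in $R$ of the previous lemma does all the work, and the argument is an exchange of limit and integral that is justified by monotonicity. The only point that needs a moment of care is verifying that $f_R$ is indeed monotone in $R$, which is clear since the $L^2$ norm of $\partial_x^2 u_\e(s)$ over a larger interval is larger; equivalently, one may simply appeal to Fatou's lemma for $\liminf_{R\to\infty} f_R$ without needing monotonicity.
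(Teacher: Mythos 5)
Your proof is correct and takes essentially the same route as the paper, which states the global bound as an immediate consequence of the $R$-uniform localized estimate \eqref{uniform_bound3_loc} (the paper leaves the monotone-convergence/Fatou passage to the limit $R\to\infty$ implicit, exactly as you spell it out). The key observation — that $c_T$ in \eqref{uniform_bound3_loc} is independent of $R$ and $\e$ — is correctly identified as the point that makes the limit argument work.
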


\section{The convergence of $u_\e$ - Proof of Theorem \ref{small_mass_limit}}\label{main1}

We proceed in  two steps. First we prove the tightness of $\{\L(u_{\epsilon})\}_{\epsilon\in(0,1)}$ in a proper functional space, and then we prove the validity of limit \eqref{convergence_pr}. 
\subsection{Tightness}
We recall that for any $1\leq p\leq +\infty$ and $\delta\geq 0$ we say that $f_n$ converges to $f$ in $L^{p}(0,T; H^{\delta}_{\text{loc}}(\mathbb{R}) )$, as $n\to\infty$,
if and only if for every $R>0$ 
\begin{equation*}
	\lim_{n\to\infty}f_n= f\ \ \ \ \text{in}\ \ L^{p}(0,T;H^{\delta}(-R,R)).
\end{equation*}

\begin{Lemma}\label{tightness}
	Assume that Hypotheses \ref{H1} and \ref{H1-bis} hold. Then, for every $T>0$ and for every  $\delta_{1}<1$ and $\delta_{2}<2$, the family of probability measures $\{\L(u_{\epsilon})\}_{\epsilon\in(0,1)}$ is tight in the space $C([0,T];H^{\delta_{1}}_{\text{loc}}(\R))\cap L^{2}(0,T;H^{\delta_{2}}_{\text{loc}}(\mathbb{R}))$.
\end{Lemma}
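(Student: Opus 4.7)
The plan is to prove tightness by combining the uniform energy estimates from Section \ref{energy} with standard compactness criteria on bounded intervals, localized via the spaces $H^s_{\text{loc}}(\R)$. The key ingredients are pointwise a.s. bounds from \eqref{uniform_bound1} and expectation bounds from \eqref{uniform_bound3}, together with the identity $\partial_t u_\epsilon=v_\epsilon$, which transfers integrability of $v_\epsilon$ into time regularity of $u_\epsilon$.

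First I would isolate the bounds that are uniform in $\epsilon$. From \eqref{uniform_bound1} we have $\P$-a.s.
\begin{equation}
\sup_{t\in[0,T]}\abs{\partial_x u_\epsilon(t)}_{L^2(\R)}^2+\int_0^T\abs{v_\epsilon(s)}_{L^2(\R)}^2 ds \lesssim \Lambda_1^2,
\end{equation}
while \eqref{uniform_bound3} gives the expectation bound $\E\int_0^T\abs{\partial_x^2 u_\epsilon}_{L^2(\R)}^2 ds\leq c_T$. In particular, since $\abs{u_\epsilon(t,x)}=1$, for any $R>0$ the family $\{u_\epsilon\}$ is a.s. bounded in $L^\infty(0,T;H^1(-R,R))$, bounded in $L^2(\Omega;L^2(0,T;H^2(-R,R)))$, and $\partial_t u_\epsilon=v_\epsilon$ is a.s.\ bounded in $L^2(0,T;L^2(-R,R))$.

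Next I would quantify the time regularity needed for $C([0,T];H^{\delta_1}_{\text{loc}}(\R))$. Writing $u_\epsilon(t)-u_\epsilon(s)=\int_s^t v_\epsilon(\tau)\,d\tau$, Cauchy--Schwarz and \eqref{uniform_bound1} yield
\begin{equation}
\abs{u_\epsilon(t)-u_\epsilon(s)}_{L^2(-R,R)}\leq (t-s)^{1/2}\Big(\int_0^T\abs{v_\epsilon}_{L^2(\R)}^2 d\tau\Big)^{1/2}\lesssim \Lambda_1 (t-s)^{1/2},\qquad \P\text{-a.s.}
\end{equation}
Interpolating this Lipschitz-$1/2$ bound in $L^2(-R,R)$ against the uniform $L^\infty(0,T;H^1(-R,R))$ bound yields, for any $\delta_1<1$, a uniform a.s.\ Hölder bound on $u_\epsilon$ in $C^{\alpha(\delta_1)}([0,T];H^{\delta_1}(-R,R))$. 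Since $H^1(-R,R)\hookrightarrow H^{\delta_1}(-R,R)$ is compact (Rellich), an Arzelà--Ascoli argument shows that bounded balls in $L^\infty(0,T;H^1(-R,R))\cap C^{\alpha}([0,T];H^{\delta_1}(-R,R))$ are relatively compact in $C([0,T];H^{\delta_1}(-R,R))$. Combining this with a Chebyshev inequality applied to the a.s. bounds above (which are deterministic, hence trivial for tightness) and diagonalizing over $R\in\N$ delivers tightness in $C([0,T];H^{\delta_1}_{\text{loc}}(\R))$.

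For the $L^2(0,T;H^{\delta_2}_{\text{loc}}(\R))$ factor I would invoke the Aubin--Lions--Simon lemma: the embedding
\begin{equation}
\big\{u\in L^2(0,T;H^2(-R,R)):\partial_t u\in L^2(0,T;L^2(-R,R))\big\}\hookrightarrow L^2(0,T;H^{\delta_2}(-R,R))
\end{equation}
is compact for any $\delta_2<2$. Using Chebyshev's inequality on the bound \eqref{uniform_bound3} (and on $\mathbb{E}\int_0^T|v_\epsilon|_{L^2}^2\,ds$ from \eqref{uniform_bound1}), for every $\eta>0$ and every $R>0$ we can find $M_R>0$ with
\begin{equation}
\P\Big(\abs{u_\epsilon}_{L^2(0,T;H^2(-R,R))}+\abs{\partial_t u_\epsilon}_{L^2(0,T;L^2(-R,R))}>M_R\Big)\leq \eta\,2^{-R}.
\end{equation}
Intersecting these events over $R\in\N$ produces a compact set in $L^2(0,T;H^{\delta_2}_{\text{loc}}(\R))$ carrying mass $\geq 1-\eta$ uniformly in $\epsilon$, proving tightness. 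The main technical point is simply to coordinate these two localized compactness arguments—one for uniform convergence in time, one for $L^2$ in time—and to handle the ``loc'' topology via diagonal extraction over $R$; all required a priori bounds are already in hand.
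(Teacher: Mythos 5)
Your proposal is correct and follows essentially the same route as the paper: Chebyshev on the uniform bounds \eqref{uniform_bound1} and \eqref{uniform_bound3}, Aubin--Lions on each interval $(-R,R)$ for the $L^{2}(0,T;H^{\delta_{2}}_{\text{loc}})$ factor, and a diagonal/summation over $R$ with weights $2^{-R}$ to handle the local topology. For the $C([0,T];H^{\delta_{1}}_{\text{loc}})$ factor the paper only says ``analogous arguments,'' and your explicit H\"older-$1/2$-in-time bound from $u_{\epsilon}(t)-u_{\epsilon}(s)=\int_{s}^{t}v_{\epsilon}\,d\tau$, interpolated against the a.s.\ $L^{\infty}(0,T;H^{1}(-R,R))$ bound and fed into Arzel\`a--Ascoli, is a valid way to fill in that step.
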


\begin{proof}
	We will first show that the family $\{\L(u_{\epsilon})\}_{\epsilon\in (0,1)}$ is tight in $L^{2}(0,T;H^{\delta_{2}}_{\text{loc}}(\mathbb{R}))$, for any $\delta_{2}<2$.  By \eqref{uniform_bound1} and \eqref{uniform_bound3}, for every $T>0$  we have 
	\begin{equation}
		\sup_{\epsilon \in\,(0,1)}\E\int_{0}^{T}\Big(\abs{\partial_x ^{2}u_{\epsilon}(t)}_{L^{2}(\R)}^{2}+\abs{\partial_{t} u_{\epsilon}(t)}_{L^{2}(\R)}^{2}\Big)dt\leq c_{T}.
	\end{equation}
Thus, for every fixed $\eta>0$ and $n \in\,\mathbb{N}$,  there exists $L_{\eta,n}>0$ such that if we denote by $B_{\eta,n}$ the closed ball of radius $L_{\eta,n}$ in 
	$L^{2}(0,T;H^{2}(-n,n))\cap W^{1,2}(0,T;L^{2}(-n,n))$, then
	\begin{equation*}
		\inf_{\epsilon \in\,(0,1)}\P\Big((u_{\epsilon})_{|_{[0,T]\times [-n,n]}}\in B_{\eta,n} \Big)\geq 1-\frac{\eta}{2^n}.
	\end{equation*}
Now, we define 	
\[K_\eta:=\bigcap_{n \in\,\mathbb{N}}K_{\eta,n},\]
where
\[K_{\eta, n}:=\left\{f \in\,L^2(0,T;H^{\delta_2}_{\text{loc}}(\R))\ :\ f_{|_{[0,T]\times [-n,n]}} \in\,B_{\eta, n}\right\}.\]
Since the Aubin-Lions lemma implies the compactness of   $K_{\eta,n}$  in $L^{2}(0,T;H^{\delta_{2}}_{\text{loc}}(\R))$, for any $\delta_{2}<2$ and $n \in\,\mathbb{N}$, it is immediate to check  that $K_\eta$ is compact in $L^2_{\text{loc}}(0,T;H^{\delta_2}(\R))$, for any $\delta_{2}<2$.
Moreover
\[\begin{array}{l}
\ds{\mathbb{P}\left(u_\e \in\,K_\eta\right)=\mathbb{P}\Big(u_\e \in\,\bigcap_{n=1}^\infty K_{\eta,n}\Big)=1-\mathbb{P}\Big(u_\e \in\,\bigcup_{n=1}^\infty K^c_{\eta,n}\Big)}\\
\vs
\ds{\quad\quad \quad \quad \quad \quad \quad \quad \geq 1-\sum_{n=1}^\infty \mathbb{P}\left((u_{\epsilon})_{|_{[0,T]\times [-n,n]}} \in\,B_{\eta,n}^c\right)\geq 1-\sum_{n=1}^\infty \frac{\eta}{2^n}=1-\eta.}	
\end{array}\]
 All this implies that $\{\L(u_{\epsilon})\}_{\epsilon\in (0,1)}$ is tight in  $L^{2}(0,T;H^{\delta_{2}}_{\text{loc}}(\mathbb{R}))$. Furthermore, by using \eqref{uniform_bound1} again, we have for every $T>0$,
	\begin{equation*}
		\sup_{\epsilon\in(0,1)}\E\sup_{t\in[0,T]}\abs{\partial_x u_{\eps}(t)}_{L^{2}(\R)}^{2}<\infty,
	\end{equation*}
so that	the tightness of $\{\L(u_{\eps})\}_{\epsilon\in(0,1)}$ in $C([0,T];H^{\delta_{1}}_{\text{loc}}(\mathbb{R}))$, for every $\delta_1<1$, can be proved by using  arguments analogous to those used  above.
\end{proof}

\subsection{Proof of Theorem \ref{small_mass_limit}}
	According to \eqref{sm130}, \eqref{uniform_bound1} and Lemma \ref{tightness},  we have that the family $\big\{\L(u_{\epsilon},\epsilon\partial_{t}u_{\epsilon} ,\eps\, w^{\eps})\big\}_{\epsilon\in(0,1)}$ is tight in
	\begin{equation}
		\Gamma_{T,\delta_{1},\delta_{2},s}:=\Big(C([0,T];H^{\delta_{1}}_{\text{loc}}(\R))\cap L^{2}(0,T;H^{\delta_{2}}_{\text{loc}}(\mathbb{R}))\Big)\times C([0,T];L_{\text{loc}}^{2}(\mathbb{R}))\times C([0,T];H^{s}_{\lambda}),
	\end{equation}
	 for any $T>0$, $\delta_{1}<1$, $\delta_{2}<2$ and $s\in[0,1)$. Then, the Skorokhod Theorem implies that for any sequence $\{\epsilon_{n}\}_{n\in\mathbb{N}}\subset(0,1)$ convergent to zero, there exists a subsequence, still denoted by $\{\epsilon_{n}\}$, and a sequence of  $\Gamma_{T,\delta_{1},\delta_{2},s}$-valued random variables
	\begin{equation*}
		\mathcal{Y}_{n}:=\big(\rho_{n}, \eps_{n}\vartheta_{n}, \eps_{n}w_{n}\big),\ \ \ \ \mathcal{Y}:=(\rho, 0, 0),\ \ \ \ n\in\mathbb{N},
	\end{equation*}
	all defined on some probability space $\big(\hat{\Omega},\hat{\F},\{\hat{\F}_{t}\}_{t\in[0,T]},\hat{\P}\big)$, such that 
	\begin{equation}
		\L(\mathcal{Y}_{n})=\L\big(u_{\eps_{n}}, \eps_{n}\partial_{t}u_{\eps_{n}}, \eps_{n}w^{\eps_{n}}\big),\ \ \ \ n\in\mathbb{N},
	\end{equation}
	and
	\[\lim_{n\to\infty}\vert \mathcal{Y}_n-\mathcal{Y}\vert_{\Gamma_{T,\delta_{1},\delta_{2},s}}=0,\ \ \ \ \ \ \hat{\mathbb{P}}\text{-a.s.}\]
	In particular, this means that 
	\begin{equation}
		\lim_{n\to\infty}\Big( \abs{\rho_n-\rho}_{C([0,T];H^{\delta_{1}}_{\text{loc}}(\R))}+\abs{\rho_{n}-\rho}_{L^{2}(0,T;H^{\delta_{2}}_{\text{loc}}(\R))} +\eps_{n}\abs{\vartheta_{n}}_{C([0,T];L^{2}_{\text{loc}}(\R))} + \eps_{n}\abs{w_{n}}_{C([0,T];H^{s}_{\lambda})}  \Big) = 0,
	\end{equation}
$\hat{\mathbb{P}}$-almost surely, or, equivalently, for any $R>0$
	\begin{equation}
		\lim_{n\to\infty}\Big(\abs{\rho_n - \rho}_{C([0,T];H^{\delta_{1}}(-R,R))}+\abs{\rho_{n}-\rho}_{L^{2}(0,T;H^{\delta_{2}}(-R,R))}+\eps_{n}\abs{\vartheta_{n}}_{C([0,T];L^{2}(-R,R))}\Big)=0,\ \ \ \ \hat{\P}\text{-a.s.}
	\end{equation}
	Note that in view of \eqref{uniform_bound1}, there exists a constant $c>0$ such that for any $T>0$ 
	\begin{equation}
		\sup_{n\in\mathbb{N}}\Bigg(\sup_{t\in[0,T]}\abs{\partial_x \rho_{n}(t)}_{L^{2}(\R)}^{2} + \eps_{n}\sup_{t\in[0,T]}\abs{\vartheta_{n}(t)}_{L^{2}(\R)}^{2} + \int_{0}^{T}\abs{\vartheta_{n}(t)}_{L^{2}(\R)}^{2}dt \Bigg)\leq c,\ \ \ \ \hat{\P}\text{-a.s.},
	\end{equation}
	and by \eqref{uniform_bound3}, for any $T>0$ 	
	\begin{equation}
		\sup_{n \in\,\mathbb{N}}\hat{\E}\int_{0}^{T}\abs{\partial_x ^{2}\rho_{n}(t)}_{L^{2}(\R)}^{2}dt <\infty.
	\end{equation}
	Hence, by the Fatou lemma, we conclude that $\rho\in  L^{\infty}(0,T;\dot{H}^{1}(\mathbb{R}))\cap L^{2}(0,T;\dot{H}^{2}(\mathbb{R})) $, $\hat{\P}$-a.s., and $\rho$ is weakly differentiable in time, with $\partial_{t}\rho\in L^{2}(0,T;L^{2}(\R))$. Moreover,
	\begin{equation}
		\sup_{t\in[0,T]}\abs{\partial_x \rho(t)}_{L^{2}(\R)} +\int_{0}^{T}\abs{\partial_{t}\rho(t)}_{L^{2}(\R)}^{2}dt\leq c,\ \ \ \ \hat{\P}\text{-a.s.},
	\end{equation}
	and 
	\begin{equation*}
		\hat{\E} \int_{0}^{T}\abs{\partial_x ^{2}\rho(t)}_{L^{2}(\R)}^{2}dt \leq c_{T}.
	\end{equation*}

Now, we  prove that for every $t\in[0,T]$ and every $\psi\in C^{\infty}_{c}(\R)$, the following identity  holds, $\hat{\P}$-almost surely, 
\begin{equation}
	\label{test_limit}
	\,\gamma_0\Inner{\rho(t),\psi}_{L^{2}(\R)} = \,\gamma_0\Inner{u_0,\psi}_{L^{2}(\R)}+\int_{0}^{t}\Inner{\partial_x^{\,2}\rho(s),\psi}_{L^{2}(\R)}ds + \int_{0}^{t}\Inner{\abs{\partial_x\rho(s)}^{2}\rho(s),\psi }_{L^{2}(\R)}ds.
\end{equation}
Indeed, for every $n \in\,\mathbb{N}$, $t\in[0,T]$ and $\psi\in C^{1}_{c}(\mathbb{R})$, we have 
\begin{equation}
	\begin{array}{ll}
		&\ds{ \Inner{\,\gamma_0\,\rho_{n}(t) + \eps_{n}\vartheta_{n}(t), \psi}_{L^{2}(\R)} = \Inner{\,\gamma_0\,u_{0}^{\epsilon_n}+\eps_{n}v_0^{\epsilon_n},\psi}_{L^{2}(\R)}+ \int_{0}^{t}\Inner{\partial_x^{\,2}\rho_{n}(s),\psi}_{L^{2}(\R)}ds   }\\
		\vs
		&\ds{\quad\quad\quad + \int_{0}^{t}\Inner{\abs{\partial_x\rho_{n}(s)}^{2}\rho_{n}(s),\psi}_{L^{2}(\mathbb{R})}ds -\eps_{n}\int_{0}^{t}\Inner{\abs{\vartheta_{n}(s)}^{2}\rho_{n}(s),\psi}_{L^{2}(\R)}ds   }\\
		\vs
		&\ds{\quad\quad\quad\quad \quad\quad\quad  \quad\quad\quad  + \sqrt{\eps_{n}}\int_{0}^{t}\Inner{\psi,\big(\rho_{n}(s)\times \vartheta_{n}(s)\big)dw_{n}(s)}_{L^{2}(\R)},\ \ \ \ \hat{\P}\text{-a.s.} }
	\end{array}
\end{equation}
First, note that for every $R>0$
\begin{equation}
	\lim_{n\to\infty} \gamma_0\,\rho_{n}+\eps_{n}\vartheta_{n} = \,\gamma_0\,\rho,\ \ \text{in}\ \ C([0,T];L^{2}(-R,R)),\ \ \hat{\P}\text{-a.s.},
\end{equation}
which implies that 
\begin{equation}\label{convergence_step1}
	\lim_{n\to\infty}\sup_{t\in[0,T]}\Big\lvert \Inner{\,\gamma_0\,(\rho_{n}(t)-\rho(t))+\eps_{n}\vartheta_{n}(t),\psi}_{L^{2}(\R)} \Big\rvert = 0, \ \ \ \ \hat{\P}\text{-a.s.}
\end{equation}
Since 
\begin{equation*}
	\sup_{t\in[0,T]}\Big\lvert \int_{0}^{t}\Inner{ \partial_x^{\,2}(\rho_{n}(s)-\rho(s)),\psi}_{L^{2}(\R)}ds\Big\rvert\leq \abs{\psi}_{H^{1}(\mathbb{R})}\int_{0}^{T}\abs{\partial_x(\rho_{n}(s)-\rho(s))}_{L^{2}(I)}ds,
\end{equation*}
where $I\subset \R$ is any bounded interval such that $\text{supp}(\psi)\subseteq I$, we obtain that 
\begin{equation}\label{convergence_step2}
	\lim_{n\to\infty}\sup_{t\in[0,T]}\Big\lvert \int_{0}^{t}\Inner{\partial_x^{\,2}(\rho_{n}(s)-\rho(s)), \psi}_{L^{2}(\R)}ds\Big\rvert = 0,\ \ \ \ \hat{\P}\text{-a.s.}
\end{equation} 
Next, thanks to \eqref{GN_inequality}, for any $R>0$ we have 
\begin{equation*}
	\begin{array}{ll}
		&\ds{ \hat{\E}\sup_{t\in[0,T]}\Big\lvert\int_{0}^{t} \Big(\abs{\partial_x\rho_{n}(s)}^{2}\rho_{n}(s)- \abs{\partial_x\rho(s)}^{2}\rho(s) \Big)ds \Big\lvert_{L^{2}(-R,R)}   }\\
		\vs
		&\ds{\lesssim \hat{\E}\int_{0}^{T}\Big\lvert \big\lvert \partial_x(\rho_{n}(s)-\rho(s))\big\rvert\big(\abs{\partial_x \rho_{n}(s)}+\abs{\partial_x \rho(s)}\big)\Big\rvert_{L^{2}(-R,R)}ds }\\
		\vs
		&\ds{\quad\quad\quad\quad +\hat{\E}\int_{0}^{T}\Big\lvert \abs{\partial_x\rho(s)}^{2}(\rho_{n}(s)-\rho(s))\Big\rvert_{L^{2}(-R,R)}ds  }\\
		\vs
		&\ds{\lesssim_{\,R}\hat{\E}\int_{0}^{T}\abs{\partial_x\rho_{n}(s)-\partial_x\rho(s)}_{L^{2}(-R,R)}\big(\abs{\partial_x^{2}\rho_{n}(s)}_{L^{2}(-R,R)}+\abs{\partial_x^{2}\rho(s)}_{L^{2}(-R,R)}\big)ds }\\
		\vs
		&\ds{\quad\quad\quad\quad +\hat{\E}\int_{0}^{T}
		\abs{\rho_{n}(s)-\rho(s)}_{L^{2}(-R,R)}\abs{\partial_x^{2}\rho(s)}_{L^{2}(-R,R)}^{2}ds },
	\end{array}
\end{equation*}
and  the Dominated Convergence Theorem implies that for every $R>0$
\begin{equation*}
	\lim_{n\to\infty}	\hat{\E}\sup_{t\in[0,T]}\Big\lvert\int_{0}^{t} \Big(\abs{\partial_x\rho_{n}(s)}^{2}\rho_{n}(s)- \abs{\partial_x\rho(s)}^{2}\rho(s) \Big)ds \Big\lvert_{L^{2}(-R,R)} = 0.
\end{equation*}
In particular, we get
\begin{equation}\label{convergence_step3}
	\lim_{n\to\infty}\hat{\E}\sup_{t\in[0,T]}\Big\lvert \int_{0}^{t}\Inner{ \abs{\partial_x\rho_{n}(s)}^{2}\rho_{n}(s)- \abs{\partial_x\rho(s)}^{2}\rho(s), \psi}_{L^{2}(\R)}\Big\rvert = 0.
\end{equation}
Moreover, since
\begin{equation*}
	\sup_{t\in[0,T]}\Big\lvert \int_{0}^{t}\Inner{\abs{\vartheta_{n}(s)}^{2}\rho_{n}(s), \psi}_{L^{2}(\R)}ds\Big\rvert\leq \abs{\psi}_{L^{\infty}(\R)} \int_{0}^{T}\abs{\vartheta_{n}(s)}_{L^{2}(\mathcal{O})}^{2}ds,
\end{equation*}
we obtain 
\begin{equation}\label{convergence_step4}
	\lim_{n\to\infty} \eps_{n}\sup_{t\in[0,T]}\Big\lvert \int_{0}^{t}\Inner{\abs{\vartheta_{n}(s)}^{2}\rho_{n}(s), \psi}_{L^{2}(\R)}ds\Big\rvert = 0,\ \ \ \ \hat{\P}\text{-a.s.}
\end{equation}
Finally, for any $R>0$
\begin{equation}
	\begin{array}{ll}
		&\ds{\sqrt{\eps_{n}}\ \hat{\E}\sup_{t\in[0,T]}\Big\lvert \int_{0}^{t}(\rho_{n}(s)\times \vartheta_{n}(s))dw_{n}(s)\Big\rvert_{L^{2}(-R,R)} }\\
		\vs
		&\ds{ \lesssim \sqrt{\eps_{n}} \ \hat{\E}\Big(\int_{0}^{T}\norm{\rho_{n}(s)\times \vartheta_{n}(s)}_{\L_{2}(K_{\eps_n},L^{2}(-R,R))}^{2}ds\Big)^{\frac{1}{2}} \lesssim \sqrt{\eps_n}\  \hat{\E}\Big(\int_{0}^{T}\abs{\vartheta_{n}(s)}_{L^{2}(-R,R)}^{2}ds\Big)^{\frac{1}{2}},}	\end{array}
\end{equation}
and this implies 
\begin{equation}\label{convergence_step5}
	\lim_{n\to\infty}\sqrt{\eps_n}\  \hat{\E}\sup_{t\in[0,T]}\Big\lvert \int_{0}^{t}\Inner{\psi, \rho_{n}(s)\times \vartheta_{n}(s))dw_{n}(s)}_{L^{2}(\mathbb{R})} ds \Big\rvert = 0.
\end{equation}
Consequently, by combining \eqref{convergence_step1}, \eqref{convergence_step2}, \eqref{convergence_step3}, \eqref{convergence_step4}, and \eqref{convergence_step5},  \eqref{test_limit} follows and the proof of Theorem \ref{small_mass_limit} is completed.

\section{The convergence of $\partial_t u_\e$ - Proof of Theorem \ref{teo3.4}}

{\em Step 1 - Proof of the first part.} We show that $\partial_t u_\e$ converges in probability to $\partial_t u$, with respect to the weak convergence in $L^2(0,T;L^2(\mathbb{R}))$.

According to \eqref{uniform_bound1} and \eqref{case1_boundedness1-bis}, there exists some $M>0$ such that  
\[\{\partial_t u_\e\}_{\e \in\,(0,1)}\subset \mathcal{S}_M:=\left\{ \varphi \in\,L^2(0,T;L^2(\mathbb{R}))\ :\ \vert\varphi\vert_{L^2(0,T;L^2(\mathbb{R}))}\leq M\right\},\ \ \ \ \ \ \mathbb{P}-\text{a.s.}\]
and $\partial_t u \in\,\mathcal{S}_M$.
As known, $\mathcal{S}_M$ is metrizable for the  weak topology of $L^2(0,T;L^2(\mathbb{R}))$, and the distance can be defined as
\[d(f,g)=\sum_{k=1}^\infty \frac 1{2^k}\,|\langle f-g,\varphi_k\rangle_{L^2(0,T;L^2(\mathbb{R}))}|,\ \ \ \ \ \ f, g \in\,L^2(0,T;L^2(\mathbb{R})),\]
where $\{\varphi_k\}_{k \in\,\mathbb{N}}\subset C^1([0,T];L^2(\mathbb{R}))$ is a dense subset of $\mathcal{S}_M$.

For every $\eta>0$, we pick $N_\eta \in\,\mathbb{N}$ such that
\[\sum_{k>N_\eta}\frac 1{2^k}\leq \frac \eta {4M^2}.\]
Then, we have
\[\begin{array}{l}
\ds{\mathbb{P}\left(d(\partial_t u_\e, \partial_t u)>\eta\right)=\mathbb{P}\left(\,\sum_{k=1}^\infty\frac 1{2^k}| \langle\partial_t u_\e- \partial_t u,\varphi_k\rangle_{L^2(0,T;L^2(\mathbb{R})}|>\eta\right)}\\
\vs	
\ds{\leq \mathbb{P}\left(\,\sum_{k\leq N_\eta}| \langle\partial_t u_\e- \partial_t u,\varphi_k\rangle_{L^2(0,T;L^2(\mathbb{R})}|>\frac\eta 2\right)\leq \sum_{k\leq N_\eta}\mathbb{P}\Big(| \langle\partial_t u_\e- \partial_t u,\varphi_k\rangle_{L^2(0,T;L^2(\mathbb{R})}|>\frac\eta{2N_\eta}\Big).}
\end{array}\]

Now, we fix $\e_\eta>0$ such that
\[\left|\langle u^\e_0-u_0,\varphi_k(0)\rangle_{L^2(\mathbb{R})}\right|\leq \frac{\eta}{6N_\eta},\ \ \ \ \ \ \e\leq \epsilon_\eta.\]
Since
\[\begin{array}{l}
\ds{\langle\partial_t u_\e- \partial_t u,\varphi_k\rangle_{L^2(0,T;L^2(\mathbb{R})}}\\
\vs
\ds{\quad \quad =\langle u_\e(T)-u(T),\varphi_k(T)\rangle_{L^2(\mathbb{R})}-\langle u^\e_0-u_0,\varphi_k(0)\rangle_{L^2(\mathbb{R})}-	\langle u_\e- u,\partial_t \varphi_k\rangle_{L^2(0,T;L^2(\mathbb{R})},}
\end{array}\]
for all $\e\leq \e_\eta$ we get
\[\begin{array}{l}
\ds{\mathbb{P}\left(d(\partial_t u_\e, \partial_t u)>\eta\right)\leq \sum_{k\leq N_\eta}\mathbb{P}\Big(|\langle u_\e(T)-u(T),\varphi_k(T)\rangle_{L^2(\mathbb{R})}|>\frac\eta{6N_\eta}\Big)	}\\
\vs
\ds{\quad \quad \quad \quad \quad \quad \quad \quad \quad \quad \quad \quad +\sum_{k\leq N_\eta}\mathbb{P}\Big(|\langle u_\e- u,\partial_t \varphi_k\rangle_{L^2(0,T;L^2(\mathbb{R})}|>\frac\eta{6N_\eta}\Big).}
\end{array}\]
Therefore, thanks to \eqref{convergence_pr} we conclude that 
\[\lim_{\e\to 0}\mathbb{P}\left(d(\partial_t u_\e, \partial_t u)>\eta\right)=0.\]

\medskip

{\em Step 2 - Proof of the second part.} We prove that $\partial_t u_\e$ does not converge in probability to $\partial_t u$, with respect to the strong convergence in $L^2(0,T;L^2(\mathbb{R}))$, if $c_0\neq 0$.

Due to \eqref{case1_boundedness1-bis}, we have
\[\vert\partial_x u(t)\vert^2_{L^2(\mathbb{R})}+(2\,\gamma+c_0)\,\int_0^t\vert \partial_t u(s)\vert_{L^2(\mathbb{R})}^2\,ds=\vert D u_0\vert_{L^2(\mathbb{R})}^2.\]
Therefore, in view of \eqref{uniform_bound1} we have
\[\begin{array}{l}
\ds{\int_0^T\int_0^t	\vert\partial_t u_\epsilon(s)\vert^2_{L^2(\mathbb{R})}\,ds\,dt-\left(1+\frac{c_0}{2\gamma}\right)\int_0^T\int_0^t	\vert\partial_t u(s)\vert^2_{L^2(\mathbb{R})}\,ds\,dt}\\
\vs
\ds{=\frac{\epsilon\,T}{2\gamma} \,\vert v_0^\epsilon\vert_{L^2(\mathbb{R})}^2-\frac 1{2\gamma}\left(\int_0^T \vert\partial_x u_\epsilon(s)\vert^2_{L^2(\mathbb{R})}\,dt-\int_0^T \vert\partial_x u(s)\vert^2_{L^2(\mathbb{R})}\,dt \right)-\frac \e{2\gamma}\int_0^T \vert\partial_t u_\epsilon(s)\vert^2_{L^2(\mathbb{R})}\,dt. }
\end{array}\]
Thanks to \eqref{sm105} and \eqref{uniform_bound1}, for every $\eta>0$ there exists $\epsilon_\eta>0$ such that
\[\frac{\epsilon\,T}{2\gamma} \,\vert v_0^\epsilon\vert_{L^2(\mathbb{R})}^2-\frac \e{2\gamma}\int_0^T \vert\partial_t u_\epsilon(s)\vert^2_{L^2(\mathbb{R})}\,dt<\eta/2,\ \ \ \ \ \ \e\leq\e_\eta,\]
so that 
\begin{equation}
\begin{array}{l}
\ds{J_{\e, \eta}:=\mathbb{P}\left(\Big|\int_0^T\int_0^t	\vert\partial_t u_\epsilon(s)\vert^2_{L^2(\mathbb{R})}\,ds\,dt-\left(1+\frac{c_0}{2\gamma}\right)\int_0^T\int_0^t	\vert\partial_t u(s)\vert^2_{L^2(\mathbb{R})}\,ds\,dt\Big|>\eta\right)}\\
\vs
\ds{\quad \quad \quad \quad \quad \leq \mathbb{P}\left(\Big|\int_0^T \vert\partial_x u_\epsilon(s)\vert^2_{L^2(\mathbb{R})}\,dt-\int_0^T \vert\partial_x u(s)\vert^2_{L^2(\mathbb{R})}\,dt\Big|>\eta \gamma\right).}	
\end{array}
	\end{equation}
This implies that for every $\e\leq \e_\eta$
\begin{equation}
	\begin{array}{l}
\ds{J_{\e, \eta}\leq \mathbb{P}\left(\vert\partial_x u_\epsilon-\partial_x u\vert_{L^2(0,T;L^2(\mathbb{R}))}\left(\vert\partial_x u_\epsilon\vert_{L^2(0,T;L^2(\mathbb{R}))}+\vert\partial_x u\vert_{L^2(0,T;L^2(\mathbb{R}))}\right)>\eta \gamma\right)}\\
\vs	
\ds{\quad \quad \leq \mathbb{P}\left(\vert\partial_x u_\epsilon-\partial_x u\vert_{L^2(0,T;L^2(\mathbb{R}))}>\eta \gamma/(R+\vert\partial_x u\vert_{L^2(0,T;L^2(\mathbb{R}))})\right)+\mathbb{P}\left(\vert\partial_x u_\epsilon\vert_{L^2(0,T;L^2(\mathbb{R}))}>R\right).}	
	\end{array}
\end{equation}
Now, according to \eqref{uniform_bound1} for every $\delta>0$ we can take $R_\delta>0$ such that 
\begin{equation}\label{sm110}\mathbb{P}\left(\vert\partial_x u_\epsilon\vert_{L^2(0,T;L^2(\mathbb{R}))}>R_\delta\right)\leq \frac{1}{R_\delta}\,\mathbb{E}\,\vert \partial_x u_\e\vert_{L^2(0,T;L^2(\mathbb{R}))}<\frac\delta 2.\end{equation}
Moreover, thanks to \eqref{convergence_pr}, there exists  $\epsilon_\delta\leq \epsilon_\eta$ such that
\begin{equation}\label{sm111}\mathbb{P}\left(\vert\partial_x u_\epsilon-\partial_x u\vert_{L^2(0,T;L^2(\mathbb{R}))}>\eta \gamma/(R_\delta+\vert\partial_x u\vert_{L^2(0,T;L^2(\mathbb{R}))})\right)<\frac\delta 2, \ \ \ \ \ \ \ \epsilon\leq \epsilon_\delta.\end{equation}
Thus, if we combine \eqref{sm110} and \eqref{sm111}, due to the arbitrariness of $\delta>0$ we get that for every $\eta>0$ 
\[\lim_{\e\to 0} \mathbb{P}\left(\Big|\int_0^T\int_0^t	\vert\partial_x u_\epsilon(s)\vert^2_{L^2(\mathbb{R})}\,ds\,dt-\left(1+\frac{c_0}{2\gamma}\right)\int_0^T\int_0^t	\vert\partial_x u(s)\vert^2_{L^2(\mathbb{R})}\,ds\,dt\Big|>\eta\right)=0.\]
In particular, if $c_0\neq 0$  we conclude that
\begin{equation}
\label{sm112}
\int_0^T\int_0^t	\vert\partial_x u_\epsilon(s)\vert^2_{L^2(\mathbb{R})}\,ds\,dt	\not\to \int_0^T\int_0^t	\vert\partial_x u(s)\vert^2_{L^2(\mathbb{R})}\,ds\,dt,\ \ \ \ \text{in probability, as}\ \e\to 0.
\end{equation}
Notice that for every $h \in\,L^2(0,T;L^2(\mathbb{R}))$ it holds
\[\int_0^T\int_0^t	\vert h(s)\vert^2_{L^2(\mathbb{R})}\,ds\,dt=\int_0^T(T-t)\vert h(t)\vert^2_{L^2(\mathbb{R})}\,dt.\]
Therefore, if we denote
\[L^2_T(0,T;L^2(\mathbb{R}))=\left\{h:(0,T)\to L^2(\mathbb{R})\ :\ \vert h\vert^2_{L^2_T(0,T;L^2(\mathbb{R}))}:=\int_0^T(T-t)\vert h(t)\vert^2_{L^2(\mathbb{R})}\,dt<\infty\right\},\]
from \eqref{sm112} we conclude that, as $\e\to 0$,
\begin{equation}
\label{sm113}
\vert \partial_t u_\epsilon\vert_{L^2_T(0,T;L^2(\mathbb{R}))}\not\to \vert \partial_t u\vert_{L^2_T(0,T;L^2(\mathbb{R}))},\ \ \ \ \text{in probability}.\end{equation}

Now, we show that this implies that, as $\e\to 0$,
 \begin{equation}
\label{sm115}
\partial_t u_\epsilon\not\to \partial_t u,\ \ \ \ \text{in probability}\ \ \ \ \ \text{in}\ L^2(0,T;L^2(\mathbb{R})).\end{equation}
We have
\[\begin{array}{l}
\ds{\vert \partial_t u_{\e}-\partial_t u\vert^2_{L_T^2(0,T;L^2(\mathbb{R})}=\int_0^T(T-t)\vert \partial_t u_{\e}(t)\vert^2_{L^2(\mathbb{R})}\,dt}\\
\vs
\ds{\quad \quad \quad +\int_0^T(T-t)\vert \partial_t u(t)\vert^2_{L^2(\mathbb{R})}\,dt-2\int_0^T (T-t)\langle \partial_t u_{\e}(t),\partial_t u(t)\rangle_{L^2(\mathbb{R})}\,dt,}
\end{array}\]
so that, for every $\eta>0$
\[\begin{array}{l}
\ds{\mathbb{P}\left(	\vert \partial_t u_{\e}-\partial_t u\vert^2_{L_T^2(0,T;L^2(\mathbb{R})}>\eta\right)}\\
\vs
\ds{\quad  \geq \mathbb{P}\left(\Big |\int_0^T(T-t)\vert \partial_t u_{\e}(t)\vert^2_{L^2(\mathbb{R})}\,dt-\int_0^T(T-t)\vert \partial_t u(t)\vert^2_{L^2(\mathbb{R})}\,dt\Big|>2\eta\right)}\\
\vs
\ds{\quad \quad -\mathbb{P}\left(\Big |\int_0^T(T-t)\langle \partial_t u_{\e}(t),\partial_t u(t)\rangle_{L^2(\mathbb{R})}\,dt-\int_0^T(T-t)\vert \partial_t u(t)\vert^2_{L^2(\mathbb{R})}\,dt\Big|>\eta/2\right).}
\end{array}\]
According to \eqref{sm113} there exists $\delta>0$, a subsequence $\{\e_n\}\downarrow 0$ and $\bar{\eta}>0$ such that
\begin{equation}
\label{sm117}
\mathbb{P}\left(\Big |\int_0^T(T-t)\vert \partial_t u_{\e_n}(t)\vert^2_{L^2(\mathbb{R})}\,dt-\int_0^T(T-t)\vert \partial_t u(t)\vert^2_{L^2(\mathbb{R})}\,dt\Big|>2\bar{\eta}\right)\geq 2\delta.	
\end{equation}
Moreover,  since for every $\psi \in\,L^2_T(0,T;L^2(\mathbb{R}))$  the function $t\mapsto (T-t)\psi(t)$ is in $L^2(0,T;L^2(\mathbb{R}))$, we have that the convergence in probability of $\partial_t u_\e$ to $\partial_t u$, with respect to the weak convergence in $L^2(0,T);L^2(\mathbb{R}))$ proved above, implies the same result, with respect to the weak convergence in $L_T^2(0,T);L^2(\mathbb{R}))$.
Hence, we can fix $\bar{n} \in\,\mathbb{N}$ such that for every $n\geq \bar{n}$
\begin{equation}
\label{sm120}
\mathbb{P}\left(\Big |\int_0^T(T-t)\langle \partial_t u_{\e_n}(t),\partial_t u(t)\rangle_{L^2(\mathbb{R})}\,dt-\int_0^T(T-t)\vert \partial_t u(t)\vert^2_{L^2(\mathbb{R})}\,dt\Big|>\bar{\eta}/2\right)\leq \delta.	
\end{equation}
Therefore, from \eqref{sm117} and \eqref{sm120} we conclude that 
\[\mathbb{P}\left(	\vert \partial_t u_{\e_n}-\partial_t u\vert^2_{L^2(0,T;L^2(\mathbb{R})}>\bar{\eta}/T\right)\geq \mathbb{P}\left(	\vert \partial_t u_{\e_n}-\partial_t u\vert^2_{L_T^2(0,T;L^2(\mathbb{R})}>\bar{\eta}\right)\geq \delta,\ \ \ \ \ \ n\geq \bar{n},\]
and this  implies \eqref{sm115}.

\section{Further uniform estimates}
\label{sec7}

Throughout this section, the noise ${w}(t)$ in system \eqref{SPDE1} is given by the spacial convolution of a  fractional noise $w^{H}(t)$ of  Hurst index $H\in (1/2,1)$ with a smooth function $\eta$.
As we will do also in Section \ref{secCLT}, here we assume Hypotheses \ref{H1}, \ref{H1-bis} and \ref{H2}, and take $(u_{0}^{\epsilon},v_{0}^{\epsilon})\in \big(\dot{H}^{2}(\R)\times H^{1}(\R)\big)$ fulfilling condition \eqref{initial_condition_rate}, for some  $u_{0}\in \dot{H}^{1}(\R)\cap M$.

Since the spectral measure $\mu$ of $w(t)=\eta\ast w^{H}(t)$ satisfies Hypothesis \ref{H1},
from \eqref{uniform_bound1}, \eqref{uniform_bound2} and \eqref{uniform_bound3}, for every $t \in\,[0,T]$ we have 
\begin{equation}\label{est_system_H}
	\abs{\partial_x u_{\epsilon}(t)}_{L^2(\mathbb{R})}^{2}+\epsilon\abs{\partial_{t}u_{\epsilon}(t)}_{L^2(\mathbb{R})}^{2}+2\gamma\int_{0}^{t}\abs{\partial_{t}u_{\epsilon}(s)}_{L^2(\mathbb{R})}^{2}ds = \abs{D u_0^{\epsilon}}_{L^2(\mathbb{R})}^{2}+\epsilon\abs{v_0^{\epsilon}}_{L^2(\mathbb{R})}^{2},
\end{equation}
$ \P$-a.s., and 
\begin{equation}\label{est_system_H1}
	\sup_{\epsilon\in(0,1)}\E\Big( \epsilon \sup_{t\in[0,T]}\abs{\partial_x ^{2}u_{\epsilon}(t)}_{L^2(\mathbb{R})}^{2} +\epsilon\int_{0}^{T}\abs{\partial_x \partial_{t}u_{\epsilon}(t)}_{L^2(\mathbb{R})}^{2}dt +\int_{0}^{T}\abs{\partial_x ^{2}u_{\epsilon}(t)}_{L^2(\mathbb{R})}^{2}dt   \Big)\leq c_{T}.
\end{equation}

\smallskip
For every $\epsilon\in (0,1)$ and $(t,x)\in \mathbb{R}^{+}\times \R,$  we denote
\begin{equation}\label{dec}
	\begin{array}{ll}
	&\ds{z_{\epsilon}(t):=\epsilon^{H/2-1}e^{\frac 1{\gamma_0}tA}(u_0^{\epsilon}-u_0)+ \frac 1{\gamma_0}\int_{0}^{t}e^{\frac 1{\gamma_0}(t-s)A}(u_{\epsilon}(s)\times \partial_{t}u_{\epsilon}(s))Q^{\epsilon}dw^{H}(s) =: z_{\epsilon, 1}(t)+z_{\epsilon, 2}(t)  },
	\end{array}
\end{equation}
where
\begin{equation}
	e^{tA}h\,(x):=\int_{\R}G_{t}(x-y)h(y)dy,\ \ \ \ \ \ \ \ \  G_{t}(x):=(\sqrt{4\pi t})^{-1/2}\exp\big(-\abs{x}^{2}/(4t)\big).
\end{equation}
Clearly,  $z_{\epsilon}$ solves the equation
\begin{equation}\label{sm168}
	\gamma_0\,\partial_{t}z_{\epsilon}(t) = \partial_x^{\,2} z_{\epsilon}(t)+ (u_{\epsilon}(t)\times \partial_{t}u_{\epsilon}(t))Q^{\epsilon}dw^{H}(t),\ \ \ \ \ \ z_{\epsilon}(0)=\epsilon^{H/2-1}(u_0^{\epsilon}-u_0).  \end{equation}

\subsection{Uniform  bounds for $z_{\epsilon}$}

In what follows, without any loss of generality, we  take $\gamma_0=1$. 

First, for $z_{\e,1}$  we have 
\begin{equation}\label{est_Z0_H}
	 \int_{0}^{T}\abs{z_{\epsilon,1}(t)}_{L^2(\mathbb{R})}^{2}dt\lesssim_{\,T}\epsilon^{H-2}\abs{u_{0}^{\epsilon}-u_0}_{L^2(\R)}^{2},
\end{equation}
 and 
\begin{equation}\label{est_Z0_H1}
	\sup_{t\in[0,T]}\abs{z_{\epsilon,1}(t)}_{L^2(\mathbb{R})}^{2}+ \int_{0}^{T}\abs{z_{\epsilon,1}(t)}_{H^{1}(\R)}^{2}dt\lesssim_{\,T}\epsilon^{H-2}\abs{u_{0}^{\epsilon}-u_0}_{L^2(\mathbb{R})}^{2}.\end{equation}

Next, for $z_{\e,2}$ we have
\begin{equation}
	z_{\epsilon, 2}(t) = \sum_{k=1}^{\infty}\int_{0}^{t}e^{(t-s)A}\Big(\big(u_{\epsilon}(s)\times \partial_{t}u_{\epsilon}(s)\big)\big(\eta_{\epsilon}\ast \xi_{k}^{H}\big)\Big)d\beta_{k}(s),\ \ \ \ t \in\,[0,T],
\end{equation}
where, with the notations introduced in Section \ref{notations}, $\{\beta_{k}\}_{k\in\mathbb{N}}$ is a sequence of i.i.d. Brownian motions, and 
\begin{equation}
	(\eta_{\epsilon}\ast \xi_{k}^{H})(x) = \int_{\R}\eta_{\epsilon}(x-y)\int_{\R}e^{-iyz}e_{k}^{H}(z)\,\mu_H(dz)dy = \int_{\R}e^{-ixy}\mathcal{F}\eta_{\epsilon}(y)e_{k}^{H}(y)\,\mu_H(dy),
\end{equation}
form a complete orthonormal basis of the reproducing kernel Hilbert spaceof $Q^{\epsilon}w^{H}(t)$.

\begin{Lemma} \label{lemma6.2}
	Let $H\in[1/2,1)$. For every $T>0$ and $\alpha>0$ we have 
	\begin{equation}\label{est_Z1_H_1}
		\sup_{\e \in\,(0,1)}\,\epsilon^{1/2+\alpha}\,\E\sup_{t\in[0,T]}\abs{z_{\epsilon,2}(t)}_{L^2(\mathbb{R})}^{2}<\infty.
	\end{equation}
	Moreover 
	\begin{equation}\label{est_Z1_H_2}
		\sup_{\epsilon\in(0,1)}\E\int_{0}^{T}\abs{z_{\epsilon, 2}(t)}_{L^2(\mathbb{R})}^{2}dt<+\infty.
	\end{equation}
\end{Lemma}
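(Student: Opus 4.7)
Both bounds rest on the pointwise identity
\[
\sum_{k=1}^{\infty}\abs{(\eta_\epsilon * \xi_k^H)(x)}^{2} = \int_\R \abs{\mathcal{F}\eta(\sqrt{\epsilon}\,z)}^{2}\, \mu_H(dz) = \epsilon^{H-1}\, c_0,
\]
obtained by Parseval (as in \eqref{identity1}) after the change of variables $y = \sqrt{\epsilon}\,z$. The two estimates are then reached by distinct strategies: a stochastic-convolution computation for the integrated bound \eqref{est_Z1_H_2}, and an It\^o-formula/BDG argument for the supremum bound \eqref{est_Z1_H_1}.

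For \eqref{est_Z1_H_2}, I would apply the It\^o isometry together with the explicit covariance structure of $Q^\epsilon w^H$. Setting $g_\epsilon(s) := u_\epsilon(s) \times \partial_t u_\epsilon(s)$, whose modulus is bounded by $\abs{\partial_t u_\epsilon(s)}$, and using the heat-kernel identity $\int_\R G_{t-s}(x-y)G_{t-s}(x-y')\,dx = G_{2(t-s)}(y-y')$ together with Cauchy--Schwarz in the double spatial integral, one finds
\[
\E\,\abs{z_{\epsilon,2}(t)}_{L^{2}(\R)}^{2} \leq \E\int_0^t \abs{\partial_t u_\epsilon(s)}_{L^{2}(\R)}^{2}\, J_\epsilon(t-s)\, ds, \qquad J_\epsilon(\tau) := \int_\R G_{2\tau}(w)\,\Gamma^{Q^\epsilon}(w)\, dw.
\]
By Bochner's theorem, $J_\epsilon(\tau) = \int_\R e^{-2\tau \xi^{2}}\abs{\mathcal{F}\eta(\sqrt{\epsilon}\,\xi)}^{2} a_H \abs{\xi}^{1-2H}\,d\xi$, and the rescaling $\zeta = \sqrt{2\tau}\,\xi$ combined with $\abs{\mathcal{F}\eta}\leq 1$ (Hypothesis \ref{H2}) gives $J_\epsilon(\tau) \leq C\,\tau^{H-1}$ uniformly in $\epsilon$, the constant $C$ involving $\int_\R e^{-\zeta^{2}}\abs{\zeta}^{1-2H}d\zeta$ (finite for $H<1$). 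Inserting this bound and exchanging the order of integration by Fubini reduces the time integral to $\int_s^T (t-s)^{H-1}dt = (T-s)^H/H$, after which the uniform bound on $\E\int_0^T \abs{\partial_t u_\epsilon}_{L^{2}(\R)}^{2}ds$ coming from \eqref{est_system_H} closes the argument.

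For \eqref{est_Z1_H_1}, I would instead apply It\^o's formula to $\abs{z_{\epsilon,2}(t)}_{L^{2}(\R)}^{2}$, using that $z_{\epsilon,2}$ solves \eqref{sm168} with zero initial condition. The dissipative drift produces $-2\int_0^t \abs{\partial_x z_{\epsilon,2}}_{L^{2}(\R)}^{2} ds$, the quadratic-variation term equals $\epsilon^{H-1} c_0 \int_0^t \abs{g_\epsilon}_{L^{2}(\R)}^{2} ds$ by the identity above, and the martingale remainder is controlled via the Burkholder--Davis--Gundy inequality and Cauchy--Schwarz, yielding
\[
2\,\E\sup_{t \leq T}\Big\lvert \int_0^t \Inner{z_{\epsilon,2}, g_\epsilon Q^\epsilon dw^H}_{L^{2}(\R)}\Big\rvert \leq \tfrac{1}{2}\,\E\sup_{t\leq T}\abs{z_{\epsilon,2}}_{L^{2}(\R)}^{2} + C\,\epsilon^{H-1}\,\E\int_0^T \abs{g_\epsilon}_{L^{2}(\R)}^{2} ds,
\]
after Young's inequality; the first term is then absorbed to the left. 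Applying \eqref{est_system_H} once more gives $\E\sup_t \abs{z_{\epsilon,2}}_{L^{2}(\R)}^{2} \lesssim \epsilon^{H-1}$, so $\epsilon^{1/2+\alpha}\E\sup_t \abs{z_{\epsilon,2}}_{L^{2}(\R)}^{2} \lesssim \epsilon^{H-1/2+\alpha}$, which is uniformly bounded for $H\in[1/2,1)$ and $\alpha>0$.

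The main obstacle is the uniformity in $\epsilon$ of \eqref{est_Z1_H_2}: using only the $L^{2}$-contractivity of $e^{tA}$ (as in the It\^o-formula route for the supremum bound) would leave the factor $\epsilon^{H-1}$ intact and fail to produce a uniform bound. It is essential to exploit the combined smoothing of the heat kernel and the spatial mollifier $\eta_\epsilon$ by passing to the spectral representation, which isolates the integrable singularity $\tau^{H-1}$ in the decay of $J_\epsilon(\tau)$; this is precisely what permits the subsequent time integration to yield an $\epsilon$-independent estimate, in contrast to the supremum bound where the prefactor $\epsilon^{1/2+\alpha}$ is unavoidable.
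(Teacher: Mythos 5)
Your proof is correct, and its two halves sit differently relative to the paper's argument. For the integrated bound \eqref{est_Z1_H_2} you follow the same basic strategy as the paper --- It\^o isometry for the stochastic convolution plus a spectral estimate of the covariance --- but your computation of $J_\epsilon(\tau)\lesssim \tau^{H-1}$ via the full Fourier representation of the heat kernel is cleaner and slightly sharper: the paper instead splits the spectral integral into $|z|>1$ (handled by Plancherel, producing $\abs{G_{t-s}}_{L^2(\R)}^2\sim (t-s)^{-1/2}$) and $|z|\leq 1$ (handled by $L^2$-contractivity of $e^{tA}$), ending with the kernel $1+(t-s)^{-1/2}$ in place of $(t-s)^{H-1}$; both kernels are integrable in time, so both arguments close via Fubini/Young and \eqref{uniform_bound1}. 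One point you should make explicit: the Cauchy--Schwarz step in your double spatial integral uses the pointwise nonnegativity of $\Gamma^{Q^\epsilon}=(\eta_\epsilon\ast\eta_\epsilon)\ast\Gamma_H$, which holds because $\eta\geq 0$ and $\Gamma_H\geq 0$ under Hypothesis \ref{H2}. For the supremum bound \eqref{est_Z1_H_1} your route is genuinely different: the paper reuses the same stochastic-convolution estimate (together with a maximal inequality for the convolution) and then interpolates the singular integral $\int_0^t (t-s)^{-1/2}\abs{\partial_t u_\epsilon(s)}_{L^2(\R)}^2\,ds$ against $\sup_t\abs{\partial_t u_\epsilon(t)}_{L^2(\R)}\lesssim \epsilon^{-1/2}$ from the energy identity, which is exactly where the factor $\epsilon^{-(1/2+\alpha)}$ is lost; you instead apply the variational It\^o formula to $\abs{z_{\epsilon,2}}_{L^2(\R)}^2$, isolate the trace term $\epsilon^{H-1}c_0\int_0^t\abs{g_\epsilon(s)}_{L^2(\R)}^2\,ds$ via the identity $\sum_k\abs{(\eta_\epsilon\ast\xi_k^H)(x)}^2=\epsilon^{H-1}c_0$, and absorb the BDG term, obtaining $\E\sup_t\abs{z_{\epsilon,2}(t)}_{L^2(\R)}^2\lesssim \epsilon^{H-1}$. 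Since $\epsilon^{H-1}\leq\epsilon^{-(1/2+\alpha)}$ for $H\geq 1/2$ and $\epsilon\in(0,1)$, your bound implies \eqref{est_Z1_H_1} and is in fact stronger. The only step you gloss over is the justification of the $L^2$-energy identity for the mild solution $z_{\epsilon,2}$; since the noise coefficient does not depend on $z_{\epsilon,2}$ and is Hilbert--Schmidt with $\E\int_0^T\epsilon^{H-1}c_0\abs{g_\epsilon(s)}_{L^2(\R)}^2\,ds<\infty$ for each fixed $\epsilon$, this is routine, but it deserves a sentence.
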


\begin{proof}

For every $t\in [0,T]$, we have
\begin{equation}
	\begin{array}{l}
		\ds{ \E\sup_{r\in[0,t]}\abs{z_{\epsilon, 2}(r)}_{L^2(\mathbb{R})}^{2}    }\\
		\vs 
		\ds{\lesssim \E\int_{0}^{t}\sum_{k=1}^{\infty}\int_{\R}\Big\lvert \int_{\R^{2}}e^{-iyz}G_{t-s}(x-y)(u_{\epsilon}(s,y)\times \partial_{t}u_{\epsilon}(s,y))\mathcal{F}\eta_{\epsilon}(z)e_{k}^{H}(z)\mu_H(dz)dy\Big\rvert^{2}dx\,ds    }\\
		\vs 
		\ds{= \E\int_{0}^{t}\!\!\int_{\R^2}\Big\lvert \int_{\R}e^{-iyz}G_{t-s}(x-y)(u_{\epsilon}(s,y)\times \partial_{t}u_{\epsilon}(s,y))dy\Big\rvert^{2}\cdot\abs{\mathcal{F}\eta_{\epsilon}(z)}^{2}\mu_H(dz)\,dx\,ds }\\
		\vs 
		\ds{\lesssim \E\int_{0}^{t}\!\!\int_{\R}\int_{\abs{z}>1}\Big\lvert \int_{\R}e^{-iyz}G_{t-s}(x-y)(u_{\epsilon}(s,y)\times \partial_{t}u_{\epsilon}(s,y))dy\Big\rvert^{2} dz\,dx\,ds  }\\
		\vs 
		\ds{+\E\int_{0}^{t}\!\!\int_{\R}\int_{\abs{z}\leq1}\Big\lvert \int_{\R}e^{-iyz}G_{t-s}(x-y)(u_{\epsilon}(s,y)\times \partial_{t}u_{\epsilon}(s,y))dy\Big\rvert^{2} \mu_H(dz)\,dx\,ds =: I_{1}^{\epsilon}(t)+I_{2}^{\epsilon}(t) }.
	\end{array}
\end{equation}
Note that there exists some $\kappa>0$
\begin{equation}\label{sm80}\abs{G_{t}}_{L^{2}(\R)}=\kappa\,t^{-1/4},\ \ \ \ \ t>0.\end{equation}
Hence  
\begin{equation}\label{sm61}
	\begin{array}{ll}
		&\ds{I_{1}^{\epsilon}(t)\leq \E\int_{0}^{t}\!\!\int_{\R^2}\Big\lvert \int_{\R}e^{-iyz}G_{t-s}(x-y)(u_{\epsilon}(s,y)\times \partial_{t}u_{\epsilon}(s,y))dy\Big\rvert^{2} dz\,dx\,ds   }\\
		\vs 
		&\ds{ \quad = \E\int_{0}^{t}\!\!\int_{\R^2}\Big\lvert G_{t-s}(x-z)(u_{\epsilon}(s,z)\times \partial_{t}u_{\epsilon}(s,z)) \Big\rvert^{2}dz\,dx\,ds   }\\
		\vs 
		&\ds{\quad\quad =\E\int_{0}^{t}\!\!\int_{\R}\abs{\partial_tu_{\epsilon}(s,y)}^{2}\int_{\R}\abs{G_{t-s}(x-y)}^{2}dx\,dy\,ds =\kappa^2\,\E\int_{0}^{t}(t-s)^{-1/2}\abs{\partial_{t}u_{\epsilon}(s)}_{L^2(\mathbb{R})}^{2}ds }.
	\end{array}
\end{equation}
Moreover, since for any $z\in\mathbb{R}$,
\begin{equation}
	\begin{array}{l}
		\ds{  \int_{\R}\Big\lvert \int_{\R}e^{-iyz}G_{t-s}(x-y)(u_{\epsilon}(s,y)\times \partial_{t}u_{\epsilon}(s,y))dy\Big\rvert^{2}dx    }\\
		\vs 
		\ds{ = \Big\lvert e^{(t-s)A}\big(e^{-iz\cdot}(u_{\epsilon}(s)\times \partial_{t}u_{\epsilon}(s))\big)\Big\rvert_{L^2(\mathbb{R})}^{2} \lesssim \Big\lvert e^{-iz\cdot}(u_{\epsilon}(s)\times \partial_{t}u_{\epsilon}(s))\Big\rvert_{L^2(\mathbb{R})}^{2} = \abs{\partial_{t}u_{\epsilon}(s)}_{L^2(\mathbb{R})}^{2} },
	\end{array}
\end{equation}
we have 
\begin{equation}\label{sm60}
	I_{2}^{\epsilon}(t)\lesssim \E\int_{0}^{t}\abs{\partial_{t}u_{\epsilon}(s)}_{L^2(\mathbb{R})}^{2}ds\cdot \int_{\abs{z}\leq 1}\abs{z}^{1-2H}dz \lesssim\E\int_{0}^{t}\abs{\partial_{t}u_{\epsilon}(s)}_{L^2(\mathbb{R})}^{2}ds.
\end{equation}
Therefore, from \eqref{sm61} and \eqref{sm60} we get
\begin{equation}
	\E\sup_{r\in[0,t]}\abs{z_{\epsilon, 2}(r)}_{L^2(\mathbb{R})}^{2}\lesssim \E\int_{0}^{t}\big(1+(t-s)^{-1/2}\big)\abs{\partial_{t}u_{\epsilon}(s)}_{L^2(\mathbb{R})}^{2}ds.
\end{equation}
On one hand, due to \eqref{uniform_bound1},  for every $\alpha\in(0,1/2)$ we have
\begin{equation}
	\begin{array}{ll}
	&\ds{\int_{0}^{t}(t-s)^{-1/2}\abs{\partial_{t}u_{\epsilon}(s)}_{L^2(\mathbb{R})}^{2}ds \leq \sup_{r\in[0,t]}\abs{\partial_{t}u_{\epsilon}(r)}_{L^2(\mathbb{R})}^{1+2\alpha}\int_{0}^{t}(t-s)^{-1/2}\abs{\partial_{t}u_{\epsilon}(s)}_{L^2(\mathbb{R})}^{1-2\alpha}ds}\\
	\vs 
	&\ds{\quad\quad  \lesssim \epsilon^{-(1/2+\alpha)}\Big(\int_{0}^{t}(t-s)^{-\frac{1}{1+2\alpha}}ds\Big)^{\frac{1+2\alpha}2}\Big(\int_{0}^{t}\abs{\partial_{t}u_{\epsilon}(s)}_{L^{2}(\R)}^{2}ds\Big)^{\frac{1-2\alpha}2} \lesssim_{\,\alpha,T}\epsilon^{-(1/2+\alpha)}  },
	\end{array}
\end{equation}
which implies that 
\begin{equation}\label{sm160}
	\E\sup_{t\in[0,T]}\abs{z_{\epsilon, 2}(t)}_{L^2(\mathbb{R})}^{2}\lesssim_{\,\alpha,T}\epsilon^{-(1/2+\alpha)}.
\end{equation}
On the other hand, thanks again to \eqref{uniform_bound1}  the Young convolution inequality gives
\begin{equation}
	\E\int_{0}^{T}\abs{z_{\epsilon, 2}(t)}_{L^2(\mathbb{R})}^{2}dt \lesssim \int_{0}^{T}(1+s^{-1/2})ds\,\E\int_{0}^{T}\abs{\partial_{t}u_{\epsilon}(s)}_{L^2(\mathbb{R})}^{2}ds \lesssim_{\,T}\E\int_{0}^{T}\abs{\partial_{t}u_{\epsilon}(s)}_{L^2(\mathbb{R})}^{2}ds.
\end{equation}
Therefore, if we put this together with \eqref{sm160} we get our thesis.

\end{proof}

\begin{Lemma}
 Let $H\in[1/2,1)$. For every $T>0$ and $\alpha >0$, we have 
 \begin{equation}\label{est_Z1_H1}
 \sup_{\e \in\,(0,1)}\,\epsilon^{-(1/2+\alpha)}\,	\E\int_{0}^{T}\abs{z_{\epsilon, 2}(t)}_{H^{1}}^{2}dt<\infty.
 \end{equation}
\end{Lemma}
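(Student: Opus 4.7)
I decompose
\[
\E\int_0^T\abs{z_{\eps,2}(t)}_{H^1}^2\,dt=\E\int_0^T\abs{z_{\eps,2}(t)}_{L^2(\R)}^2\,dt+\E\int_0^T\abs{\partial_x z_{\eps,2}(t)}_{L^2(\R)}^2\,dt,
\]
observe that the first summand is uniformly bounded in $\eps$ by \eqref{est_Z1_H_2}, and reduce the task to estimating the gradient piece. The main ingredients are the Parseval collapse in $L^2_{(s)}(\R,\mu_H)$ that opens the proof of Lemma \ref{lemma6.2}, together with a \emph{time-integrated} Plancherel smoothing for $\partial_x e^{rA/\gamma_0}$ (rather than a pointwise-in-time estimate).

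\textbf{Step 1 (It\^o isometry + Parseval collapse).} Setting $g_\eps(s):=u_\eps(s)\times\partial_tu_\eps(s)$ and $\phi_k^\eps:=\eta_\eps\ast\xi_k^H$ (the ONB of the RKHS of $Q^\eps w^H$), the representation $\phi_k^\eps(x)=\int_\R e^{-ixz}\mathcal F\eta_\eps(z)e_k^H(z)\mu_H(dz)$ together with Parseval in $L^2_{(s)}(\R,\mu_H)$ collapses the $k$-sum to
\[
\E\abs{\partial_x z_{\eps,2}(t)}_{L^2}^2=\E\int_0^t\int_\R\abs{\partial_x e^{(t-s)A/\gamma_0}(g_\eps(s)e^{-iz\cdot})}_{L^2}^2\abs{\mathcal F\eta_\eps(z)}^2\mu_H(dz)\,ds.
\]

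\textbf{Step 2 (Time-integrated parabolic smoothing).} Exchanging the order of $\int_0^T dt$ with the $(s,z)$-integrals via Fubini and applying the pointwise identity $\int_0^\infty\xi^2 e^{-2r\xi^2/\gamma_0}\,dr=\gamma_0/2$ (Plancherel for $\partial_x e^{rA/\gamma_0}$), together with the $L^2$-isometry property of modulation by $e^{-iz\cdot}$, produces the $z$-uniform bound
\[
\int_s^T\abs{\partial_x e^{(t-s)A/\gamma_0}(g_\eps(s)e^{-iz\cdot})}_{L^2}^2\,dt\le\frac{\gamma_0}{2}\abs{g_\eps(s)}_{L^2}^2.
\]

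\textbf{Step 3 (Spectral mass + assembly).} The change of variable $u=\sqrt{\eps}z$ applied to $\mathcal F\eta_\eps(z)=\mathcal F\eta(\sqrt{\eps}z)$ gives the spectral mass identity $\int_\R\abs{\mathcal F\eta_\eps(z)}^2\mu_H(dz)=c_0\,\eps^{H-1}$; combined with the a priori bound $\E\int_0^T\abs{g_\eps(s)}_{L^2}^2\,ds\le\E\int_0^T\abs{\partial_tu_\eps(s)}_{L^2}^2 ds\lesssim_T 1$ from \eqref{est_system_H} and the $L^2$ contribution, this yields
\[
\E\int_0^T\abs{z_{\eps,2}(t)}_{H^1}^2\,dt\lesssim_{\,T}1+\eps^{H-1}\lesssim_{\,T}\eps^{H-1}.
\]
For every $\alpha>0$ and $H\in[1/2,1)$ one has $H-1\ge -1/2\ge-(1/2+\alpha)$, so $\eps^{H-1}\le\eps^{-(1/2+\alpha)}$ on $(0,1)$, delivering \eqref{est_Z1_H1}.

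\textbf{Main obstacle.} The only delicate point is Step 2: the pointwise bound $\norm{\partial_x e^{rA}}_{L^2\to L^2}\lesssim r^{-1/2}$ has a non-integrable singularity at $r=0$, while the trivial contraction $\norm{e^{rA}}_{L^2\to L^2}\le 1$ loses the smoothing altogether. The time-integrated Plancherel identity bridges these extremes and, crucially, produces an estimate that is uniform in the modulation parameter $z$ — precisely what is needed to decouple the spectral integration against $|\mathcal F\eta_\eps|^2\mu_H$ (which carries the $\eps^{H-1}$ blow-up) from the $g_\eps$-dependent part of the bound.
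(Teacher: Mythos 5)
Your proof is correct, and it takes a genuinely different route from the paper's. One preliminary remark: read literally, the lemma's prefactor $\e^{-(1/2+\alpha)}$ would demand $\E\int_0^T\abs{z_{\e,2}(t)}_{H^1}^2dt\lesssim \e^{1/2+\alpha}$, which neither you nor the paper proves; this is evidently a sign typo (compare \eqref{est_Z1_H_1}, the paper's own proof, and the downstream use in \eqref{est_Z_sum2}), and your reading — that the quantity is $\lesssim_{\alpha,T}\e^{-(1/2+\alpha)}$ — is the intended statement.

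As for the argument itself: both proofs start from the same It\^o-isometry/Parseval collapse of the $k$-sum into an integral against $\abs{\mathcal F\eta_\e(z)}^2\mu_H(dz)$, but from there the paper works pointwise in $t$, using the fractional smoothing bound $\abs{e^{tA}h}_{H^1}\lesssim (t\wedge 1)^{-(1-\alpha)/2}\abs{h}_{H^\alpha}$ together with the interpolation $\abs{h}_{H^\alpha}\lesssim\abs{h}_{H^1}^{\alpha}\abs{h}_{L^2}^{1-\alpha}$ applied to $h=e^{-iz\cdot}(u_\e\times\partial_t u_\e)$. This forces a splitting of the spectral integral at $\abs{z}=1$ (the modulation costs a factor $\abs{z}^{2\alpha}$), separate estimates $\int_{\abs{z}>1}\abs{z}^\theta\abs{\mathcal F\eta_\e}^2dz\lesssim\e^{-(1+\theta)/2}$, and — crucially — the $\e$-weighted velocity bound $\int_0^T\abs{\partial_t u_\e}_{H^1}^2dt\lesssim\e^{-1}$ from \eqref{est_system_H1}; the outcome is $\e^{-(1/2+\alpha)}$. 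You instead apply Tonelli first and invoke the exact integrated Plancherel identity $\int_0^\infty\abs{\partial_x e^{rA/\gamma_0}h}_{L^2}^2dr=\tfrac{\gamma_0}{2}\abs{h}_{L^2}^2$, which is uniform in the modulation parameter $z$ because modulation is an $L^2$-isometry. The spectral integration therefore decouples completely and contributes exactly the mass $\int_\R\abs{\mathcal F\eta_\e}^2d\mu_H=c_0\,\e^{H-1}$, while the solution only enters through the uniform $L^2$ energy bound \eqref{est_system_H}. Your argument is more elementary (no fractional smoothing, no interpolation, no splitting of the spectral integral, no use of \eqref{est_system_H1}) and yields the sharper bound $\e^{H-1}$, with no $\alpha$-loss and a strict improvement over $\e^{-1/2}$ whenever $H>1/2$; since $\e^{H-1}\le\e^{-(1/2+\alpha)}$ on $(0,1)$, the lemma as intended follows. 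Note, however, that this sharpening does not by itself improve the combined estimate \eqref{est_Z_sum2}, whose $\e^{-(1/2+\alpha)}$ also comes from the sup-in-time bound \eqref{est_Z1_H_1}, which your computation does not address.
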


\begin{proof}
	For every $t\in[0,T]$, we have 
	\begin{equation}
		\begin{array}{ll}
			&\ds{  \E\abs{\partial_x z_{\epsilon, 2}(t)}_{H^{1}}^{2} \lesssim  \E\int_{0}^{t}\int_{\R}\int_{\abs{z}>1}\big\lvert \partial_{x}\Phi_{\epsilon,t}(s,x,z)\big\rvert^{2}\abs{\mathcal{F}\eta_{\epsilon}(z)}^{2}\mu_H(dz)dxds  }\\
			\vs 
			&\ds{\quad\quad\quad\quad\quad +\E\int_{0}^{t}\int_{\R}\int_{\abs{z}\leq 1}\big\lvert \partial_{x}\Phi_{\epsilon,t}(s,x,z)\big\rvert^{2}\abs{\mathcal{F}\eta_{\epsilon}(z)}^{2}\mu_H(dz)dxds =: J_{1}^{\epsilon}(t)+J_{2}^{\epsilon}(t)},
		\end{array}
	\end{equation}
	where 
	\begin{equation}
		\begin{array}{ll}
			&\ds{ \Phi_{\epsilon,t}(s,x,z):=\int_{\R}e^{-iyz}G_{t-s}(x-y)(u_{\epsilon}(s,y)\times \partial_{\epsilon}(s,y))dy }\\
			\vs
			&\ds{\quad\quad \quad\quad \quad\quad =e^{(t-s)A}\big(e^{-iz\cdot}(u_{\epsilon}(s)\times \partial_{t}u_{\epsilon}(s))\big)(x), \ \ \ \ s\in[0,t],\ \ (x,z)\in \R^{2}.}
		\end{array}
	\end{equation}
For every $\a \in\,(0,1)$ we have
	\begin{equation}
		\abs{e^{tA}h}_{H^{1}}\leq c\,(t\wedge 1)^{-\frac{1-\alpha}{2}}\abs{h}_{H^{\alpha}},\ \ \ \ t>0,\ \ \ \ h\in H^{\alpha}(\R),
	\end{equation}
so that 
	\begin{equation}
		\begin{array}{ll}
			&\ds{\int_{\R}\big\lvert \partial_{x}\Phi_{\epsilon,t}(s,x,z)\big\rvert^{2}dx \leq  \big\lvert e^{(t-s)A}\big(e^{-iz\cdot}(u_{\epsilon}(s)\times \partial_{t}u_{\epsilon}(s))\big)\big\rvert_{H^{1}}^{2} }\\
			\vs 
			&\ds{\quad\quad\lesssim (t-s)^{-1+\alpha}\big\lvert e^{-iz\cdot}(u_{\epsilon}(s)\times \partial_{t}u_{\epsilon}(s))\big\rvert_{H^{\alpha}}^{2}     }\\
			\vs 
			&\ds{\quad\quad\quad \lesssim (t-s)^{-1+\alpha}\big\lvert e^{-iz\cdot}(u_{\epsilon}(s)\times \partial_{t}u_{\epsilon}(s))\big\rvert_{H^{1}}^{2\alpha}\big\lvert e^{-iz\cdot}(u_{\epsilon}(s)\times \partial_{t}u_{\epsilon}(s))\big\rvert_{L^2(\mathbb{R})}^{2(1-\alpha)} }.
		\end{array}
	\end{equation}
	Moreover, due to \eqref{uniform_bound1}, we have 
	\begin{equation}
		\begin{array}{l}
			\ds{ \big\lvert e^{-iz\cdot}(u_{\epsilon}(s)\times \partial_{t}u_{\epsilon}(s))\big\rvert_{H^{1}}^{2\alpha}   }\\
			\vs 
			\ds{\quad \lesssim_{\,\alpha}\abs{z}^{2\alpha}\abs{u_{\epsilon}(s)\times \partial_{t}u_{\epsilon}(s)}_{L^2(\mathbb{R})}^{2\alpha}+\abs{\partial_x u_{\epsilon}(s)\times \partial_{t}u_{\epsilon}(s)}_{L^2(\mathbb{R})}^{2\alpha} +\abs{u_{\epsilon}(s)\times \partial_x \partial_{t}u_{\epsilon}(s)}_{L^2(\mathbb{R})}^{2\alpha}   }\\
			\vs 
			\ds{\quad \quad \quad \lesssim_{\,\alpha} \abs{z}^{2\alpha}\abs{\partial_{t}u_{\epsilon}(s)}_{L^2(\mathbb{R})}^{2\alpha}+\abs{\partial_{t}u_{\epsilon}(s)}_{H^{1}}^{2\alpha},\ \ \ \ \ \ \ \mathbb{P}-\text{a.s}.   }
		\end{array}
	\end{equation}
This allows to conclude that, $\mathbb{P}$ almost-surely
	\begin{equation}\label{sm70}
		\int_{\R}\big\lvert \partial_{x}\Phi_{\epsilon, t}(s,x,z)\big\rvert^{2}dx\lesssim_{\,\alpha} (t-s)^{-1+\alpha}\Big(\abs{z}^{2\alpha}\abs{\partial_{t}u_{\epsilon}(s)}_{L^2(\mathbb{R})}^{2}+\abs{\partial_{t}u_{\epsilon}(s)}_{H^{1}}^{2\alpha}\abs{\partial_{t}u_{\epsilon}(s)}_{L^2(\mathbb{R})}^{2(1-\alpha)}\Big).
	\end{equation}
In particular
	\begin{equation}
		\begin{array}{ll}
			&\ds{ J_{1}^{\epsilon}(t) \lesssim_{\,\alpha} \E\int_{0}^{t}(t-s)^{-1+\alpha}\abs{\partial_{t}u_{\epsilon}(s)}_{L^2(\mathbb{R})}^{2}ds \int_{\abs{z}>1}\abs{z}^{1+2\alpha-2H}\abs{\mathcal{F}\eta_{\epsilon}(z)}^{2}dz  }\\
			\vs 
			&\ds{\quad\quad\quad\quad +\E\int_{0}^{t}(t-s)^{-1+\alpha}\abs{\partial_{t}u_{\epsilon}(s)}_{H^{1}}^{2\alpha}\abs{\partial_{t}u_{\epsilon}(s)}_{L^2(\mathbb{R})}^{2(1-\alpha)}ds \int_{\abs{z}>1}\abs{z}^{1-2H}\abs{\mathcal{F}\eta_{\epsilon}(z)}^{2}dz    }.
		\end{array}
	\end{equation}
	Now,  for any $\theta\leq 0$ it holds
	\begin{equation}
		\begin{array}{ll}
			&\ds{		\int_{\abs{z}>1}\abs{z}^{\theta}\abs{\mathcal{F}\eta_{\epsilon}(z)}^{2}dz = \int_{\abs{z}>1}\abs{z}^{\theta}\abs{\mathcal{F}\eta(\sqrt{\epsilon }z)}^{2}dz = \epsilon^{-\frac{1+\theta}2}\int_{\abs{z}>\sqrt{\epsilon}}\abs{z}^{\theta}\abs{\mathcal{F}\eta(z)}^{2}dz }\\
			\vs 
			&\ds{\quad\quad\quad\quad\quad\quad  \leq \epsilon^{-1/2}\int_{\abs{z}>\sqrt{\epsilon}}\abs{\mathcal{F}\eta(z)}^{2}dz\lesssim \epsilon^{-1/2}},
		\end{array}
	\end{equation}
	and for any $\theta\in (0,3-2H)$,
	\begin{equation}
		\int_{\abs{z}>1}\abs{z}^{\theta}\abs{\mathcal{F}\eta_{\epsilon}(z)}^{2}dz \leq \epsilon^{-\frac{1+\theta}2}\int_{\R}\abs{z}^{\theta}\abs{\mathcal{F}\eta(z)}^{2}dz\lesssim \epsilon^{-\frac{1+\theta}2}.
	\end{equation}
	This implies that, for any $H\in[1/2,1)$ and $\alpha\in (0, 1)$, thanks to \eqref{est_system_H1} we have 
	\begin{equation}\label{sm73}
		\begin{array}{l}
			\ds{ \int_{0}^{T}J_{1}^{\epsilon}(t)dt \lesssim_{\,\alpha} \epsilon^{-(1/2+\alpha)}\int_{0}^{T}s^{-1+\alpha}ds\, \E\int_{0}^{T}\abs{\partial_{t}u_{\epsilon}(s)}_{L^2(\mathbb{R})}^{2}ds   }\\
			\vs 
			\ds{ +\epsilon^{-1/2}\,\int_{0}^{T}s^{-1+\alpha}ds\,\E\int_{0}^{T}\abs{\partial_{t}u_{\epsilon}(s)}_{H^{1}}^{2\alpha}\abs{\partial_{t}u_{\epsilon}(s)}_{L^2(\mathbb{R})}^{2(1-\alpha)}ds  }\\
			\vs 
			\ds{\quad\quad \quad \quad  \lesssim_{\,\alpha,T} \epsilon^{-(1/2+\alpha)}\, \E\int_{0}^{T}\abs{\partial_{t}u_{\epsilon}(s)}_{L^2(\mathbb{R})}^{2}ds   }\\
			\vs 
			\ds{\quad  +\epsilon^{- 1/2}\E\,\left(\Big(\int_{0}^{T}\abs{\partial_{t}u_{\epsilon}(s)}_{H^{1}}^{2}ds\Big)^{\alpha}\Big(\int_{0}^{T}\abs{\partial_{t}u_{\epsilon}(s)}_{L^2(\mathbb{R})}^{2}ds\Big)^{1-\alpha} \right)  \lesssim_{\,\alpha,T} \epsilon^{-(1/2+\alpha)}  }.
		\end{array}
	\end{equation}
	Similarly, due to \eqref{sm70}
	\begin{equation}\label{sm74}
		\begin{array}{ll}
			&\ds{ \int_0^T\,J_{2}^{\epsilon}(t)\,dt \lesssim_{\alpha} \int_0^T\,\E\int_{0}^{t}(t-s)^{-1+\alpha}\abs{\partial_{t}u_{\epsilon}(s)}_{L^2(\mathbb{R})}^{2}ds\cdot \int_{\abs{z}\leq1}\abs{z}^{1+2\alpha-2H}dz\,dt  }\\
			\vs 
			&\ds{\quad\quad +\int_0^T\,\E\int_{0}^{t}(t-s)^{-1+\alpha}\abs{\partial_{t}u_{\epsilon}(s)}_{H^{1}}^{2\alpha}\abs{\partial_{t}u_{\epsilon}(s)}_{L^2(\mathbb{R})}^{2(1-\alpha)}ds\cdot \int_{\abs{z}\leq1}\abs{z}^{1-2H}dz\,dt  \lesssim_{\,\alpha,T}\epsilon^{-\alpha},}
			\end{array}\end{equation}
and in view of \eqref{sm73} and \eqref{sm74} we conclude that 	\eqref{est_Z1_H1} holds.

\end{proof}

Finally, combining together \eqref{est_Z0_H1}, \eqref{est_Z1_H_1}, \eqref{est_Z1_H_2} and \eqref{est_Z1_H1}, we obtain the following result.
\begin{Lemma}
	Let $H\in[1/2,1)$. For every $T>0$, we have 
	\begin{equation}\label{est_Z_sum1}
		\sup_{\epsilon\in(0,1)}\E\int_{0}^{T}\abs{z_{\epsilon}(t)}_{L^2(\mathbb{R})}^{2}dt<+\infty,
	\end{equation}
	and for every $\alpha>0$,
	\begin{equation}\label{est_Z_sum2}
		\E\,\sup_{t\in[0,T]}\abs{z_{\epsilon}(t)}_{L^2(\mathbb{R})}^{2}+\mathbb{E}\int_{0}^{T}\abs{z_{\epsilon}(t)}_{H^{1}}^{2}dt\lesssim_{\,\alpha,T} \epsilon^{-(1/2+\alpha)}+\e^{H-2}\,\vert u^\epsilon_0-u_0\vert_{L^2(\R)}^2,\ \ \ \ 0<\epsilon\ll1.
	\end{equation}
\end{Lemma}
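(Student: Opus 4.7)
The plan is to assemble the bounds on $z_\epsilon$ directly from the splitting $z_\epsilon = z_{\epsilon,1} + z_{\epsilon,2}$ introduced in \eqref{dec} and the four preceding lemmas. All the analytic work (heat-semigroup decay for the deterministic boundary-layer part $z_{\epsilon,1}$, and stochastic-convolution estimates exploiting the mollification kernel $\eta_\epsilon$ together with the fractional spectral measure $\mu_H$ for $z_{\epsilon,2}$) is already in place, so what remains is essentially bookkeeping.

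For the first assertion, I would apply the pointwise triangle inequality $\abs{z_\epsilon(t)}_{L^2(\R)}^2 \leq 2\abs{z_{\epsilon,1}(t)}_{L^2(\R)}^2 + 2\abs{z_{\epsilon,2}(t)}_{L^2(\R)}^2$, integrate in time and take expectation. The $z_{\epsilon,2}$ piece is controlled uniformly in $\epsilon \in (0,1)$ by \eqref{est_Z1_H_2}. For $z_{\epsilon,1}$, \eqref{est_Z0_H} gives
\begin{equation*}
\int_0^T \abs{z_{\epsilon,1}(t)}_{L^2(\R)}^2\,dt \lesssim_T \epsilon^{H-2}\abs{u_0^\epsilon - u_0}_{L^2(\R)}^2,
\end{equation*}
and this right-hand side is bounded uniformly for $\epsilon\in(0,1)$: for $\epsilon$ in a neighbourhood of $0$, assumption \eqref{initial_condition_rate} yields $\epsilon^{H-2}\abs{u_0^\epsilon - u_0}_{L^2(\R)}^2 = o(1)$, while for $\epsilon$ bounded away from $0$ the standing $H^1$-bound of Hypothesis \ref{H1-bis} combined with $u_0\in\dot{H}^1(\R)$ gives a finite bound on the remaining compact range of $\epsilon$.

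For the second assertion, I would use the triangle inequality for the norms $L^\infty(0,T;L^2(\R))$ and $L^2(0,T;H^1(\R))$ separately. Combining \eqref{est_Z0_H1} with \eqref{est_Z1_H_1} and with \eqref{est_Z1_H1} respectively produces
\begin{equation*}
\E \sup_{t\in[0,T]} \abs{z_\epsilon(t)}_{L^2(\R)}^2 + \E\int_0^T \abs{z_\epsilon(t)}_{H^1}^2\,dt \lesssim_{\alpha,T} \epsilon^{-(1/2+\alpha)} + \epsilon^{H-2}\abs{u_0^\epsilon - u_0}_{L^2(\R)}^2,
\end{equation*}
which is precisely the claimed inequality.

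There is no substantive obstacle here. The one point worth making explicit is why the initial-condition contribution is deliberately kept visible as $\epsilon^{H-2}\abs{u_0^\epsilon - u_0}_{L^2(\R)}^2$ rather than being silently absorbed into $\epsilon^{-(1/2+\alpha)}$: this lemma feeds into the central limit theorem (Theorem \ref{CLT}), where the precise trade-off between the decay rate \eqref{initial_condition_rate} of $u_0^\epsilon - u_0$ and the blow-up rate $\epsilon^{-(1/2+\alpha)}$ of the stochastic convolution is what ultimately drives the vanishing of the error terms when passing from $z_\epsilon$ back to $y_\epsilon - \varrho$.
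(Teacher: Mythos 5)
Your proposal is correct and coincides with the paper's treatment: the paper gives no separate argument for this lemma, stating it simply as the combination of \eqref{est_Z0_H1}, \eqref{est_Z1_H_1}, \eqref{est_Z1_H_2} and \eqref{est_Z1_H1} via the decomposition $z_\epsilon = z_{\epsilon,1}+z_{\epsilon,2}$, which is exactly your bookkeeping. Your closing remark on why the $\epsilon^{H-2}\vert u_0^\epsilon-u_0\vert_{L^2(\R)}^2$ term is kept explicit also matches how the estimate is later consumed in the proof of Theorem \ref{CLT}.
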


\subsection{Uniform bounds for $y_{\epsilon}$}

In Subsection \ref{sub3.3} we have defined $y_\e$ as $\e^{H/2-1} (u_\e-u)$. It is 
immediate to check that $y_{\epsilon}$ satisfies the following equation
\begin{equation}
	\le\{\begin{array}{l}
		\ds{\gamma_0\,\partial_{t}y_{\epsilon}(t,x) = \partial_x^{\,2} y_{\epsilon}(t,x)+\abs{\partial_x u(t,x)}^{2}y_{\epsilon}(t,x)+2(\partial_x y_{\epsilon}(t,x)\cdot \partial_x u(t,x))u_{\epsilon}(t,x) }\\[10pt]
		\ds{\quad \quad\quad\quad\quad\quad\quad +\epsilon^{1-H/2}\abs{\partial_x y_{\epsilon}(t,x)}^{2}u_{\epsilon}(t,x)  -\epsilon^{H/2}\partial_{t}^{2}u_{\epsilon}(t,x)-\epsilon^{H/2}\abs{\partial_{t}u_{\epsilon}(t,x)}^{2}u_{\epsilon}(t,x)}\\
		\vs
		\ds{\quad \quad\quad\quad\quad\quad\quad\quad \quad\quad\quad\quad\quad\quad+\big(u_{\epsilon}(t)\times\partial_{t}u_{\epsilon}(t)\big)Q^{\epsilon}\partial_{t}w^{H}(t,x), }\\
		[10pt]
		\ds{y_{\epsilon}(0,x)=\epsilon^{H/2-1}(u_0^{\epsilon}(x)-u_{0}(x)) }.
	\end{array}\r.
\end{equation}
Thus, if we define 
\[r_{\epsilon}(t):= y_{\epsilon}(t)-z_{\epsilon}(t),\ \ \ \ \ \ t \in\,[0,T],\] we have that  $r_{\epsilon}$ satisfies the equation
\begin{equation}
	\le\{\begin{array}{l}
		\ds{\gamma_0\,\partial_{t}r_{\epsilon}(t) = \partial_x^{\,2} r_{\epsilon}(t)+\abs{\partial_x u(t)}^{2}r_{\epsilon}(t)+(\partial_x(u_{\epsilon}(t)-u(t))\cdot \partial_x r_{\epsilon}(t))u_{\epsilon}(t) }\\
		[10pt]
		\ds{\quad\quad\quad\quad + (\partial_x(u_{\epsilon}(t)-u(t))\cdot \partial_x z_{\epsilon}(t))u_{\epsilon}(t)+2(\partial_x r_{\epsilon}(t)\cdot \partial_x u(t))u_{\epsilon}(t)+\abs{\partial_x u(t)}^{2}z_{\epsilon}(t)   }\\
		[10pt]
		\ds{\quad\quad\quad\quad\quad\quad\quad +2(\partial_x z_{\epsilon}(t)\cdot \partial_xu(t))u_{\epsilon}(t)-\epsilon^{H/2}\partial_{t}^{2}u_{\epsilon}(t)-\epsilon^{H/2}\abs{\partial_{t}u_{\epsilon}(t)}^{2}u_{\epsilon}(t),   }\\
		[10pt]
		\ds{r_{\epsilon}(0)=0. }
	\end{array}\r.
\end{equation}

\begin{Lemma}
	Let $H\in [1/2,1)$. Then for every $T>0$ and $\alpha>0$, we have
	\begin{equation}\label{est_R}
		\E\sup_{t\in[0,T]}\abs{r_{\epsilon}(t)}_{L^2(\mathbb{R})}^{2}+\E\int_{0}^{T}\abs{r_{\epsilon}(t)}_{H^{1}}^{2}dt\lesssim_{\,\alpha,T}\epsilon^{-(1/2+\alpha)}+\epsilon^{H-2}\,\vert u^\e_0-u_0\vert_{L^2(\mathbb{R})}^2,\ \ \ \ 0<\epsilon\ll1.
	\end{equation}
	\end{Lemma}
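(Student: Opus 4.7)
The plan is to perform a Gronwall-type energy estimate for $r_\epsilon$. Testing the evolution equation for $r_\epsilon$ displayed just before the lemma against $r_\epsilon$ in $L^{2}(\R)$, and using $r_\epsilon(0)=0$, yields the identity
\[
\frac{\gamma_0}{2}\,\frac{d}{dt}\abs{r_\epsilon(t)}_{L^2(\R)}^{2}+\abs{\partial_x r_\epsilon(t)}_{L^2(\R)}^{2}=\sum_{i=1}^{8} A_i(t),
\]
where the $A_i$ correspond to the eight source terms on the right-hand side of the $r_\epsilon$ equation. Integrating in time and taking expectation, the goal is to bound each $A_i$ in one of three ways: as a Gronwall contribution of the form $\varphi(t)\abs{r_\epsilon(t)}_{L^2}^{2}$ with $\varphi\in L^{1}(0,T)$, as an absorbable term $\delta\abs{\partial_x r_\epsilon}_{L^2}^{2}$, or as an explicit forcing of the size claimed in \eqref{est_R}.

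The linear terms $A_1=\int\abs{\partial_x u}^{2}\abs{r_\epsilon}^{2}$ and $A_4=2\int(\partial_x r_\epsilon\cdot\partial_x u)(u_\epsilon\cdot r_\epsilon)$ are controlled using $\abs{u_\epsilon}\leq 1$, the 1D Gagliardo-Nirenberg interpolation $\abs{\partial_x u}_{L^\infty}^{2}\lesssim\abs{\partial_x u}_{L^2}\abs{\partial_x^{2}u}_{L^2}$, and the integrability $\int_0^T\abs{\partial_x^{2}u}_{L^2}^{2}\,dt<\infty$ from Theorem \ref{regularity}, producing Gronwall weights in $L^{1}(0,T)$. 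The forcing terms $A_3,A_5,A_6$ involving $z_\epsilon$ or $\partial_x z_\epsilon$ are paired with $r_\epsilon$ via H\"older's inequality in time and controlled by the $z_\epsilon$ bounds \eqref{est_Z_sum1} and \eqref{est_Z_sum2}; these are the sources of the $\epsilon^{-1/2-\alpha}+\epsilon^{H-2}\abs{u_0^\epsilon-u_0}_{L^2}^{2}$ contribution.

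The delicate nonlinear term $A_2=\int(\partial_x(u_\epsilon-u)\cdot\partial_x r_\epsilon)(u_\epsilon\cdot r_\epsilon)\,dx$ is handled by combining $\abs{u_\epsilon}\leq 1$, the uniform bound $\abs{\partial_x(u_\epsilon-u)}_{L^2}\lesssim\Lambda_1$ from \eqref{est_system_H}, the 1D Gagliardo-Nirenberg inequality $\abs{r_\epsilon}_{L^\infty}^{2}\leq 2\abs{r_\epsilon}_{L^2}\abs{\partial_x r_\epsilon}_{L^2}$, and a scaled Young's inequality to obtain $\abs{A_2}\leq\delta\abs{\partial_x r_\epsilon}_{L^2}^{2}+C(\delta)\abs{r_\epsilon}_{L^2}^{2}$ for any $\delta>0$. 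The term $A_7=-\epsilon^{H/2}\int\partial_t^{2}u_\epsilon\cdot r_\epsilon\,dx$ is treated by integration by parts in time: since $r_\epsilon(0)=0$,
\[
\int_0^t A_7(s)\,ds=-\epsilon^{H/2}\Inner{\partial_t u_\epsilon(t),r_\epsilon(t)}_{L^2}+\epsilon^{H/2}\int_0^t\Inner{\partial_s u_\epsilon,\partial_s r_\epsilon}_{L^2}\,ds.
\]
Substituting the $r_\epsilon$ equation for $\partial_s r_\epsilon$ and integrating by parts in $x$ on the Laplacian contribution gives the dominant term $\epsilon^{H/2}\int_0^T\abs{\partial_x\partial_s u_\epsilon}_{L^2}\abs{\partial_x r_\epsilon}_{L^2}\,ds$, which by Young's inequality is bounded by $\delta\int_0^T\abs{\partial_x r_\epsilon}_{L^2}^{2}ds+C(\delta)\epsilon^{H-1}$ upon invoking the sharp combined estimate $\epsilon\int_0^T\abs{\partial_x\partial_t u_\epsilon}_{L^2}^{2}dt\lesssim 1$ from \eqref{est_system_H1}; the remaining contributions from the nonlinear terms in $\partial_s r_\epsilon$ are handled similarly using \eqref{est_system_H}, \eqref{est_system_H1} and the $z_\epsilon$ bounds. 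The term $A_8=-\epsilon^{H/2}\int\abs{\partial_t u_\epsilon}^{2}(u_\epsilon\cdot r_\epsilon)dx$ is controlled via $\abs{r_\epsilon}_{L^\infty}\leq 2\epsilon^{H/2-1}+\abs{z_\epsilon}_{L^\infty}$ combined with the bound $\int_0^T\abs{\partial_t u_\epsilon}_{L^2}^{2}\,dt\lesssim 1$, again giving an $\epsilon^{H-1}$-order contribution.

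Choosing $\delta$ small enough to absorb every $\delta\abs{\partial_x r_\epsilon}_{L^2}^{2}$ term into the dissipation on the left and applying Gronwall's lemma then yields \eqref{est_R}. The main technical obstacle is the nonlinear term $A_2$: closure requires the Gagliardo-Nirenberg reduction that converts the $\abs{\partial_x r_\epsilon}$-dependence into the sub-quadratic form $\abs{\partial_x r_\epsilon}_{L^2}^{3/2}\abs{r_\epsilon}_{L^2}^{1/2}$, which alone is absorbable via scaled Young's inequality; no direct sphere-constraint cancellation is available here. A secondary obstacle is the $\partial_t^{2}u_\epsilon$ contribution in $A_7$, which would carry an $\epsilon^{-1}$ blow-up without time-integration by parts, since no direct $L^{2}$-bound on $\partial_t^{2}u_\epsilon$ is available and one must instead exploit the sharp joint bound on $\sqrt{\epsilon}\,\partial_x\partial_t u_\epsilon$.
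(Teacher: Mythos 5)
Your proposal follows essentially the same route as the paper's proof: an $L^2(\R)$ energy estimate for $r_\epsilon$, Gagliardo--Nirenberg absorption of the quadratic terms into the dissipation $\abs{\partial_x r_\epsilon}_{L^2(\mathbb{R})}^2$, the bounds \eqref{est_Z_sum1}--\eqref{est_Z_sum2} for the $z_\epsilon$-forcing, and --- crucially --- the time-integration by parts of $\epsilon^{H/2}\Inner{\partial_t^2 u_\epsilon, r_\epsilon}_{L^2(\mathbb{R})}$ followed by substitution of the $r_\epsilon$-equation for $\partial_s r_\epsilon$ and the use of $\epsilon\int_0^T\abs{\partial_x\partial_t u_\epsilon}_{L^2(\mathbb{R})}^2\,dt\lesssim 1$ from \eqref{est_system_H1}; a Gronwall argument with the $L^1(0,T)$ weight $\abs{\partial_x^2 u(t)}_{L^2(\mathbb{R})}^2+1$ then closes the estimate exactly as in the paper.

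One step is hidden behind ``the remaining contributions \dots are handled similarly'' and should be made explicit, because it is not similar to the others. When you substitute the $r_\epsilon$-equation inside $\epsilon^{H/2}\Inner{\partial_t u_\epsilon,\partial_s r_\epsilon}_{L^2(\mathbb{R})}$, the right-hand side of that equation itself contains $-\epsilon^{H/2}\partial_t^2 u_\epsilon$, so the second time derivative you set out to eliminate reappears as $-\gamma_0^{-1}\epsilon^{H}\Inner{\partial_t^2 u_\epsilon,\partial_t u_\epsilon}_{L^2(\mathbb{R})}$. This term is neither nonlinear nor covered by \eqref{est_system_H}, \eqref{est_system_H1} or the $z_\epsilon$-bounds; it must be recognized as $-\gamma_0^{-1}\epsilon^H\big(\tfrac12\tfrac{d}{dt}\abs{\partial_t u_\epsilon}_{L^2(\mathbb{R})}^2-\tfrac{c_0}{2\epsilon}\abs{\partial_t u_\epsilon}_{L^2(\mathbb{R})}^2\big)$, the second piece being the It\^o correction from the noise in the equation for $v_\epsilon=\partial_t u_\epsilon$. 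This forces the addition of $\gamma_0^{-1}\epsilon^{H}\abs{\partial_t u_\epsilon}_{L^2(\mathbb{R})}^2$ to the energy functional (harmless, since it is $O(\epsilon^{H-1})$ by \eqref{est_system_H}) and produces the extra forcing $c_0\epsilon^{H-1}\abs{\partial_t u_\epsilon}_{L^2(\mathbb{R})}^2$, which integrates to $O(\epsilon^{H-1})\leq O(\epsilon^{-1/2})$ and stays within the claimed budget. This is precisely what the paper does; without it your bookkeeping for the $\partial_t^2 u_\epsilon$ term does not close, but the fix is a second application of the very device you already invoked rather than a new idea, so I regard the proposal as correct in substance.
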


\begin{proof}
	Integrating by parts, we get 
	\begin{equation}
		\begin{array}{ll}
			&\ds{ \frac{\gamma_0}{2}\frac{d}{dt}\abs{r_{\epsilon}(t)}_{L^2(\mathbb{R})}^{2} = -\abs{\partial_xr_{\epsilon}(t)}_{L^2(\mathbb{R})}^{2}+\Inner{\abs{\partial_x u(t)}^{2}r_{\epsilon}(t),r_{\epsilon}(t) }_{L^2(\mathbb{R})}}\\
		\vs
		&\ds{ \quad\quad\quad\quad+\Inner{(\partial_x(u_{\epsilon}(t)-u(t))\cdot \partial_x r_{\epsilon}(t) )u_{\epsilon}(t),r_{\epsilon}(t) }_{L^2(\mathbb{R})}    }\\
			\vs 
			&\ds{+\Inner{(\partial_x(u_{\epsilon}(t)-u(t))\cdot \partial_x z_{\epsilon}(t) )u_{\epsilon}(t),r_{\epsilon}(t) }_{L^2(\mathbb{R})} + 2\Inner{ (\partial_x r_{\epsilon}(t)\cdot \partial_x u(t))u_{\epsilon}(t),r_{\epsilon}(t) }_{L^2(\mathbb{R})}   }\\
			\vs 
			&\ds{\quad \quad\quad\quad\quad+\Inner{\abs{\partial_x u(t)}^{2}z_{\epsilon}(t),r_{\epsilon}(t) }_{L^2(\mathbb{R})} +2\Inner{ (\partial_x z_{\epsilon}(t)\cdot \partial_x u(t))u_{\epsilon}(t),r_{\epsilon}(t) }_{L^2(\mathbb{R})}   }\\
			\vs 
			&\ds{\quad  -\epsilon^{H/2}\Inner{\abs{\partial_{t}u_{\epsilon}(t)}^{2}u_{\epsilon}(t),r_{\epsilon}(t) }_{L^2(\mathbb{R})} -\epsilon^{H/2}\Inner{\partial_{t}^{2}u_{\epsilon}(t),r_{\epsilon}(t)}_{L^2(\mathbb{R})}  }\\
			&\ds{\quad \quad \quad \quad \quad \quad =: -\abs{r_{\epsilon}(t)}_{H^{1}}^{2}+\sum_{i=1}^{7}I_{i}^{\epsilon}(t)  -\epsilon^{H/2}\Inner{\partial_{t}^{2}u_{\epsilon}(t),r_{\epsilon}(t)}_{L^2(\mathbb{R})} }.
		\end{array}
	\end{equation}
	Note that 
	\begin{equation}
		\Inner{\partial_{t}^{2}u_{\epsilon}(t),r_{\epsilon}(t)}_{L^2(\mathbb{R})} = \frac{d}{dt}\Inner{\partial_{t}u_{\epsilon}(t),r_{\epsilon}(t)}_{L^2(\mathbb{R})} - \Inner{\partial_{t}u_{\epsilon}(t),\partial_{t}r_{\epsilon}(t)}_{L^2(\mathbb{R})},
	\end{equation}
	and 
	\begin{equation}
		\Inner{\partial_{t}^{2}u_{\epsilon}(t), \partial_{t}u_{\epsilon}(t)}_{L^2(\mathbb{R})} = \frac{1}{2}\frac{d}{dt}\abs{\partial_{t}u_{\epsilon}(t)}_{L^2(\mathbb{R})}^{2}-\frac{c_0}{2\epsilon}\abs{\partial_{t}u_{\epsilon}(t)}_{L^2(\mathbb{R})}^{2}.
	\end{equation}
Therefore, we get
	\begin{equation}
		\begin{array}{ll}
			&\ds{ \frac 12 \frac{d}{dt}\Big( \gamma_0\abs{r_{\epsilon}(t)}_{L^2(\mathbb{R})}^{2}+2\,\epsilon^{H/2}\Inner{\partial_{t}u_{\epsilon}(t),r_{\epsilon}(t)}_{L^2(\mathbb{R})} +\frac{1}{\gamma_0}\,\epsilon^{H}\,\abs{\partial_{t}u_{\epsilon}(t)}_{L^2(\mathbb{R})}^{2} \Big) +\abs{\partial_xr_{\epsilon}(t)}_{L^2(\mathbb{R})}^{2}  }\\[10pt]
					&\ds{\quad \quad \quad \quad \quad \quad = \sum_{i=1}^{7}I_{i}^{\epsilon}(t) +\frac 1{\gamma_0}\sum_{j=1}^{9}J^\epsilon_{j}(t),}
			\end{array}
	\end{equation}
	where
	\begin{equation}
		\begin{array}{l}		
		\ds{\epsilon^{-H/2}\sum_{j=1}^{9}J^\epsilon_{j}(t):=	 \Inner{\partial_x^{\,2} r_{\epsilon}(t),\partial_{t}u_{\epsilon}(t)}_{L^2(\mathbb{R})} + \Inner{\abs{\partial_x u(t)}^{2}r_{\epsilon}(t),\partial_{t}u_{\epsilon}(t) }_{L^2(\mathbb{R})}   }\\
			\vs 
			\ds{\quad \quad \quad \quad  +\Inner{(\partial_x(u_{\epsilon}(t)-u(t))\cdot \partial_x r_{\epsilon}(t) )u_{\epsilon}(t),\partial_{t}u_{\epsilon}(t) }_{L^2(\mathbb{R})}}\\
			\vs
			\ds{\quad \quad \quad \quad \quad \quad \quad \quad  +\Inner{(\partial_x(u_{\epsilon}(t)-u(t))\cdot \partial_x z_{\epsilon}(t) )u_{\epsilon}(t),\partial_{t}u_{\epsilon}(t) }_{L^2(\mathbb{R})}  }\\
			\vs 
			\ds{\quad \quad \quad +2\Inner{(\partial_x r_{\epsilon}(t)\cdot \partial_x u(t))u_{\epsilon}(t), \partial_{t}u_{\epsilon}(t) }_{L^2(\mathbb{R})}+2\Inner{(\partial_x z_{\epsilon}(t)\cdot \partial_x u(t))u_{\epsilon}(t), \partial_{t}u_{\epsilon}(t) }_{L^2(\mathbb{R})}   }\\
			\vs 
			\ds{ +\Inner{\abs{\partial_x u(t)}^{2}z_{\epsilon}(t),\partial_{t}u_{\epsilon}(t) }_{L^2(\mathbb{R})} - \epsilon^{H/2}\Inner{\abs{\partial_{t}u_{\epsilon}(t)}^{2}u_{\epsilon}(t),\partial_{t}u_{\epsilon}(t) }_{L^2(\mathbb{R})} + \frac{c_0}{2\epsilon^{1-H/2}}\abs{\partial_{t}u_{\epsilon}(t)}_{L^2(\mathbb{R})}^{2}. }
		\end{array}
	\end{equation}
	Now, we will estimate $I^\e_{i}(t)$ and $J^\e_{j}(t)$, for $1\leq i\leq 7$ and $1\leq j \leq 8$. In what follows, $\delta$ is an arbitrary positive constant that we will pick later. Due to \eqref{uniform_bound1}, we have 
	\begin{equation}
		I_{1}^{\epsilon} = \Inner{\abs{\partial_x u}^{2}r_{\epsilon},r_{\epsilon} }_{L^2(\mathbb{R})}\leq \abs{\partial_x u}_{L^{\infty}(\R)}^{2}\abs{r_{\epsilon}}_{L^2(\mathbb{R})}^{2}\lesssim \abs{\partial_x^{2}u}_{L^2(\mathbb{R})}\abs{r_{\epsilon}}_{L^2(\mathbb{R})}^{2}.
	\end{equation}
Moreover	
	\begin{equation}
		\begin{array}{l}
		\ds{	
		I_{2}^{\epsilon}=\Inner{ (\partial_x(u_{\epsilon}-u)\cdot\partial_x r_{\epsilon})u_{\epsilon},r_{\epsilon}  }_{L^2(\mathbb{R})}}\\
		\vs
		\ds{\quad \quad \quad \quad \lesssim \abs{r_{\epsilon}}_{H^{1}}\abs{r_{\epsilon}}_{L^{\infty}}\lesssim \abs{r_{\epsilon}}_{H^{1}}^{3/2}\abs{r_{\epsilon}}_{L^2(\mathbb{R})}^{1/2}\leq \delta\abs{r_{\epsilon}}_{H^{1}}^{2}+c(\delta)\abs{r_{\epsilon}}_{L^2(\mathbb{R})}^{2},}
	\end{array}
\end{equation}
and	
	\begin{equation}
		I_{3}^{\epsilon}=\Inner{ (\partial_x(u_{\epsilon}-u)\cdot\partial_x z_{\epsilon})u_{\epsilon},r_{\epsilon}  }_{L^2(\mathbb{R})}\lesssim\abs{z_{\epsilon}}_{H^{1}}\abs{r_{\epsilon}}_{L^{\infty}(\R)}\leq \delta\abs{r_{\epsilon}}_{H^{1}}^{2}+c(\delta)\abs{z_{\epsilon}}_{H^{1}}^{2}.
	\end{equation}
Next	
	\begin{equation}
		I_{4}^{\epsilon} = 2\Inner{(\partial_x r_{\epsilon}\cdot \partial_x u)u_{\epsilon},r_{\epsilon} }_{L^2(\mathbb{R})}\leq \delta\abs{r_{\epsilon}}_{H^{1}}^{2}+c(\delta)\abs{\partial_x^{2}u}_{L^2(\mathbb{R})}\abs{r_{\epsilon}}_{L^2(\mathbb{R})}^{2},
	\end{equation}
and	
	\begin{equation}
		I_{5}^{\epsilon} = \Inner{\abs{\partial_x u}^{2}z_{\epsilon},r_{\epsilon} }_{L^2(\mathbb{R})}\leq \abs{\partial_x u}_{L^{\infty}}^{2}\abs{z_{\epsilon}}_{L^2(\mathbb{R})}\abs{r_{\epsilon}}_{L^2(\mathbb{R})}\lesssim\abs{z_{\epsilon}}_{L^2(\mathbb{R})}^{2}+\abs{\partial_x^{2}u}_{L^2(\mathbb{R})}^{2}\abs{r_{\epsilon}}_{L^2(\mathbb{R})}^{2}.
	\end{equation}
Finally	
	\begin{equation}
		I_{6}^{\epsilon}= 2\Inner{(\partial_x z_{\epsilon}\cdot \partial_x u)u_{\epsilon},r_{\epsilon} }_{L^2(\mathbb{R})}\lesssim\abs{z_{\epsilon}}_{H^{1}}\abs{\partial_x u}_{L^{\infty}(\R)}\abs{r_{\epsilon}}_{L^2(\mathbb{R})}\lesssim\abs{z_{\epsilon}}_{H^{1}}^{2}+\abs{\partial_x^{2}u}_{L^2(\mathbb{R})}\abs{r_{\epsilon}}_{L^2(\mathbb{R})}^{2},
	\end{equation}
	and
	\begin{equation}
		I_{7}^{\epsilon}=\epsilon^{H/2}\Inner{\abs{\partial_{t}u_{\epsilon}}^{2}u_{\epsilon},r_{\epsilon} }_{L^2(\mathbb{R})}\leq \delta\abs{r_{\epsilon}}_{H^{1}}^{2}+c(\delta)\epsilon^{H-1}\abs{\partial_{t}u_{\epsilon}}_{L^2(\mathbb{R})}^{2}.
	\end{equation}
Concerning the terms $J_j^\epsilon(t)$, we have	
	\begin{equation}
		J_{1}^{\epsilon}=-\epsilon^{H/2}\Inner{\partial_x r_{\epsilon}, \partial_x\partial_{t}u_{\epsilon}}_{L^2(\mathbb{R})}\leq \delta\abs{r_{\epsilon}}_{H^{1}}^{2}+c(\delta)\epsilon^{H}\abs{\partial_{t}u_{\epsilon}}_{H^{1}}^{2},
	\end{equation}
and	
	\begin{equation}
		J_{2}^{\epsilon} = \epsilon^{H/2}\Inner{\abs{\partial_x u}^{2}r_{\epsilon},\partial_{t}u_{\epsilon} }_{L^2(\mathbb{R})}\lesssim\epsilon^{H}\abs{\partial_{t}u_{\epsilon}}_{L^2(\mathbb{R})}^{2}+\abs{\partial_x^{2}u}_{L^2(\mathbb{R})}^{2}\abs{r_{\epsilon}}_{L^2(\mathbb{R})}^{2}.
	\end{equation}
Moreover	
	\begin{equation}
		J_{3}^{\epsilon} =\epsilon^{H/2}\Inner{ (\partial_x(u_{\epsilon}-u)\cdot \partial_x r_{\epsilon})u_{\epsilon},\partial_{t}u_{\epsilon}  }_{L^2(\mathbb{R})}\lesssim\epsilon^{H/2}\abs{r_{\epsilon}}_{H^{1}}\abs{\partial_{t}u_{\epsilon}}_{H^{1}}\leq \delta\abs{r_{\epsilon}}_{H^{1}}^{2}+c(\delta)\epsilon^{H}\abs{\partial_{t}u_{\epsilon}}_{H^{1}}^{2},
	\end{equation}
and	
	\begin{equation}
		J_{4}^{\epsilon} =\epsilon^{H/2}\Inner{ (\partial_x(u_{\epsilon}-u)\cdot \partial_x z_{\epsilon})u_{\epsilon},\partial_{t}u_{\epsilon}  }_{L^2(\mathbb{R})}\lesssim\epsilon^{H/2}\abs{z_{\epsilon}}_{H^{1}}\abs{\partial_{t}u_{\epsilon}}_{H^{1}}\lesssim \abs{z_{\epsilon}}_{H^{1}}^{2}+\epsilon^{H}\abs{\partial_{t}u_{\epsilon}}_{H^{1}}^{2},
	\end{equation}
Next	
	\begin{equation}
		J_{5}^{\epsilon} = 2\,\epsilon^{H/2}\Inner{(\partial_x r_{\epsilon}\cdot \partial_x u)u_{\epsilon},\partial_{t}u_{\epsilon}  }_{L^2(\mathbb{R})}\leq \delta\abs{r_{\epsilon}}_{H^{1}}^{2}+c(\delta)\epsilon^{H}\abs{\partial_{t}u_{\epsilon}}_{H^{1}}^{2},
	\end{equation}
and	
	\begin{equation}
		J_{6}^{\epsilon} = 2\epsilon^{H/2}\Inner{(\partial_x z_{\epsilon}\cdot \partial_x u)u_{\epsilon},\partial_{t}u_{\epsilon}  }_{L^2(\mathbb{R})}\lesssim\abs{z_{\epsilon}}_{H^{1}}^{2}+\epsilon^{H}\abs{\partial_{t}u_{\epsilon}}_{H^{1}}^{2},
	\end{equation}
and	
	\begin{equation}
		J_{7}^{\epsilon} = \epsilon^{H/2}\Inner{ \abs{\partial_x u}^{2}z_{\epsilon},\partial_{t}u_{\epsilon}   }_{L^2(\mathbb{R})}\lesssim\abs{z_{\epsilon}}_{H^{1}}^{2}+\epsilon^{H}\abs{\partial_{t}u_{\epsilon}}_{H^{1}}^{2}.
	\end{equation}
Finally,	
	\begin{equation}
		J_{8}^{\epsilon}=-\epsilon^{H}\Inner{\abs{\partial_{t}u_{\epsilon}}^{2}u_{\epsilon},\partial_{t}u_{\epsilon}}_{L^2(\mathbb{R})}\lesssim\epsilon^{H}\abs{\partial_{t}u_{\epsilon}}_{H^{1}}\abs{\partial_{t}u_{\epsilon}}_{L^2(\mathbb{R})}^{2}\lesssim \epsilon^{H}\big( \abs{\partial_{t}u_{\epsilon}}_{H^{1}}^{2}+\abs{\partial_{t}u_{\epsilon}}_{L^2(\mathbb{R})}^{4}   \big).
	\end{equation}
	Therefore, since  
	\begin{equation}
		\epsilon^{H/2}\Inner{\partial_{t}u_{\epsilon},r_{\epsilon}}_{L^2(\mathbb{R})}\leq \delta\abs{r_{\epsilon}}_{L^2(\mathbb{R})}^{2}+c(\delta)\epsilon^{H}\abs{\partial_{t}u_{\epsilon}}_{L^2(\mathbb{R})}^{2},
	\end{equation}
	thanks to \eqref{uniform_bound1} we can take $\bar{\delta}>0$ sufficiently small so that,  $\P$-almost surely,
	\begin{equation}
		\begin{array}{ll}
			&\ds{\sup_{r\in[0,t]} \abs{r_{\epsilon}(t)}_{L^2(\mathbb{R})}^{2}+\int_{0}^{t}\abs{\partial_x r_{\epsilon}(s)}_{L^2(\mathbb{R})}^{2}ds  }\\
			\vs 
			&\ds{\lesssim \frac{1}{\epsilon^{1-H}}+ \epsilon^{H}\int_{0}^{t}\abs{\partial_{t}u_{\epsilon}(s)}_{H^{1}}^{2}ds+\int_{0}^{t}\abs{z_{\epsilon}(s)}_{H^{1}}^{2}ds+\int_{0}^{t}\left(\abs{\partial_x u(s)}_{L^2(\mathbb{R})}^{2}+1\right)\abs{r_{\epsilon}(s)}_{L^2(\mathbb{R})}^{2}ds }.
		\end{array}
	\end{equation}
	Finally, thanks to \eqref{est_system_H1} and \eqref{est_Z_sum2}, since the mapping
	\[t \in\,(0,T)\mapsto \vert \partial_x ^2u(t)\vert_{L^2(\mathbb{R})}+1 \in\,\mathbb{R},\]
	belongs to $L^1$, 
	a generalized Gronwall's Lemma yields
		\begin{equation}
		\E\sup_{t\in[0,T]}\abs{r_{\epsilon}(t)}_{L^2(\mathbb{R})}^{2}+\E\int_{0}^{T}\abs{r_{\epsilon}(t)}_{H^{1}}^{2}dt\lesssim_{\alpha,T}\epsilon^{-(1/2+\alpha)}+ \epsilon^{H-2}\,\vert u^\e_0-u_0\vert^2_{L^2(\mathbb{R})},
	\end{equation}
	for any $\alpha>0$.
	
\end{proof}

Finally, if we combine \eqref{est_Z_sum1}, \eqref{est_Z_sum2} and \eqref{est_R}, we obtain the following result.
\begin{Lemma}\label{Y}
	Let $H\in[1/2,1)$. Then, for every $T>0$ and $\alpha>0$, we have
	\begin{equation}\label{est_Y}
		\E\sup_{t\in[0,T]}\abs{y_{\epsilon}(t)}_{L^2(\mathbb{R})}^{2}+\E\int_{0}^{T}\abs{y_{\epsilon}(t)}_{H^{1}}^{2}dt\lesssim_{\,\alpha,T}\epsilon^{-(1/2+\alpha)}+ \epsilon^{H-2}\,\vert u^\e_0-u_0\vert^2_{L^2(\mathbb{R})},\ \ \ \ \ \ \  0<\e\ll 1.
	\end{equation}
	
\end{Lemma}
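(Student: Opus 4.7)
The plan is to exploit the decomposition $y_\epsilon = r_\epsilon + z_\epsilon$ that was set up just before Lemma \ref{Y}, which allows the desired bound on $y_\epsilon$ to be assembled from the already-established bounds \eqref{est_Z_sum2} and \eqref{est_R}. Since $r_\epsilon$ was defined precisely as the difference $y_\epsilon - z_\epsilon$, there is nothing further to verify about this decomposition beyond recalling it.

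First, I would note that for any Hilbert space norm $|\cdot|$, the elementary inequality $|a+b|^2 \leq 2|a|^2 + 2|b|^2$ gives, pointwise in $t$,
\begin{equation}
|y_\epsilon(t)|^2_{L^2(\mathbb{R})} \leq 2\,|r_\epsilon(t)|^2_{L^2(\mathbb{R})} + 2\,|z_\epsilon(t)|^2_{L^2(\mathbb{R})},
\end{equation}
and likewise for the $H^1$-norm. Taking supremum in $t \in [0,T]$ and integrating over $[0,T]$ respectively, and then taking expectations, one obtains
\begin{equation}
\mathbb{E}\sup_{t\in[0,T]}|y_\epsilon(t)|^2_{L^2(\mathbb{R})} + \mathbb{E}\int_0^T |y_\epsilon(t)|^2_{H^1}\,dt \leq 2\,\Bigl(\mathbb{E}\sup_{t\in[0,T]}|r_\epsilon(t)|^2_{L^2(\mathbb{R})} + \mathbb{E}\int_0^T |r_\epsilon(t)|^2_{H^1}\,dt\Bigr) + 2\,\Bigl(\mathbb{E}\sup_{t\in[0,T]}|z_\epsilon(t)|^2_{L^2(\mathbb{R})} + \mathbb{E}\int_0^T |z_\epsilon(t)|^2_{H^1}\,dt\Bigr).
\end{equation}

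Then I would invoke \eqref{est_R} to bound the $r_\epsilon$-contribution by $c_{\alpha,T}\bigl(\epsilon^{-(1/2+\alpha)} + \epsilon^{H-2}|u_0^\epsilon - u_0|^2_{L^2(\mathbb{R})}\bigr)$, and \eqref{est_Z_sum2} to bound the $z_\epsilon$-contribution by the same quantity (with a possibly different constant). Adding the two estimates and absorbing the constants yields precisely \eqref{est_Y}. Observe that the cruder bound \eqref{est_Z_sum1} is not needed here, since \eqref{est_Z_sum2} already controls both the supremum and the $H^1$-integral.

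There is no real obstacle at this stage: all the analytical work — the semigroup estimates for $z_{\epsilon,1}$ and $z_{\epsilon,2}$, the Gronwall-type argument for $r_\epsilon$, and the energy identity \eqref{est_system_H} used throughout — has been carried out in the preceding lemmas. The only thing to be careful about is that the two input estimates share the same right-hand side structure $\epsilon^{-(1/2+\alpha)} + \epsilon^{H-2}|u_0^\epsilon - u_0|^2_{L^2(\mathbb{R})}$ with the same $\alpha$, so that the combination closes without any loss of exponent; this is indeed the case for every $\alpha > 0$, and thus the proof reduces to a two-line triangle-inequality argument.
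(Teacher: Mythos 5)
Your proposal is correct and coincides with the paper's own argument: the authors obtain \eqref{est_Y} exactly by combining the decomposition $y_\epsilon=r_\epsilon+z_\epsilon$ with \eqref{est_Z_sum2} and \eqref{est_R} via the elementary inequality $|a+b|^2\le 2|a|^2+2|b|^2$. Your side remark that \eqref{est_Z_sum1} is not actually needed here is also accurate, since \eqref{est_Z_sum2} already dominates the $L^2(0,T;L^2(\mathbb{R}))$ contribution at the stated rate $\epsilon^{-(1/2+\alpha)}$.
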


\section{Convergence of $z_\e$}
Our goal is proving that for every $T>0$ and  $H\in(1/2,1)$, 
	\begin{equation}\label{fine1}
		\lim_{\e\to 0}\,\mathbb{E}\,\int_0^T\vert z_{\epsilon}(t)-z(t)\vert^2_{L^2(\mathbb{R})}\,dt=0,
	\end{equation}
where $z_\e$ is the solution of problem \eqref{sm168} and 
	\begin{equation}
		z(t):=\frac 1{\gamma_0} \int_{0}^{t}e^{\frac 1{\gamma_0}(t-s)A}(u(s)\times \partial_{t}u(s))dw^{H}(s),\ \ \ \ t\in[0,T],
	\end{equation}
Here and in what follows $u$ is the solution of the heat flow harmonic map equation \eqref{limiting_equation}.

 As in \eqref{dec}, for any $\epsilon\in (0,1)$ and $t\in[0,T]$, we write $z_{\epsilon} = z_{\epsilon, 1}+z_{\epsilon, 2}$. According to \eqref{initial_condition_rate} and \eqref{est_Z0_H}, we  have  
	\begin{equation}\label{sm100}\lim_{\epsilon\to 0}\,\abs{z_{\epsilon, 1}}_{L^{2}(0,T;L^2(\mathbb{R}))}=0,\end{equation}
	 so, in order to prove \eqref{fine1}, it is sufficient to show that 
	\begin{equation}\label{fine2}
\lim_{\e\to 0}\mathbb{E}\int_0^T	\vert	z_{\epsilon, 2}(t)- z(t)\vert_{L^2(\mathbb{R})}^2\,dt=0. \ \ 
	\end{equation}
Here, we  decompose  $z_{\e,2}$ as
	\begin{equation}
		\begin{array}{ll}
			&\ds{z_{\epsilon, 2}(t)= \frac 1{\gamma_0}\Big(\int_{0}^{t}e^{\frac 1{\gamma_0}(t-s)A}(u_{\epsilon}(s)\times \partial_{t}u_{\epsilon}(s))Q^{\epsilon}dw^{H}(s)-\int_{0}^{t}e^{\frac 1{\gamma_0}(t-s)A}(u_{\epsilon}(s)\times \partial_{t}u_{ \epsilon}(s))dw^{H}(s)\Big) }\\
			 \vs 
			 &\ds {\quad\quad\quad\quad\quad\quad + \frac 1{\gamma_0}\int_{0}^{t}e^{\frac 1{\gamma_0}(t-s)A}\big(u_{\epsilon}(s)\times \partial_{t}u_{\epsilon}(s)\big)dw^{H}(s)=: \zeta_{1}^{\epsilon}(t)+\zeta_{2}^{\epsilon}(t). }
		\end{array}
	\end{equation}
		
\begin{Lemma}
We have 
	\begin{equation}\label{sm101}
		\lim_{\e\to 0}\,\E\int_{0}^{T}\abs{\zeta_{1}^{\epsilon}(t)}_{L^2(\mathbb{R})}^{2}dt=0.
	\end{equation}	
\end{Lemma}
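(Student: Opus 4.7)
I would start by expanding $\zeta_{1}^{\epsilon}(t)$ along the orthonormal basis $\{\xi_{k}^{H}\}_{k}$ of $K^{H}$: since
\begin{equation*}
 (\eta_{\epsilon}\ast\xi_{k}^{H}-\xi_{k}^{H})(y) = \int_{\R}e^{-iyz}\big(\mathcal{F}\eta(\sqrt{\epsilon}z)-1\big)e_{k}^{H}(z)\,\mu_{H}(dz),
\end{equation*}
the It\^o isometry together with Parseval in $L^{2}_{(s)}(\R,\mu_{H})$, exactly as was done in the proof of Lemma \ref{lemma6.2}, produces the key identity
\begin{equation*}
 \E\int_{0}^{T}|\zeta_{1}^{\epsilon}(t)|_{L^{2}(\R)}^{2}\,dt 	= \frac{1}{\gamma_{0}^{2}}\int_{\R}|\mathcal{F}\eta(\sqrt{\epsilon}z)-1|^{2}\,H_{\epsilon}(z)\,\mu_{H}(dz),
\end{equation*}
where $H_{\epsilon}(z):=\E\int_{0}^{T}\!\int_{0}^{t}\int_{\R}|\Psi_{t,s,z}(x)|^{2}\,dx\,ds\,dt$ and $\Psi_{t,s,z}(x) := \int_{\R}G_{(t-s)/\gamma_{0}}(x-y)\,e^{-iyz}(u_{\epsilon}(s,y)\times \partial_{t}u_{\epsilon}(s,y))\,dy$.

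The heart of the proof will then be a uniform-in-$\epsilon$ tightness statement for the auxiliary kernel $H_{\epsilon}$. On the one hand, the $L^{2}$-contractivity of the heat semigroup together with the uniform bound $\sup_{\epsilon}\E\int_{0}^{T}|\partial_{t}u_{\epsilon}|_{L^{2}}^{2}\,ds<\infty$ from \eqref{uniform_bound1} yields $\sup_{\epsilon,z}H_{\epsilon}(z)\leq C$, whence $\sup_{\epsilon}\int_{|z|\leq 1}H_{\epsilon}\,\mu_{H}<\infty$ because $H<1$. On the other hand, applying Plancherel in the variable $z$ (the key being that $z\mapsto \Psi_{t,s,z}(x)$ is the Fourier transform in $y$ of $G_{(t-s)/\gamma_{0}}(x-y)(u_{\epsilon}(s,y)\times\partial_{t}u_{\epsilon}(s,y))$) together with $|G_{\tau}|_{L^{2}(\R)}^{2}\simeq \tau^{-1/2}$ gives
\begin{equation*}
\sup_{\epsilon\in(0,1)}\int_{\R}H_{\epsilon}(z)\,dz\lesssim \E\int_{0}^{T}\!\int_{0}^{t}(t-s)^{-1/2}|\partial_{t}u_{\epsilon}(s)|_{L^{2}(\R)}^{2}\,ds\,dt<\infty.
\end{equation*}
Because $H>1/2$, the weight $|z|^{1-2H}$ is decreasing on $\{|z|>M\}$ and bounded there by $M^{1-2H}$, so this estimate converts into the crucial tail bound $\sup_{\epsilon\in(0,1)}\int_{|z|>M}H_{\epsilon}(z)\,\mu_{H}(dz)\lesssim M^{1-2H}$, which vanishes as $M\to\infty$.

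With these ingredients, the conclusion follows by splitting the integral at $|z|=M$ and using the crude bound $|\mathcal{F}\eta(\sqrt{\epsilon}z)-1|^{2}\leq 4$ on $\{|z|>M\}$:
\begin{equation*}
\E\int_{0}^{T}|\zeta_{1}^{\epsilon}(t)|_{L^{2}(\R)}^{2}\,dt\lesssim \sup_{|z|\leq M}|\mathcal{F}\eta(\sqrt{\epsilon}z)-1|^{2}\int_{\R}H_{\epsilon}\,\mu_{H}+\int_{|z|>M}H_{\epsilon}\,\mu_{H}.
\end{equation*}
The first summand vanishes as $\epsilon\to 0$ for each fixed $M$, by the continuity of $\mathcal{F}\eta$ at the origin with $\mathcal{F}\eta(0)=\int\eta=1$ (in fact Hypothesis \ref{H2} gives the quantitative rate $(\sqrt{\epsilon}M)^{2a}$), while $\int_{\R}H_{\epsilon}\,\mu_{H}$ remains bounded uniformly in $\epsilon$; the second summand is $O(M^{1-2H})$ uniformly in $\epsilon$. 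Sending $\epsilon\to 0$ first and then $M\to\infty$ proves \eqref{sm101}. The main obstacle I anticipate is precisely this uniform tightness of $\{H_{\epsilon}\mu_{H}\}_{\epsilon}$ in the Fourier variable $z$: no uniform-in-$\epsilon$ Sobolev regularity of $u_{\epsilon}\times\partial_{t}u_{\epsilon}$ is available, since the $H^{1}$-bounds on $\partial_{t}u_{\epsilon}$ coming from \eqref{est_system_H1} diverge as $\epsilon\to 0$, so the $z$-decay has to be extracted from the heat kernel via Plancherel, and it is at this step that the assumption $H>1/2$ enters in an essential way.
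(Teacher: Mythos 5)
Your proposal is correct, and its skeleton coincides with the paper's: the It\^o isometry, Parseval in $L^{2}_{(s)}(\R,\mu_H)$ to collapse the sum over $k$ into an integral against $\abs{\mathcal{F}\eta_{\epsilon}(z)-1}^{2}\mu_H(dz)$, the $L^2$-contractivity of the heat semigroup for bounded $z$, and Plancherel in $z$ combined with $\abs{G_\tau}_{L^2(\R)}\simeq\tau^{-1/4}$ to control the large-$z$ region via $\E\int_0^t(t-s)^{-1/2}\abs{\partial_t u_\epsilon(s)}_{L^2(\R)}^2\,ds$. Where you diverge is in how the smallness of $1-\mathcal{F}\eta_\epsilon$ is converted into convergence: the paper splits the $z$-integral at $\abs{z}=1/\sqrt{\epsilon}$ and invokes the quantitative bound \eqref{mollifier1} of Hypothesis \ref{H2} (with $a\geq H-1/2$) to obtain the uniform pointwise estimate $\abs{1-\mathcal{F}\eta_\epsilon(z)}^{2}\abs{z}^{1-2H}\lesssim\epsilon^{H-1/2}$ on all of $\R$, which yields the explicit rate $\E\int_0^T\abs{\zeta_1^\epsilon(t)}_{L^2(\R)}^2\,dt\lesssim_T\epsilon^{H-1/2}$; you instead split at a fixed level $\abs{z}=M$, use only the continuity of $\mathcal{F}\eta$ at the origin on $\{\abs{z}\leq M\}$, and absorb the tail into $O(M^{1-2H})$ uniformly in $\epsilon$ before letting $M\to\infty$. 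Your route is marginally more elementary — it does not actually need \eqref{mollifier1}, only $\eta\in L^1(\R)$ with $\int\eta=1$ — at the cost of losing the rate $\epsilon^{H-1/2}$, which is harmless here since the lemma asserts only convergence. Both arguments use $H>1/2$ in the same essential place, namely to make $\abs{z}^{1-2H}$ small for large $\abs{z}$, and your closing diagnosis of why the decay in $z$ must come from the heat kernel rather than from Sobolev regularity of $u_\epsilon\times\partial_t u_\epsilon$ is accurate.
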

	
\begin{proof}	For every $t\in[0,T]$ we have
	\begin{equation}
		\zeta_{1}^{\epsilon}(t) = \frac 1{\gamma_0}\sum_{k=1}^{\infty}\int_{0}^{t}e^{\frac 1{\gamma_0}(t-s)A}\big[(u(s)\times \partial_{t}u_{\epsilon}(s))(\eta_{\epsilon}\ast \xi_{k}^{H}-\xi_{k}^{H})\big]d\beta_{k}(s),
	\end{equation}
so that, by proceeding as in the proof of Lemma \ref{lemma6.2}, 	we have 
	\begin{equation}
		\begin{array}{l}
			\ds{  \E\abs{\zeta_{1}^{\epsilon}(t)}_{L^2(\mathbb{R})}^{2}    }\\
			\vs 
			\ds{\lesssim \E\int_{0}^{t}\int_{\R^2}\Big\lvert \int_\mathbb{R}e^{-iyz}G_{\frac{t-s}{\gamma_0}}(x-y)(u(s,y)\times \partial_{t}u_{\epsilon}(s,y))dy\Big\rvert^{2}\big\lvert 1-\mathcal{F}\eta_{\epsilon}(z)\big\rvert^{2}\abs{z}^{1-2H}dzdxds }\\
			\vs 
			\ds{ \leq \E\int_{0}^{t}\int_{\R}\int_{\abs{z}>1/\sqrt{\epsilon}}\Big\lvert\int_\mathbb{R} e^{-iyz}G_{\frac{t-s}{\gamma_0}}(x-y)(u(s,y)\times \partial_{t}u_{\epsilon}(s,y))dy\Big\rvert^{2}\mu_H(dz)dxds  }\\
			\vs 
			\ds{+\E\int_{0}^{t}\int_{\R}\int_{\abs{z}\leq 1/\sqrt{\epsilon}}\Big\lvert \int_\mathbb{R} e^{-iyz}G_{\frac{t-s}{\gamma_0}}(x-y)(u(s,y)\times \partial_{t}u_{\epsilon}(s,y))dy\Big\rvert^{2}\big\lvert 1-\mathcal{F}\eta_{\epsilon}(z)\big\rvert^{2}\mu_H(dz)dxds.  }
		\end{array}
	\end{equation}
	Note that, since  $(\mathcal{F}\eta_{\epsilon})(z)= (\mathcal{F}\eta)(\sqrt{\epsilon}z)$, thanks to \eqref{mollifier1}, for any $\abs{z}\leq 1/\sqrt{\epsilon}$ we have 
	\begin{equation}
		(1-\mathcal{F}\eta_{\epsilon}(z))^{2}\abs{z}^{1-2H}\lesssim \abs{\sqrt{\epsilon}z}^{2a}\abs{z}^{1-2H}=\epsilon^{a}\abs{z}^{2a+1-2H}.
	\end{equation}
Hence, we get 
	\begin{equation}
		\begin{array}{ll}
			&\ds{  \E\abs{\zeta_{1}^{\epsilon}(t)}_{L^{2}(\R)}^{2}\lesssim \epsilon^{H-1/2}\E\int_{0}^{t}\int_{\R^2}\Big\lvert \int_\mathbb{R} e^{-iyz}G_{\frac{t-s}{\gamma_0}}(x-y)(u(s,y)\times \partial_{t}u_{\epsilon}(s,y))dy\Big\rvert^{2}dzdxds  }\\
			\vs 
			&\ds{\quad\quad\quad\quad= \epsilon^{H-1/2}\E\int_{0}^{t}\int_{\R^2}\Big\lvert G_{\frac{t-s}{\gamma_0}}(x-y)(u(s,y)\times \partial_{t}u_{\epsilon}(s,y))\Big\rvert^{2}dydxds  }\\
			\vs 
			&\ds{\quad\quad\quad\quad\quad\quad\quad\quad \lesssim \epsilon^{H-1/2} \E\int_{0}^{t}(t-s)^{-1/2}\abs{\partial_{t}u_{\epsilon}(s)}_{L^2(\mathbb{R})}^{2}ds,  }
		\end{array}
	\end{equation}
	and the Young convolution inequality allows to conclude
	\begin{equation}
		\E\int_{0}^{T}\abs{\zeta_{1}^{\epsilon}(t)}_{L^2(\mathbb{R})}^{2}dt \lesssim_{T}\epsilon^{H-1/2}\E\int_{0}^{T}\abs{\partial_{t}u_{\epsilon}(s)}_{L^2(\mathbb{R})}^{2}ds.
	\end{equation}
	Thanks to \eqref{uniform_bound1} and the fact that $H>1/2$, this implies \eqref{sm101}.\end{proof}
	
Therefore, according to  \eqref{sm101}, we obtain \eqref{fine2} once we prove 
	\begin{equation}\label{Z_convergence_step2}
	\lim_{\e\to 0}\mathbb{E}\int_0^T\vert 	\zeta_{2}^{\epsilon}(t)-z(t)\vert^2_{L^2(\mathbb{R})}\,dt=0.
	\end{equation}

\subsection{A few approximation results}

For every $ \e \in\,(0,1)$, we introduce the problem
\begin{equation}
\label{fine13-bis}
\gamma_0\,\partial_t \hat{u}_\epsilon(t)=\partial_x^2 \hat{u}_\epsilon(t)+\vert \partial_x \hat{u}_\epsilon(t)\vert^2\,\hat{u}_\epsilon(t),\ \ \ \ \ \hat{u}_\epsilon(0)=u^\e_0.	
\end{equation}
Thanks to \eqref{initial_condition_rate} and \eqref{initial_condition_rate-bis}, we can apply Lemma \ref{lemB3} and we have
\[\sup_{t\in [0,T]}\abs{\hat{u}_\epsilon(t)-u(t)}_{H^{1}(\R)}^{2}+\int_{0}^{T}\abs{\hat{u}_\epsilon(t)-u(t)}_{H^{2}(\R)}^{2}dt\lesssim \vert u^\e_0-u_0\vert_{H^1(\mathbb{R})}^2.\]
In particular, if $\lambda \in\,(0,2(1-H))$ is the constant introduced in the statement of Theorem \ref{CLT}, 
\begin{equation}\label{fine14-bis}
\begin{array}{l}\ds{\lim_{\e\to 0}\e^{-\lambda}\int_0^T\vert \hat{u}_\epsilon(t)\times \partial_t \hat{u}_\epsilon(t)-u(t)\times \partial_t u(t)\vert_{L^2(\mathbb{R})}^2\,dt}\\[14pt]\ds{\quad \quad \quad =\frac 1{\gamma_0^2}\,\lim_{\e\to 0}\e^{-\lambda}\int_0^T\vert \hat{u}_\epsilon(t)\times \partial_x^2 \hat{u}_\epsilon(t)-u(t)\times \partial_x^2 u(t)\vert_{L^2(\mathbb{R})}^2\,dt=0.}\end{array}	
\end{equation}

Next, for every $\e \in\,(0,1)$ we  denote 
by $\eta_{\epsilon}$ the solution of the problem
\begin{equation}\label{fine11}\epsilon\,\partial_t\eta_{\epsilon}(t)+\gamma_0\,\eta_{\epsilon}(t)=\e^{\lambda} \partial_x^2\, \eta_{\epsilon}(t)+\hat{u}_\epsilon(t)\times \partial_x^2 \hat{u}_\epsilon(t),\ \ \ \ \ \eta_{\epsilon}(0)=u^\e_0\times v^\e_0,\end{equation}		
where $\hat{u}_\epsilon$ is the solution of problem \eqref{fine13-bis} and $\lambda $ is the positive constant above. Moreover, we   denote 
\[\vartheta_\epsilon(t):=u_\epsilon(t)\times \partial_t u_\epsilon(t).\]
It is immediate to check that
\[\epsilon\,\partial_t\vartheta_\e(t)+\gamma_0\,\vartheta_\e(t)=u_\epsilon(t)\times \partial_x^2 u_\epsilon(t)-\e^{1-H/2} \partial_t u_\e(t) Q_\e\partial_t w^H(t),\ \ \ \ \ \vartheta_\epsilon(0)=u^\e_0\times v^\e_0.\]
In the lemma below, we will show that $\vartheta_\epsilon$ and $\eta_\epsilon$  get closer and closer, for $0<\epsilon\ll 1$.

\begin{Lemma}
We have
	\begin{equation}
	\label{fine3}
	\lim_{\e\to 0}\mathbb{E}\int_0^T\vert\vartheta_\e(t)-\eta_\epsilon(t)\vert_{H^{-\sigma}(\mathbb{R})}^2\,dt=0,	
	\end{equation}
	where $\sigma:=H-\frac 12.$

\end{Lemma}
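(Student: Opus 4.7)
The plan is to treat $\xi_{\epsilon} := \vartheta_{\epsilon} - \eta_{\epsilon}$ as the solution of a first-order equation in $t$ (an ODE pointwise in $x$ apart from the regularizing $\epsilon^{\lambda}\partial_{x}^{2}\eta_{\epsilon}$ term), solve by Duhamel's formula, and estimate the resulting deterministic remainder and stochastic convolution separately in $H^{-\sigma}$. Applying $u_{\epsilon}\times$ to the parabolic-scaled SPDE \eqref{SPDE3} and using the cancellations $u_{\epsilon}\times u_{\epsilon}=0$ and $u_{\epsilon}\times(u_{\epsilon}\times\partial_{t}u_{\epsilon}) = -\partial_{t}u_{\epsilon}$ gives $\epsilon\,\partial_{t}\vartheta_{\epsilon} + \gamma_{0}\vartheta_{\epsilon} = u_{\epsilon}\times\partial_{x}^{2}u_{\epsilon} - \epsilon^{1-H/2}\partial_{t}u_{\epsilon}\,Q^{\epsilon}\partial_{t}w^{H}$. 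Subtracting \eqref{fine11} and noting $\vartheta_{\epsilon}(0) = \eta_{\epsilon}(0) = u_{0}^{\epsilon}\times v_{0}^{\epsilon}$ yields
\[
\epsilon\,\partial_{t}\xi_{\epsilon} + \gamma_{0}\xi_{\epsilon} = F_{\epsilon}(t) - \epsilon^{1-H/2}\partial_{t}u_{\epsilon}(t)\,Q^{\epsilon}\partial_{t}w^{H}(t), \qquad \xi_{\epsilon}(0)=0,
\]
with $F_{\epsilon} := (u_{\epsilon}\times\partial_{x}^{2}u_{\epsilon} - \hat{u}_{\epsilon}\times\partial_{x}^{2}\hat{u}_{\epsilon}) - \epsilon^{\lambda}\partial_{x}^{2}\eta_{\epsilon}$. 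Duhamel then gives $\xi_{\epsilon} = R_{\epsilon} + N_{\epsilon}$ with $R_{\epsilon}(t) = \epsilon^{-1}\int_{0}^{t}e^{-\gamma_{0}(t-s)/\epsilon}F_{\epsilon}(s)\,ds$ and $N_{\epsilon}(t) = -\epsilon^{-H/2}\int_{0}^{t}e^{-\gamma_{0}(t-s)/\epsilon}\partial_{t}u_{\epsilon}(s)Q^{\epsilon}dw^{H}(s)$.

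For $R_{\epsilon}$, the scalar kernel $\epsilon^{-1}e^{-\gamma_{0}\cdot/\epsilon}\mathbf{1}_{[0,\infty)}$ has $L^{1}(\R)$-norm $1/\gamma_{0}$, so Young's inequality gives $\E\int_{0}^{T}\abs{R_{\epsilon}}_{H^{-\sigma}(\R)}^{2}dt \le \gamma_{0}^{-2}\,\E\int_{0}^{T}\abs{F_{\epsilon}}_{L^{2}(\R)}^{2}dt$ (using $L^{2}\hookrightarrow H^{-\sigma}$). Writing $u_{\epsilon}\times\partial_{x}^{2}u_{\epsilon} - \hat{u}_{\epsilon}\times\partial_{x}^{2}\hat{u}_{\epsilon} = (u_{\epsilon}-\hat{u}_{\epsilon})\times\partial_{x}^{2}u_{\epsilon} + \hat{u}_{\epsilon}\times\partial_{x}^{2}(u_{\epsilon}-\hat{u}_{\epsilon})$ reduces the first two pieces to controlling $\abs{u_{\epsilon}-\hat{u}_{\epsilon}}_{L^{\infty}(\R)}$ via Gagliardo-Nirenberg, together with the uniform $L^{2}_{t}H^{2}_{x}$ bound \eqref{uniform_bound3} and the rates $u_{\epsilon}\to u$ from Lemma \ref{Y} and $\hat{u}_{\epsilon}\to u$ from \eqref{fine14-bis}. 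The last piece $\epsilon^{\lambda}\partial_{x}^{2}\eta_{\epsilon}$ contributes $\epsilon^{2\lambda}\E\int_{0}^{T}\abs{\partial_{x}^{2}\eta_{\epsilon}}_{L^{2}(\R)}^{2}dt$, which is made infinitesimal by a standard maximal regularity estimate on the damped heat equation \eqref{fine11} combined with the smallness assumption \eqref{fine25-bis}.

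For the stochastic convolution, Ito's isometry and Fubini yield
\[
\E\int_{0}^{T}\abs{N_{\epsilon}(t)}_{H^{-\sigma}(\R)}^{2}\,dt \le \frac{\epsilon^{1-H}}{2\gamma_{0}}\,\E\int_{0}^{T}\bigl\|\partial_{t}u_{\epsilon}(s)Q^{\epsilon}\bigr\|_{\L_{2}(K^{H},H^{-\sigma}(\R))}^{2}\,ds,
\]
and Plancherel gives the spectral representation $\|fQ^{\epsilon}\|_{\L_{2}(K^{H},H^{-\sigma}(\R))}^{2} = \int_{\R}\abs{\mathcal{F}f(\zeta)}^{2}A_{\epsilon}(\zeta)\,d\zeta$ with $A_{\epsilon}(\zeta) := \int_{\R}(1+\abs{\zeta-\eta}^{2})^{-\sigma}d\mu_{Q^{\epsilon}w^{H}}(\eta)$. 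Because $\mu_{Q^{\epsilon}w^{H}}$ is symmetric with density $a_{H}\abs{\mathcal{F}\eta(\sqrt{\epsilon}\eta)}^{2}\abs{\eta}^{1-2H}$ strongly concentrated at the origin through the singular factor $\abs{\eta}^{1-2H}$, a rearrangement-type argument yields $\|A_{\epsilon}\|_{L^{\infty}(\R)}\lesssim A_{\epsilon}(0)$, and the scaling substitution $\eta=\eta'/\sqrt{\epsilon}$ combined with the decay \eqref{mollifier2} of $\mathcal{F}\eta$ then produces the asymptotics $A_{\epsilon}(0)\lesssim \epsilon^{2H-3/2}$ for $H\in(1/2,3/4)$, $\log(1/\epsilon)$ at $H=3/4$, and $O(1)$ for $H\in(3/4,1)$. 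In every regime $\epsilon^{1-H}A_{\epsilon}(0)\to 0$, and combined with $\E\int_{0}^{T}\abs{\partial_{t}u_{\epsilon}}_{L^{2}(\R)}^{2}ds\le C$ from \eqref{uniform_bound1} this finishes \eqref{fine3}.

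The main obstacle is precisely this spectral-scale analysis of $A_{\epsilon}(0)$: the crude bound $\|A_{\epsilon}\|_{L^{\infty}}\le\mu_{Q^{\epsilon}w^{H}}(\R) = \epsilon^{H-1}c_{0}$ only gives $\E\int_{0}^{T}\abs{N_{\epsilon}}_{H^{-\sigma}(\R)}^{2}\,dt = O(1)$, which fails to decay. The specific exponent $\sigma = H-1/2$ is tuned to the singularity of $\mu^{H}$, and the additional factor $\epsilon^{H-1/2}$ (or the logarithmic correction at the borderline $H=3/4$) is recovered only through a careful splitting of the integral defining $A_{\epsilon}(0)$ into the low-frequency zone $\abs{\eta}<\sqrt{\epsilon}$, where the fractional singularity $\abs{\eta}^{1-2H}$ dominates, and the high-frequency zone $\abs{\eta}\ge\sqrt{\epsilon}$, where the mollifier decay \eqref{mollifier2} takes over.
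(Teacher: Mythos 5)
Your decomposition drops the viscosity $\epsilon^{\lambda}\partial_{x}^{2}$ from the generator of the Duhamel semigroup and keeps it in the forcing, and this is where the argument breaks. With the purely scalar kernel $\epsilon^{-1}e^{-\gamma_{0}(t-s)/\epsilon}$, Young's inequality reduces the deterministic remainder to showing $\E\int_{0}^{T}\abs{u_{\epsilon}\times\partial_{x}^{2}u_{\epsilon}-\hat{u}_{\epsilon}\times\partial_{x}^{2}\hat{u}_{\epsilon}}_{L^{2}(\R)}^{2}\,dt\to0$. Your splitting handles $(u_{\epsilon}-\hat{u}_{\epsilon})\times\partial_{x}^{2}u_{\epsilon}$ (modulo a secondary issue: you need the expectation of the product $\sup_{t}\abs{u_{\epsilon}-\hat{u}_{\epsilon}}_{L^{\infty}}^{2}\cdot\int_{0}^{T}\abs{\partial_{x}^{2}u_{\epsilon}}_{L^{2}}^{2}dt$, while \eqref{uniform_bound3} only gives a first moment of the second factor), but the other piece $\hat{u}_{\epsilon}\times\partial_{x}^{2}(u_{\epsilon}-\hat{u}_{\epsilon})$ requires $\E\int_{0}^{T}\abs{\partial_{x}^{2}(u_{\epsilon}-\hat{u}_{\epsilon})}_{L^{2}}^{2}\,dt\to0$, i.e.\ smallness of the \emph{difference} at the $\dot{H}^{2}$ level. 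Nothing in the paper, or in any estimate you can extract from \eqref{uniform_bound3}, \eqref{est_Y} or \eqref{conv-rate}, gives this: second derivatives of $u_{\epsilon}$ and $\hat{u}_{\epsilon}$ are only uniformly \emph{bounded} in $L^{2}(0,T;L^{2}(\R))$, while convergence of the difference is available only at the $H^{1}$ level. This is exactly why the paper inserts $\epsilon^{\lambda}\partial_{x}^{2}$ into the generator $B_{\epsilon}$: it writes $u_{\epsilon}\times\partial_{x}^{2}u_{\epsilon}-\hat{u}_{\epsilon}\times\partial_{x}^{2}\hat{u}_{\epsilon}=\partial_{x}\big(u_{\epsilon}\times\partial_{x}u_{\epsilon}-\hat{u}_{\epsilon}\times\partial_{x}\hat{u}_{\epsilon}\big)$ and lets the parabolic smoothing of $e^{B_{\epsilon}(t-s)}$ absorb that derivative at the cost of $((t-s)/\epsilon^{1-\lambda})^{-1/2}$, which after integration produces $\epsilon^{-\lambda}$ times an $H^{1}$-level difference, controlled by \eqref{est_Y} and \eqref{fine20-bis}. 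Your kernel has no spatial smoothing, so this mechanism is unavailable and the term does not close. A related problem affects your treatment of $\epsilon^{\lambda}\partial_{x}^{2}\eta_{\epsilon}$: maximal regularity for \eqref{fine11} yields $\lVert\epsilon^{\lambda}\partial_{x}^{2}\eta_{\epsilon}\rVert_{L^{2}_{t}L^{2}_{x}}\lesssim\lVert\hat{u}_{\epsilon}\times\partial_{x}^{2}\hat{u}_{\epsilon}\rVert_{L^{2}_{t}L^{2}_{x}}+\cdots$, which is $O(1)$, not $o(1)$; to get smallness you would need to differentiate the equation and invoke the $\dot{H}^{3}$--$\dot{H}^{4}$ regularity of $\hat{u}_{\epsilon}$ together with \eqref{fine25-bis}, which you do not do.

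On the positive side, your treatment of the stochastic convolution is a genuinely different and viable route: instead of using the smoothing of $e^{B_{\epsilon}(t-s)}$ to land in $L^{2}(\R)$ (which is what the paper does for $I^{3}_{\epsilon}$, obtaining $\epsilon^{1-H-\lambda/2}$), you exploit the $H^{-\sigma}$ norm with $\sigma=H-1/2$ to de-singularize the spectral measure $\abs{\mathcal{F}\eta(\sqrt{\epsilon}\,\cdot)}^{2}\,\mu_{H}$ at high frequency; your claimed asymptotics for $A_{\epsilon}(0)$ ($\epsilon^{2H-3/2}$ for $H<3/4$, logarithmic at $H=3/4$, $O(1)$ for $H>3/4$) are correct and give $\epsilon^{1-H}A_{\epsilon}(0)\to0$ in all regimes. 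The uniform bound $\lVert A_{\epsilon}\rVert_{L^{\infty}}\lesssim A_{\epsilon}(0)$ needs an actual proof (the naive rearrangement bound discards the mollifier decay and diverges for $H<3/4$), but a splitting of the $\eta$-integral at $\abs{\eta}=\epsilon^{-1/2}$ combined with \eqref{mollifier2} does make it work. Note, incidentally, that in the paper the exponent $\sigma=H-1/2$ is not spent on the noise term at all but on the term $\epsilon^{-(1-\lambda)}\int_{0}^{t}e^{B_{\epsilon}(t-s)}\partial_{x}^{2}\vartheta_{\epsilon}(s)\,ds$, which your decomposition does not produce.
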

\begin{proof}
	If we define $h_\epsilon(t):=\eta_\epsilon(t)-\vartheta_\epsilon(t)$, we have that $h_\epsilon$ solves the equation
	\[\begin{array}{l}
\ds{\epsilon\,\partial_t h_\epsilon(t)+\gamma_0\,h_\e(t)=\epsilon^\lambda\, \partial_x^2 h_\e(t)+ \epsilon^\lambda\, \partial_x^2 \vartheta_\e(t) +\left( \hat{u}_\epsilon(t)\times \partial_x^2 \hat{u}_\epsilon(t)-u_\epsilon(t)\times \partial_x^2 u_\epsilon(t)\right)}\\[10pt]
\ds{ \quad \quad \quad \quad \quad \quad \quad \quad \quad \quad \quad +\e^{1-H/2} \partial_t u_\e(t) Q_\e\partial_t w^H(t),\ \ \ \ \ h_\epsilon(0)=0.}	
\end{array}\]
Hence, if we define
\begin{equation}\label{fine10-bis}B_\e:=-\frac {\gamma_0}\e\,I+\frac{1}{\e^{1-\lambda}}\, \partial_x^2,\end{equation}
we get
\[\begin{array}{l}
\ds{h_\e(t)= \frac 1{\epsilon^{1-\lambda}}\int_0^t e^{B_\epsilon(t-s)}\, \partial_x^2 \vartheta_\e(s)\,ds+\frac 1\epsilon\int_0^t e^{B_\epsilon(t-s)}\,\left( \hat{u}_\epsilon(s)\times \partial_x^2 \hat{u}_\epsilon(s)-u_\epsilon(s)\times \partial_x^2 u_\epsilon(s)\right)\,ds}\\[16pt]
\ds{\quad \quad \quad \quad \quad \quad \quad +\frac{1}{\epsilon^{H/2}}\int_0^t e^{B_\epsilon(t-s)}\,\partial_t u_\e(s) Q_\e\partial_t w^H(s)=:I_\e^1(t)+I_\e^2(t)+I_\e^3(t).}
\end{array}\]

\smallskip

{\em Study of} $I^1_\epsilon$. For every $\a \in\,(0,1)$ we have
\[\begin{array}{l}
\ds{	\mathbb{E}\int_0^T\vert I^1_\epsilon(t)\vert_{H^{-\sigma}(\mathbb{R})}^2\,dt=\frac 1{\epsilon^{2(1-\lambda)}}\mathbb{E}\int_0^T\Big\vert\int_0^t e^{B_\epsilon(t-s)}\, \partial_x^2 \vartheta_\e(s)\,ds\Big\vert_{H^{-\sigma}(\mathbb{R})}^2\,dt}\\[16pt]
\ds{\quad \quad \leq \frac 1{\epsilon^{2(1-\lambda)}}\,\mathbb{E}\int_0^T\Big(\int_0^t e^{-\frac{\gamma_0}\e (t-s)} ((t-s)/\e^{1-\lambda} )^{-\frac{2-\alpha-\sigma}2}\vert \vartheta_\epsilon(s)\vert_{H^{\alpha}(\mathbb{R})}\,ds\Big)^2\,dt}\\[16pt]
\ds{\quad \quad\quad \quad\leq \frac 1{\epsilon^{2(1-\lambda)}}\,\Big(\int_0^T e^{-\frac{\gamma_0}\e t} (t/\e^{1-\lambda} )^{-\frac{2-\alpha-\sigma}2}\,dt\Big)^2\mathbb{E}\int_0^T\vert \vartheta_\epsilon(t)\vert_{H^{\alpha}(\mathbb{R})}^2\,dt}\\[16pt]
\ds{\quad \quad\quad \quad\quad \quad\quad \quad \lesssim_{\,T,\alpha} e^{\lambda(\alpha+\sigma)}\int_0^T\mathbb{E}\,\vert \vartheta_\epsilon(t)\vert_{H^{\alpha}(\mathbb{R})}^2\,dt.}\end{array}\]
Now, for every $\alpha \in\,(0,1)$ it is immediate to check that
\[\vert \vartheta_\epsilon(t)\vert_{H^{\alpha}(\mathbb{R})}=\vert u_\epsilon(t)\times \partial_t u_\epsilon(t)\vert_{H^{\alpha}(\mathbb{R})}\lesssim \vert \partial_t u_\epsilon(t)\vert_{L^2(\mathbb{R})}^{1-\alpha}\,\vert \partial_t u_\epsilon(t)\vert_{H^1(\mathbb{R})}^{\alpha}\left(1+\vert  u_\epsilon(t)\vert_{\dot{H}^1(\mathbb{R})}^{\alpha}\right).\]
Thus, in view of \eqref{uniform_bound1} and \eqref{uniform_bound2}, we obtain
\[\int_0^T\mathbb{E}\,\vert \vartheta_\epsilon(t)\vert_{H^{\alpha}(\mathbb{R})}^2\,dt\lesssim \e^{-\alpha},\]
so that
\begin{equation}
\label{fine4}
\mathbb{E}\int_0^T\vert I^1_\epsilon(t)\vert_{H^{-\sigma}(\mathbb{R})}^2\,dt\lesssim_{\,T,\alpha}	\e^{-\alpha+\lambda(\alpha+\sigma)}=\e^{-\alpha+\lambda(\alpha+H-1/2)}.
\end{equation}
Since $H>1/2$ and $\lambda>0$, this implies that  we can find $\bar{\alpha}>0$ such that $-\bar{\alpha}+\lambda(\bar{\alpha}+H-1/2)>0$ and
\begin{equation}
\label{fine5}
\lim_{\e\to 0}	\mathbb{E}\int_0^T\vert I^1_\epsilon(t)\vert_{H^{-\sigma}(\mathbb{R})}^2\,dt=0.
\end{equation}

\smallskip

{\em Study of} $I^2_\epsilon$. Notice that
\[ \hat{u}_\epsilon(t)\times \partial_x^2 \hat{u}_\epsilon(t)-u_\epsilon(t)\times \partial_x^2 u_\epsilon(t)= \partial_x\left(\hat{u}_\epsilon(t)\times \partial_x \hat{u}_\epsilon(t)-u_\epsilon(t)\times \partial_x u_\epsilon(t)\right).\]
Then, we have
\[\begin{array}{l}
\ds{	\mathbb{E}\int_0^T\vert I^2_\epsilon(t)\vert_{L^2(\mathbb{R})}^2\,dt=\frac 1{\epsilon^{2}}\mathbb{E}\int_0^T\Big\vert\int_0^t e^{B_\epsilon(t-s)}\, \partial_x \left(\hat{u}_\epsilon(s)\times \partial_x \hat{u}_\epsilon(s)-u_\epsilon(s)\times \partial_x u_\epsilon(s)\right)\,ds\Big\vert_{L^2(\mathbb{R})}^2\,dt}\\[16pt]
\ds{\quad  \leq \frac 1{\epsilon^{2}}\,\mathbb{E}\int_0^T\Big(\int_0^t e^{-\frac{\gamma_0}\e (t-s)} ((t-s)/\e^{1-\lambda} )^{-\frac 1{2}}\vert \hat{u}_\epsilon(s)\times \partial_x \hat{u}_\epsilon(s)-u_\epsilon(s)\times \partial_x u_\epsilon(s)\vert_{L^2(\mathbb{R})}\,ds\Big)^2\,dt}\\[16pt]
\ds{\quad \quad \quad \leq \frac 1{\epsilon^{2}}\,\Big(\int_0^T e^{-\frac{\gamma_0}\e t} (t/\e^{1-\lambda} )^{-\frac 1{2}}\,dt\Big)^2\mathbb{E}\int_0^T\vert \hat{u}_\epsilon(t)\times \partial_x \hat{u}_\epsilon(t)-u_\epsilon(t)\times \partial_x u_\epsilon(t)\vert_{L^2(\mathbb{R})}^2\,dt}\\[16pt]
\ds{\quad \quad\quad \quad \quad \quad\lesssim\,\e^{-\lambda}\,\mathbb{E}\int_0^T\vert \hat{u}_\epsilon(t)\times \partial_x \hat{u}_\epsilon(t)-u_\epsilon(t)\times \partial_x u_\epsilon(t)\vert_{L^2(\mathbb{R})}^2\,dt.}
\end{array}\]
Since 
\[\vert u(t)\times \partial_x u(t)-\hat{u}_\epsilon(t)\times \partial_x \hat{u}_\epsilon(t)\vert_{L^2(\mathbb{R})}\lesssim \vert \partial_x u(t)\vert_{L^2(\mathbb{R})}\,\vert u(t)-\hat{u}_\epsilon(t)\vert_{H^1(\mathbb{R})}+\vert u(t)-\hat{u}_\epsilon(t)\vert_{H^1(\mathbb{R})},\]
in view of Theorem \ref{regularity} and   \eqref{continuity2} we have
\[\begin{array}{l}\ds{\int_0^T\vert u(t)\times \partial_x u(t)-\hat{u}_\epsilon(t)\times \partial_x \hat{u}_\epsilon(t)\vert_{L^2(\mathbb{R})}^2\,dt\leq c_T(\vert u_0\vert_{\dot{H}^1(\mathbb{R})})\int_0^T \vert u(t)-\hat{u}_\epsilon(t)\vert^2_{H^1(\mathbb{R})}\,dt}\\[14pt]
\ds{\quad \quad \quad \quad \quad \quad \leq c_T(\vert u_0\vert_{\dot{H}^1(\mathbb{R})}, \vert u_0^\epsilon\vert_{\dot{H}^1(\mathbb{R})})\vert u^\e_0-u_0\vert_{L^2(\mathbb{R})}^2.}
\end{array}
\]
Hence, thanks to \eqref{initial_condition_rate} and the fact that $\lambda<2(1-H)<2-H$, we get
\begin{equation}\label{fine20-bis}\lim_{\e\to 0}\e^{-\lambda}\,\int_0^T\vert u(t)\times \partial_x u(t)-\hat{u}_\epsilon(t)\times \partial_x \hat{u}_\epsilon(t)\vert_{L^2(\mathbb{R})}^2\,dt=0.\end{equation}
In the same way
thanks to \eqref{est_Y}, for every $\alpha \in\,(0,1)$ we have
\[\e^{-\lambda}\,\mathbb{E}\int_0^T\vert u(t)\times \partial_x u(t)-u_\epsilon(t)\times \partial_x u_\epsilon(t)\vert_{L^2(\mathbb{R})}^2\,dt\lesssim_{\,T,\alpha} \epsilon^{\frac 32-\alpha-H-\lambda}+\e^{-\lambda}\vert u^\e_0-u_0\vert_{L^2(\mathbb{R})}^2.\]
As we are assuming $H\leq 1$ and $\lambda< 2(1-H)<3/2-H$,  we can fix $\bar{\alpha} \in\,(0,1)$ such that 
\[\frac 32-\bar{\alpha}-H-\lambda>0,\ \ \ \ \ \e^{-\lambda}\vert u^\e_0-u_0\vert_{L^2(\mathbb{R})}^2=o( \e^{-\lambda+2-H}),\]
and we  conclude 
\[\lim_{\e\to 0}\e^{-\lambda}\,\mathbb{E}\int_0^T\vert u(t)\times \partial_x u(t)-u_\epsilon(t)\times \partial_x u_\epsilon(t)\vert_{L^2(\mathbb{R})}^2\,dt=0.\]
This, together with \eqref{fine20-bis}, implies
\begin{equation}
\label{fine6}
\lim_{\e\to 0}	\mathbb{E}\int_0^T\vert I^2_\epsilon(t)\vert_{L^2(\mathbb{R})}^2\,dt=0.
\end{equation}

\smallskip

{\em Study of} $I^3_\e$. We have
\[\begin{array}{l}
\ds{\mathbb{E}\int_0^T \vert I^3_\e(t)\vert_{L^2(\mathbb{R})}^2\,dt=\frac{1}{\epsilon^{H}}\mathbb{E}\int_0^T\Big\vert\int_0^t e^{B_\epsilon(t-s)}\,\partial_t u_\e(s) Q_\e\partial_t w^H(s)\Big\vert^2_{L^2(\mathbb{R})}\,dt}\\[16pt]
\ds{\quad \quad \leq \frac{1}{\epsilon^{H}}\int_0^T\int_0^t e^{-\frac{2\gamma_0}{\epsilon}(t-s)}\left(1+\left(|t-s|/\e^{1-\lambda}\right)^{-1/2}\right)\mathbb{E}\,\vert 	\partial_t u_\e(s)\vert^2_{L^2(\mathbb{R})}\,ds\,dt}\\[16pt]
\ds{\quad \quad \quad \quad \leq  \frac{1}{\epsilon^{H}}\int_0^T e^{-\frac{2\gamma_0}{\epsilon}t}\,dt\  \mathbb{E}\int_0^T\vert 	\partial_t u_\e(t)\vert^2_{L^2(\mathbb{R})}\,dt\lesssim \epsilon^{1-H-\frac \lambda 2}.}
\end{array}\]
Therefore, since we assume that $H<1$ and $\lambda<2(1-H)$, we have
\begin{equation}
\label{fine7}
\lim_{\e\to 0}	\mathbb{E}\int_0^T\vert I^3_\epsilon(t)\vert_{L^2(\mathbb{R})}^2\,dt=0.
\end{equation}

\smallskip

{\em Conclusion}. If we collect together \eqref{fine5}, \eqref{fine6} and \eqref{fine7} we get \eqref{fine3}.

\end{proof}

Next we prove the following estimate for the stochastic integral.

\begin{Lemma}
For every $f \in\,L^2(0,T;H^{-\sigma}(\mathbb{R}))$, with $\sigma=H-1/2$,  we have
\begin{equation}
	\label{fine8}
	\mathbb{E}\int_0^T \Big\vert\int_0^t e^{\frac 1{\gamma_0}(t-s)A}f(s)\partial_t w^H(s)\Big \vert_{L^2(\mathbb{R})}^2\,dt\lesssim_{\,T}\mathbb{E}\int_0^T\vert f(t)\vert_{H^{-\sigma}(\mathbb{R})}^2\,dt.
\end{equation}	
\end{Lemma}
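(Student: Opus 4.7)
The plan is to apply the It\^o isometry on the cylindrical Wiener process on $K^H$ and reduce the estimate to a Fourier-analytic kernel bound. Denote by $Z(t)$ the stochastic convolution on the left-hand side of \eqref{fine8}. By the It\^o isometry,
\[
\mathbb{E}\vert Z(t)\vert_{L^2(\mathbb{R})}^2 = \int_0^t \Vert e^{\frac{1}{\gamma_0}(t-s)A}M_{f(s)}\Vert_{\L_2(K^H,L^2(\mathbb{R}))}^2\,ds,
\]
where $M_{f(s)}$ denotes multiplication by $f(s)$. Expanding the Hilbert--Schmidt norm in the ONB $\{\xi_k^H\}$ of $K^H$, applying Plancherel (recall $e^{tA}$ has Fourier symbol $e^{-t|\xi|^2}$), and using $\xi_k^H=\mathcal{F}(e_k^H\mu_H)$ so that $\mathcal{F}(f(s)\xi_k^H)(\xi)=(2\pi)^{-1}\!\int\widehat{f(s)}(\xi-\eta)e_k^H(\eta)\,\mu_H(d\eta)$, the orthonormality of $\{e_k^H\}$ in $L^2_{(s)}(\mathbb{R},\mu_H)$ yields
\[
\sum_{k}\bigl|\mathcal{F}(f(s)\xi_k^H)(\xi)\bigr|^2 \;=\;(2\pi)^{-2}\int_{\mathbb{R}}\bigl|\widehat{f(s)}(\xi-\eta)\bigr|^2\mu_H(d\eta).
\]

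Plugging in, integrating in $t\in[0,T]$, using the elementary bound $\int_s^T e^{-2(t-s)|\xi|^2/\gamma_0}\,dt\lesssim_T (1+|\xi|^2)^{-1}$, then Fubini and the change of variables $\zeta=\xi-\eta$ give
\[
\mathbb{E}\int_0^T\vert Z(t)\vert_{L^2(\mathbb{R})}^2\,dt \;\lesssim_T\; \int_0^T\!\!\int_{\mathbb{R}}\bigl|\widehat{f(s)}(\zeta)\bigr|^2\,K(\zeta)\,d\zeta\,ds,\qquad K(\zeta):=\int_{\mathbb{R}}\frac{\mu_H(d\eta)}{1+|\zeta+\eta|^2}.
\]
The proof is thus reduced to the kernel estimate
\[
K(\zeta)\;\lesssim\;(1+|\zeta|^2)^{-\sigma},\qquad \sigma=H-\tfrac12,
\]
which immediately gives the desired $H^{-\sigma}$-bound on the right-hand side of \eqref{fine8}.

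The main (and only nontrivial) step is the kernel estimate, which is where the hypothesis $H\in[1/2,1)$ is used decisively. For $|\zeta|\leq 1$, $K(\zeta)$ is bounded by a constant since $|\eta|^{1-2H}$ is integrable near $0$ (as $H<1$) and $|\eta|^{1-2H}/(1+|\eta|^2)$ is integrable at infinity (as $H>0$). For $|\zeta|>1$ one splits $K(\zeta)=K_1(\zeta)+K_2(\zeta)$ corresponding to the regions $\{|\eta|\leq|\zeta|/2\}$ and $\{|\eta|>|\zeta|/2\}$. On the first region, $|\zeta+\eta|\gtrsim|\zeta|$, so
\[
K_1(\zeta)\;\lesssim\;|\zeta|^{-2}\int_{|\eta|\leq|\zeta|/2}|\eta|^{1-2H}\,d\eta\;\lesssim\;|\zeta|^{-2H}.
\]
On the second region, the condition $H\geq 1/2$ gives the monotonicity $|\eta|^{1-2H}\leq (|\zeta|/2)^{1-2H}$, so
\[
K_2(\zeta)\;\lesssim\; |\zeta|^{1-2H}\int_{\mathbb{R}}\frac{d\eta}{1+|\zeta+\eta|^2}\;\lesssim\;|\zeta|^{1-2H}.
\]
Combining the two contributions, $K(\zeta)\lesssim|\zeta|^{1-2H}=|\zeta|^{-2\sigma}$ for $|\zeta|>1$, completing the kernel bound and hence the proof of \eqref{fine8}.
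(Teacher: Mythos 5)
Your proof is correct. The first half — It\^o isometry, Plancherel, and the Parseval identity in $L^2_{(s)}(\R,\mu_H)$ reducing the Hilbert--Schmidt norm to a convolution of $|\widehat{f(s)}|^2$ against $\mu_H$ — is exactly the computation the paper performs. Where you diverge is in the finishing step: the paper keeps the time lag $\tau=t-s$ and proves the two-parameter bound $\int_{\R}e^{-2\tau x^2/\gamma_0}|\eta-x|^{1-2H}\,dx\lesssim(1+\tau^{-H})(1+|\eta|^2)^{1/2-H}$, then concludes with Young's convolution inequality in time using the integrable kernel $1+\tau^{-H}$ (integrable precisely because $H<1$). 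You instead integrate the heat symbol in $t$ first, which converts the semigroup decay into the stationary multiplier $(1+|\xi|^2)^{-1}$ and reduces everything to the purely spatial kernel bound $K(\zeta)=\int(1+|\zeta+\eta|^2)^{-1}\mu_H(d\eta)\lesssim(1+|\zeta|^2)^{-\sigma}$, proved by an elementary dyadic-type splitting. Your route avoids the time-singular kernel entirely and isolates where $H\ge 1/2$ enters (the monotonicity $|\eta|^{1-2H}\le(|\zeta|/2)^{1-2H}$ on the outer region) and where $H<1$ enters (local integrability of $|\eta|^{1-2H}$); the paper's route has the advantage of producing a pointwise-in-$t$ estimate for $\E|Z(t)|^2_{L^2(\R)}$ rather than only the time-integrated bound, though only the latter is needed for \eqref{fine8}. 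Both arguments are sound.
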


\begin{proof}
We have
\[\begin{array}{l}
\ds{\mathbb{E}\,\Big\vert\int_0^t e^{\frac 1{\gamma_0}(t-s)A}f(s)\partial_t w^H(s)\Big \vert_{L^2(\mathbb{R})}^2}\\[16pt]
\ds{\quad \quad =\mathbb{E}\int_0^t \int_{\mathbb{R}}\ \sum_{k=1}^\infty \Big\vert \int_{\mathbb{R}^2} e^{-iyz}G_{\frac{t-s}{\gamma_0}}(x-y)	f(s,y)e_k^H(z)|z|^{1-2H}\,dz\,dy\Big\vert^2\,dx\,ds}\\[16pt]
\ds{\quad \quad \quad \quad =\mathbb{E}\int_0^t \int_{\mathbb{R}}\ \sum_{k=1}^\infty \Big\vert \int_{\mathbb{R}}e^H_k(z)|z|^{1-2H}\int_{\mathbb{R}}e^{-iyz}G_{\frac{t-s}{\gamma_0}}(x-y)	f(s,y)\,dy\,dz\Big\vert^2\,dx\,ds}\\[16pt]
\ds{\quad \quad \quad \quad \quad \quad =\mathbb{E}\int_0^t \int_{\mathbb{R}^2}\  \Big\vert \int_{\mathbb{R}}e^{-iyz}G_{\frac{t-s}{\gamma_0}}(x-y)	f(s,y)\,dy\Big\vert^2|z|^{1-2H}\,dz\,dx\,ds.}
\end{array}\]
Now, we have
\[\begin{array}{l}
\ds{\int_{\mathbb{R}}\Big\vert \int_{\mathbb{R}}e^{-iyz}G_{\frac{t-s}{\gamma_0}}(x-y)	f(s,y)\,dy\Big\vert^2\,dx=\int_{\mathbb{R}}\Big\vert [G_{\frac{t-s}{\gamma_0}}\star e^{-iz\cdot}f(s,\cdot)](x)\Big\vert^2\,dx}	\\[16pt]
\ds{\quad \quad \quad=\int_{\mathbb{R}}\vert \mathcal{F} G_{\frac{t-s}{\gamma_0}}(x)\vert^2 \vert \mathcal{F}(e^{-iz\cdot}f(s,\cdot))(x)\vert^2\,dx=\int_{\mathbb{R}} e^{-\frac{2}{\gamma_0}(t-s)x^2}\vert \mathcal{F}f(s,\cdot)(x+z)\vert^2\,dx. }
\end{array}\]
Hence, we get
\[\begin{array}{l}
\ds{\mathbb{E}\,\Big\vert\int_0^t e^{\frac{1}{\gamma_0}(t-s)A}f(s)\partial_t w^H(s)\Big \vert_{L^2(\mathbb{R})}^2=\mathbb{E}\int_0^t \int_{\mathbb{R}^2}e^{-\frac{2}{\gamma_0}(t-s)x^2}\vert \mathcal{F}f(s,\cdot)(x+z)\vert^2|z|^{1-2H}\,dz\,dx\,ds}\\[16pt]
\ds{\quad \quad \quad \quad =\mathbb{E}\int_0^t \int_{\mathbb{R}^2}e^{-\frac{2}{\gamma_0}(t-s)x^2}\vert \mathcal{F}f(s,\cdot)(\eta)\vert^2\vert\eta-x\vert^2\,d\eta\,dx\,ds.}
\end{array}\]
It is possible to check that
\[\int_{\mathbb{R}^2}e^{-\frac{2}{\gamma_0}(t-s)x^2}\vert\eta-x\vert^2\,dx\lesssim \left(1+(t-s)^{-H}\right)(1+\vert\eta\vert^2)^{1/2-H}.\]
Thus,
\[\begin{array}{l}
\ds{\mathbb{E}\int_0^T\Big\vert\int_0^t e^{\frac 1{\gamma_0}(t-s)A}f(s)\partial_t w^H(s)\Big \vert_{L^2(\mathbb{R})}^2\,dt}\\[16pt]
\ds{\quad \quad \quad \lesssim \mathbb{E}\int_0^T\int_0^t \int_{\mathbb{R}}\left(1+(t-s)^{-H}\right)(1+\vert\eta\vert^2)^{1/2-H}\vert\mathcal{F} f(s,\cdot)(\eta)\vert^2\,d\eta\,ds\,dt }\\[16pt]
\ds{\quad \quad \quad \quad \quad \lesssim_{\,T}\mathbb{E}\int_0^T\int_0^t \left(1+(t-s)^{-H}\right)\vert f(s)\vert_{H^{-\sigma}(\mathbb{R})}^2\,ds\,dt \lesssim_{\,T} \mathbb{E}\int_0^T \vert f(t)\vert_{H^{-\sigma}(\mathbb{R})}^2\,dt.}
\end{array}\]

\end{proof}

\subsection{Conclusion of the proof of \eqref{fine1}}

As we have seen above, \eqref{fine1} is a consequence of \eqref{fine2}, which is a consequence of \eqref{Z_convergence_step2}. Thus, in what follows we are going to prove \eqref{Z_convergence_step2}. In view  of \eqref{fine8}, we have
\[\begin{array}{l}
\ds{\mathbb{E}\int_0^T\vert \zeta^\epsilon_2(t)-z(t)\vert_{L^2(\mathbb{R})}^2\,dt}\\[16pt]
\ds{\quad \quad =\frac 1{\gamma_0^2}\,\mathbb{E}\int_0^T\Big \vert\int_{0}^{t}e^{\frac 1{\gamma_0}(t-s)A}\big(u_{\epsilon}(s)\times \partial_{t}u_{\epsilon}(s)-u(s)\times \partial_{t}u(s)\big)dw^{H}(s)\Big\vert_{L^2(\mathbb{R})}^2\,dt}\\[16pt]
\ds{\quad \quad \quad \quad \quad \quad \lesssim_{\,T}\mathbb{E}\int_0^T\vert u_{\epsilon}(t)\times \partial_{t}u_{\epsilon}(t)-u(t)\times \partial_{t}u(t)\vert^2_{H^{-\sigma}(\mathbb{R})}\,dt.}
	\end{array}\]
	Thanks to \eqref{fine3} and \eqref{fine14-bis},
	 this implies  that
	\[\begin{array}{l}\ds{\limsup_{\e\to 0}\mathbb{E}\int_0^T\vert \zeta^\epsilon_2(t)-z(t)\vert_{L^2(\mathbb{R})}^2\,dt\lesssim \lim_{\e\to 0}\mathbb{E}\int_0^T\vert u_{\epsilon}(t)\times \partial_{t}u_{\epsilon}(t)-\eta_\epsilon(t)\vert^2_{H^{-\sigma}(\mathbb{R})}\,dt}\\[16pt]
	\ds{ +\limsup_{\e\to 0}\int_0^T\vert \eta_\epsilon(t)- \hat{u}_\epsilon(t)\times \partial_{t}\hat{u}_\epsilon(t)\vert^2_{H^{-\sigma}(\mathbb{R})}\,dt +\lim_{\e\to 0}\int_0^T\vert \hat{u}_\epsilon(t)\times \partial_{t}\hat{u}_\epsilon(t)-u(t)\times \partial_{t}u(t)\vert^2_{L^2(\mathbb{R})}\,dt }\\[16pt]
	\ds{\quad \quad \quad \quad =\limsup_{\e\to 0} \int_0^T\vert \eta_\epsilon(t)- \hat{u}_\epsilon(t)\times \partial_{t}\hat{u}_\epsilon(t)\vert^2_{H^{-\sigma}(\mathbb{R})}\,dt.}
	\end{array}\]
	Thus, we obtain \eqref{Z_convergence_step2} and conclude the proof of \eqref{fine1}  once we prove the following result.
	\begin{Lemma}
	It holds
\begin{equation}
\label{fine9}
\lim_{\e\to 0} \int_0^T\vert \eta_\epsilon(t)- \hat{u}_\epsilon(t)\times \partial_{t}\hat{u}_\epsilon(t)\vert^2_{L^2(\mathbb{R})}\,dt=0.	
\end{equation}
		\end{Lemma}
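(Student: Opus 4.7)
The plan is to set $g_\epsilon := \eta_\epsilon - \phi_\epsilon$ with $\phi_\epsilon := \hat u_\epsilon\times\partial_t\hat u_\epsilon$, derive the PDE for $g_\epsilon$, and close a standard energy estimate in $L^2$. Since $\hat u_\epsilon$ solves the heat flow equation \eqref{fine13-bis} and $|\hat u_\epsilon|\equiv 1$ forces $\hat u_\epsilon\times\hat u_\epsilon=0$, one obtains the key identity $\gamma_0\phi_\epsilon=\hat u_\epsilon\times\partial_x^2\hat u_\epsilon$, which matches the source term of \eqref{fine11}. Subtracting gives
\[\epsilon\partial_t g_\epsilon+\gamma_0 g_\epsilon-\epsilon^\lambda\partial_x^2 g_\epsilon=\epsilon^\lambda\partial_x^2\phi_\epsilon-\epsilon\partial_t\phi_\epsilon,\qquad g_\epsilon(0)=u_0^\epsilon\times v_0^\epsilon-\gamma_0^{-1}u_0^\epsilon\times\partial_x^2 u_0^\epsilon.\]
Pairing this in $L^2(\mathbb R)$ with $g_\epsilon$, integrating by parts the Laplacian, applying Young's inequality on each forcing term and integrating in time yields
\[\int_0^T|g_\epsilon(t)|_{L^2}^2\,dt\lesssim\epsilon|g_\epsilon(0)|_{L^2}^2+\epsilon^\lambda\!\!\int_0^T|\partial_x\phi_\epsilon|_{L^2}^2\,dt+\epsilon^2\!\!\int_0^T|\partial_t\phi_\epsilon|_{L^2}^2\,dt.\]

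The two forcing-driven terms vanish thanks to the regularity of $\hat u_\epsilon$. The expansion $\partial_x\phi_\epsilon=\partial_x\hat u_\epsilon\times\partial_t\hat u_\epsilon+\hat u_\epsilon\times\partial_x\partial_t\hat u_\epsilon$, together with $\gamma_0\partial_t\phi_\epsilon=\hat u_\epsilon\times\partial_x^2\partial_t\hat u_\epsilon+|\partial_x\hat u_\epsilon|^2\phi_\epsilon$ (obtained by differentiating \eqref{fine13-bis} in time), reduces both integrals to bounds on $\int_0^T|\partial_t\hat u_\epsilon|_{H^2}^2\,dt$ and $\int_0^T|\hat u_\epsilon|_{\dot H^3}^2\,dt$, which Theorem \ref{regularity} at $k=3$ controls by $c_3(|u_0^\epsilon|_{\dot H^3})$. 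Since the Theorem \ref{CLT} hypothesis $\epsilon^\lambda|u_0^\epsilon|_{\dot H^3}\to 0$ with $\lambda<2(1-H)<1$ implies $|u_0^\epsilon|_{\dot H^3}=o(\epsilon^{-\lambda})$, the prefactors $\epsilon^\lambda$ and $\epsilon^2$ absorb the possible growth and both terms vanish.

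The main obstacle is the initial-layer term $\epsilon|g_\epsilon(0)|_{L^2}^2$: Hypothesis \ref{H1-bis} only gives $|v_0^\epsilon|_{L^2},|u_0^\epsilon|_{\dot H^2}=O(\epsilon^{-1/2})$, so the crude bound yields $O(1)$ rather than $o(1)$. To extract the needed vanishing I would pass to Fourier via Plancherel, exploiting the explicit representation, with $\alpha_\epsilon(\xi)=(\gamma_0+\epsilon^\lambda|\xi|^2)/\epsilon$ and $r(\xi)=\gamma_0/(\gamma_0+\epsilon^\lambda|\xi|^2)$,
\[\hat g_\epsilon(t,\xi)=e^{-\alpha_\epsilon(\xi)t}\bigl[\hat g_\epsilon(0,\xi)+(1-r(\xi))\hat\phi_\epsilon(0,\xi)\bigr]-(1-r(\xi))\hat\phi_\epsilon(t,\xi)-r(\xi)\!\!\int_0^t e^{-\alpha_\epsilon(\xi)(t-s)}\partial_s\hat\phi_\epsilon(s,\xi)\,ds,\]
so that the time-integrated weight on the IC becomes $(2\alpha_\epsilon(\xi))^{-1}\le\min(\epsilon/(2\gamma_0),\epsilon^{1-\lambda}/(2|\xi|^2))$. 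The key identity $u_0^\epsilon\times\partial_x^2 u_0^\epsilon=\partial_x(u_0^\epsilon\times\partial_x u_0^\epsilon)$ exhibits the $\partial_x^2 u_0^\epsilon$ part of $g_\epsilon(0)$ as the spatial derivative of a function uniformly bounded in $L^2$ by $\Lambda_1$, producing a harmless $O(\epsilon^{1-\lambda})$ contribution. The remaining $u_0^\epsilon\times v_0^\epsilon$ piece, combined with the $(1-r(\xi))\hat\phi_\epsilon(0,\xi)$ correction, is the truly delicate step: controlling its low-frequency mass requires exploiting the pointwise orthogonality $u_0^\epsilon\cdot v_0^\epsilon=0$ together with the high-frequency smoothing of the semigroup $e^{tB_\epsilon}$, $B_\epsilon=-\gamma_0/\epsilon+\epsilon^{\lambda-1}\partial_x^2$, to shift the problematic low-frequency content into regions where the smoothing factor provides the required decay.
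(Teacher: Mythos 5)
Your overall strategy --- the identity $\gamma_0\,\hat u_\e\times\partial_t\hat u_\e=\hat u_\e\times\partial_x^2\hat u_\e$, the Duhamel/Fourier representation of $\eta_\e$ with $\partial_s\hat\phi_\e$ integrated by parts in time, and the observation $u_0^\e\times\partial_x^2u_0^\e=\partial_x(u_0^\e\times\partial_x u_0^\e)$ --- is on the right track, and your Fourier formula is essentially the computation the paper performs for the Duhamel part of $\eta_\e$. But there are two genuine gaps. The first is in the energy estimate: after integrating by parts you split $\e^\lambda\langle\partial_x^2\phi_\e,g_\e\rangle$ against the diffusion term, leaving the forcing contribution $\e^\lambda\int_0^T\abs{\partial_x\phi_\e}_{L^2}^2\,dt$ with a \emph{single} power of $\e^\lambda$. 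That quantity does not vanish: $\int_0^T\abs{\phi_\e(t)}_{H^1}^2\,dt$ is controlled only through $c_{2,T}(\abs{u_0^\e}_{\dot{H}^2})$ with $\abs{u_0^\e}_{\dot{H}^2}\lesssim\e^{-1/2}$ from Hypothesis \ref{H1-bis}, so even assuming only quadratic growth of the constant the term is of order $\e^{\lambda-1}\to\infty$, since $\lambda<2(1-H)<1$. Condition \eqref{fine25-bis} is calibrated for \emph{two} powers of $\e^\lambda$ against an $H^2$-type norm: you must instead absorb $\e^\lambda\langle\partial_x^2\phi_\e,g_\e\rangle$ into the damping $\gamma_0\abs{g_\e}_{L^2}^2$ via Young, producing $\e^{2\lambda}\int_0^T\abs{\partial_x^2\phi_\e}_{L^2}^2\,dt\lesssim\e^{2\lambda}\,c_{3,T}(\abs{u_0^\e}_{\dot{H}^3})$, which is the term the paper obtains (as $\e^{2\lambda}\int_0^T\abs{f_\e(t)}^2_{H^2}\,dt$) and which \eqref{fine25-bis} kills.

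The second, more serious, gap is the $u_0^\e\times v_0^\e$ piece of the initial layer, which your proposed mechanism cannot handle. Since $\abs{u_0^\e(x)}=1$ and $u_0^\e\cdot v_0^\e=0$ pointwise, one has $\abs{u_0^\e\times v_0^\e}=\abs{v_0^\e}$ pointwise: the orthogonality gives \emph{equality} with the worst case rather than any gain. Moreover this term carries no spatial derivative, so the high-frequency smoothing of $e^{tB_\e}$ is useless against its low-frequency content; the bound
\begin{equation}
\int_0^T\big\vert e^{tB_\e}(u_0^\e\times v_0^\e)\big\vert_{L^2(\R)}^2\,dt\;\lesssim\;\e\,\abs{v_0^\e}_{L^2(\R)}^2
\end{equation}
is sharp up to constants whenever a fixed fraction of the $L^2$ mass of $v_0^\e$ sits at bounded frequencies, and under Hypothesis \ref{H1-bis} alone the right-hand side is only $O(1)$. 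What is actually needed here is an additional decay assumption of the type $\sqrt{\e}\,\abs{v_0^\e}_{L^2(\R)}\to 0$ (as in \eqref{sm105} or \eqref{initial_condition_rate_additional}); this is what the paper implicitly invokes when it declares the vanishing of $\eta_{\e,1}(t)=e^{tB_\e}(u_0^\e\times v_0^\e)$ in \eqref{fine12-bis} to be immediate. Without importing such a hypothesis the step you flag as ``truly delicate'' is not merely delicate --- the route you sketch for it provably yields nothing, and the proof does not close.
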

\begin{proof}
Since $\eta_\epsilon$ solves equation \eqref{fine11} and 
\[\hat{u}_\epsilon(t)\times \partial_{x}^2\hat{u}_\epsilon(t)=\gamma_0\,\hat{u}_\epsilon(t)\times \partial_{t}\hat{u}_\epsilon(t),\]
we have
\[\eta_\epsilon(t)=e^{B_\e t}(u^\e_0\times v^\e_0)+\frac {\gamma_0}\epsilon\int_0^t e^{B_\epsilon(t-s)}\left(\hat{u}_\epsilon(s)\times \partial_{t}\hat{u}_\epsilon(s)\right)\,ds=:\eta_{\epsilon,1}(t)+\eta_{\epsilon,2}(t),\] 
where $B_\epsilon$ is the operator defined in \eqref{fine10-bis}.
It is immediate to check that 
\begin{equation} \label{fine12-bis}
	\lim_{\e\to 0}\vert \eta_{\epsilon,1}\vert_{L^2(0,T;L^2(\mathbb{R}))}= 0,
\end{equation}
so that proving \eqref{fine9} reduces to proving
\begin{equation}
\label{fine9-bis}
\lim_{\e\to 0} \int_0^T\vert \eta_{\epsilon,2}(t)-f_\epsilon(t)\vert^2_{L^2(\mathbb{R})}\,dt=0,	
\end{equation}
where we have set
\[f_\epsilon(t,x):=\hat{u}_\epsilon(t,x)\times \partial_t \hat{u}_\epsilon(t,x)=\frac 1{\gamma_0}\,\hat{u}_\epsilon(t,x)\times \partial_x^2\, \hat{u}_\epsilon(t,x).\]

Now, let us consider the Fourier transform of $\eta_{\epsilon,2}(t)$, which is given by 
\begin{equation}
	\mathcal{F}\eta_{\epsilon,2}(t)(x) = \frac{\gamma_0}{\epsilon} \int_{0}^{t}e^{-\big(\frac{\gamma_0}{\epsilon}+\frac{x^2}{\epsilon^{1-\lambda}}\big)(t-s)}\mathcal{F}f_\epsilon(s,x)ds,\ \ \ \ x \in\,\mathbb{R}.
\end{equation}
For every $s \in\,[0,t]$ we write
\begin{equation}
	\mathcal{F}f_{\epsilon}(s)(x) = \mathcal{F}f_{\epsilon}(t)(x)- \int_s^t \partial_{t}(\mathcal{F}f_{\epsilon})(r,x)\,dr,
\end{equation}
so that
\[\begin{array}{l}
\ds{\vert\mathcal{F}\eta_{\epsilon,2}(t)(x)-	\mathcal{F}f_{\epsilon}(t)(x)\vert\leq \left(\Big\vert\frac{\gamma_0}{\epsilon} \big(\frac{\gamma_0}{\epsilon}+\frac{x^2}{\epsilon^{1-\lambda}}\big)^{-1}-1\Big\vert+e^{-\frac{\gamma_0 t}{\e}}\right)\,\vert\mathcal{F}f_{\epsilon}(t)(x)\vert}\\[14pt]
\ds{\quad \quad \quad +\frac{\gamma_0}{\epsilon} \int_{0}^{t}e^{-\big(\frac{\gamma_0}{\epsilon}+\frac{x^2}{\epsilon^{1-\lambda}}\big)(t-s)}\int_s^t \vert \partial_{t}(\mathcal{F}f_{\epsilon})(r,x)\vert\,dr\,ds}\\[14pt]
\ds{\quad \quad \quad \quad \quad \lesssim e^{-\frac{\gamma_0 t}{\e}}\,\vert\mathcal{F}f_{\epsilon}(t)(x)\vert+\epsilon^\lambda x^2\,\vert\mathcal{F}f_{\epsilon}(t)(x)\vert+\e^{1/2}\left(\int_0^t \vert \partial_{t}(\mathcal{F}f_{\epsilon})(r,x)\vert^2\,dr \right)^{1/2}.}
\end{array}\]
This implies
\[\begin{array}{l}
\ds{\int_0^T \vert \eta_{\epsilon,2}(t)-f_{\epsilon}(t)\vert_{L^2(\mathbb{R})}^2\,dt=\int_0^T \vert	\mathcal{F}\eta_{\epsilon,2}(t)-	\mathcal{F}f_{\epsilon}(t)\vert_{L^2(\mathbb{R})}^2\,dt}\\[16pt]
\ds{\quad \quad \quad \quad \lesssim_{\,T} \int_0^T e^{-\frac{2\gamma_0 t}{\e}}\,\vert\mathcal{F}f_{\epsilon}(t)(x)\vert^2\,dt+\epsilon^{2\lambda} \int_0^T \vert f_{\epsilon}(t)\vert^2_{H^2(\mathbb{R})}\,dt+\e\int_0^T \vert \partial_t f_{\epsilon}(t)\vert_{L^2(\mathbb{R})}^2\,dt.}
\end{array}\]
Now, since 
\[\vert f_{\epsilon}(t)\vert_{L^2(\mathbb{R})}\leq \vert \hat{u}_\epsilon(t)\vert_{\dot{H}^2(\mathbb{R})},\ \ \ \ \ \ \vert f_{\epsilon}(t)\vert_{H^2(\mathbb{R})}\lesssim \vert \hat{u}_\epsilon(t)\vert_{\dot{H}^4(\mathbb{R})}\]
and
\[\vert \partial_t f_{\epsilon}(t)\vert_{L^2(\mathbb{R})}=\vert \hat{u}_\e\times \partial_x^2 \partial_t\hat{u}_{\epsilon}(t)\vert_{L^2(\mathbb{R})}\leq \vert \partial_t\hat{u}_{\epsilon}(t)\vert_{H^2(\mathbb{R})},\]
according to Theorem \ref{regularity}, as we are assuming $\lambda <2(1-H)<1/2$, we have
\[\int_0^T \vert \eta_{\epsilon,2}(t)-f_{\epsilon}(t)\vert_{L^2(\mathbb{R})}^2\,dt\lesssim_{\,T} \epsilon\,\vert u^\epsilon_0\vert_{\dot{H}^2(\mathbb{R})}^2+ \e^{2\lambda}\,\vert u^\e_0\vert_{\dot{H}^3(\mathbb{R})}^2.\]
Thanks to \eqref{fine25-bis}, this allows to obtain \eqref{fine9-bis} and our proof is completed.

\end{proof}

\section{Fluctuations of $u_\e$ - Proof of Theorem \ref{CLT}}
\label{secCLT}

In this section, we  study the normal fluctuations for $\{u_{\epsilon}\}_{\epsilon\in (0,1)}$ around its deterministic limit $u$, in the special case the noise ${w}(t)$ in system \eqref{SPDE} is the spacial convolution of a  fractional noise $w^{H}(t)$ of  Hurst index $H\in (1/2,1)$ with a smooth function.
As in Section \ref{sec7}, we assume Hypotheses \ref{H1}, \ref{H1-bis} and \ref{H2}, and take $(u_{0}^{\epsilon},v_{0}^{\epsilon})\in \big(\dot{H}^{2}(\R)\times H^{1}(\R)\big)$ fulfilling condition \eqref{initial_condition_rate}, for some  $u_{0}\in \dot{H}^{1}(\R)\cap M$.

\smallskip

By applying Theorem \ref{small_mass_limit}, we have 
	$u_{\epsilon}$ converges in probability to $u$ in $C([0,T];H^{\delta_{1}}_{\text{loc}}(\R))\cap L^{2}(0,T;H^{\delta_{2}}_{\text{loc}}(\R))$.
for any $\delta_{1}<1$ and $\delta_{2}<2$, where $u\in L^{\infty}(0,T; \dot{H}^{1}(\R))\cap L^{2}(0,T; \dot{H}^{2}(\R))$ is the unique solution of equation \eqref{limiting_equation}.

\subsection{A linear and continuous mapping in $L^{2}(\Omega)$}

We fix  $T>0$ and $\xi\in L^{2}(0,T;L^2(\mathbb{R}))$, and  for every  $v\in L^{2}(0,T;L^2(\mathbb{R}))$ and $t \in\,[0,T]$ we define  
\begin{equation}
	\Theta_{\xi}(v)(t):=\frac 1{\gamma_0}\int_{0}^{t}e^{\frac 1{\gamma_0}(t-s)A}\big(\abs{\partial_x u(s)}^{2}v(s)\big)ds+\frac 2{\gamma_0}\int_{0}^{t}e^{\frac 1{\gamma_0}(t-s)A}\big((\partial_x u(s)\cdot \partial_x v(s))u(s)\big)ds+\xi(t),
\end{equation}
where $u$ is the solution of equation \eqref{limiting_equation}.

\begin{Lemma}\label{linear_continuous_operator}
	The mapping $\Theta_{\xi}:L^{2}(0,T;L^2(\mathbb{R}))\to L^{2}(0,T;L^2(\mathbb{R}))$ is well-defined, and for every $\xi\in L^{2}(0,T;L^2(\mathbb{R}))$ there is a unique $\Lambda(\xi)\in L^{2}(0,T;L^2(\mathbb{R}))$ such that
	\begin{equation}\label{well_posed}
		\Theta_{\xi}(\Lambda(\xi))=\Lambda(\xi).
	\end{equation}
	Moreover, the mapping $\Lambda:L^{2}(0,T;L^2(\mathbb{R}))\to L^{2}(0,T;L^2(\mathbb{R}))$ is linear and continuous.  Namely	\begin{equation}\label{continuous_L2}
		\abs{\Lambda(\xi)}_{L^{2}(0,T;L^2(\mathbb{R}))}\lesssim_{\,T}\abs{\xi}_{L^{2}(0,T;L^2(\mathbb{R}))},\ \ \ \ \ \ \ \ \xi\in L^{2}(0,T;L^2(\mathbb{R})).
	\end{equation}
\end{Lemma}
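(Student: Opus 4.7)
The plan is to realize $\Theta_\xi$ as the affine operator $\Theta_\xi=\xi+\Psi$, where $\Psi$ denotes the linear integral operator gathering the two semigroup integrals, and to obtain the fixed point via Banach's contraction theorem in a weighted version of $L^{2}(0,T;L^{2}(\R))$.

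First I would establish that $\Psi$ is a bounded linear operator on $L^{2}(0,T;L^{2}(\R))$, the key being to use the smoothing of the heat semigroup to give meaning to the $\partial_x v$ term. Combining the one-dimensional embedding $H^{1}(\R)\hookrightarrow L^{\infty}(\R)$ with the Gagliardo--Nirenberg inequality $\|\partial_x u\|_{L^{\infty}}\lesssim\|\partial_x u\|_{L^{2}}^{1/2}\|\partial_x^{\,2}u\|_{L^{2}}^{1/2}$, the regularity $u\in L^{\infty}(0,T;\dot{H}^{1}(\R))\cap L^{2}(0,T;\dot{H}^{2}(\R))$ provided by Theorem \ref{uniqueness} yields $\partial_x u\in L^{4}(0,T;L^{\infty}(\R))$. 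The first integral in $\Psi(v)$ is then directly controlled via $\||\partial_x u|^{2}v\|_{L^{2}_{x}}\le\|\partial_x u\|_{L^{\infty}}^{2}\|v\|_{L^{2}_{x}}$, and for the second I would integrate by parts inside the semigroup to shift the derivative off $v$:
\[
e^{(t-s)A/\gamma_{0}}\bigl((\partial_x u\cdot\partial_x v)u\bigr)=\partial_x e^{(t-s)A/\gamma_{0}}\bigl((\partial_x u\cdot v)u\bigr)-e^{(t-s)A/\gamma_{0}}\bigl((\partial_x u\cdot v)\partial_x u+(\partial_x^{\,2}u\cdot v)u\bigr).
\]
The heat-kernel bounds $\|\partial_x e^{tA/\gamma_{0}}\|_{L^{2}\to L^{2}}\lesssim t^{-1/2}$ and, for the delicate term $(\partial_x^{\,2}u\cdot v)u$ whose $L^{2}_{x}$-norm is not controlled by $\|v\|_{L^{2}}$, $\|e^{tA/\gamma_{0}}\|_{L^{1}\to L^{2}}\lesssim t^{-1/4}$ combined with $\|(\partial_x^{\,2}u\cdot v)u\|_{L^{1}_{x}}\le\|\partial_x^{\,2}u\|_{L^{2}}\|v\|_{L^{2}}$ produce pointwise-in-time estimates with integrable kernel singularities. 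Young's convolution inequality in time (using $t^{-1/2}\in L^{4/3}(0,T)$ and $t^{-1/4}\in L^{2}(0,T)$) then yields $\|\Psi(v)\|_{L^{2}(0,T;L^{2})}\le C_{T}\|v\|_{L^{2}(0,T;L^{2})}$.

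To produce the fixed point I would work with the equivalent weighted norm $\|v\|_{\lambda}^{2}:=\int_{0}^{T}e^{-\lambda t}\|v(t)\|_{L^{2}}^{2}\,dt$. The extra factor $e^{-\lambda(t-s)/2}$ appearing in the estimates for $\|\Psi(v)\|_{\lambda}$ turns every kernel $(t-s)^{-\alpha}$, $\alpha\in[0,1)$, into $e^{-\lambda(t-s)/2}(t-s)^{-\alpha}$, whose relevant $L^{p}(0,T)$-norm tends to $0$ as $\lambda\to\infty$. Thus for $\lambda$ large enough $\Psi$ becomes a strict contraction on $(L^{2}(0,T;L^{2}(\R)),\|\cdot\|_{\lambda})$, and Banach's fixed point theorem yields a unique $\Lambda(\xi)\in L^{2}(0,T;L^{2}(\R))$ satisfying \eqref{well_posed}.

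Linearity of $\Lambda$ is then a consequence of the uniqueness and the linearity of $\Psi$ in $v$ and of $\Theta_\xi$ in $\xi$. For the continuity estimate \eqref{continuous_L2}, taking $\|\cdot\|_{\lambda}$-norms in $\Lambda(\xi)=\xi+\Psi(\Lambda(\xi))$ gives $\|\Lambda(\xi)\|_{\lambda}\le(1-\theta)^{-1}\|\xi\|_{\lambda}$ for the contraction constant $\theta<1$, and converting back to the unweighted norm produces the $T$-dependent bound. The main technical obstacle is precisely the term $(\partial_x^{\,2}u\cdot v)u$: its spatial $L^{2}$-norm cannot be bounded using only $\|v\|_{L^{2}}$, and it is this difficulty that forces the mild-form reformulation of $\Theta_\xi$ and the use of the $L^{1}\to L^{2}$ smoothing of the heat semigroup.
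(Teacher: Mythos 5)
Your proposal is correct and follows essentially the same route as the paper: the paper also writes $\Theta_\xi(v)=Lv+\xi$, integrates by parts inside the semigroup to move the derivative off $v$ (producing exactly your three terms, with the $(\partial_x^{\,2}u\cdot v)u$ term handled via the $L^{1}\to L^{2}$ heat-kernel bound $\lVert G_t\rVert_{L^2}\sim t^{-1/4}$), and then obtains the fixed point by making $L$ a contraction for the exponentially weighted norm $\int_0^T e^{-2\lambda t}\lvert\cdot\rvert_{L^2}^2\,dt$ with $\lambda$ large. The only cosmetic differences are that you bound $\lvert\,\lvert\partial_x u\rvert^2 v\rvert_{L^2}$ via $\lVert\partial_x u\rVert_{L^\infty}^2$ where the paper uses $\lVert\partial_x u\rVert_{L^4}^2$ together with $L^1\to L^2$ smoothing, and that the regularity of $u$ you invoke comes from Theorems \ref{small_mass_limit}/\ref{regularity} rather than Theorem \ref{uniqueness}.
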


\begin{proof}
We fix $\xi\in L^{2}(0,T;L^2(\mathbb{R}))$ and  write
\[\Theta_\xi(v)=Lv+\xi,\ \ \ \ \ \ v \in\,L^{2}(0,T;L^2(\mathbb{R})),\]
where 
\[Lv:=\frac 1{\gamma_0}\int_{0}^{t}e^{\frac 1{\gamma_0}(t-s)A}\big(\abs{\partial_x u(s)}^{2}v(s)\big)ds+\frac 2{\gamma_0}\int_{0}^{t}e^{\frac 1{\gamma_0}(t-s)A}\big((\partial_x u(s)\cdot \partial_x v(s))u(s)\big)ds.\]
We have
	\begin{equation}
		\begin{array}{l}
			\ds{ \lvert Lv(t)\big\rvert_{L^2(\mathbb{R})}\lesssim \int_{0}^{t}\Big\lvert e^{\frac 1{\gamma_0}(t-s)A}\big(\abs{\partial_x u(s)}^{2}v(s)\big) \Big\rvert_{L^2(\mathbb{R})}ds}\\
			\vs 
			\ds{ +\int_{0}^{t}\Big\lvert e^{\frac 1{\gamma_0}(t-s)A}\partial_x\big((\partial_x u(s)\cdot v(s))u(s)\big)\Big\rvert_{L^2(\mathbb{R})}ds +\int_{0}^{t}\Big\lvert e^{\frac 1{\gamma_0}(t-s)A}\big((\partial_x u(s)\cdot v(s))\partial_x u(s)\big)\Big\rvert_{L^2(\mathbb{R})}ds}\\ 
			\vs 
			\ds{  \quad\quad\quad\quad \quad \quad\quad\quad +\int_{0}^{t}\Big\lvert e^{\frac 1{\gamma_0}(t-s)A}\big((\partial_x^{2} u(s)\cdot v(s))u(s)\big)\Big\rvert_{L^2(\mathbb{R})}ds =:\sum_{i=1}^{4}I_{i}(t)    }
		\end{array}
	\end{equation}
	Recalling  that
	\begin{equation}
		\sup_{t\in[0,T]}\abs{\partial_x u(t)}_{L^2(\mathbb{R})}^{2}+\int_{0}^{T}\abs{\partial_x^{2}u(s)}_{L^2(\mathbb{R})}^{2}ds<\infty,
	\end{equation}
we take some $\lambda>0$, to be determined later, and in view of the Gagliardo-Nirenberg inequality we get
	\begin{equation}
		\begin{array}{ll}
			&\ds{ \int_0^T e^{-2\lambda t}|I_{1}(t)|^2\,dt\lesssim_{\,T}\int_{0}^{T}\Big(\int_{0}^{t}e^{-\lambda(t-s)}(t-s)^{-1/4}\abs{\partial_x u(s)}_{L^{4}(\R)}^{2}e^{-\lambda s}\abs{v(s)}_{L^2(\mathbb{R})}ds\Big)^{2}dt  }\\
			\vs 
			&\ds{\quad\quad  \lesssim_{\,T}\int_{0}^{T}\Big(\int_{0}^{t}e^{-\lambda(t-s)}(t-s)^{-1/4}\abs{\partial_x^{2} u(s)}_{L^2(\mathbb{R})}^{1/2}e^{-\lambda s}\abs{v(s)}_{L^2(\mathbb{R})}ds\Big)^{2}dt  }\\
			\vs 
			&\ds{\quad\quad \lesssim_{\,T} \Big(\int_{0}^{T}e^{-4\lambda s/3}s^{-1/3}ds\Big)^{3/2}\Big(\int_{0}^{T}\abs{\partial_x^{2}u(s)}_{L^2(\mathbb{R})}^{2}ds\Big)^{1/2}\int_{0}^{T}e^{-2\lambda t}\abs{v(t)}_{L^2(\mathbb{R})}^{2}dt. }
		\end{array}
	\end{equation}
In the same way	
	\begin{equation}
		\begin{array}{ll}
			&\ds{\int_0^T e^{-2\lambda t}|I_{2}(t)|^2\,dt\lesssim_{\,T} \int_{0}^{T}\Big(\int_{0}^{t}e^{-\lambda(t-s)}(t-s)^{-1/2}\abs{\partial_x^{2} u(s)}_{L^2(\mathbb{R})}^{1/2}e^{-\lambda s}\abs{v}_{L^2(\mathbb{R})}ds\Big)^{2}dt  }\\
			\vs 
			&\ds{\quad\quad \lesssim_{\,T}\Big(\int_{0}^{T}e^{-4\lambda s/3}s^{-2/3}ds\Big)^{3/2}\Big(\int_{0}^{T}\abs{\partial_x^{2}u(s)}_{L^2(\mathbb{R})}^{2}ds\Big)^{1/2}\int_{0}^{T}e^{-2\lambda t}\abs{v(t)}_{L^2(\mathbb{R})}^{2}dt,   }
		\end{array}
	\end{equation}
and	
	\begin{equation}
	\begin{array}{l}
	\ds{\int_0^T e^{-2\lambda t}|I_{3}(t)|^2\,dt}\\
	\vs
	\ds{\quad \quad \lesssim_{\,T} \Big(\int_{0}^{T}e^{-4\lambda s/3}s^{-1/3}ds\Big)^{3/2}\Big(\int_{0}^{T}\abs{\partial_x^{2}u(s)}_{L^2(\mathbb{R})}^{2}ds\Big)^{1/2}\int_{0}^{T}e^{-2\lambda t}\abs{v(t)}_{L^2(\mathbb{R})}^{2}dt,	}
	\end{array}
	\end{equation}
Finally	
	\begin{equation}
		\begin{array}{ll}
			&\ds{ \int_0^T e^{-2\lambda t}|I_{4}(t)|^2\,dt\lesssim_{\,T}\int_{0}^{T}\Big(\int_{0}^{t}e^{-\lambda(t-s)}(t-s)^{-1/4}\abs{\partial_x^{2}u(s)}_{L^2(\mathbb{R})}e^{-\lambda s}\abs{v(s)}_{L^2(\mathbb{R})}ds\Big)^{2}dt  }\\
			\vs 
			&\ds{\quad\quad \lesssim_{\,T}\int_{0}^{T}e^{-2\lambda s}s^{-1/2}ds\cdot \int_{0}^{T}\abs{\partial_x^{2}u(s)}_{L^2(\mathbb{R})}^{2}ds\cdot \int_{0}^{T}e^{-2\lambda t}\abs{v(t)}_{L^2(\mathbb{R})}^{2}dt    }.
		\end{array}
	\end{equation}
	Now,  for any $\lambda>0$ and $a\in (0,1)$, we have 
	\begin{equation}
		\int_{0}^{T}e^{-\lambda s }s^{a-1}ds = \int_{0}^{\delta}e^{-\lambda s}s^{a-1}ds+\int_{\delta}^{T}e^{-\lambda s}s^{a-1}ds \lesssim \delta^{a} +\frac{\delta^{a-1}}{\lambda}\exp(-\lambda \delta),\ \ \ \ \ \delta \in\,[0,T],
	\end{equation}
	and, due to the arbitrariness of $\delta$, this implies that for any $a\in (0,1)$,
	\begin{equation}
		\lim_{\lambda\to+\infty}\int_{0}^{T}e^{-\lambda s}s^{a-1}ds =0.
	\end{equation}
	Hence, there exists a constant $\bar{\lambda}>0=\bar{\lambda}(T)$ such that 
	\begin{equation}
		\int_{0}^{T}e^{-2\bar{\lambda} t}\big\lvert Lv(t)\big\rvert_{L^2(\mathbb{R})}^{2}dt \leq \frac{1}{2}\int_{0}^{T}e^{-2\bar{\lambda}t}\abs{v(t)}_{L^2(\mathbb{R})}^{2}dt,
	\end{equation}
so that the linear operator $L$ is a contraction in $L^2(0,T;L^2(\mathbb{R}))$, endowed with the norm
\begin{equation}\label{fine35}\vert \varphi\vert^2_{L^2(0,T;L^2(\mathbb{R}))}=\int_{0}^{T}e^{-2\bar{\lambda} t}\vert \varphi(t)\vert^2_{L^2(\mathbb{R})}\,dt.\end{equation}	
In particular, this implies that $\Theta_\xi$ maps $L^2(0,T;L^2(\mathbb{R}))$ into itself, for every $\xi \in\,L^2(0,T;L^2(\mathbb{R}))$, and, due to the linearity of $L$ is a contraction with respect to the norm \eqref{fine35}. This means that $\Theta_\xi$ 
admits a unique fixed point $\Lambda(\xi)$. Finally, by using  similar arguments, we can show that \eqref{continuous_L2} holds. Since $L$ is linear, we have that $\Lambda$ is linear and \eqref{continuous_L2} holds.
	
\end{proof}

\subsection{Proof of  Theorem \ref{CLT} }

With the same notations introduced in Section \ref{sec-main} and Lemma \ref{linear_continuous_operator}, if $\varrho_\e$ is the solution of the equation 
\begin{equation}\label{fine50}
	\le\{\begin{array}{l}
		\ds{\gamma_0\,\partial_{t}\varrho_{\epsilon}(t,x) = \partial_x^{\,2} \varrho_{\epsilon}(t,x)+\abs{\partial_x u(t,x)}^{2}\varrho_{\epsilon}(t,x)+2(\partial_x \varrho_{\epsilon}(t,x)\cdot \partial_x u(t,x))u_{\epsilon}(t,x) }\\[10pt]
		
		\ds{\quad \quad\quad\quad\quad\quad\quad\quad \quad\quad\quad\quad\quad\quad+\big(u_{\epsilon}(t)\times\partial_{t}u_{\epsilon}(t)\big)Q^{\epsilon}\partial_{t}w^{H}(t,x), }\\
		[10pt]
		\ds{\varrho_{\epsilon}(0,x)=\epsilon^{H/2-1}(u_0^{\epsilon}(x)-u_{0}(x)), }
	\end{array}\r.
\end{equation}
 we have  $\varrho_{\epsilon}=\Lambda(z_{\epsilon})$. Thus, as a consequence of \eqref{fine1} and Lemma \ref{linear_continuous_operator}, we obtain 
\begin{equation}
	\varrho_{\epsilon} \to  \Lambda(z)=:\varrho\ \ \text{in}\ \ L^{2}(\Omega;L^{2}(0,T;L^2(\mathbb{R}))),
\end{equation}
where $\varrho$ is the unique solution of the equation
	\begin{equation}
	\le\{\begin{array}{l}
		\ds{\gamma_0\partial_{t}\varrho(t) = \partial_x^{\,2} \varrho(t)+\abs{\partial_x u(t)}^{2}\varrho(t)+2(\partial_x u(t)\cdot \partial_x \varrho(t))u(t) +\big(u(t)\times\partial_{t}u(t)\big)\partial_{t}w^{H}(t), }\\
		[10pt]
		\ds{\varrho(0)=0 }.
	\end{array}\r.
\end{equation}
In particular,  Theorem \ref{CLT} is proved once we show the following result.

\begin{Lemma}
We have 	\begin{equation}\label{sm82}
		\lim_{\epsilon\to0}\,\E\,\abs{y_\e-\varrho_{\epsilon}}_{L^{2}(0,T;L^2(\mathbb{R}))}=0.
	\end{equation}
\end{Lemma}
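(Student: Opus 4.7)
The plan is to subtract equation \eqref{fine50} for $\varrho_\e$ from the equation satisfied by $y_\e$. The noise $(u_\e \times \partial_t u_\e)Q^\e \partial_t w^H$ and the linear coefficients $\abs{\partial_x u}^2\cdot$ and $2(\partial_x\cdot \partial_x u)u_\e$ are identical in both equations, so they cancel in the difference $d_\e := y_\e - \varrho_\e$, which therefore satisfies $d_\e(0)=0$ and
\[
\gamma_0 \partial_t d_\e = \partial_x^2 d_\e + \abs{\partial_x u}^2 d_\e + 2(\partial_x d_\e \cdot \partial_x u) u_\e + F_\e,
\]
where $F_\e := \e^{1-H/2}\abs{\partial_x y_\e}^2 u_\e - \e^{H/2}\partial_t^2 u_\e - \e^{H/2}\abs{\partial_t u_\e}^2 u_\e$ collects the three extra nonlinearities present only in the $y_\e$ equation. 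The task then reduces to showing $\E\abs{d_\e}_{L^2(0,T;L^2(\R))}^2 \to 0$, from which \eqref{sm82} follows by Jensen's inequality.

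First I would multiply by $d_\e$, integrate over $\R$, and bound term by term. The linear contributions are handled exactly as in the proof of Lemma \ref{linear_continuous_operator}, producing $\delta\abs{\partial_x d_\e}_{L^2}^2 + c(\delta)(1+\abs{\partial_x^2 u}_{L^2})\abs{d_\e}_{L^2}^2$, which is absorbed either by the dissipation or by the Gronwall loop. The cubic term $\e^{1-H/2}\abs{\partial_x y_\e}^2 u_\e$ is estimated by Gagliardo-Nirenberg, $\abs{d_\e}_{L^\infty}\lesssim\abs{d_\e}_{L^2}^{1/2}\abs{\partial_x d_\e}_{L^2}^{1/2}$, combined with the key bound \eqref{est_Y}; after Young's inequality the net factor, in expectation, is of order $\e^{(1-H)/2-\alpha}$, which vanishes precisely because $H<1$ allows $\alpha<(1-H)/2$ (the initial-datum contribution $\e^{H-2}\abs{u_0^\e-u_0}^2_{L^2}$ from \eqref{est_Y} is also $o(1)$ thanks to \eqref{initial_condition_rate}). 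The term $\e^{H/2}\abs{\partial_t u_\e}^2 u_\e$ is dealt with in the same spirit using \eqref{uniform_bound1}, exploiting the pointwise bound $\abs{\partial_t u_\e}_{L^2}\lesssim \e^{-1/2}$.

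The hard part will be the remaining term $-\e^{H/2}\partial_t^2 u_\e$, for which \eqref{uniform_bound1} and \eqref{uniform_bound2} provide no uniform $L^2$-in-time bound on $\partial_t^2 u_\e$. To absorb it, I would import the trick from the proof of \eqref{est_R} in Section \ref{sec7} and work with the augmented energy
\[
\mathcal{E}_\e(t) := \gamma_0\abs{d_\e(t)}_{L^2}^2 + 2\e^{H/2}\Inner{\partial_t u_\e(t), d_\e(t)}_{L^2} + \tfrac{1}{\gamma_0}\,\e^H\abs{\partial_t u_\e(t)}_{L^2}^2.
\]
Writing $\Inner{\partial_t^2 u_\e, d_\e}_{L^2} = \tfrac{d}{dt}\Inner{\partial_t u_\e, d_\e}_{L^2} - \Inner{\partial_t u_\e,\partial_t d_\e}_{L^2}$, and then substituting the equation for $\partial_t d_\e$ obtained above, the troublesome term becomes a full time derivative plus a collection of cross terms, each carrying a strictly positive power of $\e$ and hence controllable by \eqref{uniform_bound1}, \eqref{uniform_bound2} and \eqref{est_Y}. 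The spurious $(c_0/(2\e))\abs{\partial_t u_\e}_{L^2}^2$ produced by It\^o's formula on $\abs{\partial_t u_\e}_{L^2}^2$ is absorbed by the $\e^H/\gamma_0$ contribution in $\mathcal{E}_\e$, exactly as in the derivation of \eqref{est_R}.

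Assembling the estimates should yield an inequality of the form
\[
\E\,\mathcal{E}_\e(t) + \E\int_0^t \abs{\partial_x d_\e(s)}_{L^2}^2\,ds \lesssim_{\,T} \rho_\e + \int_0^t \Big(1+\E\abs{\partial_x^2 u(s)}_{L^2}^2\Big)\E\abs{d_\e(s)}_{L^2}^2\,ds,
\]
with $\rho_\e\to 0$ as $\e\to 0$. Since $s\mapsto 1+\abs{\partial_x^2 u(s)}_{L^2}^2$ lies in $L^1(0,T)$ by Theorem \ref{regularity}, a generalized Gronwall lemma will then give $\E\int_0^T\abs{d_\e(t)}_{L^2}^2\,dt \to 0$, and \eqref{sm82} will follow.
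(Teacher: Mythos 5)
Your reduction is set up correctly: subtracting \eqref{fine50} from the equation for $y_\e$ does cancel the noise, the initial datum and the two linear coefficients, leaving $d_\e=y_\e-\varrho_\e$ driven only by $F_\e=\e^{1-H/2}\abs{\partial_x y_\e}^2u_\e-\e^{H/2}\partial_t^2u_\e-\e^{H/2}\abs{\partial_t u_\e}^2u_\e$, and $\E\abs{d_\e}^2_{L^2(0,T;L^2)}\to 0$ would indeed give \eqref{sm82}. But your treatment of the term you yourself single out as the hard one, $-\e^{H/2}\partial_t^2u_\e$, does not close, and the gap is quantitative, not cosmetic. When you write $\Inner{\partial_t^2u_\e,d_\e}=\frac{d}{dt}\Inner{\partial_t u_\e,d_\e}-\Inner{\partial_t u_\e,\partial_t d_\e}$ and substitute the equation for $\partial_t d_\e$, the forcing $F_\e$ reappears paired against $\partial_t u_\e$, and in particular you must evaluate $-\frac{\e^{H}}{\gamma_0}\Inner{\partial_t u_\e,\partial_t^2u_\e}=-\frac{\e^{H}}{2\gamma_0}\frac{d}{dt}\abs{\partial_t u_\e}^2_{L^2}+\frac{c_0\,\e^{H-1}}{2\gamma_0}\abs{\partial_t u_\e}^2_{L^2}$. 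The total-derivative part is what the $\e^H\abs{\partial_t u_\e}^2_{L^2}$ piece of $\mathcal{E}_\e$ absorbs; the It\^o correction $\frac{c_0}{2\gamma_0}\e^{H-1}\abs{\partial_t u_\e}^2_{L^2}$ is \emph{not} absorbed — it sits on the right-hand side with a positive sign and, by \eqref{uniform_bound1}, integrates to a quantity of order $\e^{H-1}$, which diverges for every $H<1$. This is exactly the origin of the $1/\e^{1-H}$ on the right-hand side of the pre-Gronwall inequality leading to \eqref{est_R}: there it is harmless because \eqref{est_R} is an upper bound that is allowed to blow up like $\e^{-(1/2+\alpha)}$, but here your bound must tend to zero, so the same residue is fatal. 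A second divergence of the same order arises from the cross term $\e^{H/2}\Inner{\partial_t u_\e,\partial_x^2 d_\e}$: integrating by parts puts a full spatial derivative on $\partial_t u_\e$ and, by \eqref{est_system_H1}, costs $\e^{H}\,\E\int_0^T\abs{\partial_t u_\e}^2_{H^1}dt\lesssim\e^{H-1}$.

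The paper circumvents both obstructions by abandoning the energy method at precisely this step. It performs the shift $\lambda_\e=y_\e-\varrho_\e+\e^{H/2}\partial_t u_\e$ at the level of the equation (so $-\e^{H/2}\partial_t^2u_\e$ is eaten by $\partial_t\lambda_\e$ and It\^o's formula on $\abs{\partial_t u_\e}^2_{L^2}$ is never invoked, hence no $c_0/(2\e)$ correction ever appears), writes $\lambda_\e=\Lambda(\xi_\e)$ via the fixed-point operator of Lemma \ref{linear_continuous_operator}, and then uses \eqref{continuous_L2} to reduce the problem to $\E\abs{\xi_{\e,i}}_{L^2(0,T;L^2)}\to0$ for each forcing term. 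The commutator term $\e^{H/2}\partial_x^2\partial_t u_\e$ is then handled with the smoothing estimate $\abs{e^{tA}\partial_x^2h}_{L^2}\lesssim t^{-1+\alpha/2}\abs{h}_{H^\alpha}$, which places only $\alpha<H$ derivatives on $\partial_t u_\e$ and, after interpolation, yields $\e^{H-\alpha}\to0$ instead of $\e^{H-1}$. If you want to salvage a direct argument on $d_\e$, you would need some analogue of this fractional-derivative bookkeeping; the augmented-energy device from Section \ref{sec7} by itself cannot produce a vanishing bound.
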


\begin{proof}
	 Without loss of generality, we take $\gamma_0=1$. First of all, we notice that if we define
	\begin{equation}
		 \lambda_{\epsilon}(t):=y_\e(t)-\varrho_\e(t)+\e^{H/2}\partial_t u_\e(t),\ \ \ \ \ \ t \in\,[0,T],
	\end{equation}
	then  $\lambda_{\epsilon} = \Lambda(\xi_{\epsilon})$, with 
	\begin{equation}
		\begin{array}{l}
			\ds{ \epsilon^{H/2}\xi_{\epsilon}(t) = - \int_{0}^{t}e^{(t-s)A}\partial_x^{\,2} \partial_{t}u_{\epsilon}(s)ds - \int_{0}^{t}e^{(t-s)A}\big(\abs{\partial_x u(s)}^{2}\partial_{t}u_{\epsilon}(s)\big)ds  }\\
			\vs 
			\ds{ -2\int_{0}^{t}e^{(t-s)A}\big((\partial_x \partial_{t}u_{\epsilon}(s)\cdot \partial_x u(s))u(s)\big)ds + \epsilon^{1-H}\int_{0}^{t}e^{(t-s)A}\big((\partial_x u(s)\cdot \partial_x y_{\epsilon}(s))y_{\epsilon}(s)\big)ds }\\
			\vs 
			\ds{+\epsilon^{1-H}\int_{0}^{t}e^{(t-s)A}\big(\abs{\partial_x y_{\epsilon}(s)}^{2}u_{\epsilon}(s)\big)ds -\int_{0}^{t}e^{(t-s)A}\big(\abs{\partial_{t}u_{\epsilon}(s)}^{2}u_{\epsilon}(s)\big)ds = : \e^{H/2}\sum_{i=1}^{6}\xi_{\epsilon, i}(t).   }
		\end{array}
	\end{equation} 
	Therefore, since
	\begin{equation}
		\lim_{\epsilon\to0}\epsilon^{H}\,\E\abs{\partial_{t}u_{\epsilon}}_{L^{2}(0,T;L^2(\mathbb{R}))}^{2}=0,
	\end{equation}
due to \eqref{continuous_L2} in order to prove \eqref{sm82}  it is enough  to show that 
\begin{equation}
	\lim_{\epsilon\to0}\E\abs{\xi_{\epsilon, i}}_{L^{2}(0,T;L^2(\mathbb{R}))}=0.
\end{equation}
for every $i=1,\ldots,6$. If we fix an arbitrary $\alpha\in (0,1)$, we have 
\begin{equation}
	\begin{array}{l}
		\ds{  \E\int_{0}^{T}\abs{\xi_{\epsilon, 1 }(t)}_{L^2(\mathbb{R})}^{2}dt \lesssim \epsilon^{H}\,\E\int_{0}^{T}\Big(\int_{0}^{t}\Big\lvert e^{(t-s)A}\partial_x^{\,2} \partial_{t}u_{\epsilon}(s)\Big\rvert_{L^2(\mathbb{R})}ds\Big)^{2}dt   }\\
		\vs 
		\ds{\quad\quad\quad  \lesssim\epsilon^{H}\,\E\int_{0}^{T}\Big(\int_{0}^{t}(t-s)^{-1+\alpha/2}\abs{\partial_{t}u_{\epsilon}(s)}_{H^{\alpha}}ds\Big)^{2}dt    }\\
		\vs 
		\ds{\quad\quad\quad\quad \lesssim_{\,\a}\epsilon^{H} \Big(\int_{0}^{T}s^{-1+\alpha/2}ds\Big)^{2}\E\int_{0}^{T} \abs{\partial_{t}u_{\epsilon}(s)}_{H^{1}}^{2\alpha}\abs{\partial_{t}u_{\epsilon}(s)}_{L^2(\mathbb{R})}^{2(1-\alpha)}ds   }\\
		\vs 
		\ds{\quad\quad \lesssim_{\,\a, T}\epsilon^{H}\E\,\left(\Big(\int_{0}^{T}\abs{\partial_{t}u_{\epsilon}(s)}_{H^{1}}^{2}ds\Big)^{\alpha}\Big(\int_{0}^{T}\abs{\partial_{t}u_{\epsilon}(s)}_{L^2(\mathbb{R})}^{2}ds\Big)^{2(1-\alpha)} \right) \lesssim_{\,\alpha,T}\epsilon^{H-\alpha}.  }
	\end{array}
\end{equation}
Thus, if we pick $\alpha\in(0,H)$, we get
\begin{equation} \label{sm90}
	\lim_{\epsilon\to0} \E\int_{0}^{T}\abs{\xi_{\epsilon, 1}(t)}_{L^2(\mathbb{R})}^{2}dt =0.
\end{equation}
Next,
\begin{equation}
	\begin{array}{ll}
		&\ds{ \E\int_{0}^{T}\abs{\xi_{\epsilon, 2}(t)}_{L^2(\mathbb{R})}^{2}dt \lesssim \epsilon^{H}\E\int_{0}^{T}\Big(\int_{0}^{t}(t-s)^{-1/4}\abs{\partial_x^{2}u(s)}_{L^2(\mathbb{R})}^{1/2}\abs{\partial_{t}u_{\epsilon}(s)}_{L^2(\mathbb{R})}ds \Big)^{2}dt }\\
		\vs 
		&\ds{\quad\quad\quad\quad  \lesssim_{\,T}\epsilon^{H}\E\Big(\int_{0}^{T}\abs{\partial_x^{2}u(s)}_{L^2(\mathbb{R})}^{1/2}\abs{\partial_{t}u_{\epsilon}(s)}_{L^2(\mathbb{R})}ds\Big)^{2} \lesssim_{\,T}\epsilon^{H}\E\int_{0}^{T}\abs{\partial_{t}u_{\epsilon}(s)}_{L^2(\mathbb{R})}^{2}ds  },
	\end{array}
\end{equation}
which implies 
\begin{equation}\label{sm91}
	\lim_{\epsilon\to0} \E\int_{0}^{T}\abs{\xi_{\epsilon, 2}(t)}_{L^2(\mathbb{R})}^{2}dt =0.
\end{equation}
Moreover, since 
\begin{equation}
	(\partial_x \partial_{t}u_{\epsilon}\cdot \partial_x u)u=\partial_x\big((\partial_{t}u_{\epsilon}\cdot \partial_x u)u\big)-(\partial_{t}u_{\epsilon}\cdot \partial_x u)\partial_x u-(\partial_{t}u_{\epsilon}\cdot \partial_x^{2}u)u,
\end{equation}
 we have 
\begin{equation}
	\begin{array}{l}
		\ds{ \E\int_{0}^{T}\abs{\xi_{\epsilon, 3}(t)}_{L^2(\mathbb{R})}^{2}dt \lesssim\epsilon^{H} \E\int_{0}^{T}\Big(\int_{0}^{t}(t-s)^{-1/2}\abs{\partial_{t}u_{\epsilon}(s)}_{L^2(\mathbb{R})}\abs{\partial_x^{2}u(s)}_{L^2(\mathbb{R})}^{1/2}ds\Big)^{2}dt   }\\
		\vs 
		\ds{\quad\quad + \epsilon^{H}\E\int_{0}^{T}\Big(\int_{0}^{t}(t-s)^{-1/4}\abs{\partial_{t}u_{\epsilon}(s)}_{L^2(\mathbb{R})}\abs{\partial_x^{2}u(s)}_{L^2(\mathbb{R})}^{1/2}ds\Big)^{2}dt    }\\
		\vs 
		\ds{\quad\quad\quad\quad  +\epsilon^{H}\E\int_{0}^{T}\Big(\int_{0}^{t}(t-s)^{-1/4}\abs{\partial_{t}u_{\epsilon}(s)}_{L^2(\mathbb{R})}\abs{\partial_x^{2}u(s)}_{L^2(\mathbb{R})}ds\Big)^{2}dt  }\\
		\vs 
		\ds{\quad \lesssim\epsilon^{H}\Big(\int_{0}^{T}\big(s^{-2/3}+s^{-1/3}\big)ds\Big)^{3/2} \Big(\int_{0}^{T}\abs{\partial_x^{2}u(s)}_{L^2(\mathbb{R})}^{2}ds\Big)^{1/2}\int_{0}^{T}\abs{\partial_{t}u_{\epsilon}(s)}_{L^2(\mathbb{R})}^{2}ds   }\\
		\vs 
		\ds{\quad\quad\quad\quad  +\epsilon^{H}\Big(\int_{0}^{T}s^{-1/2}ds\Big)\Big(\int_{0}^{T}\abs{\partial_x^{2}u(s)}_{L^2(\mathbb{R})}^{2}ds\Big) \int_{0}^{T}\abs{\partial_{t}u_{\epsilon}(s)}_{L^2(\mathbb{R})}^{2}ds },
	\end{array}
\end{equation}
which implies  
\begin{equation}\label{sm92}
	\lim_{\epsilon\to0} \E\int_{0}^{T}\abs{\xi_{\epsilon,3}(t)}_{L^2(\mathbb{R})}^{2}dt =0.
\end{equation}
Next, thanks to \eqref{est_Y}, we have
\begin{equation}
	\begin{array}{ll}
		&\ds{ \E\Big(\int_{0}^{T}\abs{\xi_{\epsilon, 4}(t)}_{L^2(\mathbb{R})}^{2}dt\Big)^{1/2}  \lesssim\epsilon^{1-H/2}\,\E\Bigg(\int_{0}^{T}\Big(\int_{0}^{t}(t-s)^{-1/4}\abs{y_{\epsilon}(s)}_{H^{1}}^{2}ds\Big)^{2}dt\Bigg)^{1/2}    }\\
		\vs 
		&\ds{\quad\quad\quad\quad \lesssim\epsilon^{1-H/2}\,\E\Bigg(\Big(\int_{0}^{T}s^{-1/2}ds\Big)\Big(\int_{0}^{T}\abs{y_{\epsilon}(s)}_{H^{1}}^{2}ds\Big)^{2}\Bigg)^{1/2}  }\\
		\vs 
		&\ds{\quad\quad \lesssim_{\,T}\epsilon^{1-H/2}}\,\E\int_{0}^{T}\abs{y_{\epsilon}(s)}_{H^{1}}^{2}ds \lesssim_{\,\alpha,T}\epsilon^{(1-H)/2-\alpha}+\e^{H/2-1}\vert u^\e_0-u_0\vert_{L^2(\mathbb{R})},
	\end{array}
\end{equation}
for any $\alpha>0$. Hence thanks to \eqref{initial_condition_rate} we conclude that 
\begin{equation}\label{sm93}
	\lim_{\epsilon\to0}\E\Big(\int_{0}^{T}\abs{\xi_{\epsilon, 4}(t)}_{L^2(\mathbb{R})}^{2}dt\Big)^{1/2} =0.
\end{equation}
Similarly, we obtain 
\begin{equation}\label{sm94}
	\lim_{\epsilon\to0}\E\Big(\int_{0}^{T}\abs{\xi_{\epsilon, 5}(t)}_{L^2(\mathbb{R})}^{2}dt\Big)^{1/2} =0.
\end{equation}
Finally, since 
\begin{equation}
	\begin{array}{ll}
		&\ds{ \E\Big(\int_{0}^{T}\abs{\xi_{\epsilon, 6}(t)}_{L^2(\mathbb{R})}^{2}dt\Big)^{1/2} \lesssim\epsilon^{H/2}\, \E\Bigg(\int_{0}^{T}\Big(\int_{0}^{t}(t-s)^{-1/4}\abs{\partial_{t}u_{\epsilon}(s)}_{L^2(\mathbb{R})}^{2}ds\Big)^{2}dt\Bigg)^{1/2} }\\
		\vs 
		&\ds{\quad\quad\quad\quad\quad\quad \lesssim_{\,T}\epsilon^{H/2}\,\E\int_{0}^{T}\abs{\partial_{t}u_{\epsilon}(s)}_{L^2(\mathbb{R})}^{2}ds },
	\end{array}
\end{equation}
we get
\begin{equation}\label{sm95}
	\lim_{\epsilon\to0}\E\Big(\int_{0}^{T}\abs{\xi_{\epsilon, 6}(t)}_{L^2(\mathbb{R})}^{2}dt\Big)^{1/2} =0.
\end{equation}
This concludes the proof of 
\[\lim_{\epsilon\to0}\E\abs{\xi_{\epsilon, i}}_{L^{2}(0,T;L^2(\mathbb{R}))}=0,\ \ \ \ \ \ \ i=1,\ldots,6,\]
and \eqref{sm82} follows.

\end{proof}

\begin{Remark}\label{rem6.1}
{\em   
   	
 We would like to emphasize that the only passage in the whole proof of Theorem \ref{CLT} where we need the Hurst parameter $H$ to be strictly larger than $1/2$ is in the proof of limit \eqref{sm101}. Moreover, the only passage where we need $H$ to be strictly less that $1$ is in the proof of limit \eqref{fine7}.

}	
\end{Remark}

\section{Back to the  convergence of $u_\e$ and $\partial_t u_\e$}

 Estimate \eqref{est_Y} has a very important consequence. Actually, it implies that if $H\in[1/2,1)$, then, for every $T>0$ and $\alpha>0$,  
	\begin{equation}\label{conv-rate}
		\E\sup_{t\in[0,T]}\abs{u_{\epsilon}(t)-u(t)}_{L^2(\mathbb{R})}^{2}+\E\int_{0}^{T}\abs{u_{\epsilon}(t)-u(t)}_{H^{1}}^{2}dt\lesssim_{\,\alpha,T}\epsilon^{3/2-H-\alpha}+ \,\vert u^\e_0-u_0\vert^2_{L^2(\mathbb{R})}.
	\end{equation}

	In particular, if we fix $u_0$ and a sequence $\{u^\e_0\}_{\e \in\,(0,1)}$ fulfilling 
\eqref{initial_condition_rate} , and take some $\a \in\,(0,(1-H)/2)$,  
	we conclude 
	\begin{equation}\label{limit-strong}
		\lim_{\e\to 0}\,\E\sup_{t\in[0,T]}\abs{u_{\epsilon}(t)-u(t)}_{L^2(\mathbb{R})}^{2}+\E\int_{0}^{T}\abs{u_{\epsilon}(t)-u(t)}_{H^{1}}^{2}dt=0.
	\end{equation}
	This means that if the noise has the special form $\eta\ast w_H$, for some kernel $\eta$ satisfying Hypothesis \ref{H2} and a Gaussian noise $w^H(t)$ having a spatially homogeneous correlation of fractional type, with Hurst coefficient $H \in\,[1/2,1)$,   and if the initial data  satisfy condition \eqref{initial_condition_rate}, then the convergence in probability of $u_\e$ to $u$ in $C([0,T];L^2_{\text{loc}}(\R))\cap L^2(0,T;H^1_{\text{loc}}(\R))$ proved in  Theorem  \ref{small_mass_limit} can be improved to mean-square convergence in $C([0,T];L^2(\R))\cap L^2(0,T;H^1(\R))$. Moreover,  a bound on the rate of convergence is given, depending on the initial conditions.	

\smallskip

We conclude this section with the following estimate which involves $\partial_t u_\e$ as well.

\begin{Lemma}\label{lem8.5}
	Let $H\in [1/2,1)$ and assume that $u_0\in \dot{H}^{3}(\R)\cap M$, with 
	\begin{equation}\label{initial_condition_rate_additional}
		\big\lvert (u_{0}^{\epsilon}-u_0,\sqrt{\epsilon}v_{0}^{\epsilon} ) \big\rvert_{H^{1}(\R)\times L^2(\mathbb{R})} =O(\epsilon^{\beta}), \ \ \ \ 0<\epsilon\ll1,
	\end{equation}
	for some $\beta>0$. Then, for every $T>0$  we have
	\begin{equation}
		\E\sup_{t\in[0,T]}\Big(\abs{u_{\epsilon}(t)-u(t)}_{H^{1}}^{2}+\epsilon\abs{\partial_{t}u_{\epsilon}(t)-\partial_{t}u(t)}_{L^2(\mathbb{R})}^{2}\Big) +  \E\int_{0}^{T}\abs{\partial_{t}u_{\epsilon}(t)-\partial_{t}u(t)}_{L^2(\mathbb{R})}^{2}dt \lesssim_{\,T} c_0 +\epsilon^{1\wedge 2\beta},
	\end{equation}
	for $0<\epsilon\ll1$.
\end{Lemma}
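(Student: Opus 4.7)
The approach is an energy estimate for the difference $\rho_\epsilon := u_\epsilon - u$, patterned on the derivation of \eqref{uniform_bound1} but applied to the nonlinearly forced equation that $\rho_\epsilon$ satisfies. Subtracting $\gamma_0\partial_t u = \partial_x^2 u + |\partial_x u|^2 u$ from \eqref{SPDE3} yields
\begin{equation*}
\epsilon \partial_t^2 \rho_\epsilon + \gamma_0 \partial_t \rho_\epsilon = \partial_x^2 \rho_\epsilon + G_\epsilon + \sqrt{\epsilon}\,(u_\epsilon\times\partial_t u_\epsilon)\,\partial_t w^\epsilon,
\end{equation*}
where $G_\epsilon := (|\partial_x u_\epsilon|^2 u_\epsilon - |\partial_x u|^2 u) - \epsilon|\partial_t u_\epsilon|^2 u_\epsilon - \epsilon \partial_t^2 u$, with $\rho_\epsilon(0)=u_0^\epsilon - u_0$ and $\partial_t\rho_\epsilon(0) = v_0^\epsilon - \gamma_0^{-1}(\partial_x^2 u_0 + |\partial_x u_0|^2 u_0)$; note that $\partial_t u(0)\in L^2(\mathbb{R})$ and, by Theorem \ref{regularity}, $\partial_t^2 u \in L^2(0,T;L^2(\mathbb{R}))$, since $u_0 \in \dot H^3$. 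Applying Itô to $\mathcal{E}_\epsilon(t) := |\partial_x \rho_\epsilon(t)|_{L^2}^2 + \epsilon|\partial_t \rho_\epsilon(t)|_{L^2}^2$ exactly as in \eqref{sm153}, and using $u_\epsilon\cdot\partial_t u_\epsilon = 0$ with \eqref{identity1} to reduce the noise trace to $c_0|\partial_t u_\epsilon|_{L^2}^2$, I obtain
\begin{equation*}
\mathcal{E}_\epsilon(t) + 2\gamma_0\!\int_0^t\!|\partial_t \rho_\epsilon|_{L^2}^2\,ds = \mathcal{E}_\epsilon(0) + \epsilon c_0\!\int_0^t\!|\partial_t u_\epsilon|_{L^2}^2\,ds + 2\!\int_0^t\!\langle \partial_t \rho_\epsilon, G_\epsilon\rangle_{L^2}\,ds + M_\epsilon(t).
\end{equation*}

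The three non-cross terms are handled directly. By \eqref{initial_condition_rate_additional}, $\mathcal{E}_\epsilon(0) \lesssim \epsilon^{2\beta} + \epsilon|\partial_t u(0)|_{L^2}^2 \lesssim \epsilon^{1\wedge 2\beta}$, and $\epsilon c_0\,\mathbb{E}\!\int_0^T|\partial_t u_\epsilon|^2\,ds \lesssim c_0\,\epsilon$ by \eqref{uniform_bound1}. The additive $c_0$ in the final bound arises entirely from the martingale: the trace inequality $\sum_k |\langle \partial_t\rho_\epsilon,(u_\epsilon\times\partial_t u_\epsilon)\xi_k^\epsilon\rangle|^2 \leq c_0|\partial_t\rho_\epsilon|_{L^2}^2|\partial_t u_\epsilon|_{L^2}^2$, together with BDG and the a.s.\ bound $\epsilon|\partial_t u_\epsilon(t)|_{L^2}^2 \leq \Lambda_1^2$ from \eqref{uniform_bound1}, gives
\begin{equation*}
\mathbb{E}\sup_{[0,T]}|M_\epsilon(t)| \lesssim \sqrt{\epsilon c_0}\,\mathbb{E}\Bigl(\sup_{[0,T]}|\partial_t u_\epsilon|_{L^2}^2\!\int_0^T\!|\partial_t \rho_\epsilon|_{L^2}^2\,ds\Bigr)^{1/2} \lesssim \sqrt{c_0}\,\mathbb{E}\Bigl(\int_0^T\!|\partial_t \rho_\epsilon|_{L^2}^2\,ds\Bigr)^{1/2},
\end{equation*}
which after Young splits as $\delta\,\mathbb{E}\!\int_0^T|\partial_t\rho_\epsilon|^2\,ds + c(\delta)\,c_0$.

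For the cross term $\int_0^T\langle\partial_t\rho_\epsilon, G_\epsilon\rangle$, I would split $G_\epsilon$ into its three summands. The nonlinear piece factors as $|\partial_x u_\epsilon|^2 u_\epsilon - |\partial_x u|^2 u = [\partial_x(u_\epsilon+u)\cdot\partial_x\rho_\epsilon]\,u_\epsilon + |\partial_x u|^2\,\rho_\epsilon$, and the orthogonality identities $u_\epsilon\cdot\partial_t u_\epsilon = u\cdot\partial_t u = 0$ produce the crucial relation $u_\epsilon\cdot\partial_t\rho_\epsilon = -\rho_\epsilon\cdot\partial_t u$, which transfers the worst derivative away from $\partial_t\rho_\epsilon$ onto $\partial_t u$; Gagliardo-Nirenberg together with the $\dot H^3$-regularity of $u$ furnished by Theorem \ref{regularity} and the uniform bound \eqref{uniform_bound3} then control the resulting integrand by $|\rho_\epsilon|_{H^1}(|\rho_\epsilon|_{H^1}+1)\phi(s)$ for some $\phi\in L^1(0,T)$. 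The remaining two summands of $G_\epsilon$ each carry an explicit $\epsilon$, and Young combined with the pathwise bound on $\epsilon|\partial_t u_\epsilon|^2$ turns them into a forcing of order $\epsilon$. A pathwise Gronwall lemma then closes the estimate and yields exactly the stated bound. The chief technical obstacle is the BDG/Young step: without the a.s.\ bound $\epsilon|\partial_t u_\epsilon|^2 \lesssim 1$, the martingale would carry a divergent $\epsilon^{-1/2}$ factor, and it is precisely this step --- not the initial data or the Itô correction, both of order $\epsilon$ --- that injects the additive $c_0$ into the final bound.
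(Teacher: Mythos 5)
Your proposal follows essentially the same route as the paper: write the second-order equation for the difference, apply It\^o's formula to the energy $\abs{\partial_x \rho_\e}_{L^2}^2+\e\abs{\partial_t\rho_\e}_{L^2}^2$, use the orthogonality $u_\e\cdot\partial_t u_\e=u\cdot\partial_t u=0$ to move $\partial_t\rho_\e$ off the bad nonlinear terms, control the martingale by BDG plus the pathwise bound $\e\abs{\partial_t u_\e}_{L^2}^2\lesssim 1$ from \eqref{uniform_bound1}, and close with Gronwall. This is exactly the paper's argument, and your treatment of the initial data, the cross terms, and the martingale is sound.

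One computation is off, though it does not damage the stated bound. Since $dv_\e$ carries the martingale $\e^{-1/2}(u_\e\times v_\e)\,dw^\e$, the It\^o correction to $d\big(\e\abs{\partial_t\rho_\e}_{L^2}^2\big)$ is $\e\cdot\frac{1}{\e}\sum_i\abs{(u_\e\times v_\e)\xi_i^\e}_{L^2}^2\,dt=c_0\abs{\partial_t u_\e}_{L^2}^2\,dt$, not $\e c_0\abs{\partial_t u_\e}_{L^2}^2\,dt$: the $\e$ weight in the energy cancels exactly against the $1/\e$ in the quadratic variation. Hence this term integrates to $c_0\int_0^T\abs{\partial_t u_\e}_{L^2}^2\,ds\lesssim c_0$ by \eqref{uniform_bound1} --- order $c_0$, not order $c_0\e$ --- so your concluding claim that the additive $c_0$ ``arises entirely from the martingale'' and that the It\^o correction is of order $\e$ is incorrect. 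In the paper both the It\^o correction and the BDG estimate contribute the order-one $c_0$, and this is consistent with Theorem \ref{teo3.4}: the persistent $O(c_0)$ gap in $\E\int_0^T\abs{\partial_t u_\e-\partial_t u}_{L^2}^2\,dt$ is a genuine energy injection by the noise, already visible at the level of the It\^o--Stratonovich correction, not merely an artifact of the martingale estimate. With this correction your proof yields the asserted bound $\lesssim_T c_0+\e^{1\wedge 2\beta}$ unchanged.
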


\begin{proof}
	
If for every  $\epsilon\in (0,1)$ and $t\in[0,T]$ we denote $u_{R}^{\epsilon}(t):= u_{\epsilon}(t)-u(t)$,  we have 
	\begin{equation}
		\le\{\begin{array}{l}
			\ds{\epsilon \partial_{t}^{2}u_{R}^{\epsilon} = \partial_x^{\,2} u_{R}^{\epsilon} +\abs{\partial_x u}^{2}u_{R}^{\epsilon}-\gamma_0\partial_{t}u_{R}^{\epsilon}+\abs{\partial_x u_{R}^{\epsilon}}^{2}u_{\epsilon}+2(\partial_x u\cdot \partial_x u_{R}^{\epsilon})u_{\epsilon}  }\\
			[10pt]
			\ds{\quad\quad\quad\quad\quad\quad-\epsilon\partial_{t}^{2}u-\epsilon\abs{\partial_{t}u_{\epsilon}}^{2}u_{\epsilon}+\sqrt{\epsilon}(u_{\epsilon}\times \partial_{t}u_{\epsilon})\partial_{t}w^{\epsilon}, } \\
			[10pt]
			\ds{u_{R}^{\epsilon}(0)=u_{0}^{\epsilon}-u_0,\ \ \ \ \partial_{t} u_{R}^{\epsilon}(0)=v_0^{\epsilon}-\tilde{v}_0 },
		\end{array}\r.
	\end{equation}
	where $\tilde{v}_0:=\frac 1{\gamma_0}\big(D^{\,2} u_0+\abs{Du_0}^{2}u_0\big)$. If we denote
	\begin{equation}
		v=\partial_{t}u,\ \ \ \ \ v_{\epsilon}= \partial_{t}u_{\epsilon},\ \ \ \ \ v_{R}^{\epsilon}=\partial_{t}u_{R}^{\epsilon}=v_\epsilon-v,
	\end{equation}
recalling that $(u_\epsilon,v_\epsilon)\in \mathcal{M}$, 	from the It\^{o} formula we get
	\begin{equation}
		\begin{array}{ll}
			&\ds{  \frac{1}{2}d\Big(\abs{\partial_x u_{R}^{\epsilon}(t)}_{L^2(\mathbb{R})}^{2}+\epsilon\abs{v_{R}^{\epsilon}(t)}_{L^2(\mathbb{R})}^{2}\Big) +\gamma_0\abs{v_{R}^{\epsilon}(t)}_{L^2(\mathbb{R})}^{2}dt  = \Inner{\abs{\partial_x u(t)}^{2}u_{R}^{\epsilon}(t),v_{R}^{\epsilon}(t)  }_{L^2(\mathbb{R})}dt   }\\
			\vs 
			&\ds{ \quad \quad  -\Inner{\abs{\partial_x u_{R}^{\epsilon}(t)}^{2}u_{\epsilon},v(t) }_{L^2(\mathbb{R})}dt -2\Inner{ (\partial_x u(t)\cdot \partial_x u_{R}^{\epsilon}(t))u_{R}^{\epsilon}(t),v(t)  }_{L^2(\mathbb{R})}dt  }\\
			\vs 
			&\ds{\quad \quad \quad -\epsilon\Inner{\partial_{t}^{2}u(t),v_{R}^{\epsilon}(t)}_{L^2(\mathbb{R})}dt+\epsilon\Inner{\abs{v_{\epsilon}(t)}^{2}u_{\epsilon}(t),v(t) }_{L^2(\mathbb{R})}dt +\frac{c_0}{2}\abs{v_{\epsilon}(t)}_{L^2(\mathbb{R})}^{2}dt }\\
			\vs
			&\ds{\quad \quad \quad \quad \quad \quad +\sqrt{\epsilon}\Inner{v_{R}^{\epsilon}(t), (u_{\epsilon}(t)\times v_{\epsilon}(t))dw^{\epsilon}(t)}_{L^2(\mathbb{R})}  }.
		\end{array}
	\end{equation}
	For any $\delta>0$,
	\begin{equation}
	\begin{array}{l}
	\ds{
		\big\lvert\Inner{\abs{\partial_x u}^{2}u_{R}^{\epsilon},v_{R}^{\epsilon}}_{L^2(\mathbb{R})} \big\rvert \leq \delta\abs{v_{R}^{\epsilon}}_{L^2(\mathbb{R})}^{2}+c(\delta)\abs{\partial_x u}_{L^{4}}^{4}\abs{u_{R}^{\epsilon}}_{H^{1}}^{2}}\\
		\vs 
		\ds{\quad \quad \quad \quad \quad \quad \leq \delta\abs{v_{R}^{\epsilon}}_{L^2(\mathbb{R})}^{2}+c(\delta)\abs{\partial_x^2 u}_{L^2(\mathbb{R})}\abs{\partial_x u}_{L^2(\mathbb{R})}^{3}\abs{u_{R}^{\epsilon}}_{H^{1}}^{2},}	
	\end{array}
\end{equation}
	and
	\begin{equation}
		\epsilon\big\lvert \Inner{\partial_{t}^{2}u,v_{R}^{\epsilon}}_{L^2(\mathbb{R})} \big\rvert \leq \delta\abs{v_{R}^{\epsilon}}_{L^2(\mathbb{R})}^{2}+c(\delta)\epsilon^{2}\abs{\partial^{2}_{t}u}_{L^2(\mathbb{R})}^{2}.
	\end{equation}
	Moreover,
	\begin{equation}
		\big\lvert \Inner{\abs{\partial_x u_{R}^{\epsilon}}^{2}u_{\epsilon},v}_{L^2(\mathbb{R})} \big\rvert+2\big\lvert\Inner{(\partial_x u\cdot \partial_x u_{R}^{\epsilon})u_{R}^{\epsilon},v}_{L^2(\mathbb{R})} \big\rvert \lesssim\big(1+\abs{\partial_x  u}_{L^2(\mathbb{R})}\big)\abs{v}_{H^{1}}\abs{ u_{R}^{\epsilon}}_{H^{1}}^{2},
	\end{equation}
	and
	\begin{equation}
		\epsilon \big\lvert \Inner{\abs{v_{\epsilon}}^{2}u_{\epsilon},v}_{L^2(\mathbb{R})}\big\rvert \leq \epsilon\abs{v}_{H^{1}}\abs{v_{\epsilon}}_{L^2(\mathbb{R})}^{2}\lesssim\epsilon\abs{v}_{H^{1}}\abs{v}_{L^2(\mathbb{R})}^{2}+\epsilon\abs{v}_{H^{1}}\abs{v_{R}^{\epsilon}}_{L^2(\mathbb{R})}^{2}.
	\end{equation}
	By the Burkholder-Davis-Gundy inequality, for any $\delta>0$ we have
	\begin{equation}
		\begin{array}{ll}
			&\ds{  \sqrt{\epsilon}\ \E\sup_{r\in[0,t]}\Big\lvert \int_{0}^{r}\Inner{v_{R}^{\epsilon}(r),(u_{\epsilon}(r)\times v_{\epsilon}(r))dw^{\epsilon}(r)}_{L^2(\mathbb{R})}\Big\rvert    }\\
			\vs 
			&\ds{ \quad \quad \quad  \lesssim \sqrt{c_0\epsilon}\ \E\Big( \int_{0}^{t}\abs{v_{R}^{\epsilon}(r)}_{L^2(\mathbb{R})}^{2} \abs{v_{\epsilon}(r)}_{L^2(\mathbb{R})}^{2}dr\Big)^{1/2}}\\
			\vs
			&\ds{ \quad \quad \quad \quad \quad \quad \leq \delta \epsilon\, \E\sup_{r\in[0,t]}\abs{v_{R}^{\epsilon}(r)}_{L^2(\mathbb{R})}^{2} +c_0\,c(\delta)\,\E\int_{0}^{t}\abs{v_{\epsilon}(s)}_{L^2(\mathbb{R})}^{2}dt. }
		\end{array}
	\end{equation}
	Hence, if we pick some $\delta>0$ sufficiently small, by first integrating in time, then taking the supremum and finally taking the expectations, we get 
	\begin{equation}
		\begin{array}{l}
			\ds{ \E\sup_{r\in[0,t]}\Big(\abs{u_{R}^{\epsilon}(r)}_{H^{1}}^{2}+\epsilon\abs{v_{R}^{\epsilon}(r)}_{L^2(\mathbb{R})}^{2}\Big)+\E\int_{0}^{t}\abs{v_{R}^{\epsilon}(s)}_{L^2(\mathbb{R})}^{2}ds \lesssim c_0\,\E\int_{0}^{t}\abs{v_{\epsilon}(s)}_{L^2(\mathbb{R})}^{2}ds   }\\
			\vs 
			\ds{ \quad\quad\quad +\abs{\partial_x u^{\epsilon}_{R}(0)}_{L^2(\mathbb{R})}^{2}+\epsilon\abs{v_{R}^{\epsilon}(0)}_{L^2(\mathbb{R})}^{2}+\epsilon\Big(\int_{0}^{t}\abs{v(s)}_{H^{1}}\abs{v(s)}_{L^2(\mathbb{R})}^{2}ds+\epsilon\int_{0}^{t}\abs{\partial_{t}^{2}u(s)}_{L^2(\mathbb{R})}^{2}ds\Big) }\\
			\vs 
			\ds{ +\int_{0}^{t}\Big(\abs{\partial_x ^{2}u(s)}_{L^2(\mathbb{R})}\abs{\partial_x u(s)}_{L^2(\mathbb{R})}^{3}+(1+\abs{\partial_x u(s)}_{L^2(\mathbb{R})})\abs{v(s)}_{H^{1}}\Big)\E\big(\abs{u_{R}^{\epsilon}(s)}_{H^{1}}^{2}+\epsilon\abs{v_{R}^{\epsilon}(s)}_{L^2(\mathbb{R})}^{2}\big)ds   }.
		\end{array}
	\end{equation}
	Therefore, from \eqref{initial_condition_rate_additional}, together with \eqref{fine30}, for $k=3$, by applying Gronwall's inequality, we complete the proof.

\end{proof}

In particular, in the deterministic case, that is when when $c_0=0$, we get 	\begin{equation}
		\abs{u_{\epsilon}-u}_{L^{\infty}(0,T;H^{1})}+\abs{\partial_{t}u_{\epsilon}-\partial_{t}u}_{L^{2}(0,T;L^2(\mathbb{R}))} \lesssim_{T}\epsilon^{(1/2)\wedge \beta},\ \ \ \ 0<\epsilon\ll 1.
	\end{equation}
Thus,	unlike in the stochastic case, we have the strong convergence of $\partial_{t}u_{\epsilon}$  to $\partial_{t}u$ in $L^{2}(0,T;L^2(\mathbb{R}))$. Notice that this provides an alternative proof to the result proved in \cite{zarnescu}, where the quite more delicate case of space dimension $d>1$ is considered, but where a  higher regularity of initial conditions is assumed, and, of course, the convergence can only be local in time.

\appendix

\section{Uniqueness and further regularity properties for the heat flow  harmonic map equation}
\label{AppB}

Here, we prove a few results  concerning   the   equation
\begin{equation}\label{heat_flow}
	\le\{\begin{array}{l}
		\ds{ \gamma_0\,\partial_{t}u(t,x) = \partial_{x}^{2} u(t,x)+\abs{\partial_{x} u(t,x)}^{2}u(t,x)  , \ \ \ \ (t,x)\in \mathbb{R}^{+}\times \mathbb{R},}\\[10pt]
		\ds{u(0,x)=u_{0}(x)\in \mathbb{S}^{2},\ \ \ \ x\in \mathbb{R} }.
	\end{array}\r.
\end{equation}

\subsection{Uniqueness of the solution of equation \eqref{heat_flow}}\label{subA1}

We start giving a proof of Theorem  \ref{uniqueness}. If $u_{1}$ and $u_{2}$ are any two solutions of \eqref{heat_flow}, and  we denote $\rho:=u_{1}-u_{2}$,  since $\rho(0,x)=0$ we have   
	$\rho \in\,L^2(0,T;H^2(\mathbb{R}))$. In particular, this implies that $\rho(t)\cdot \partial_{x}\rho(t) \in L^{1}(\R)$ and $\partial_{x}\big(\rho(t)\cdot \partial_{x}\rho(t)\big)\in L^{1}(\R)$, so that 
	\begin{equation*}
		\Inner{\rho(t),\partial_{x}^{2}\rho(t) }_{L^{2}(\R)} = \int_{\R}\partial_{x}\big(\partial_{x}\rho(t)\cdot \rho(t)\big)dx - \abs{\partial_{x}\rho(t)}_{L^{2}(\R)}^{2}= - \abs{\partial_{x}\rho(t)}_{L^{2}(\R)}^{2},
	\end{equation*}
and
	\begin{equation*}
		\begin{array}{ll}
			&\ds{  \frac{\gamma_0}{2}\frac{d}{dt}\abs{\rho(t)}_{L^{2}(\R)}^{2} = -\abs{\partial_{x}\rho(t)}_{L^{2}(\R)}^{2} + \Inner{\big(\abs{\partial_{x}u_{1}(t)}^{2}-\abs{\partial_{x}u_{2}(t)}^{2}\big)u_{1}(t),\rho(t)}_{L^{2}(\R)} }\\
			\vs
			&\ds{\quad\quad\quad\quad\quad\quad\quad\quad\quad\quad\quad\quad  +\Inner{\abs{\partial_{x}u_{2}(t)}^{2}\rho(t),\rho(t)}_{L^{2}(\R)} }.
		\end{array}
	\end{equation*}
	Thanks to  the continuous embedding $H^{1}(\R)\subset L^{\infty}(\R)$, we have 
	\begin{equation}
		\begin{array}{ll}
		\ds{\Big\lvert \Inner{\big(\abs{\partial_{x}u_{1}(t)}^{2}-\abs{\partial_{x}u_{2}(t)}^{2}\big)u_{1}(t),\rho(t)}_{L^{2}(\R)} \Big\rvert}\\
		\vs
		\ds{\quad\quad\quad\quad \leq \frac{1}{2}\abs{\partial_{x}\rho(t)}_{L^{2}(\R)}^{2} + c\Big(\abs{\partial_{x}^{2}u_{1}(t)}_{L^{2}(\R)}^{2}+\abs{\partial_{x}^{2}u_{2}(t)}_{L^{2}(\R)}^{2}\Big)\abs{\rho(t)}_{L^{2}(\R)}^{2} },
		\end{array}
	\end{equation}
	and 
	\begin{equation}
		\Inner{\abs{\partial_{x}u_{2}(t)}^{2}\rho(t),\rho(t)}_{L^{2}(\R)}\lesssim \abs{\partial_{x}^{2}u_{2}(t)}_{L^{2}(\R)}^{2}\abs{\rho(t)}_{L^{2}(\R)}^{2}.
	\end{equation}
This implies that 	\begin{equation}
		\gamma_0\,\frac{d}{dt}\abs{\rho(t)}_{L^{2}(\R)}^{2} +\abs{\partial_{x}\rho(t)}_{L^{2}(\R)}^{2} \lesssim \Big(\abs{\partial_{x}^{2}u_{1}(t)}_{L^{2}(\R)}^{2}+\abs{\partial_{x}^{2}u_{2}(t)}_{L^{2}(\R)}^{2}\Big)\abs{\rho(t)}_{L^{2}(\R)}^{2}, 
	\end{equation}
so that, since $u_1, u_2 \in\,L^2(0,T,\dot{H}^2(\mathbb{R}))$, we can apply Gronwall's Lemma and  get $\rho(t)\equiv0$.

\subsection{An approximation result}

Next, we show how we can approximate functions in $M$ by smooth functions still in $M$.
\begin{Lemma}\label{lemB1}
	For every  $u\in \dot{H}^{1}(\R)\cap M$ there exists a sequence $\{u_n\}_{n\geq 1}\subset \bigcap_{k\in\mathbb{N}} \dot{H}^{k}(\R)\cap M$ such that $u-u_n \in\,L^2(\mathbb{R})$, for every $n \in\,\mathbb{N}$, with 
\begin{equation}
\label{sm195}
\vert 	u-u_n\vert_{L^2(\mathbb{R})}\lesssim \frac 1n\,\vert Du\vert_{L^2(\mathbb{R})},\ \ \ \ \ n \in\,\mathbb{N},	
\end{equation}
and
	\[\lim_{n \to \infty}
	\vert Du_n-Du\vert_{L^2(\mathbb{R})}=0.\]
	\end{Lemma}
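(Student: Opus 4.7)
My plan is to use the classical ``mollify and project'' scheme adapted to $\mathbb{S}^2$-valued maps. Let $\eta\in C^\infty_c(\R)$ be a non-negative symmetric mollifier with support in $[-1,1]$ and $\int\eta=1$, set $\eta_n(x):=n\eta(nx)$, and define $v_n:=\eta_n\ast u$. Since $u\in L^\infty(\R)$ (as $\abs{u}\equiv 1$) and $Du\in L^2(\R)$, the function $v_n$ is smooth and all its derivatives belong to $L^2(\R)$, so $v_n\in\bigcap_{k\geq 1}\dot H^k(\R)$. In dimension one, $u\in H^1_{\mathrm{loc}}(\R)$ admits an absolutely continuous representative satisfying the global H\"older bound
\begin{equation*}
\abs{u(x)-u(y)}\leq\sqrt{\abs{x-y}}\,\abs{Du}_{L^2(\R)},\qquad x,y\in\R.
\end{equation*}
Combined with $\abs{u}\equiv 1$, which gives $u(x)\cdot u(y)=1-\tfrac{1}{2}\abs{u(x)-u(y)}^2$, one obtains
\begin{equation*}
u(x)\cdot v_n(x)=1-\tfrac{1}{2}\int\eta_n(x-y)\abs{u(x)-u(y)}^2dy\geq 1-\tfrac{C}{n}\abs{Du}_{L^2(\R)}^2,
\end{equation*}
so that $\abs{v_n(x)}\geq u(x)\cdot v_n(x)\geq 1/2$ for every $x\in\R$ provided $n$ is large enough. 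For such $n$ I would set $u_n(x):=v_n(x)/\abs{v_n(x)}$, which is smooth with values in $\mathbb{S}^2$; since $\abs{v_n}$ is smooth and bounded away from $0$, and $v_n\in\dot H^k(\R)$ for every $k$, the chain rule yields $u_n\in\dot H^k(\R)$ for every $k\geq 1$.

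The $L^2$ estimate \eqref{sm195} would be obtained by splitting $u-u_n=(u-v_n)+(v_n-u_n)$. The first summand is handled by the classical mollification bound $\abs{u-v_n}_{L^2(\R)}\leq(C/n)\abs{Du}_{L^2(\R)}$, which follows from $u(x-z/n)-u(x)=-\int_0^{z/n}Du(x-s)\,ds$ and two applications of Cauchy--Schwarz. For the second, since $\abs{v_n-u_n}=1-\abs{v_n}\leq 1-u\cdot v_n$, the pointwise inequality $\abs{u(x)-u(y)}^2\leq\abs{x-y}\int_y^x\abs{Du}^2$ gives
\begin{equation*}
1-u(x)\cdot v_n(x)\leq\frac{1}{2n}\bigl(\mathbf{1}_{[-1/n,1/n]}\ast\abs{Du}^2\bigr)(x),
\end{equation*}
and Young's inequality $\|f\ast g\|_{L^2}\leq\|f\|_{L^1}\|g\|_{L^2}$, applied with $f=\abs{Du}^2\in L^1(\R)$ and $g=\mathbf{1}_{[-1/n,1/n]}$, yields $\abs{v_n-u_n}_{L^2(\R)}\lesssim n^{-3/2}\abs{Du}_{L^2(\R)}^2$. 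For $n\gtrsim\abs{Du}_{L^2(\R)}^2$ this is absorbed into $(C/n)\abs{Du}_{L^2(\R)}$, so after reindexing (or discarding finitely many initial terms) the whole sequence satisfies \eqref{sm195}.

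The main technical point is the $\dot H^1$ convergence. Differentiating $u_n=v_n/\abs{v_n}$ and using $u\cdot Du=0$ (a direct consequence of $\abs{u}^2\equiv 1$), one finds
\begin{equation*}
Du_n-Du=\frac{Dv_n-Du}{\abs{v_n}}+\Bigl(\frac{1}{\abs{v_n}}-1\Bigr)Du-\frac{v_n\,(v_n\cdot Dv_n)}{\abs{v_n}^3}.
\end{equation*}
The first term goes to $0$ in $L^2$ by the classical mollification bound $\abs{Dv_n-Du}_{L^2(\R)}\to 0$ combined with $\abs{v_n}^{-1}\leq 2$; the second is controlled by $\abs{Du}_{L^2(\R)}\cdot\sup_x(1-\abs{v_n(x)})\lesssim n^{-1}\abs{Du}_{L^2(\R)}^3\to 0$. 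The delicate term is the last one: a priori $v_n\cdot Dv_n$ does not vanish, unlike $u\cdot Du$. My plan is to restore the missing cancellation by writing
\begin{equation*}
v_n\cdot Dv_n=v_n\cdot(Dv_n-Du)+(v_n-u)\cdot Du,
\end{equation*}
and observing that both summands tend to $0$ in $L^2(\R)$---the first because $\abs{v_n}\leq 1$ and $Dv_n\to Du$ in $L^2$, the second because the H\"older bound also yields the uniform estimate $\abs{v_n-u}_{L^\infty(\R)}\lesssim n^{-1/2}\abs{Du}_{L^2(\R)}$, together with $Du\in L^2(\R)$. This is the hardest step, because it requires exploiting the geometric constraint $\abs{u}\equiv 1$ in an essential way: a generic $\R^3$-valued mollification would not enjoy an analogous cancellation.
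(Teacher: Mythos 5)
Your proposal is correct and follows essentially the same route as the paper: mollify, use the H\"older bound $\abs{u(x)-u(y)}\lesssim\sqrt{\abs{x-y}}$ to get $\abs{v_n}\geq 1/2$ uniformly, project to the sphere, and recover the missing cancellation in the term $v_n\cdot Dv_n$ by writing it against $u\cdot Du=0$. The only differences are bookkeeping ones: the paper gets \eqref{sm195} from the pointwise bound $\abs{u-u_n}\leq 4\abs{u-\tilde u_n}$ rather than your $1-u\cdot v_n$ plus Young's inequality route, and it compares $Du_n$ with $D\tilde u_n$ before passing to $Du$, but the key estimates are the same.
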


\begin{proof}
Let $\eta\in C^{\infty}_{c}(\R)$ be the standard mollifier, and for  every $n\in\mathbb{N}$ and $x\in \R$ let $\eta_{n}(x):=n\eta(nx)$. For every  $u\in \dot{H}^{1}(\R)\cap M$ we define $\tilde{u}_{n}:=\eta_{n}\ast u$. As known, we  have $\tilde{u}_{n}\in \bigcap_{k\in\mathbb{N}}\dot{H}^{k}(\R)$ and  $\tilde{u}_n$ converges to $u$ in $\dot{H}^{1}(\R)$, as $n\to\infty$. Moreover, since   \[\abs{u(x)-u(y)}\leq c\sqrt{\abs{x-y}},\ \ \ \ \ \ \ x, y \in\,\mathbb{R},\] we get
	\[\vert u(x)-\tilde{u}_n(x)\vert \leq \int_{\mathbb{R}}\eta_n(y)\vert u(x)-u(x-y)\vert\,dy\lesssim \int_{\mathbb{R}}\eta_n(y)\sqrt{|y|}\,dy\lesssim \frac 1{\sqrt{n}},\ \ \ \ \ \ x \in\,\mathbb{R},\]
	so that $\tilde{u}_n$ converges uniformly to $u$, as $n\to \infty$.
In particular, there exists some $n_0 \in\,\mathbb{N}$ such that 
	\begin{equation}\label{sm191}\inf_{x \in\,\mathbb{R}}\,\abs{\tilde{u}_{n}(x)}\geq \frac 12,\ \ \ \  \ \ \ n\geq n_0.\end{equation}
	
Now, we show that $u-\tilde{u}_n \in\,L^2(\mathbb{R})$, for all $n \in\,\mathbb{N}$, and
\begin{equation}\label{sm190}
\vert 	u-\tilde{u}_n\vert_{L^2(\mathbb{R})}\lesssim \frac 1n\,\vert Du\vert_{L^2(\mathbb{R})},\ \ \ \ \ n \in\,\mathbb{N}.
\end{equation}
Actually, for every $n \in\,\mathbb{N}$ we have
\[u(x)-\tilde{u}_n(x)=\int_{-1/n}^{1/n}\eta_n(y)\int_{x-y}^x Du(s)\,ds\,dy,\ \ \ \ \ \ \ x \in\,\R,\]
and hence
\[\begin{array}{l}
\ds{\vert u(x)-\tilde{u}_n(x)\vert^2\leq \int_{\mathbb{R}}|\eta_n(y)|^2\,dy\,\int_{-1/n}^{1/n}\left(\int_{x-y}^x|Du(s)|\,ds\right)^2\,dy\leq n\int_0^{1/n} y \int_{x-y}^{x+y}|Du(s)|^2\,ds\,dy.}
\end{array}\]
This implies
\[\begin{array}{l}
\ds{\vert u-\tilde{u}_n\vert_{L^2(\mathbb{R})}^2\leq n\int_0^{1/n} y \int_{\mathbb{R}}\int_{x-y}^{x+y}|Du(s)|^2\,ds\,dx\,dy=n\int_0^{1/n} y \int_{\mathbb{R}}|Du(s)|^2\int_{s-y}^{s+y}dx\,ds\,dy}\\
\vs
\ds{\quad \quad \quad \quad \quad \quad \quad \quad =2n\int_0^{1/n} y^2\,dy\int_{\mathbb{R}}|Du(s)|^2\,ds=\frac{2}{3n^2}\,\vert Du\vert_{L^2(\mathbb{R})}^2,}	
\end{array}\]
and \eqref{sm190} follows.

Next, we define	 
	\begin{equation}
		u_{n}(x):=\frac{\tilde{u}_{n}(x)}{\abs{\tilde{u}_{n}(x)}},\ \ \ \ x\in\R,\ \ \ \ \ \ \ n\geq n_0.
	\end{equation}
Due to \eqref{sm191}, $u_n$ is well-defined and belongs to  $\bigcap_{k\in\mathbb{N}} \dot{H}^{k}(\R)\,\cap\,M$. 
Moreover, 
\[\vert u(x)-u_n(x)\vert\leq \frac{\vert u(x)-\tilde{u}_n(x)\vert}{\vert \tilde{u}_n(x)\vert}+\frac{\vert u(x)\vert\,\vert \vert u(x)\vert-\vert \tilde{u}_n(x)\vert \vert}{\vert \tilde{u}_n(x)\vert}\leq 4\,\vert u(x)-\tilde{u}_n(x)\vert,\]
and this implies that $u-u_n \in\,L^2(\mathbb{R})$ and thanks to \eqref{sm190}, \eqref{sm195} follows.

As for the convergence in $\dot{H}^1(\mathbb{R})$, we have  
	\begin{equation}
		Du_{n}(x) = \frac{D\tilde{u}_{n}(x)}{\abs{\tilde{u}_{n}(x)}} - \frac{(D\tilde{u}_{n}(x)\cdot \tilde{u}_{n}(x)  )\tilde{u}_{n}(x)  }{\abs{\tilde{u}_{n}(x)}^{3}},\ \ \ \ x\in\R,\ \ \ n \geq n_0.
	\end{equation}
	Thus, we have
	\begin{equation}
		\begin{array}{ll}
			&\ds{ \int_{\R}\abs{ Du_{n}(x) - D\tilde{u}_{n}(x) }^{2}dx \lesssim \int_{\R}\frac{\big\lvert \abs{\tilde{u}_{n}(x)} -1 \big\rvert^{2}}{\abs{\tilde{u}_{n}(x)}^{2}}\abs{D\tilde{u}_{n}(x)}^{2}dx + \int_{\R}\frac{ \big\lvert (D\tilde{u}_{n}(x)\cdot \tilde{u}_{n}(x) ) \big\rvert^{2} }{\abs{\tilde{u}_{n}(x)}^{4}}dx   }\\
			\vs 
			&\ds{\quad\quad  \lesssim \int_{\R}\frac{\big\lvert \abs{\tilde{u}_{n}(x)} -1 \big\rvert^{2}}{\abs{\tilde{u}_{n}(x)}^{2}} \abs{D\tilde{u}_{n}(x)-Du(x)}^{2}dx + \int_{\R}\frac{\big\lvert \abs{\tilde{u}_{n}(x)} -1 \big\rvert^{2}}{\abs{\tilde{u}_{n}(x)}^{2}}\abs{Du(x)}^{2}dx   }\\
			\vs 
			&\ds{\quad\quad+ \int_{\R}\frac{ \big\lvert ( (D\tilde{u}_{n}(x)-Du(x)) \cdot \tilde{u}_{n}(x) ) \big\rvert^{2} }{\abs{\tilde{u}_{n}(x)}^{4}}dx +\int_{\R}\frac{ \big\lvert ( Du(x) \cdot \tilde{u}_{n}(x) ) \big\rvert^{2} }{\abs{\tilde{u}_{n}(x)}^{4}}dx =:\sum_{i=1}^{4}I_{i}(n) }.
		\end{array}
	\end{equation}
For $I_1(n)$ and $I_3(n)$, we have		\begin{equation}
		I_{1}(n)+I_{3}(n) \lesssim \int_{\R}\abs{D\tilde{u}_{n}(x)-Du(x)}^{2}dx \to 0,\ \ \ \ \text{as}\ n\to\infty.
	\end{equation}
For $I_{2}(n)$, since  $\tilde{u}_n$ converges uniformly to $u$, for every $\e>0$ we can find $n_\e \in\,\mathbb{R}$ such that 
\begin{equation}
\label{sm192}
\sup_{x \in\,\mathbb{R}}\vert \vert\tilde{u}_n(x)\vert -1\vert=\sup_{x \in\,\mathbb{R}}\vert \vert\tilde{u}_n(x)\vert -\vert u(x)\vert\vert	<\epsilon,\ \ \ \ \ \ n\geq n_\e.
\end{equation}
 This means that 	\begin{equation}
I_2(n)\leq 		4\epsilon\int_{\mathbb{R}} \abs{Du(x)}^{2}dx,\ \ \ \ \ n\geq n_\e,
	\end{equation}
and due to the arbitrariness of $\epsilon$ we conclude that $I_{2}(n)\to 0$, as $n\to\infty$. Finally, for $I_4(n)$ thanks to \eqref{sm192} and to the fact that $Du(x)\cdot u(x)=0$, for every $n\geq n_\e$ we have 
	\begin{equation}
		\begin{array}{ll}
			&\ds{ I_{4}(n)= \int_{\mathbb{R}} \frac{\big\lvert (Du(x)\cdot (\tilde{u}_{n}(x)-u(x)) ) \big\rvert^{2}}{\vert \tilde{u}_n(x)\vert^4}\,dx \lesssim  \int_{\mathbb{R}} \vert Du(x)\vert^2 \vert\tilde{u}_{n}(x)-u(x)\vert^2\,dx \leq \e\,\vert Du\vert_{L^2(\mathbb{R})}^2,}
		\end{array}
	\end{equation} 
	so  that $I_{4}(n)\to 0$, as $n\to\infty$. All this allows us to conclude that
	\begin{equation}
		\lim_{n\to\infty} \int_{\R}\abs{ Du_{n}(x) - D\tilde{u}_{n}(x) }^{2}dx=0,
	\end{equation}
and recalling that $\tilde{u}_{n}$ converges to $u$ in $\dot{H}^{1}(\R)$, we obtain that $Du_n$ converges to $Du$ in $L^2(\R)$.
\end{proof}

\subsection{Regularity of solutions}

We start by proving some uniform bounds for smooth solutions of equation \eqref{heat_flow}. Then, by an approximation argument, we will  extend these bounds to solutions with lower regularity. Throughout this section, whenever we write $c_{k,T}(r)$ we denote a function depending on $k$, $T$ and $r$, which is increasing with respect to $r\geq 0$.

\begin{Lemma}\label{lemmaA2}
	If $u_{0}\in \bigcap_{k\geq1}\dot{H}^{k}(\R)\,\cap\, M$, then the solution $u$ of equation \eqref{heat_flow} belongs to the space $ L^{\infty}(0,T;\bigcap_{k\geq 1}\dot{H}^{k}(\R))$,  for every $T>0$.
\end{Lemma}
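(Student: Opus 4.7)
The plan is to argue by induction on $k\geq 1$, establishing the a priori bound
\[
\sup_{t\in [0,T]} \abs{u(t)}_{\dot{H}^{k}(\R)}^{2} + \int_{0}^{T}\abs{u(t)}_{\dot{H}^{k+1}(\R)}^{2}\,dt \leq c_{k,T}\bigl(\abs{u_{0}}_{\dot{H}^{k}(\R)}\bigr).
\]
The case $k=1$ has essentially been derived already in Subsection \ref{subA1}. Testing equation \eqref{heat_flow} against $\partial_{t}u$ and using $u\cdot\partial_{t}u = 0$, a consequence of $\abs{u}\equiv 1$, yields the dissipation identity $\frac{1}{2}\frac{d}{dt}\abs{\partial_{x}u}_{L^{2}}^{2} + \gamma_{0}\abs{\partial_{t}u}_{L^{2}}^{2} = 0$, whence $\abs{\partial_{x}u(t)}_{L^{2}} \leq \abs{D u_{0}}_{L^{2}}$. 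Testing instead against $-\partial_{x}^{2}u$ and using the differentiated constraint $u\cdot \partial_{x}^{2}u = -\abs{\partial_{x}u}^{2}$ produces $\frac{\gamma_{0}}{2}\frac{d}{dt}\abs{\partial_{x}u}_{L^{2}}^{2} + \abs{\partial_{x}^{2}u}_{L^{2}}^{2} = \abs{\partial_{x}u}_{L^{4}}^{4}$, and the 1D Gagliardo--Nirenberg inequality $\abs{f}_{L^{4}}^{4} \lesssim \abs{f}_{L^{2}}^{3}\abs{\partial_{x}f}_{L^{2}}$ lets us absorb the right-hand side into $\frac{1}{2}\abs{\partial_{x}^{2}u}_{L^{2}}^{2}$ at the cost of a $\abs{\partial_{x}u}_{L^{2}}^{6}$ term that is already bounded.

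For the inductive step, assume the bound holds for $1,\ldots,k-1$. Applying $\partial_{x}^{k}$ to \eqref{heat_flow} and pairing with $\partial_{x}^{k}u$ in $L^{2}(\R)$ produces
\[
\frac{\gamma_{0}}{2}\frac{d}{dt}\abs{\partial_{x}^{k}u(t)}_{L^{2}(\R)}^{2} + \abs{\partial_{x}^{k+1}u(t)}_{L^{2}(\R)}^{2} = R_{k}(t),
\]
where by the Leibniz rule
\[
R_{k}(t) = \sum_{\alpha+\beta+\gamma=k} c_{\alpha,\beta,\gamma}\int_{\R}\bigl(\partial_{x}^{\alpha+1}u\cdot\partial_{x}^{\beta+1}u\bigr)\bigl(\partial_{x}^{\gamma}u\cdot \partial_{x}^{k}u\bigr)\,dx.
\]
The only summands containing the top-order factor $\partial_{x}^{k+1}u$ correspond to $\alpha=k$, $\beta=\gamma=0$ and its symmetric partner; all remaining summands involve only derivatives of order $\leq k$, so each of them is bounded by Gagliardo--Nirenberg interpolation between $\abs{u}_{\dot{H}^{1}},\ldots,\abs{u}_{\dot{H}^{k}}$ (with $\abs{u}_{L^{\infty}}=1$ handling any $\gamma=0$ factor) to quantities controlled by the induction hypothesis.

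The main obstacle is the top-order piece, which naively involves $\abs{\partial_{x}^{k+1}u}_{L^{2}}$ in a non-absorbable way. The decisive ingredient is again the geometric constraint: differentiating $\abs{u}^{2}\equiv 1$ exactly $k$ times and isolating the two extremal contributions yields
\[
u\cdot \partial_{x}^{k}u = -\frac{1}{2}\sum_{j=1}^{k-1}\binom{k}{j}\partial_{x}^{j}u\cdot \partial_{x}^{k-j}u,
\]
so the pointwise factor $u\cdot \partial_{x}^{k}u$ is actually a polynomial in derivatives of order $\leq k-1$. Substituting this identity and applying Cauchy--Schwarz with Young's inequality bounds the top-order contribution by $\delta\abs{\partial_{x}^{k+1}u}_{L^{2}}^{2}$ plus lower-order pieces that are controlled by the induction hypothesis via the Sobolev embedding $H^{1}(\R)\hookrightarrow L^{\infty}(\R)$. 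Choosing $\delta$ small enough to absorb into the left-hand side and applying a generalized Gr\"onwall argument (whose integrable coefficient is furnished by the $L^{2}(0,T;\dot{H}^{k}(\R))$ bound from the previous step) closes the induction. The corresponding bound on $\partial_{t}u$ in $H^{k-1}(\R)$ then follows by reading it off from the equation together with the standard Sobolev algebra estimates on the nonlinearity.
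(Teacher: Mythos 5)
Your argument is correct as an a priori estimate and its mechanism is sound: the Leibniz expansion of $\langle \partial_x^k(\abs{\partial_x u}^2u),\partial_x^ku\rangle_{L^2(\R)}$, the isolation of the two summands carrying $\partial_x^{k+1}u$, the reduction of the factor $u\cdot\partial_x^ku$ to derivatives of order at most $k-1$ via the differentiated constraint $\abs{u}^2\equiv 1$, absorption by Young, and Gr\"onwall. However, this is not the route the paper takes for this lemma: it is essentially the proof of the paper's Lemma \ref{lemA4} (identities \eqref{general_formula} and \eqref{A8-bis} are exactly your Leibniz formula and constraint identity), which is stated as a separate, subsequent result. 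The paper proves Lemma \ref{lemmaA2} itself in two quite different steps: first an $L^\infty$-in-space bound on every $\partial_x^ku$ obtained from a Bochner-type inequality for $\abs{\partial_x^ku}^2$ and the parabolic maximum principle, and then an $L^2$-in-space bound obtained from the Duhamel representation $u(t)=e^{tA}u_0+\int_0^te^{(t-s)A}(\abs{\partial_xu(s)}^2u(s))\,ds$ together with the $(t-s)^{-1/2}$ smoothing of the heat semigroup. Your route, if completed, buys a sharper constant (depending only on $\abs{u_0}_{\dot H^k(\R)}$ rather than on $\abs{u_0}_{\dot H^{k+1}(\R)}$ as in the paper's \eqref{heat_flow_L_infty}--\eqref{heat_flow_L_2}), which is precisely the content of Lemma \ref{lemA4}.

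The one substantive issue is a circularity in using the energy method to prove the \emph{qualitative} membership $u\in L^\infty(0,T;\dot H^k(\R))$. The identity $\tfrac{\gamma_0}{2}\tfrac{d}{dt}\abs{\partial_x^ku(t)}_{L^2(\R)}^2+\abs{\partial_x^{k+1}u(t)}_{L^2(\R)}^2=R_k(t)$ already presupposes that $\partial_x^ku(t)\in L^2(\R)$, that $t\mapsto\abs{\partial_x^ku(t)}_{L^2(\R)}^2$ is differentiable, and that the integrations by parts over the whole line produce no contribution at infinity --- that is, it presupposes essentially what the lemma asserts. An a priori bound of the form ``if the norm is finite then it is at most $C$'' does not by itself yield finiteness. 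This is exactly why the paper orders the appendix as it does: Lemma \ref{lemmaA2} is established by the maximum principle and the mild formulation, which express $\abs{\partial_x^ku(t)}_{L^2(\R)}$ in terms of quantities already known to be finite, and only afterwards is the energy computation of Lemma \ref{lemA4} carried out under the hypothesis $u\in\bigcap_{k}C^\infty([0,T];\dot H^k(\R))$. To make your proof self-contained you would need to add either a truncation/cut-off argument on expanding intervals with constants uniform in the interval, or a continuation argument, or to fall back on a non-circular representation such as the Duhamel formula for the top derivative.
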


\begin{proof}
	If $u_{0}\in \bigcap_{k\geq1}\dot{H}^{k}(\R)\,\cap\, M$, we have that
	$u_{0}\in C_{b}^{\infty}(\R)$. Then, from the combination of Theorem \ref{uniqueness} and  Theorem \ref{small_mass_limit} and from the results in \cite[Chapter 5]{lin2008} (see also \cite{topping1996}), we have that equation \eqref{heat_flow} admits a unique solution 
	\begin{equation}
		u\in C^{\infty}((0,T)\times \R)\cap L^{\infty}(0,T;\dot{H}^{1}(\R))\cap L^{2}(0,T;\dot{H}^{2}(\R)),
	\end{equation}
for every $T>0$, 	with 
	\begin{equation}\label{heat_flow_H1}
		\sup_{t\in[0,T]}\abs{\partial_{x}u(t)}_{L^{2}(\R)}^{2} +\int_{0}^{T}\abs{\partial_{x}^{2}u(t)}_{L^{2}(\R)}^{2}dt\leq c_{T}\big(\abs{u_0}_{\dot{H}^{1}(\R)}\big).
	\end{equation}
	In what follows, our aim is proving that $u\in L^{\infty}(0,T; \dot{H}^{k}(\R) )$, for all $k\in\mathbb{N}$ and $T>0$.
	
\smallskip
	
	{\em Step 1. } We show that $\partial_{x}^{k}u\in L^{\infty}((0,T)\times \R)$, for any $T>0$ and $k\in\mathbb{N}$, with
	\begin{equation}\label{heat_flow_L_infty}
		\sup_{t \in\,[0,T]}\,\abs{\partial_{x}^{k}u(t)}_{L^{\infty}(\R)}\leq c_{k,T}\big(\abs{u_0}_{\dot{H}^{k}(\R)}\big)\big(1+\abs{\partial_{x}^{k+1}u_0 }_{L^{2}(\R)} \big).
	\end{equation}
	
	We will proceed with an induction argument.
	It can be easily verified that 
	\begin{equation}
		\frac{1}{2}\big(\partial_t\abs{\partial_{x}u}^{2}-\partial_x^2\abs{\partial_{x}u}^{2} \big) = \abs{\partial_{x}u}^{4}-\abs{\partial_{x}^{2}u}^{2}=-\vert\partial_{x}^{2}u+\abs{\partial_{x}u}^{2}u \vert^{2}\leq 0,
	\end{equation}
and, as a consequence of the maximum principle,  we get
	\begin{equation}
		\abs{\partial_{x}u(t)}_{L^{\infty}(\R)}^{2}\leq \abs{Du_{0}}_{L^{\infty}(\R)}^{2}\leq \abs{Du_{0}}_{L^{2}(\R)}\,\abs{D^{2}u_{0}}_{L^{2}(\R)},\ \ \ \ t\geq0.
	\end{equation}
	
	In general,  we have
\[\frac{1}{2}\big(\partial_t\abs{\partial_{x}^{k}u}^{2}-\partial_x^2\abs{\partial_{x}^{k}u}^{2} \big)=-\vert \partial_x^{k+1} u\vert^2+\partial_k^k u \cdot \left(\partial_t\partial_x^k u-\partial_x^2\partial_x^{k}\right)=-\vert \partial_x^{k+1} u\vert^2+\partial_k^k u \cdot \partial_x^k(\vert \partial_x u\vert^2u). \]
Now, for every $k \in\,\mathbb{N}$ it holds
	\begin{equation}\label{general_formula}
\partial_x^k(\vert\partial_x u\vert^2u)=		\sum_{i=0}^{k}\partial_{x}^{i}u\sum_{j=1}^{k+1-i}\alpha^i_{j,k}\,(\partial_{x}^{j}u\cdot \partial_{x}^{k+2-i-j}u),
	\end{equation}
for some constants $\alpha^i_{j,k} \in\,\mathbb{N}$. Moreover, as a consequence of $\vert u(x)\vert\equiv 1$, we have
\begin{equation}
\label{A8-bis}	
u\cdot \partial_{x}^{k}u = \sum_{i=1}^{k-1}\beta_{i,k}(\partial_{x}^{i}u\cdot \partial_{x}^{k-i}u),
\end{equation}
for some constants $\beta_{j,k} \in\,\mathbb{N}$.
Therefore,
 we have 
	\begin{equation}
		\begin{array}{ll}
			&\ds{ \frac{1}{2}\big(\partial_t\abs{\partial_{x}^{k}u}^{2}-\partial_x^2\abs{\partial_{x}^{k}u}^{2} \big) = -\abs{\partial_{x}^{k+1}u}^{2}+ (\alpha^0_{1,k}+\alpha^0_{k+1,k})(\partial_{x}u\cdot \partial_{x}^{k+1}u)(u\cdot \partial_{x}^{k}u) }\\
			\vs 
			&\ds{\quad\quad\quad + \sum_{j=2}^{k}\alpha^0_{j,k}(\partial_{x}^{j}u\cdot \partial_{x}^{k+2-j}u )(u\cdot \partial_{x}^{k}u )  + \sum_{i=1}^{k}(\partial_{x}^{i}u\cdot \partial_x^k u)\sum_{j=1}^{k+1-i}\alpha^i_{j,k}\left(\partial_{x}^{j}u\cdot \partial_{x}^{k+2-i-j}u\right)      }\\
			\vs 
			&\ds{\leq -\frac{1}{2}\abs{\partial_{x}^{k+1}u}^{2}+c \abs{\partial_{x}u}^{2}\abs{\partial_{x}^{k}u}^{2} +\sum_{j=2}^{k}\alpha^0_{j,k}\vert\partial_{x}^{j}u\cdot \partial_{x}^{k+2-j}u\vert\sum_{i=1}^{k-1}\beta_{i,k}\vert\partial_{x}^{i}u\cdot \partial_{x}^{k-i}u\vert  }\\
			\vs 
			&\ds{\quad\quad\quad \quad\quad\quad \quad\quad\quad + \sum_{i=1}^{k}\left(\partial_{x}^{i}u\cdot \partial_x^k u\right)\sum_{j=1}^{k+1-i}\alpha^i_{j,k}\left(\partial_{x}^{j}u\cdot \partial_{x}^{k+2-i-j}u\right) }.
		\end{array}
	\end{equation}
	Hence, if we assume that 
\eqref{heat_flow_L_infty} is true for all $j\leq k-1$, we can find  a function $c_{k,T}\big(\abs{u_0}_{\dot{H}^{k}(\R)}\big)>0$ such that
	\begin{equation}
		\partial_t\abs{\partial_{x}^{k}u(t)}^{2}-\partial_x^2 \abs{\partial_{x}^{k}u(t)}^{2} \leq c_{1,k}\big(\abs{u_0}_{\dot{H}^{k}(\R)}\big)\big(1+\abs{\partial_{x}^{k}u(t)}^{2}\big).
	\end{equation}
	and from the generalized maximum principle \eqref{heat_flow_L_infty} follows for $k$. This implies that  \eqref{heat_flow_L_infty} holds for every $k\geq 1$.
\smallskip
	
	{\em Step 2. } We show that $\partial_{x}^{k}u\in L^{\infty}(0,T;L^2(\R))$, for any $T>0$ and $k\in\mathbb{N}$, with
	\begin{equation}\label{heat_flow_L_2}
		\sup_{t \in\,[0,T]}\,\abs{\partial_{x}^{k}u(t)}_{L^{2}(\R)}\leq c_{k,T}\big(\abs{u_0}_{\dot{H}^{k+1}(\R)}\big).
	\end{equation}

As in Step 1, we proceed with an induction argument.	In view of  \eqref{heat_flow_H1}, our statement is true for $k=1$. Now, we fix an arbitrary $k\geq 2$ and assume that \eqref{heat_flow_L_2} is true for all $j\leq k-1$.
	Since
	\begin{equation}
	u(t) = e^{tA}u_{0}+ \int_{0}^{t}e^{(t-s)A}\big(\abs{\partial_{x}u(s)}^{2}u(s)\big)ds,\ \ \ \  t\in [0,T],
	\end{equation}
we have
\begin{equation}\label{fine10}\begin{array}{l}
\ds{\vert \partial_x^k u(t)\vert_{L^2(\R)}\lesssim 	\abs{D^{k}u_{0}}_{L^{2}(\R)}+\int_0^t (t-s)^{-1/2}\vert \partial_x^{k-1}\big(\abs{\partial_{x}u(s)}^{2}u(s)\big)\vert_{L^{2}(\R)}\,ds.}
\end{array}
\end{equation}
Now, since we are assuming that \eqref{heat_flow_L_2} is true for all $j\leq k-1$, thanks  to \eqref{heat_flow_L_infty} and  \eqref{general_formula}, we have
	\begin{equation}\label{fine20}
		\sup_{t \in\,[0,T]}\abs{ \partial_{x}^{k-1}\big(\abs{\partial_{x}u(t)}^{2}u(t)\big) }_{L^{2}(\R)}^{2}\leq c_{k,T}(\abs{u_0}_{\dot{H}^{k+1}(\R)}\big),
	\end{equation}
	and this, together with \eqref{fine10}, implies that  \eqref{heat_flow_L_2}
holds for $k$, and hence for all $k\geq 1$. 	
	
	\end{proof}

\begin{Remark}
	{\em Notice that as a consequence of the fact that, under the same assumptions of Lemma \ref{lemmaA2}, the solution of \eqref{heat_flow} belongs to $L^\infty(0,T;\bigcap_{k\geq 1}\dot{H}^{k}(\R))$ we have that under the same assumptions $u \in\,C^\infty([0,T];\bigcap_{k\geq 1}\dot{H}^{k}(\R))$.}
\end{Remark}

\begin{Lemma}\label{lemA4}
	Let $T>0$ and $u_0\in \bigcap_{k\geq1}\dot{H}^{k}(\R)\,\cap\, M$. 
 If $u\in \bigcap_{k\geq 1} C^{\infty}([0,T];\dot{H}^{k}(\R))$ is a solution of \eqref{heat_flow}, with initial condition $u_0$, then for every $k\in \mathbb{N}$ we have 
		\begin{equation}\label{boundedness}
			\sup_{t\in [0,T]}\abs{u(t)}_{\dot{H}^{k}(\R)}^{2} + \int_{0}^{T}\abs{u(t)}_{\dot{H}^{k+1}(\R)}^{2}dt+\int_{0}^{T}\abs{\partial_{t}u(t)}_{H^{k-1}(\R)}^{2}dt \leq c_{k,T}\big(\abs{u_0}_{\dot{H}^{k}(\R)}\big).
		\end{equation}
				\end{Lemma}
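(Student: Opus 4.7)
The plan is to proceed by induction on $k \geq 1$. For the base case $k=1$, I would test the equation against $\partial_t u$: since $u \cdot \partial_t u = 0$ (as $|u|=1$), the nonlinear term drops and integration by parts yields
\begin{equation*}
\frac{\gamma_0}{2}\frac{d}{dt}\abs{\partial_x u(t)}_{L^2(\R)}^{2} + \gamma_0\abs{\partial_t u(t)}_{L^2(\R)}^{2} = 0,
\end{equation*}
which immediately gives \eqref{boundedness} for $k=1$, together with the $\partial_t u \in L^2(0,T;L^2(\R))$ part; the $L^2_t \dot{H}^2$ bound then follows from the equation $\partial_x^2 u = \gamma_0 \partial_t u - \abs{\partial_x u}^2 u$ combined with $\abs{\partial_x u}_{L^\infty} \lesssim \abs{\partial_x u}_{L^2}^{1/2}\abs{\partial_x^2 u}_{L^2}^{1/2}$.

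For the inductive step, assuming the estimate holds up to order $k-1$, I would differentiate \eqref{heat_flow} $k$ times in $x$ and test against $\partial_x^k u$, obtaining
\begin{equation*}
\frac{\gamma_0}{2}\frac{d}{dt}\abs{\partial_x^k u(t)}_{L^2(\R)}^{2} + \abs{\partial_x^{k+1} u(t)}_{L^2(\R)}^{2} = \Inner{\partial_x^k u(t),\partial_x^k\bigl(\abs{\partial_x u(t)}^2 u(t)\bigr)}_{L^2(\R)}.
\end{equation*}
Using the expansion \eqref{general_formula}, the right-hand side is a sum of terms of the form $\int (\partial_x^k u)\cdot(\partial_x^i u)(\partial_x^j u \cdot \partial_x^{k+2-i-j} u)\,dx$. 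The terms where all indices are $\leq k$ can be controlled by H\"older, the embedding $H^1(\R)\hookrightarrow L^\infty(\R)$, and Gagliardo--Nirenberg, using the inductive hypothesis to keep $\abs{\partial_x^j u}_{L^\infty}$, $j\leq k-1$, bounded by $\abs{\partial_x^j u}_{L^2}^{1/2}\abs{\partial_x^{j+1}u}_{L^2}^{1/2}$.

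The main obstacle is the critical term containing $k+1$ derivatives, namely (up to constants)
\begin{equation*}
\int_\R (u\cdot \partial_x^k u)(\partial_x^{k+1} u\cdot \partial_x u)\,dx,
\end{equation*}
which cannot be absorbed directly into $\abs{\partial_x^{k+1}u}_{L^2}^2$. Here the key trick is to use the constraint identity \eqref{A8-bis} coming from $\abs{u}=1$, which replaces $u\cdot \partial_x^k u$ by $\sum_{i=1}^{k-1}\beta_{i,k}(\partial_x^i u\cdot \partial_x^{k-i}u)$, a quantity involving only derivatives of order $\leq k-1$. The critical term thus becomes
\begin{equation*}
\sum_{i=1}^{k-1}\beta_{i,k}\int_\R (\partial_x^{k+1} u\cdot \partial_x u)(\partial_x^i u\cdot \partial_x^{k-i}u)\,dx,
\end{equation*}
which, via Cauchy--Schwarz and Gagliardo--Nirenberg applied to the intermediate factors (controlled by the inductive hypothesis), can be bounded by $\delta\abs{\partial_x^{k+1}u}_{L^2}^2 + C(\delta) g_k(t)(1+\abs{\partial_x^k u}_{L^2}^2)$ for some $g_k\in L^1(0,T)$ depending only on $\abs{u_0}_{\dot{H}^k(\R)}$.

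Collecting all contributions, choosing $\delta>0$ small enough to absorb $\delta\abs{\partial_x^{k+1} u}_{L^2}^2$ into the left-hand side, and applying Gronwall's Lemma to the resulting differential inequality yields the $L^\infty_t \dot{H}^k$ and $L^2_t \dot{H}^{k+1}$ bounds in \eqref{boundedness}. Finally, the $\partial_t u\in L^2(0,T;H^{k-1}(\R))$ bound follows from the equation $\gamma_0\partial_t u = \partial_x^2 u + \abs{\partial_x u}^2 u$, estimating $\abs{\abs{\partial_x u}^2 u}_{H^{k-1}}$ via Gagliardo--Nirenberg and the already established bounds on $\abs{u}_{\dot{H}^{k+1}}$.
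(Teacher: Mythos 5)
Your proposal is correct and follows the same overall skeleton as the paper's proof: the base case $k=1$ by testing against $\partial_t u$ (so that $u\cdot\partial_t u=0$ kills the nonlinearity and yields the energy identity \eqref{case1_boundedness1}), and the inductive step by an $L^2$ energy estimate on $\varphi_k=\partial_x^k u$ combined with the Leibniz expansion \eqref{general_formula}, absorption of the top-order term, and Gronwall. The one genuine divergence is in how the inputs to the Gronwall inequality are controlled. The paper treats the critical term $(\partial_x u\cdot\partial_x^{k+1}u)(u\cdot\partial_x^k u)$ by the crude pointwise bound $\lvert u\cdot\partial_x^k u\rvert\le\lvert\partial_x^k u\rvert$ (using $\lvert u\rvert=1$) together with the a priori sup-norm bounds \eqref{heat_flow_L_infty} on $\partial_x^j u$, which are established separately in Lemma \ref{lemmaA2} via a maximum-principle argument (and it is \emph{there}, not in the energy estimate, that the paper invokes the constraint identity \eqref{A8-bis}); the same sup-norm bounds are then used for the lower-order terms. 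You instead bring \eqref{A8-bis} directly into the energy estimate to trade $u\cdot\partial_x^k u$ for a product of derivatives of order at most $k-1$, and you control all $L^\infty$ factors by Gagliardo--Nirenberg from the inductive hypothesis alone. Your route is therefore self-contained within the energy framework and does not need the auxiliary maximum-principle lemma; the paper's route keeps the energy computation shorter at the price of that extra lemma. Both yield a Gronwall coefficient in $L^1(0,T)$ depending only on $\lvert u_0\rvert_{\dot H^k(\R)}$, and the concluding bound on $\partial_t u$ in $L^2(0,T;H^{k-1}(\R))$ is obtained identically from the equation.
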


\begin{proof}
We will provide the proof of \eqref{boundedness} for the cases when $k=1$. Then we will assume that \eqref{boundedness} is true for every $j\leq k$ and we will show that this implies it is true for $k$. By induction this gives the validity of \eqref{boundedness} for every $k\geq 1$.
	
	{\em Step 1.} We show here that \eqref{boundedness} holds for $k=1$.
	Since $u\in \bigcap_{k\geq 1} C^{\infty}([0,T];\dot{H}^{k}(\R))$, we have
\[\int_{\R} \partial_x\left(\partial_xu(t)\cdot\partial_t u(t)\right)\,dx=0,\ \ \ \ \ \ t \in\,[0,T].\]	
Hence,	integrating by parts, we obtain 
	\begin{equation}
		\gamma_0\abs{\partial_t u(t)}_{L^2(\R)}^{2} = \Inner{\partial_{x}^{2}u(t), \partial_t u(t)}_{L^2(\R)}+\Inner{\abs{\partial_{x}u(t)}^{2}u(t),\partial_t u(t)}_{L^2(\R)} = -\frac{1}{2}\frac{d}{dt}\abs{\partial_{x}u(t)}_{L^2(\R)}^{2},
	\end{equation}
	and this implies that for every $t\in[0,T]$
	\begin{equation}\label{case1_boundedness1}
		\abs{\partial_{x}u(t)}_{L^2(\R)}^{2}+2\gamma_0\int_{0}^{t}\abs{\partial_t u(s)}_{L^2(\R)}^{2}ds = \abs{D u_{0          }}_{L^2(\R)}^{2}.
	\end{equation}
	Moreover, since
	\begin{equation}
		\int_{0}^{T}\abs{\partial_{x}^{2}u(t)}_{L^2(\R)}^{2}dt \leq c\int_{0}^{T}\abs{\partial_t u(t)}_{L^2(\R)}^{2}+c\int_{0}^{T}\abs{\partial_{x}u(t)}_{L^{4}(\R)}^{4}dt,
	\end{equation}
	 thanks to the interpolation inequality $\abs{h}_{L^{4}(\R)}\leq c\,\abs{\partial_x h}_{L^2(\R)}^{1/4}\abs{h}_{L^2(\R)}^{3/4}$, for $h\in H^{1}(\R)$,  we have 
	\begin{equation}
		\int_{0}^{T}\abs{\partial_{x}^{2}u(t)}_{L^2(\R)}^{2}dt \leq c\int_{0}^{T}\abs{\partial_x u(t)}_{L^2(\R)}^{2}+\frac{1}{2}\int_{0}^{T}\abs{\partial_{x}^{2}u(t)}_{L^2(\R)}^{2}dt+c\int_{0}^{T}\abs{\partial_{x}u(t)}_{L^2(\R)}^{6}dt.
	\end{equation}
	Hence, by \eqref{case1_boundedness1} we conclude that
	\begin{equation}\label{case1_boundedness2}
		\int_{0}^{T}\abs{\partial_{x}^{2}u(t)}_{L^2(\R)}^{2}dt \lesssim_{\,T} \big(\abs{D u_{0}}_{L^2(\R)}^{2}+\abs{D u_{0}}_{L^2(\R)}^{6}\big).
	\end{equation}
	This, together with \eqref{case1_boundedness1}, implies \eqref{boundedness} for $k=1$.
\smallskip
	 
{\em Step 2.} We assume that \eqref{boundedness} holds for every $j\leq k-1$, and we show that this implies that it is true for $k$. 

If we define $\varphi_k=\partial_x^k u$, we have
\[\partial_t \varphi_k(t)=\partial_x^2 \varphi_k(t)+\partial_x^k\left(\vert \partial_x u(t)\vert^2u(t)\right),\ \ \ \ \ \varphi_k(0)=D^k u_0.\]
Hence
\begin{equation}\label{fine15}\frac 12\frac{d}{dt}\vert \varphi_k(t)\vert^2_{L^2(\R)}=\langle 
\partial_x^2 \varphi_k(t),\varphi_k(t)\rangle_{L^2(\R)}+\langle \partial_x^k\left(\vert \partial_x u(t)\vert^2u(t)\right),\varphi_k(t)\rangle_{L^2(\R)}.\end{equation}
As we have seen above for the case $k=1$, due to the fact that $u(t) \in\,\bigcap_{k\geq 1} \dot{H}^{k}(\R)$, we have
\begin{equation}\label{fine16}\langle 
\partial_x^2 \varphi_k(t),\varphi_k(t)\rangle_{L^2(\R)}=-\frac 12\vert \partial_x \varphi_k(t)\vert^2_{L^2(\R)}.\end{equation} 
Moreover, 
if we define
\[A(t):=\partial_x^k\left(\vert \partial_x u(t)\vert^2u(t)\right)\cdot\varphi_k(t),\]
in view of \eqref{general_formula} we have
\[\begin{array}{l}
\ds{A(t)=\vert \partial_xu(t)\vert^2\vert \partial_x^ku(t)\vert^2+(\alpha^0_{1,k}+\alpha^0_{k+1,k})(\partial_{x}u(t)\cdot \partial_{x}^{k+1}u(t))(u(t)\cdot \partial_{x}^{k}u(t))}\\[10pt]
\ds{\quad \quad \quad \quad \quad +\left(u(t)\cdot \partial_x^ku(t) \right)\sum_{j=2}^{k}\alpha^0_{j,k}\,\left(\partial_{x}^{j}u(t)\cdot \partial_{x}^{k+1-j}u(t)\right)}\\[10pt]
\ds{\quad \quad \quad \quad \quad \quad \quad \quad \quad \quad +\sum_{i=1}^{k-1}\left(\partial_{x}^{i}u(t)\cdot\partial_x^ku(t) \right) \sum_{j=1}^{k+1-i}\alpha^i_{j,k}\,\left(\partial_{x}^{j}u(t)\cdot \partial_{x}^{k+2-i-j}u(t)\right).
}	
\end{array}\]
This implies
\[\begin{array}{l}
\ds{\int_{\R} A(t)\,dx\lesssim \vert \partial_xu(t)\vert^2_{L^\infty(\R)}\vert \partial_x^ku(t)\vert^2_{L^2(\R)} +\vert \partial_xu(t)\vert_{L^\infty(\R)}\vert \partial_x^ku(t)\vert_{L^2(\R)}\vert \partial_x^{k+1}u(t)\vert_{L^2(\R)}  }	\\[10pt]
\ds{\quad \quad \quad \quad +\vert \partial_x^ku(t)\vert_{L^2(\R)} \sum_{j=2}^{k-1}\vert\partial_{x}^{j}u(t)\vert_{L^2(\R)}\vert \partial_{x}^{k+1-j}u(t)\vert_{L^\infty(\R)}}\\[10pt]
\ds{\quad \quad \quad \quad \quad \quad \quad \quad +\vert \partial_x^ku(t)\vert_{L^2(\R)}\sum_{i=2}^{k-1}\vert\partial_{x}^{i}u(t)\vert_{L^2(\R)}\sum_{j=1}^{k+1-i}\vert\partial_{x}^{j}u(t)\vert_{L^\infty(\R)}\vert\partial_{x}^{k+2-i-j}u(t)\vert_{L^\infty(\R)}.}
\end{array}\]
Therefore, in view of  \eqref{heat_flow_L_infty} and of the inductive hypothesis, we conclude
\[\begin{array}{l}
\ds{\int_0^T \int_{\R} A(t)\,dx\,dt\leq \frac 12\int_0^T \vert \partial_x^{k+1}u(t)\vert_{L^2(\R)}^2\,dt+c_T\big ( \vert u_0\vert_{\dot{H}^{2}(\R)}\big)\int_0^T \vert \partial_x^{k}u(t)\vert_{L^2(\R)}^2\,dt}\\[14pt]
\ds{\quad \quad \quad +c\,\int_0^T \vert \partial_x^{k}u(t)\vert_{L^2(\R)}^2\,dt+c_{k-1,T}\big ( \vert u_0\vert_{\dot{H}^{k}(\R)}\big)\sum_{i=2}^{k-1}\int_0^T \vert \partial_x^{i}u(t)\vert_{L^2(\R)}^2\,dt}\\[14pt]
\ds{\quad \quad   \leq \frac 12\int_0^T \vert \partial_x^{k+1}u(t)\vert_{L^2(\R)}^2\,dt+c_T\big ( \vert u_0\vert_{\dot{H}^{2}(\R)}\big)\int_0^T \vert \partial_x^{k}u(t)\vert_{L^2(\R)}^2\,dt+c_{k-1,T}\big ( \vert u_0\vert_{\dot{H}^{k}(\R)}\big).}	
\end{array}\]
 This, together with \eqref{fine15} and \eqref{fine16}, implies
\[\vert \partial_x^ku(t)\vert^2_{L^2(\R)}+\int_0^t \vert \partial_x^{k+1}u(s)\vert_{L^2(\R)}^2\,ds\leq  
c_T\big ( \vert u_0\vert_{\dot{H}^{2}(\R)}\big)\int_0^t \vert \varphi_k(s)\vert_{L^2(\R)}^2\,ds+c_{k,T}\big ( \vert u_0\vert_{\dot{H}^{k}(\R)}\big),\]
and the Gronwall's lemma gives 
\begin{equation}\label{fine21}\vert \partial_x^ku(t)\vert^2_{L^2(\R)}+\int_0^t \vert \partial_x^{k+1}u(s)\vert_{L^2(\R)}^2\,ds\leq c_{k,T}\big  ( \vert u_0\vert_{\dot{H}^{k}(\R)}\big).\end{equation}
Moreover, as
\[\partial_x^{k-1}\partial_t u(t)=\partial_x^{k+1}u(t)+\partial_x^{k-1}\left(\vert \partial_x u(t)\vert^2 u(t)\right),\]
due to \eqref{fine20} and \eqref{fine21}, we conclude
\[\int_0^T \vert \partial_x^{k-1}\partial_t u(t)\vert^2_{L^2(\R)}\lesssim_{\,T,k}c\big(u_0\vert_{\dot{H}^{k}(\R)}\big),\]
and \eqref{boundedness} follows for $k$.

\end{proof}

Next, we show that the solution of \eqref{heat_flow} depends continuously on its initial condition.

\begin{Lemma} \label{lemB3}
Let $u_{1},u_{2}$ be  solutions of \eqref{heat_flow}, with initial conditions $u_{1,0},u_{2,0}$, respectively, such that $u_{1,0}, u_{2,0}\in \bigcap_{k\geq1}\dot{H}^{k}(\R)\,\cap\, M$ 
  and $u_{1,0}-u_{2,0}\in L^2(\mathbb{R})$. Then  
		for any $k\geq 1$ we have
		\begin{equation}\label{continuity2}
\begin{array}{l}
\ds{\sup_{t\in [0,T]}\abs{u_{1}(t)-u_{2}(t)}_{H^{k}(\R)}^{2}+\int_{0}^{T}\abs{u_{1}(t)-u_{2}(t)}_{H^{k+1}(\R)}^{2}dt }\\
\vs
\ds{\quad \quad \quad \quad \quad \quad \leq c_{k,T}\big(\abs{u_{1,0}}_{\dot{H}^{k\vee 1}(\R)},\abs{u_{2,0}}_{\dot{H}^{k\vee 1}(\R)}\big)\abs{u_{1,0}-u_{2,0}}_{H^{k}(\R)}^{2}.
}	
\end{array}
					\end{equation}
		
	\end{Lemma}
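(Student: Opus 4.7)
My approach is induction on $k \geq 1$, running coupled energy estimates on the difference $\rho := u_1 - u_2$ at all derivative levels $0, 1, \ldots, k$ simultaneously and closing them via Gronwall's lemma. The crucial enabler is that $u_{1,0}, u_{2,0} \in \bigcap_{m\geq 1}\dot{H}^m(\R) \cap M$, so Lemmas \ref{lemmaA2} and \ref{lemA4} give, for every $m \in \mathbb{N}$,
\begin{equation*}
\sup_{t\in[0,T]}\!\big(|u_i(t)|_{\dot{H}^m}^2 + |\partial_x^m u_i(t)|_{L^\infty}^2\big) + \int_0^T\!|u_i(t)|_{\dot{H}^{m+1}}^2\, dt \leq c_{m,T}\big(|u_{i,0}|_{\dot{H}^{m\vee 1}}\big),
\end{equation*}
so that every factor depending only on $u_1, u_2$ appears as a deterministic, integrable-in-time coefficient.

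The equation for $\rho$ follows from $\gamma_0 \partial_t u_i = \partial_x^2 u_i + |\partial_x u_i|^2 u_i$ and the telescoping identity $f_1 g_1 - f_2 g_2 = (f_1 - f_2) g_1 + f_2 (g_1 - g_2)$:
\begin{equation*}
\gamma_0 \partial_t \rho = \partial_x^2 \rho + |\partial_x u_1|^2 \rho + \big((\partial_x u_1 + \partial_x u_2)\cdot \partial_x \rho\big) u_2.
\end{equation*}
For the base case $k=1$, I would test this identity against $\rho$ and against $-\partial_x^2 \rho$ and sum. Each nonlinear contribution is bounded using the Gagliardo--Nirenberg inequality $|\partial_x u_i|_{L^\infty}^2 \lesssim |\partial_x u_i|_{L^2}|\partial_x^2 u_i|_{L^2}$, whose $L^2(0,T)$ norm is controlled by Lemma \ref{lemA4} at $k=1$; top-order terms are absorbed into $|\partial_x\rho|_{L^2}^2 + |\partial_x^2\rho|_{L^2}^2$ on the left via Young's inequality. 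Gronwall then closes the case $k=1$.

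For general $k \geq 2$, assuming \eqref{continuity2} holds at orders $1, \ldots, k-1$, I would apply $\partial_x^k$ to the equation for $\rho$, test against $\partial_x^k \rho$, and integrate by parts to extract the coercive term $|\partial_x^{k+1}\rho|_{L^2}^2$. The Leibniz expansion of $\partial_x^k$ of the nonlinearity produces a finite sum of terms each of the schematic form (coefficient built from $u_1, u_2$ and their derivatives) $\times\ \partial_x^j \rho$ with $0 \leq j \leq k+1$. The finitely many terms with $j = k+1$ are absorbed by Young, while the remainder is bounded by $C\Phi_k(t)\sum_{j=0}^k |\partial_x^j \rho|^2_{L^2}$ with $\Phi_k \in L^1(0,T)$ depending on $|u_{1,0}|_{\dot{H}^{k+1}}, |u_{2,0}|_{\dot{H}^{k+1}}$, using \eqref{heat_flow_L_infty} to bound the $L^\infty$ norms of intermediate derivatives of $u_i$. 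Summing the estimates at levels $0, 1, \ldots, k$ and invoking Gronwall yields \eqref{continuity2} at level $k$.

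The principal obstacle is the combinatorial bookkeeping of the Leibniz expansion of $\partial_x^k\big(|\partial_x u_1|^2 u_1 - |\partial_x u_2|^2 u_2\big)$: every summand must be organized so that exactly one factor is a derivative of $\rho$ while the others depend purely on $u_1, u_2$, achieved by iterating the telescoping identity above on each factor of the triple product $(\partial_x u)\cdot(\partial_x u)\, u$ and its derivatives. Once the expansion is in this "single difference factor" form, each term is estimated by H\"older's inequality, the one-dimensional Sobolev embedding $H^1 \hookrightarrow L^\infty$, and Gagliardo--Nirenberg interpolation, all against the uniform bounds from Lemma \ref{lemA4} and \eqref{heat_flow_L_infty}; the remaining computation is then routine.
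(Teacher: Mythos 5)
Your argument is essentially the paper's: telescope the difference of the nonlinearities so that each summand carries exactly one derivative of $\rho=u_1-u_2$, run energy estimates level by level using the a priori bounds of Lemmas \ref{lemmaA2} and \ref{lemA4} together with Gagliardo--Nirenberg, absorb the top-order terms by Young, and close with Gronwall; the paper writes out only $k=1$ (testing the differentiated equation against $\partial_x\rho$ rather than the original against $-\partial_x^2\rho$, which is the same computation after integration by parts) and leaves $k\geq 2$ to induction exactly as you propose. The one point you should correct is the dependence of your Gronwall coefficient $\Phi_k$ on $\abs{u_{i,0}}_{\dot H^{k+1}}$: the lemma claims, and the later application to $\hat u_\e$ (where $\abs{u_0^\e}_{\dot H^2}=O(\e^{-1/2})$ by Hypothesis \ref{H1-bis}) actually requires, a constant depending only on $\abs{u_{i,0}}_{\dot H^{k\vee 1}}$. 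This is achieved by never taking $L^\infty$ norms of $\partial_x^j u_i$ for $j\geq k$: the factors $\partial_x^k u_i$ and $\partial_x^{k+1}u_i$ produced by the Leibniz expansion must be placed in $L^2$ in space (bounded uniformly in time, respectively in $L^2(0,T)$, by \eqref{boundedness} with data controlled in $\dot H^k$ only), with the accompanying low-order factor of $\rho$ estimated in $L^\infty$ via $H^1\hookrightarrow L^\infty$, and \eqref{heat_flow_L_infty} invoked only for derivatives of $u_i$ of order at most $k-1$.
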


\begin{proof} In what follows, we will show \eqref{continuity2} for the case when $k=1$ only. The cases when $k\geq 2$ can be treated by using a similar arguments and  induction.

Let $u_{1},u_{2}$ be solutions of \eqref{heat_flow}, with initial conditions $u_{1,0}, u_{2,0}$, respectively. Then, if we define $\rho:=u_{1}-u_{2}$, by proceeding as in Subsection \ref{subA1},  we have
	\begin{equation}
		\abs{\rho(t)}_{L^2(\R)}^{2}+\int_{0}^{t}\abs{\partial_{x}\rho(s)}_{L^2(\R)}^{2}dt \leq \abs{\rho(0)}_{L^2(\R)}^{2}+c\int_{0}^{t}\Big(\abs{\partial_{x}^{2}u_{1}(s)}_{L^2(\R)}^{2}+\abs{\partial_{x}^{2}u_{2}(s)}_{L^2(\R)}^{2}\Big)\abs{\rho(s)}_{L^2(\R)}^{2}ds.
	\end{equation}
	Therefore, the Gronwall's Lemma gives	\begin{equation}
		\sup_{t\in[0,T]}\abs{\rho(t)}_{L^2(\R)}^{2} \lesssim \abs{\rho(0)}_{L^2(\R)}^{2}\exp\Big(c\int_{0}^{T}\big(\abs{\partial_{x}^{2}u_{1}(t)}_{L^2(\R)}^{2}+\abs{\partial_{x}^{2}u_{2}(t)}_{L^2(\R)}^{2}\big)dt\Big).
	\end{equation}
	This, together with \eqref{case1_boundedness2}, implies 
	\begin{equation}\label{continuity1}
		\begin{array}{l}
		\ds{\sup_{t\in [0,T]}\abs{u_{1}(t)-u_{2}(t)}_{L^2(\mathbb{R})}^{2}+\int_{0}^{T}\abs{u_{1}(t)-u_{2}(t)}_{H^{1}(\R)}^{2}dt}\\
		\vs \ds{\quad \quad \quad \quad \quad \quad \quad \quad \leq c_T\big(\abs{u_{1,0}}_{\dot{H}^{1}(\R)},\abs{u_{2,0}}_{\dot{H}^{1}(\R)}\big)\abs{u_{1,0}-u_{2,0}}_{L^2(\mathbb{R})}^{2}.}	
		\end{array}
	\end{equation}
		
Next, we have
	\begin{equation}
		\begin{array}{ll}
			&\ds{ \partial_{t}\partial_{x}\rho(t) = \partial_{x}^{3}\rho(t) + 2(\partial_{x}\rho(t)\cdot \partial_{x}^{2}u_{1}(t))u_{1}(t)+2(\partial_{x}u_{2}(t)\cdot \partial_{x}^{2}\rho(t))u_{1}(t)   }\\
			\vs 
			&\ds{\quad\quad\quad\quad +\big(\abs{\partial_{x}u_{1}(t)}^{2}-\abs{\partial_{x}u_{2}(t)}^{2}\big)\partial_{x}u_{1}(t) +2(\partial_{x}u_{2}(t)\cdot \partial_{x}^{2}u_{2}(t))\rho(t)+\abs{\partial_{x}u_{2}(t)}^{2}\partial_{x}\rho(t) },
		\end{array}
	\end{equation}
	and then,  integrating by parts,
	\begin{equation}
		\begin{array}{ll}
			&\ds{ \frac{1}{2}\frac{d}{dt}\abs{\partial_{x}\rho(t)}_{L^2(\R)}^{2} = -\abs{\partial_{x}^{2}\rho(t)}_{L^2(\R)}^{2}+2\Inner{\partial_{x}\rho(t), (\partial_{x}\rho(t)\cdot \partial_{x}^{2}u_{1}(t))u_{1}(t)}_{L^2(\R)} }\\
			&\vs
			&\ds{\quad   +2\Inner{\partial_{x}\rho(t),(\partial_{x}u_{2}(t)\cdot \partial_{x}^{2}\rho(t))u_{1}(t) }_{L^2(\R)}    +\Inner{\partial_{x}\rho(t), \big(\abs{\partial_{x}u_{1}(t)}^{2}-\abs{\partial_{x}u_{2}(t)}^{2}\big)\partial_{x}u_{1}(t) }_{L^2(\R)} }\\
			\vs 
			&\ds{\quad\quad \quad\quad+2\Inner{\partial_{x}\rho(t), (\partial_{x}u_{2}(t)\cdot \partial_{x}^{2}u_{2}(t))\rho(t)}_{L^2(\R)}   +\Inner{\partial_{x}\rho(t), \abs{\partial_{x}u_{2}(t)}^{2}\partial_{x}\rho(t) }_{L^2(\R)} }.
		\end{array}
	\end{equation}
	For any $\delta>0$, 
	\begin{equation}\begin{array}{l}
		\ds{\big\lvert \Inner{\partial_{x}\rho, (\partial_{x}\rho\cdot \partial_{x}^{2}u_{1})u_{1}}_{L^2(\R)}\big\rvert \leq \abs{\partial_{x}^{2}u_{1}}_{L^2(\R)}\abs{\partial_{x}\rho}_{L^{\infty}}\abs{\partial_{x}\rho}_{L^2(\R)}}\\
		\vs 
		\ds{\quad \quad \quad \quad \quad\quad \quad \quad \quad \quad\quad \quad \quad \quad \quad \leq \delta\abs{\partial_{x}^{2}\rho}_{L^2(\R)}^{2}+c(\delta)\abs{\partial_{x}^{2}u_{1}}_{L^2(\R)}^{4/3}\abs{\partial_{x}\rho}_{L^2(\R)}^{2},}
		\end{array}
	\end{equation}
	and
	\begin{equation}\begin{array}{l}
		\ds{
		\big\lvert \Inner{\partial_{x}\rho, (\partial_{x}u_{2}\cdot \partial_{x}^{2}\rho)u_{1} }_{L^2(\R)}\big\rvert \leq \abs{\partial_{x}u_{2}}_{L^{\infty}}\abs{\partial_{x}\rho}_{L^2(\R)}\abs{\partial_{x}^{2}\rho}_{L^2(\R)}}\\
		\vs 
		\ds{\quad \quad \quad \quad \quad\quad \quad \quad \quad \quad\leq \delta\abs{\partial_{x}^{2}\rho}_{L^2(\R)}^{2}+c(\delta)\abs{\partial_{x}^{2}u_{2}}_{L^2(\R)}\abs{\partial_{x}u_{2}}_{L^2(\R)}\abs{\partial_{x}\rho}_{L^2(\R)}^{2}.}
		\end{array}
	\end{equation}
	Moreover, we have
	\begin{equation}
		\begin{array}{ll}
			&\ds{ \big\lvert \Inner{\partial_{x}\rho, (\abs{\partial_{x}u_{1}}^{2}- \abs{\partial_{x}u_{2}}^{2} )\partial_{x}u_{1} }_{L^2(\R)}  \big\rvert \lesssim\abs{\partial_{x}\rho}_{L^2(\R)}^{2}\abs{\partial_{x}u_{1}}_{L^{\infty}}\big(\abs{\partial_{x}u_{1}}_{L^{\infty}}+\abs{\partial_{x}u_{2}}_{L^{\infty}}\big)  }\\
			\vs 
			&\ds{\quad\quad\quad\quad \lesssim\big(\abs{\partial_{x}u_{1}}_{L^{\infty}}^{2}+\abs{\partial_{x}u_{2}}_{L^{\infty}}^{2}\big)\abs{\partial_{x}\rho}_{L^2(\R)}^{2}}\\
			\vs 
			&\ds{\quad\quad\quad\quad \quad\quad\quad\quad \lesssim\big(\abs{\partial_{x}^{2}u_{1}}_{L^2(\R)}\abs{\partial_{x}u_{1}}_{L^2(\R)}+\abs{\partial_{x}^{2}u_{2}}_{L^2(\R)}\abs{\partial_{x}u_{2}}_{L^2(\R)}\big)\abs{\partial_{x}\rho}_{L^2(\R)}^{2}  },
		\end{array}
	\end{equation}
and
	\begin{equation}
		\begin{array}{ll}
		&\ds{   \big\lvert \Inner{\partial_{x}\rho, (\partial_{x}u_{2}\cdot \partial_{x}^{2}u_{2})\rho}_{L^2(\R)} \big\rvert \leq \abs{\partial_{x}\rho}_{L^2(\R)}\abs{\partial_{x}^{2}u_{2}}_{L^2(\R)}\abs{\partial_{x}u_{2}}_{L^{\infty}}\abs{\rho}_{L^{\infty}}}\\
		\vs 
		&\ds{\quad \quad \lesssim \abs{\rho}_{L^2(\R)}^{1/2}\abs{\partial_{x}\rho}_{L^2(\R)}^{3/2}\abs{\partial_{x}^{2}u_{2}}_{L^2(\R)}^{3/2}\abs{\partial_{x}u_{2}}_{L^2(\R)}^{1/2} \lesssim\abs{\rho}_{L^2(\R)}^{2} +\abs{\partial_{x}^{2}u_{2}}_{L^2(\R)}^{2}\abs{\partial_{x}u_{2}}_{L^2(\R)}^{2/3}\abs{\partial_{x}\rho}_{L^2(\R)}^{2} },
		\end{array}
	\end{equation}
	and
	\begin{equation}
		\big\lvert \Inner{\partial_{x}\rho,\abs{\partial_{x}u_{2}}^{2}\partial_{x}\rho}_{L^2(\R)}\big\rvert \leq \abs{\partial_{x}u_{2}}_{L^{\infty}}^{2}\abs{\partial_{x}\rho}_{L^2(\R)}^{2} \lesssim \abs{\partial_{x}^{2}u_{2}}_{L^2(\R)}\abs{\partial_{x}u_{2}}_{L^2(\R)}\abs{\partial_{x}\rho}_{L^2(\R)}^{2}.
	\end{equation}
	Therefore, if we put all this together and choose $\delta>0$ sufficiently small, 
	we obtain 
	\begin{equation}
		\begin{array}{ll}
		&\ds{  \abs{\partial_{x}\rho(t)}_{L^2(\R)}^{2}+\int_{0}^{t}\abs{\partial_{x}^{2}\rho(s)}_{L^2(\R)}^{2}ds }\\
		\vs 
		&\ds{\lesssim \abs{\partial_x \rho(0)}_{L^2(\R)}^{2}+\int_{0}^{t}\abs{\rho(s)}_{L^2(\R)}^{2}ds + \int_{0}^{t}\big(1+\abs{\partial_{x}u_{1}(s)}_{L^2(\R)}+\abs{\partial_{x}u_{2}(s)}_{L^2(\R)}\big) }\\
		\vs 
		&\ds{  + \int_{0}^{t}\big(1+\abs{\partial_{x}u_{1}(s)}_{L^2(\R)}+\abs{\partial_{x}u_{2}(s)}_{L^2(\R)}\big)\big(1+\abs{\partial_{x}^{2}u_{1}(s)}_{L^2(\R)}^{2}+\abs{\partial_{x}^{2}u_{2}(s)}_{L^2(\R)}^{2}\big)\abs{\partial_{x}\rho(s)}_{L^2(\R)}^{2}ds  }.
		\end{array}
	\end{equation}
	Finally,  by applying Gronwall's Lemma and taking into account of \eqref{continuity1}, we complete the proof of \eqref{continuity2}.
\end{proof}

Now, if we fix $u_0\in \dot{H}^{k}(\R)\cap M$, for some $k\geq 1$, due to Lemma \ref{lemB1} we have that there exists a sequence $\{u_{0,n}\}_{n\geq 1}\subset \bigcap_{k\in\mathbb{N}} \dot{H}^{k}(\R)\cap M$ such that $u_{0,n}$ converges to $u_0 \in\,\dot{H}^k(\mathbb{R})$ and $u_{0,n}-u \in\,L^2(\R)$ converges to $0$ in $L^2(\mathbb{R})$. In particular, for every $n,m \in\,\mathbb{N}$, we have that $u_{0,n}-u_{0,m} \in\,H^k(\mathbb{R})$ and 
\begin{equation}\label{sm196}
\lim_{n, m\to\infty }\vert u_{0,n}-u_{0,m}\vert_{H^k(\mathbb{R})}=0.	
\end{equation}
According to Lemma \ref{lemB3}, if we denote by $u_n\in \bigcap_{k\geq 1} C^{\infty}((0,T);\dot{H}^{k}(\R))$  the unique solution of \eqref{heat_flow}, with initial condition $u_{0,n}$, we have that the sequence 
\[\{u_{n}\}_{n \in\,\mathbb{N}}\subset L^\infty(0,T;C_b(\mathbb{R}))\,\cap\,L^{\infty}(0,T;\dot{H}^{k}(\R))\,\cap\, L^{2}(0,T;\dot{H}^{k+1}(\R)).\]
is Cauchy. In particular, it converges in  the space above and its limit $u$ is a solution of \eqref{heat_flow}. Moreover, due to Lemma \ref{lemA4}, the following result holds.

\begin{Theorem}
	For every  $T>0$, $k\in\mathbb{N}$ and $u_0\in \dot{H}^{k}(\R)\cap M$, equation \eqref{heat_flow} admits  a unique solution $u \in\, L^{\infty}(0,T;\dot{H}^{k}(\R))\cap L^{2}(0,T;\dot{H}^{k+1}(\R))$, with $\partial_t u \in\,L^2(0,T;H^{k-1})$ and 
	\begin{equation} \label{fine30}
		\sup_{t\in [0,T]}\abs{u(t)}_{\dot{H}^{k}(\R)}^{2} + \int_{0}^{T}\abs{u(t)}_{\dot{H}^{k+1}(\R)}^{2}dt+\int_{0}^{T}\abs{\partial_{t}u(t)}_{H^{k-1}(\R)}^{2}dt \leq c_{k,T}(\abs{u_0}_{\dot{H}^{k}(\R)} \big).
	\end{equation}
	Moreover, in case $k=1$,  we have the following identity
	\begin{equation}\label{case1_boundedness1-bis}
		\abs{\partial_{x}u(t)}_{L^2(\R)}^{2}+2\gamma_0\int_{0}^{t}\abs{\partial_t u(s)}_{L^2(\R)}^{2}ds = \abs{D u_{0          }}_{L^2(\R)}^{2}.
	\end{equation}
\end{Theorem}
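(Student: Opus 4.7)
Plan: the proof is an approximation argument whose individual ingredients are already prepared in the appendix. Starting from $u_{0}\in \dot{H}^{k}(\R)\cap M$, I would first refine the construction of Lemma \ref{lemB1} so as to produce a sequence $\{u_{0,n}\}\subset \bigcap_{j\geq 1}\dot{H}^{j}(\R)\cap M$ with the three properties: $u_{0,n}\to u_{0}$ in $\dot{H}^{k}(\R)$ (so in particular $\sup_n \abs{u_{0,n}}_{\dot{H}^{k}(\R)}<\infty$), $u_{0,n}-u_{0,m}\in H^{k}(\R)$, and $\abs{u_{0,n}-u_{0,m}}_{H^{k}(\R)}\to 0$ as $n,m\to\infty$. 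Concretely, I would take $u_{0,n}=(\eta_{n}\ast u_{0})/\abs{\eta_{n}\ast u_{0}}$, as in Lemma \ref{lemB1}; the first two properties follow from the uniform lower bound $\abs{\eta_{n}\ast u_{0}}\geq 1/2$ established there, a Fa\`a di Bruno expansion of the composition with $x\mapsto x/\abs{x}$, and the standard convergence of mollifications in $\dot{H}^{k}$.

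For each $n$, Lemma \ref{lemmaA2} produces a smooth solution $u_{n}\in\bigcap_{j\geq 1}C^{\infty}([0,T];\dot{H}^{j}(\R))$ of \eqref{heat_flow} with initial datum $u_{0,n}$. Lemma \ref{lemA4} yields the uniform estimate
\[
\sup_{t\in[0,T]}\abs{u_{n}(t)}_{\dot{H}^{k}(\R)}^{2}+\int_{0}^{T}\abs{u_{n}(t)}_{\dot{H}^{k+1}(\R)}^{2}dt+\int_{0}^{T}\abs{\partial_{t}u_{n}(t)}_{H^{k-1}(\R)}^{2}dt\leq c_{k,T}\big(\abs{u_{0}}_{\dot{H}^{k}(\R)}\big),
\]
while Lemma \ref{lemB3} furnishes the contraction estimate
\[
\sup_{t\in[0,T]}\abs{u_{n}(t)-u_{m}(t)}_{H^{k}(\R)}^{2}+\int_{0}^{T}\abs{u_{n}(t)-u_{m}(t)}_{H^{k+1}(\R)}^{2}dt\leq c_{k,T}\abs{u_{0,n}-u_{0,m}}_{H^{k}(\R)}^{2},
\]
so $\{u_{n}\}$ is Cauchy in $C([0,T];H^{k}(\R))\cap L^{2}(0,T;H^{k+1}(\R))$. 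The limit $u$ is easily checked to solve \eqref{heat_flow} with initial condition $u_{0}$, and the uniform bound above passes to the limit by lower semicontinuity (and Fatou for the integral terms), yielding \eqref{fine30}. Uniqueness is a direct consequence of Theorem \ref{uniqueness}, because any solution in $L^{\infty}(0,T;\dot{H}^{k}(\R))\cap L^{2}(0,T;\dot{H}^{k+1}(\R))$ with $k\geq 1$ is a fortiori admissible there.

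For the identity \eqref{case1_boundedness1-bis} in the case $k=1$, I would start from the identity \eqref{case1_boundedness1} satisfied by each smooth approximant $u_{n}$ and pass to the limit in $n$. The term $\abs{\partial_{x}u_{n}(t)}_{L^{2}(\R)}^{2}$ converges uniformly in $t$ because $u_{n}\to u$ in $C([0,T];\dot{H}^{1}(\R))$; the integral term converges because the Cauchy property at level $k=1$ (which gives convergence in $L^{2}(0,T;\dot{H}^{2}(\R))$) together with the equation $\gamma_{0}\partial_{t}u_{n}=\partial_{x}^{2}u_{n}+\abs{\partial_{x}u_{n}}^{2}u_{n}$ implies that $\partial_{t}u_{n}\to\partial_{t}u$ in $L^{2}(0,T;L^{2}(\R))$; finally, $\abs{Du_{0,n}}_{L^{2}(\R)}\to\abs{Du_{0}}_{L^{2}(\R)}$ by the choice of approximants.

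The main obstacle is the very first step: upgrading Lemma \ref{lemB1} from $\dot{H}^{1}$ to $\dot{H}^{k}$ convergence of the sphere-valued mollifications. The linear part (convergence of $\eta_{n}\ast u_{0}$ to $u_{0}$ in $\dot{H}^{k}(\R)$) is routine, but the nonlinear projection onto $\mathbb{S}^{2}$ requires controlling all derivatives up to order $k$ of the composition $x\mapsto x/\abs{x}$ evaluated at $\eta_{n}\ast u_{0}$; this is where the uniform lower bound $\abs{\eta_{n}\ast u_{0}}\geq 1/2$ proved in Lemma \ref{lemB1} plays its decisive role, together with a Fa\`a di Bruno expansion and the fact that smoothness of the projection map off the origin is unproblematic. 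Once this bookkeeping is done, the rest of the proof is a straightforward assembly of Lemmas \ref{lemmaA2}, \ref{lemA4}, \ref{lemB3} and Theorem \ref{uniqueness}.
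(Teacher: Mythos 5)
Your proposal follows essentially the same route as the paper: approximate $u_0$ by smooth sphere-valued data via Lemma \ref{lemB1}, solve with Lemma \ref{lemmaA2}, obtain the uniform bounds from Lemma \ref{lemA4} and the Cauchy property from Lemma \ref{lemB3}, pass to the limit, and deduce uniqueness from Theorem \ref{uniqueness}, with the $k=1$ identity obtained by passing to the limit in \eqref{case1_boundedness1}. The one point you single out as the main obstacle --- upgrading the approximation of the initial datum from $\dot{H}^{1}$ to $\dot{H}^{k}$ convergence --- is invoked by the paper without further justification, so your Fa\`a di Bruno argument based on the uniform lower bound $\abs{\eta_{n}\ast u_{0}}\geq 1/2$ is a needed supplement rather than a deviation.
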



\medskip\

{\bf Acknowledgements:} The first-named author would like to thank Manoussos Grillakis, Yu Gu, and Andrew Lawrie for several helpful conversations regarding various aspects of this paper. In addition, both authors are grateful to Zdzislaw Brze\'zniak for the stimulating discussions during his visit to the Department of Mathematics at the University of Maryland, which provided valuable insights relevant to this work.



\end{document}